\newlength{\defbaselineskip} \setlength{\defbaselineskip}{\baselineskip}
\newtheorem{thm}{Theorem}[section]
\newtheorem{cor}[thm]{Corollary}
\newtheorem{lemm}[thm]{Lemma}
\newtheorem{prop}[thm]{Proposition}
\newtheorem{rem}[thm]{Remark}
\let\oldrem\rem
\renewcommand{\rem}{\oldrem\normalfont}
\theoremstyle{definition}
\newtheorem{remark}[thm]{Remark}
 \numberwithin{equation}{section}
\numberwithin{equation}{section} \theoremstyle{definition}
\DeclareMathOperator{\Pic}{Pic}
\DeclareMathOperator{\Aut}{Aut}
 \DeclareMathOperator{\Spec}{Spec}
\DeclareMathOperator{\Hom}{Hom}
\DeclareMathOperator{\Ext}{Ext}
\DeclareMathOperator{\id}{Id}
          \newcommand\PP{{\mathbb{P}}}
          \newcommand\QQ{{\mathbb{Q}}}
          \newcommand\ZZ{{\mathbb{Z}}}
          \newcommand\C{{\mathbb{C}  }}
          \newcommand\CC{{\mathbb{C}  }}
          \newcommand\R{{\mathcal{R}  }}
          \newcommand\s{{\mathcal{S}}}
           \newcommand\F{{\mathcal F}}
           \newcommand\G{{\mathcal G}}
          \newcommand\oo{\mathcal O}
         \newcommand\Z{\mathbb{Z}}
          \newcommand\Cl{\mathrm{Cl}}
          \newcommand\cone{\Pi}
\newcommand\lra{\longrightarrow}
\definecolor{zielony}{rgb}{0.5, 0.9, 0.1}
\definecolor{czerwony}{rgb}{0.8, 0.2, 0.1}
\definecolor{niebieski}{rgb}{0.3, 0.1, 0.9}
\newcounter{appendice}
\newcommand{\pb}{{\color{magenta}\bullet}}
\newcommand{\qua}{{\color{cyan}\blacksquare}\normalcolor}
\newcommand{\psp}{\bullet}
\newcommand{\pspuno}{{\color{red}\blacktriangledown}\normalcolor}
\newcommand{\pspdue}{{\color{green}\blacklozenge}\normalcolor}
\newcommand{\psptre}{{\color{blue}\hbox{\AsteriskCenterOpen}}\normalcolor}
\newcommand{\pspquat}{{\color{magenta}\hbox{\EightAsterisk}}\normalcolor}
\newcommand{\ignore}[1]{}
\newcommand{\bbeta}{\phi^{[2]}}
\newcommand{\aalpha}{\rho^{[2]}}
\begin{document}
\title{A very special EPW sextic and two IHS fourfolds} 
\author[M.~Donten-Bury]{Maria Donten-Bury}
\address{University of Warsaw, Poland and Freie Universit\"at Berlin, Germany}
\email{m.donten@mimuw.edu.pl}
\author[B.~van Geemen]{Bert van Geemen}
\address{University of Milan, Italy}
\email{lambertus.vangeemen@unimi.it}
\author[G.~Kapustka]{Grzegorz Kapustka}
\address{University of Z\"{u}rich, Switzerland, Polish Academy of Sciences and Jagiellonian University, Cracow, Poland}
\email{grzegorz.kapustka@uj.edu.pl}
\author[M.~Kapustka]{Micha\l{} Kapustka}
\address{University of  Stavanger, Norway and Jagiellonian University, Cracow, Poland}
\email{michal.kapustka@uis.no}
\author[J.~Wi\'{s}niewski]{Jaros\l{}aw A.~Wi\'{s}niewski}
\address{University of Warsaw, Poland}
\email{J.Wisniewski@mimuw.edu.pl}
\keywords{Hyperk\"ahler manifolds, EPW sextics, Hilbert scheme of points on K3 surfaces, Abelian fourfolds}
\subjclass[2000]{53C26, 14J10, 14J28, 14J35, 14J70, 14N20}

\begin{abstract}

We show that the Hilbert scheme of two points on the Vinberg $K3$ surface has a 2:1 map onto a
very symmetric EPW sextic $Y$ in $\PP^5$. 
The fourfold $Y$ is singular along $60$ planes, $20$ of which form a complete
family of incident planes.
This solves a problem of Morin and O'Grady and establishes
that $20$ is the maximal cardinality of such a family of planes.
Next, we show that this Hilbert scheme is birationally isomorphic to
the Kummer type IHS fourfold $X_0$ constructed in \cite{DW}. 
We find that $X_0$ is also related to the Debarre-Varley abelian fourfold.
\end{abstract}

\maketitle

\section{Introduction}
By an irreducible holomorphic symplectic (IHS) fourfold we mean
a compact 4-dimensional, simply connected K\"{a}hler manifold with trivial canonical
bundle which admits a unique (up to a constant)
closed non-degenerate holomorphic $2$-form and is not a product of two
manifolds (see~\cite{B}). There are only two known families of such fourfolds:
the $21$-dimensional family of deformations of the Hilbert scheme of two points on a $K3$ surface 
(we say that elements of this family are \emph{of $K3^{[2]}$-type}) having $b_2=23$
and the $5$-dimensional family of deformations of the Hilbert scheme of three points summing to zero 
on an Abelian surface with $b_2=7$. 

A well known family of projective IHS fourfolds of $K3^{[2]}$-type is the family of double EPW sextics found by O'Grady in \cite{O}.
Recall that an EPW sextic is a special sextic hypersurface in $\PP^5$ constructed in 
\cite{EPW}. It arises from a choice of a subspace $A\subset \bigwedge^3 \C^6$,
Lagrangian with respect to the symplectic form $\eta$ 
(unique up to choice of volume form on $\bigwedge^6 \C^6$) on $\bigwedge^3 \C^6$ given by the wedge product. More precisely the EPW sextic associated to $A$ is 
$$
Y_A=\{[v]\in \mathbb{P}(\C^6)|\quad \dim( A\cap (v\wedge \textstyle \bigwedge^2 \C^6))\geq 1 \}.
$$
Such EPW sextics appear as quotients of polarised IHS fourfolds of $K3^{[2]}$-type by an anti-symplectic involution.

In this paper, we investigate birational models of a very special IHS fourfold of K3 type.  The fourfold is obtained as
a Hilbert scheme of two points on a special K3 surface studied by Vinberg. We find out, in particular, that on one hand the fourfold is birational to a double EPW sextic (see Proposition \ref{intro main epw}) and on the other it is birational to the Kummer type IHS fourfold constructed in 
\cite{DW} as a desingularisation of a quotient of an Abelian fourfold by a group action (see Theorem \ref{desingularisation}). 

To introduce our fourfold, let us denote by $S$ the $K3$ surface which is 
the desingularisation of the double covering of the Del Pezzo surface of degree 5, 
denoted by $S_5$, branched over the union of its 10 lines. 
This surface was studied by Vinberg in~\cite{vinberg} as one of two
``most algebraic K3 surfaces". The starting point of our investigation is the following proposition.
 
\begin{prop}\label{intro main epw}
There exists a series of flops from $S^{[2]}$ to a fourfold $\overline{S^{[2]}}$ and a generically $2:1$ morphism $\overline{S^{[2]}}\to Y$ to an EPW sextic
$Y\subset \PP^5$ which is singular along $60$ planes. 
\end{prop}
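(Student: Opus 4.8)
\emph{Sketch of the approach.} The plan is to produce a degree-two polarisation on a birational model of $S^{[2]}$ and to identify the associated morphism with the double cover of an EPW sextic. Write $M$ for the polarisation of $S$ induced by $\oo_{S_5}(1)$ under the double cover $\pi\colon S\to S_5$; since $\deg S_5=5$ one has $M^2=10$, and the fact that $\phi_M$ is $2:1$ onto $S_5\subset\PP^5$ rather than an embedding is exactly the source of the symmetry to be exploited. On $S^{[2]}$ I would consider the class $H:=M-2\delta$, where $2\delta$ is the exceptional divisor of the Hilbert--Chow morphism, so that $q(\delta)=-2$. Then
\[
q(H)=q(M)+4\,q(\delta)=10-8=2,
\]
and a class of square $2$ on a fourfold of $K3^{[2]}$-type has divisibility $1$ automatically. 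By Riemann--Roch for $K3^{[2]}$-type fourfolds one computes $\chi(H)=\binom{q(H)/2+3}{2}=6$, so once $H$ is shown to be nef and base-point free on a suitable model, that model maps to $\PP^5$; this is precisely the numerical profile of the polarisation of a double EPW sextic.

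Next I would pin down the birational model. Since $\rk\Pic(S)=20$ we have $\rk\Pic(S^{[2]})=21$, and the movable cone carries a rich wall-and-chamber decomposition; $H$ is movable but not nef on $S^{[2]}$, which is why the statement passes to a flop $\overline{S^{[2]}}$. Using the description of the nef and movable cones of $K3^{[2]}$-type fourfolds in terms of the Picard lattice (the walls being governed by the $(-2)$- and $(-10)$-classes and their associated reflections), I would locate the chamber containing $H$, realise $S^{[2]}\dashrightarrow\overline{S^{[2]}}$ as the induced sequence of Mukai flops across the intervening walls, and check that on $\overline{S^{[2]}}$ the class $H$ is nef and base-point free. \textbf{This step is the main obstacle:} controlling the full wall structure in Picard rank $21$, and proving base-point-freeness --- equivalently, that $H$ lies in the interior of the movable cone and defines a morphism rather than a further contraction --- is where the real work lies.

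With $\overline{S^{[2]}}$ in hand, set $\phi:=\phi_{|H|}\colon\overline{S^{[2]}}\to\PP^5$ and $Y:=\phi(\overline{S^{[2]}})$. From the Fujiki relation $H^4=3\,q(H)^2=12$ I would deduce that, if $\phi$ is generically finite of degree $2$, then $\deg Y=6$. The covering involution is supplied by geometry: the deck transformation $\sigma$ of $\pi$ acts by $-1$ on $H^{2,0}(S)$ (as $S_5$ is rational), so the induced involution on $\overline{S^{[2]}}$ is anti-symplectic and exchanges the two sheets of $\phi$. Hence $\phi$ is generically $2:1$ onto the sextic $Y=\overline{S^{[2]}}/\sigma^{[2]}$. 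To conclude that $Y$ is an EPW sextic I would either invoke O'Grady's characterisation of $K3^{[2]}$-type fourfolds with a base-point-free degree-two polarisation as double EPW sextics, or exhibit the Lagrangian $A\subset\bigwedge^3\C^6$ directly from the cup-product data of $H^0(\PP^5,\oo(1))$ against the anti-invariant sections; its $\Aut(S_5)$-invariance then makes $Y=Y_A$ the very symmetric sextic of the statement.

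Finally I would count the singular planes. Planes contained in an EPW sextic correspond to decomposable vectors, $\PP(W)\subset Y_A$ iff $\bigwedge^3 W\in\PP(A)\cap G(3,\C^6)$, and such planes lie in the singular stratum $Y_A[\geq 2]=\mathrm{Sing}(Y_A)$. The problem thus reduces to enumerating $\PP(A)\cap G(3,\C^6)$ for the explicit $A$ above. Using the large symmetry group inherited from $\Aut(S_5)$ and the covering involution, I expect this intersection to consist of $60$ reduced points, organised into orbits --- among them the distinguished orbit of $20$ pairwise incident planes --- and to exhaust the two-dimensional singular locus, so that $Y$ is singular exactly along these $60$ planes. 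Verifying that these planes account for all of $\mathrm{Sing}(Y)$, with no further components of $Y_A[\geq 2]$, would complete the argument.
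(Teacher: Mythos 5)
Your numerical skeleton is correct and coincides with the paper's: your class $H=M-2\delta$ is exactly the paper's $\mu(C)-\Delta^{[2]}_S$ (Proposition \ref{gu}), the computation $q(H)=10-8=2$, $\chi=6$, $H^4=12$ appears verbatim in the proof of Proposition \ref{propMain}, and your covering involution induced by the deck transformation of $S\to S_5$ is the paper's Lemma \ref{rho}. However, two of your steps are genuinely defective, not just unfinished.

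First, the step you yourself flag as ``the main obstacle'' is not a technical lacuna to be outsourced to general theory --- it \emph{is} the proof. Your proposed route (wall-and-chamber structure of the movable cone in Picard rank $21$, then O'Grady's characterisation of square-two polarisations) does not close the gap: O'Grady's results identify a \emph{generic} polarised $K3^{[2]}$-type fourfold of degree two with a double EPW sextic, and precisely what must be verified here is that this maximally special fourfold (and this specific big class $H$) avoids the bad strata of his taxonomy, i.e.\ that $H$ becomes nef \emph{and base-point-free} on some model and that the resulting morphism is $2:1$ rather than birational or of higher even degree. (Even your sheet-exchange argument presupposes that $\sigma^{[2]}$ acts trivially on $\PP(H^0(H))$, which needs the geometric input of Lemma \ref{rho}.) The paper avoids all abstract cone machinery: it writes $g$ explicitly via the quadrics through $S'\subset\PP^6$, identifies the full indeterminacy locus $\mathcal{K}$ --- the $85$ surfaces $l_i^{[2]}$, $e_i^{[2]}$, $E_{ij}$, $F_{ij}$ organised by the Petersen graph --- and flops them in an explicit order (Section \ref{flop}), checking nefness of the transform of $H$ by direct intersection computations (Lemma \ref{curve}, Proposition \ref{propMain}). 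This explicitness is not optional decoration: it is reused to describe the ramification (Corollary \ref{branch}) and to prove Theorem \ref{desingularisation}.

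Second, your final step would fail as stated. The equivalence ``$\PP(W)\subset Y_A$ iff $\bigwedge^3\tilde{W}\in\PP(A)\cap G(3,\C^6)$'' is false, as is the identification $\mathrm{Sing}(Y_A)=Y_A[\geq 2]$. What \cite[Prop.~3.3]{ogrady-incident} gives (and what the paper uses in Proposition \ref{Y is EPW}) is that $Y_A$ is singular along $Y_A[2]$ \emph{and}, separately, along the planes whose Pl\"ucker points lie in $A$; these are different loci. For the Lagrangian $A$ in question, $\PP(A)\cap G(3,\C^6)$ consists of exactly $20$ points --- the incident planes --- and not $60$; indeed Corollary \ref{cor20inc} (completeness of the family, hence Theorem \ref{20 incident}) depends on the fact that the other $40$ singular planes correspond to \emph{no} point of $A$. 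Those $40$ planes constitute $Y_A[2]$, the image of the ramification of the double cover (a degenerate form of the degree-$40$ surface of the generic case). So enumerating $\PP(A)\cap G(3,\C^6)$ would recover only a third of $\mathrm{Sing}(Y)$ and would structurally misread the other two thirds. The paper obtains the $60$ planes in the opposite order: it first computes the equation $F_6$ of the image (Proposition \ref{equationY}), finds its singular locus by direct computation (Proposition \ref{singY}), and only then extracts the Lagrangian $A$ from the $20$ incident planes (Propositions \ref{20inc} and \ref{Y is EPW}).
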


This EPW sextic $Y$ is a very symmetric and natural sextic  related to many classical objects in algebraic geometry. First, it is invariant with respect to the action of the symmetric group $\Sigma_6$ acting on $\PP^5$ by permutation of the coordinates. 
We prove in Section \ref{secEPW} that there are $16$ hyperplanes in $\PP^5$
which are tangent to $Y$ along Segre cubics 
(see \cite{D1} for the discussion about such cubics) 
such that the fifteen planes contained in each such cubic are singular planes on $Y$.
Moreover, $Y$ admits $16$ singular points of multiplicity $4$ whose tangent cone is 
the cone over the Igusa quartic. 
Furthermore, $Y\subset \PP^5$ is projectively self-dual. 
Finally, the EPW sextic $Y$ gives rise to a maximal configuration of 
$20$ incident planes in $\mathbb{P}^5$, 
thus solving a classical problem proposed by Morin and reformulated by O'Grady. 

The problem of O'Grady addresses the question of finding \emph{complete families of incident planes} in $\PP^5$
i.e. configuration of planes in $\PP^5$ intersecting each other and such that no 
planes outside of this set  intersects all of the planes in the set. 
In 1930 Ugo Morin classified in \cite{Morin} all complete irreducible families of incident planes in $\PP^5$. 
In the same paper he acknowledged that the classification of complete finite set of incident planes 
presents essential difficulties.
The latter problem was re-addressed by Dolgachev and Markushevich \cite{DM} who found, 
using the geometry of the Fano model of an Enriques surfaces, 
a description of some families of $10$ incident planes. 
Moreover, they found an explicit description of a complete family of $13$ incident planes.
Then O'Grady in \cite{O} proved, using the results of Ferretti, that for $10\leq k \leq 16$ 
there exists a $20-k$ dimensional moduli space of complete families of incident planes of cardinality $k$. 
Moreover, he proved that the maximal cardinality of a finite complete family of planes 
is between $10$ and $20$. Then O'Grady asked: what is the maximal cardinality of a finite family of 
incident planes in $\PP^5$?

Finding families of incident planes is in fact strictly related to EPW sextics. 
Indeed from \cite[Claim 3.2]{ogrady-incident}, the set of points in $G(3,\C^6)$ corresponding to any complete family 
$\mathcal{F}$ of incident planes spans a space 
$\mathbb{P}(A_{\mathcal{F}})\subset \mathbb{P}(\bigwedge^3 \C^6)$
with $A_{\mathcal{F}}$ Lagrangian with respect to $\eta$. 
In particular, a finite complete family of incident planes $\mathcal{F}$ gives 
rise to an EPW sextic $Y_{A_{\mathcal{F}}}$. 
The configuration of planes is then contained in the singular locus of the EPW sextic. It follows that the construction of a finite complete family of incident planes amounts to finding a special EPW sextic. In our case, it turns out that a suitable subset of $20$ of the $60$ singular planes of the EPW sextic $Y$
is a complete family of incident planes of cardinality 20. Note that the configuration of this $20$ planes is probably rigid. 
We hence infer the following answer to O'Grady's problem:
\begin{thm}\label{20 incident}
There exists a complete family of twenty incident planes in $\PP^5$. 
So $20$ is the maximal possible cardinality of a finite complete family of incident planes in $\PP^5$.
\end{thm}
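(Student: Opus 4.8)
The plan is to combine the upper bound already available in the literature with an explicit realisation of the extremal case coming from the sextic $Y$ of Proposition \ref{intro main epw}. First I would recall that O'Grady has shown in \cite{O}, using results of Ferretti, that any finite complete family of incident planes in $\PP^5$ has at most $20$ members. Thus it suffices to produce one finite complete family of cardinality exactly $20$; the maximality statement in the theorem then follows from this bound.

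For the construction I would exploit the correspondence between incident planes and isotropic decomposable trivectors. A plane $\PP(W)\subset\PP^5$, with $W\subset\C^6$ three-dimensional, determines the decomposable vector $\bigwedge^3 W\in\bigwedge^3\C^6$, and two planes $\PP(W_1),\PP(W_2)$ are incident precisely when $W_1\cap W_2\neq 0$, equivalently when $\eta(\bigwedge^3 W_1,\bigwedge^3 W_2)=0$, since the wedge of the two decomposables vanishes exactly when the six vectors fail to span $\C^6$. Hence pairwise incidence of a family of planes is the same as isotropy of the span of the associated trivectors, and by \cite[Claim 3.2]{ogrady-incident} a finite complete family spans a Lagrangian subspace $A_{\mathcal F}\subset\bigwedge^3\C^6$. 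The next step is to single out, among the $60$ planes along which $Y$ is singular, a subset $\mathcal F$ of $20$ that is pairwise incident. The $\Sigma_6$-symmetry established in Section \ref{secEPW}, together with the arrangement of the $60$ planes into the sixteen Segre cubics cut out on $Y$, should make such a distinguished family of $20$ planes visible and should reduce the pairwise-incidence check to a single $\Sigma_6$-orbit of pairs.

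Granting the pairwise incidence, I would then compute the span $A_{\mathcal F}$ of the twenty trivectors and verify that it is exactly $10$-dimensional, hence Lagrangian for $\eta$. Completeness is the heart of the matter: if a plane $\PP(W)$ meets all twenty members of $\mathcal F$, then $\bigwedge^3 W$ is orthogonal to $A_{\mathcal F}$, so $\bigwedge^3 W\in A_{\mathcal F}^{\perp}=A_{\mathcal F}$, and $\PP(W)$ is a decomposable plane of the Lagrangian $A_{\mathcal F}$. Therefore $\mathcal F$ is complete if and only if the only decomposable vectors in $A_{\mathcal F}$ are the twenty we started with. I would establish this by describing the scheme $A_{\mathcal F}\cap G(3,\C^6)$ explicitly and showing that it consists of exactly these $20$ reduced points; in particular none of the remaining $40$ singular planes of $Y$, nor any further plane of $\PP^5$, is incident to all of $\mathcal F$.

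The main obstacle is precisely this last verification—that $A_{\mathcal F}$ carries no decomposable vector beyond the expected twenty. This is a concrete but delicate computation in the Pl\"ucker coordinates on $G(3,\C^6)$, and the natural way to control it is to diagonalise the $\Sigma_6$-action: decomposing $\bigwedge^3\C^6$ and the Lagrangian $A_{\mathcal F}$ into irreducible $\Sigma_6$-representations should cut the defining equations of $A_{\mathcal F}\cap G(3,\C^6)$ down to a manageable system and confirm that the twenty planes exhaust its solutions. Once completeness is in hand, the family $\mathcal F$ realises the O'Grady bound, and the equality $20$ follows at once.
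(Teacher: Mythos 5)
Your skeleton agrees with the paper's up to the last step: both invoke O'Grady's upper bound of $20$ from \cite{O} so that only existence of a complete family of cardinality $20$ remains, both extract the twenty planes from the sixty singular planes of $Y$, and both verify pairwise incidence and that the span $A$ of the corresponding trivectors is $10$-dimensional, hence Lagrangian (this is Proposition \ref{20inc}, done by a Magma computation). Where you genuinely diverge is completeness. Your reduction is correct: a plane $\PP(W)$ meets all twenty members iff $\bigwedge^3 W$ is $\eta$-orthogonal to $A$, i.e.\ lies in $A^\perp=A$, so completeness is exactly the assertion that the only decomposable vectors in $A$ are the twenty given ones; you then propose to compute the intersection $\PP(A)\cap G(3,\C^6)$ directly in Pl\"ucker coordinates. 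That is a well-posed, finite (if heavy) elimination problem, and it would succeed. The paper avoids it by a structural shortcut: by \cite[Prop.~3.3]{ogrady-incident}, any plane whose trivector lies in $A$ is contained in $\sing Y_A$; since the paper proves $Y_A=Y$ (Proposition \ref{Y is EPW}) and has already computed $\sing Y$ to be exactly the sixty planes (Proposition \ref{singY}), completeness reduces to the linear-algebra check that none of the remaining forty planes has its trivector in $A$ (Corollary \ref{cor20inc}). The paper's route buys a much lighter computation and, at the same time, the EPW identification of $Y$ used throughout the rest of the paper; your route is more self-contained, since once the twenty planes are written down it never needs the sextic $Y$, the identity $Y=Y_A$, or O'Grady's Proposition 3.3 at all.

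Three smaller points, none fatal. First, the configuration of twenty planes is \emph{not} $\Sigma_6$-stable: by Proposition \ref{choices} its stabilizer in $\Sigma_6$ is only a copy of $\Sigma_5$ (the configuration has six $\Sigma_6$-translates), so your plan to decompose $A$ into irreducible $\Sigma_6$-representations cannot work as stated; you must use $\beta\Sigma_5$, or the larger group generated by it and the even sign changes. Second, and for the same reason, the pairwise-incidence check does not reduce to a single orbit of pairs: pairs of planes attached to the same partition and pairs attached to different partitions behave differently (Remark \ref{rem_planes_intersection}). Third, claiming that $\PP(A)\cap G(3,\C^6)$ consists of twenty \emph{reduced} points is more than you need and is not obvious; set-theoretic equality with the twenty points is all that completeness requires.
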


The outline of the first part of the paper is as follows. In Section~\ref{section k3s} we construct a rational map $S^{[2]}\rightarrow \PP^5$ and find its image. The flops of Proposition 
\ref{intro main epw} are described in Section~\ref{section map}, see Proposition~\ref{propMain}.
Then, the detailed proof of the Theorem \ref{20 incident} is given in section \ref{section epw sextic}. In Section \ref{secEPW} further geometric properties of the EPW sextic are studied.


The second aim of the paper is to establish a relation between $S^{[2]}$ and the
Kummer type IHS fourfold constructed in \cite{DW} as a desingularisation of an 
Abelian fourfold by a group action. 
We shall see that these manifolds are in fact related through the sextic $Y$.

Let us recall the construction from \cite{DW}.
Bellamy and Schedler in \cite{BS} showed that a certain action of the group 
$G:=Q_8\times_{\Z_2}D_8$, of order $32$,
on $\C^4$ admits a symplectic resolution.
This action can be given by matrices with coefficients in $\Z[i]$. 
Thus we get an action of $G$ on $E^4$, where $E$ is the elliptic curve
$E:=\C /(\Z+i\Z)$ with complex multiplication by $\Z[i]$.
Donten-Bury and Wi\'sniewski \cite[Sect.~6]{DW} showed that there is an inclusion 
$G\subset \Aut(E^4)$ such that the quotient
$E^4/G$ admits a symplectic resolution $X_0$ (not uniquely defined)
which is an IHS fourfold with second Betti number
$b_2(X_0)=23$. One of the goals of our paper is to show that $X_0$ is of $K3^{[2]}$-type. We shall see that it is in fact birational to the Hilbert scheme of two points on a $K3$-surface.

To relate $E^4/G$ to $S^{[2]}$ and $Y$, 
we find a morphism $E^4\rightarrow \PP^5$ which factors over the group 
$(G,i)\subset Aut(E^4)$
generated by $G$ and diagonal scalar multiplication by $i$. 
This morphism is proven to be the 
quotient map $E^4\rightarrow E^4/(G,i)$ and $E^4/(G,i)$ is shown to be a sextic hypersurface $Y'$ in 
$\PP^5$. After finding special geometrical properties of $Y'$ (singular planes, tangent hyperplanes etc.)
we are finally able in Section \ref{secDesing} to show that actually $Y=Y'$. 
We then conclude that both $\overline{S^{[2]}}$ and $E^4/G$ admit a 2:1 morphism to the same sextic $Y$ 
and have the same ramification locus.
We also obtain:

\begin{thm} \label{desingularisation} There exists a birational contraction 
$\overline{S^{[2]}}\to E^4/G$. In particular $X_0$ is deformation equivalent to the Hilbert scheme of two points on a $K3$ surface. 
\end{thm}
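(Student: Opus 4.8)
The plan is to recognise $\overline{S^{[2]}}$ and $E^4/G$ as two birational models of one and the same double cover of $Y$, to promote the resulting birational map to a morphism by a normalization argument, and finally to transport the $K3^{[2]}$-type across the birational equivalence.

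First I would set up the two coverings. In the central product $G=Q_8\times_{\Z_2}D_8$ the common central involution acts on $\C^4$ as $-\mathrm{Id}$, so scalar multiplication by $i$ satisfies $i^2=-1\in G$ while $i\notin G$; hence $[(G,i):G]=2$ and the quotient map factors as
$$E^4\lra E^4/G\lra E^4/(G,i)=Y',$$
the second arrow being finite of degree $2$. Being a quotient of a smooth variety by a finite group, $E^4/G$ is normal, and therefore it is precisely the normalization of $Y'$ in the quadratic extension $\C(E^4/G)/\C(Y')$. Since $Y=Y'$ by Section~\ref{secDesing}, this exhibits $E^4/G\to Y$ as a $2:1$ morphism whose ramification divisor is, as recalled above, the same as that of the morphism $\overline{S^{[2]}}\to Y$ from Proposition~\ref{intro main epw}.

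The heart of the matter, and the step I expect to be the main obstacle, is to upgrade this coincidence of ramification loci to an identification of the two double covers, i.e. to show that $\overline{S^{[2]}}\to Y$ and $E^4/G\to Y$ induce the same quadratic extension of $\C(Y)$. A branch divisor does not by itself determine a double cover, so here I would use that $Y$ is an EPW sextic: O'Grady's construction \cite{O} attaches to $Y$ a canonical double cover, determined by the Lagrangian data rather than merely by the branch locus, and I would check that both $\overline{S^{[2]}}$ and $E^4/G$ realise this canonical cover --- most cleanly by matching the two anti-symplectic involutions over $Y$, or equivalently the induced $2$-torsion data on the smooth locus $Y\setminus\mathrm{Sing}(Y)$. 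Once this is established, $\overline{S^{[2]}}$ and $E^4/G$ share the function field $\C(E^4/G)$ over $\C(Y)$ and are thus birational over $Y$.

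It then remains to produce the morphism and to conclude. Because $\overline{S^{[2]}}$ is smooth, hence normal, and proper over $Y$, and because $E^4/G$ is the normalization of $Y$ in $\C(E^4/G)$, the universal property of normalization makes the dominant morphism $\overline{S^{[2]}}\to Y$ factor through a morphism $\overline{S^{[2]}}\to E^4/G$; this map is birational and proper and, as $E^4/G$ is singular while $\overline{S^{[2]}}$ is smooth, it contracts the loci lying over $\mathrm{Sing}(E^4/G)$, so it is the asserted birational contraction. For the last sentence, recall from \cite{DW} that the symplectic resolution $X_0\to E^4/G$ is a smooth IHS fourfold; composing it with the contraction yields a birational map $\overline{S^{[2]}}\dashrightarrow X_0$ between smooth IHS fourfolds, which are therefore deformation equivalent by Huybrechts' theorem on birational hyperk\"ahler manifolds. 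Since $\overline{S^{[2]}}$ is obtained from $S^{[2]}$ by flops it is of $K3^{[2]}$-type, and hence so is $X_0$, i.e. $X_0$ is deformation equivalent to the Hilbert scheme of two points on a $K3$ surface.
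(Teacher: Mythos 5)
Your outline coincides with the paper's reduction: the paper takes the Stein factorization $\overline{S^{[2]}}\xrightarrow{c} Z\xrightarrow{f} Y$ and proves $Z\cong E^4/G$ by identifying the two degree-two covers of $Y$, which is exactly the reduction you make via the universal property of normalization, and your appeal to Huybrechts' theorem is the right way to read the ``in particular'' clause. The problem is that the step you yourself call the heart of the matter is announced but never proved: you say you ``would check'' that both $Z\to Y$ and $E^4/G\to Y$ realise O'Grady's canonical double cover attached to the Lagrangian $A$. That check is not carried out, and it is not routine: showing that $E^4/G\to Y$, a quotient of an abelian fourfold constructed with no reference to $\bigwedge^3\CC^6$, agrees with the cover determined by the Lagrangian data is of the same order of difficulty as the theorem itself. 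The paper closes this gap by a different, genuinely decisive argument: the branch loci of both covers are unions of $40$ planes, which have codimension $2$ in $Y$ --- so, contrary to your wording, there is no ramification \emph{divisor}, and this is precisely why neither cover is determined by branch data in the usual cyclic-cover sense. Hence both covers are \'etale over $Y\setminus \mathrm{Sing}(Y)$; the simple connectedness of $\overline{S^{[2]}}$ forces $\pi_1(Y\setminus \mathrm{Sing}(Y))=\Z_2$, so there is a \emph{unique} connected double cover \'etale in codimension one, and uniqueness of integral closures then gives $Z\cong E^4/G$. Nothing in your proposal substitutes for this uniqueness statement.

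There is a second, smaller gap: you take the equality of the two branch loci as ``recalled above'', but no earlier result of the paper asserts it. Corollary \ref{branch} and Lemma \ref{remark-EPW} produce two sets of $40$ planes, each being the complement, inside the $60$ singular planes of $Y$, of a complete family of $20$ incident planes; a priori these could be two different families. The paper needs Proposition \ref{choices} --- there are exactly six such families and they are permuted transitively by the $\Sigma_6$-action preserving $Y$ --- to adjust one cover by a projective automorphism of $Y$ so that the branch loci actually coincide. Any complete write-up must include this step; otherwise your normalization argument would be applied to two covers branched over possibly different loci, where it fails.
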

In Remark (\ref{remEx}) we describe explicitly the exceptional locus
of the contraction $\overline{S^{[2]}}\to E^4/G$ over the quotient
singularity of type $\C^4/G$ which was studied in \cite{BS}
(cf.~\cite{DW}).  
 
To obtain the map from $E^4$ to $\PP^5$, we show first in section \ref{secPolarization} 
that there is a unique $G$-invariant principal polarization $H$ on $E^4$.
This is not the product polarization,
but $(E^4,H)$ is a principally polarized Abelian fourfold with the maximal 
number of $10$ vanishing theta nulls found in \cite{V} and \cite{D}.
The singular locus of any theta divisor of $(E^4,H)$
consists of exactly 10 ODPs.
We study in Section \ref{aute4} the automorphism group of $(E^4,H)$ and we deduce
that the configuration of the sixteen
$G$-fixed points in $E^4$ and the singular points on the sixteen $G$-invariant theta divisors
is a $(16,6)$ (actually a complementary $(16,10)$) configuration. The map $E^4\to Y$ is given by a subsystem of $|2D|$ where $D$ is a $G$-invariant theta divisor (see Remark \ref{diagram qoutient}).
 
It is well-known that the K3 surface $S$ is a desingularization of a quotient of $E^2$ by a diagonal action of $\mathbb{Z}_4$ generated by $(i,-i)$, 
 so it is maybe not so surprising that
$E^4/G$ and $S^{[2]}$ are closely related (see also \cite{vgeementop} for another description of $S$). In fact, there is a lattice theoretical argument proving that
(see Proposition \ref{mongardi}). However, the groups involved in the quotient maps 
$(E^2)^2\rightarrow S^{[2]}$
and in $E^4\rightarrow E^4/G$ are different and we do not know of a 
more direct method to obtain Theorem \ref{desingularisation}. In any case, the sextic $Y$ is clearly 
 of independent interest.

\noindent{\bf Notation.}\par
\par
\begin{tabular}{ll}
  $\Sigma_n$& $n$-th permutation group\\
  $S_5\subset\PP^5$&degree 5 Del Pezzo surface via anticanonical embedding\\
  $\PP^2_r$& blow-up of $\PP^2$ at $r$ general points, $S_5\cong\PP^2_4$\\
  $\alpha:\Sigma_5\rightarrow Aut(S_5)$&action of $\Sigma_5$ on $S_5$, second wedge of standard representation\\
  $S'=\Pi\cap Q\subset\PP^6$& intersection of the cone $\Pi$ over $S_5$ with vertex $P$ and a special quadric $Q$\\
  $\rho: \PP^6\supset S'\rightarrow S_5\subset\PP^5$&projection from the vertex of $\Pi$\\
  $C=Q\cap S_5\subset S_5$& ramification of $\rho$, union of ten $(-1)$ curves on $S_5$\\
  $\phi:S\rightarrow S'$& resolution of 15 $A_1$ singularities, $S$ is a Vinberg K3 surface\\
  $\nu: S\rightarrow S_5$& composition of the above two morphisms \\
  $e_1,\dots,e_{15}$&$(-2)$-curves on $S$ which are contracted by $\phi$\\
  $l_1,\dots,l_{10}$&$(-2)$-curves on $S$ in strict transform of branching of $\rho$\\ 
  $S^{[2]}=Hilb^2(S)$&Hilbert scheme of sub-schemes of $S$ of length 2\\
  $\mu: DivS\rightarrow DivS^{[2]}$&$\mu(C)$ consist of cycles having non-empty intersection with $C\subset S$\\ 
  $\bbeta: S^{[2]}\dashrightarrow S'^{[2]}$&push-forward map \\
  $\aalpha: S'^{[2]}\dashrightarrow Y\subset\PP^5$&maps a pair of points on $S'$ to intersection of their span with $\PP^5$ \\
  $g: S^{[2]}\dashrightarrow \PP^5$&composition $\bbeta\circ\aalpha$\\ 
  $Y\subset \PP^5$& image of $g$, hypersurface of degree 6, EPW sextic\\
  $\beta:\Sigma_5\rightarrow Aut(\PP^5)$& action compatible with $\alpha$ and $g$\\
  $\overline{g}: \overline{S^{[2]}}\rightarrow Y\subset \PP^5$& small modification of $g$ which is a regular morphism\\
  $F_{ij}\subset S^{[2]}$& $F_{ij}=\{ \{p,q\}\in S^{[2]}:p\in e_i,\ q\in e_j\}\cong \PP^1\times\PP^1$\\
  $E_{ij}\subset S^{[2]}$& $E_{ij}=\{\{p,q\}\in S^{[2]}:p\in e_i,\ q\in l_j\}\cong \PP^2_2$\\
  $L_{ij}\subset\overline{S^{[2]}}$&strict transform of $L_{ij}'=\{\{p,q\}\in S^{[2]}\colon p\in l_i, q\in l_j \}\subset S^{[2]}$\\
  $\overline{g}\colon \overline{S^{[2]}}\xrightarrow{c} Z \xrightarrow{f} Y$&Stein factorization\\
\end{tabular}

\subsection*{Acknowledgments}
The following authors were supported by research grants from Polish
National Science Center: MD-B by project 2013/08/A/ST1/00804, GK by the project 2013/08/A/ST1/00312, MK by the project
2013/10/E/ST1/00688, JW by project 2012/07/B/ST1/03343.  The work was
completed while MD-B held a Dahlem Research School
Postdoctoral Fellowship at Freie Universit\"at Berlin.

We would like to thank Giovanni Mongardi for useful discussions.
\section{The Hilbert scheme of two points on the Vinberg K3 surface}\label{section_hilbert}
In this section we study the birational geometry of $S^{[2]}:=Hilb^2(S)$,
where $S$ is the (unique) K3 surface with transcendental lattice $T_S$
isomorphic to the the rank two lattice $(\ZZ^2,q=2(x^2+y^2))$, studied by Vinberg in~\cite{vinberg} (see also \cite{vgeementop}).  
Moreover, we exhibit an explicit (rational) 2:1 map
$g\colon S^{[2]}\dashrightarrow\PP^5$ whose image is an EPW sextic $Y$ and 
we find the equation defining $Y$.

\subsection{The K3 surface $S$ }\label{section k3s} 
The surface~$S$ is the desingularisation of the double
cover of $\PP^2$ branched along the union of six lines, 
an equation for the branch curve is:
\begin{equation}\label{sixlines}
xyz(x-y)(x-z)(y-z)\,=\,0~.
\end{equation}
The six lines defined by the equation (\ref{sixlines}) meet three at a time in 
the points
$$
p_1:=(1:0:0), \quad p_2:=(0:1:0),\quad p_3:=(0:0:1),\quad p_4:=(1:1:1)~,
$$
and two at a time in the points
$$(0:1:1),\quad (1:0:1), \quad (1:1:0)~.$$
Blowing up the four triple points $p_1,\dots, p_4$ in $\PP^2$ we obtain a Del Pezzo surface $S_5$. 
The strict transform of the 6 lines, together with the four exceptional divisors of the birational map $S_5\to \PP^2$, 
are the ten $(-1)$ curves on $S_5$. 
The reduced divisor $B$ in the Del Pezzo surface $S_5$, whose irreducible components are 
the ten $(-1)$ curves  has only ordinary double points, has arithmetic genus $6$ and 
lies in $|-2K_{S_5}|$. 
Hence, $B$
is even in the Picard group of $S_5$, 
so it is the branch locus of a degree two map $\rho\colon S'\rightarrow S_5$.

The linear system $|-2K_{S_5}|$ embeds $S_5$ as an intersection of $5$ 
quadrics $q_i'=0$, $i=1,\ldots,5$, in $\PP^5$. 
There is a further quadric $q_0'=0$ in $\PP^5$ which cuts out the divisor $B$ on $S_5$.
The K3 surface $S$ thus has a (nodal) model $S'$ as the intersection of the cone $\cone$ over $S_5$
with vertex $P=(0:\ldots:0:1)\in\PP^6$ (so $\cone$ is defined by the 5 quadrics $q_i'=0$ in $\PP^6$)
and the quadric $Q$ defined by $y_6^2=q_0'$, see the proof of \ref{equationY} below; 
that is we have a contraction map
$$
\phi\colon S\,\longrightarrow\,S'\,:=\,\cone\cap Q\quad(\subset\PP^6).
$$
The degree two map $\rho\colon S'\rightarrow S_5$ is simply the projection from $P$ to the hyperplane $y_6=0$.

The divisor $B$ in $S_5$ has $4\cdot 3+3=15$ ordinary double points and after blowing up these points 
we get a surface $S_5'$ on which the strict transforms of the $6$ lines and $4$ exceptional curves 
of the first blow up are disjoint. 
The $K3$ surface  $S$ is the double cover of $S_5'$ branched over these $10$ curves,
the inverse images of the $15$ rational curves over double points of $B$ are rational curves in $S$.
The map $\phi:S\rightarrow S'$ is the contraction of these $15$ rational curves. 
The intersection diagram of the $25$ curves on $S$ above is the following Petersen graph.
Here the dots ${\pb}$ correspond to the $10$ curves from $\mathcal{T}$
and the edges to the $15$ exceptional curves of $\phi \colon S\to S'$.

$$\begin{xy}<60pt,0pt>:
(0,1)*={\pb}="01", 
(0.588,-0.809)*={\pb}="02", 
(0.475,0.13)*{\pb}="12", 
(-0.588,-0.809)*={\pb}="14", 
(-0.475,0.13)*={\pb}="04", 
(-0.95,0.27)*={\pb}="23",  
(0.294,-0.404)*={\pb}="13", 
(0.95,0.27)*={\pb}="34", 
(-0.294,-0.404)*={\pb}="03",  
(0,0.5)*={\pb}="24",  
"01";"24" **@{-}, "01";"23" **@{-}, "01";"34" **@{-},
"02";"34" **@{-}, "02";"13" **@{-}, "02";"14" **@{-},
"03";"12" **@{-}, "03";"14" **@{-}, "03";"24" **@{-},
"04";"12" **@{-}, "04";"13" **@{-}, "04";"23" **@{-},
"12";"34" **@{-}, "13";"24" **@{-}, "14";"23" **@{-},
\end{xy}$$ 
\medskip

Let $C$ be the pull-back of $B$ to $S$ along the composition
$S\rightarrow S'\rightarrow S_5$. Then $C^2=10$ and the linear system
given by this curve is the contraction map $\phi=\phi_C:S\rightarrow
S'\subset\PP^6$.

\subsection{The map $g\colon S^{[2]}\dashrightarrow \PP^5$} \label{eq1}
Let $S^{[2]}$ be the Hilbert scheme of two points on $S$.
We define a rational map $g$ (cf.~\cite[Sect.~4.3]{O2}) 
that on an open part of $S^{[2]}$ is a composition of rational maps:
\[ 
g\colon S^{[2]}\stackrel{\bbeta}{\dashrightarrow} (S')^{[2]} 
\stackrel{\aalpha}{\dashrightarrow} Y\subset \PP^5~.
\]
Here the map $\bbeta$ is the map naturally induced by 
$\phi$ on the Hilbert scheme.
By $(S')^{[2]}$ we denote the
Hilbert scheme of two points in the smooth part of $S'$. 
The rational map $\aalpha$  
is induced by mapping $\{p,q\} \in
(S')^{2}$ to the hyperplane of those quadrics in the ideal of $S'\subset \PP^5$
which contain the line spanned by $p$ and $q$. 
The base locus of the map $g$ will be studied in Section \ref{section map}.
We shall see in Proposition \ref{propMain} that the map $g$ is given by a complete linear system.



To describe the image $Y$ of $g$, 
we need the following symmetric functions in six variables $Z_0,\ldots,Z_5$.
{\renewcommand{\arraystretch}{1.3}
$$
\begin{array}{l}
P_6:=Z_0^6 + Z_1^6 + Z_2^6 + Z_3^6 + Z_4^6 + Z_5^6,\\
P_{42}:=
Z_0^4  Z_1^2 + Z_0^4  Z_2^2+ \ldots + Z_4^2 Z_5^4,\\
P_{222}:=
Z_0^2  Z_1^2  Z_2^2 + Z_0^2  Z_1^2 Z_3^2 + \ldots + Z_3^2  Z_4^2  Z_5^2, 
\\
P_{111111}:=Z_0 Z_1 Z_2 Z_3 Z_4 Z_5,
\end{array}
$$
}
the polynomials $P_{42}$ and $P_{222}$ have 30 and 20 terms respectively.

\begin{prop}\label{equationY}
The image $Y$ of $S^{[2]}$ under $g\colon S^{[2]}\dashrightarrow\PP^5$ is the sextic $F_6=0$ where, 
with the notation as above,
$$
F_6\,=\,P_6 - P_{42} + 2P_{222} -16P_{111111}.
$$
\end{prop}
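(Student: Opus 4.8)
The plan is to compute the image $Y$ explicitly by producing an equation, rather than by arguing abstractly that it is an EPW sextic (which comes from the general theory recalled in the introduction). First I would make the $\Sigma_6$-symmetry work for me: the map $g$ is equivariant for the $\Sigma_5$-action $\beta$ on $\PP^5$, but the target equation $F_6$ is visibly $\Sigma_6$-invariant, so the natural first step is to pin down the coordinates $Z_0,\dots,Z_5$ on $\PP^5$ in which the full symmetric group $\Sigma_6$ acts by permutation, and to verify that $g$ is compatible with this larger symmetry. The $(-1)$-curves on $S_5$ and the Petersen-graph combinatorics of the $25$ curves on $S$ should single out such a coordinate system, in which the generators $q_0',\dots,q_5'$ of the quadric ideal of $S_5\subset\PP^5$ (the five defining quadrics together with the auxiliary quadric cutting out $B$) are permuted up to the relation $\sum q_i'=0$ that reflects $\dim\PP^5=5$.

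Next I would set up the rational map $\aalpha\colon (S')^{[2]}\dashrightarrow\PP^5$ concretely. A general pair $\{p,q\}$ of points on $S'\subset\PP^6$ spans a line $\ell$; the six quadrics in the ideal of $S'$ (pulled back from the $q_i'$) evaluate on $\ell$ to give six values, and $\aalpha(\{p,q\})$ is the point $(Z_0:\dots:Z_5)$ recording which linear combination of quadrics vanishes on $\ell$. In practice this means writing $Z_i$ as the "polar" bilinear form $q_i'(p,q)$ associated to the quadric $q_i'$, evaluated on the two points. Composing with $\bbeta$, I obtain $Z_i$ as explicit functions on $S\times S$, symmetric in the two factors and descending to $S^{[2]}$. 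The target sextic is then the single algebraic relation among these six functions, and the heart of the proof is to show that relation is exactly $F_6=0$.

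To identify the relation I would proceed by a dimension-count plus elimination argument. The space of $\Sigma_6$-invariant sextics in six variables is spanned by the four monomials $P_6, P_{42}, P_{222}, P_{111111}$, so a priori $F_6$ lies in a four-dimensional space and is determined by three ratios of coefficients. I would determine these coefficients by evaluating the $Z_i$ at a few well-chosen special pairs $\{p,q\}$ — for instance pairs lying on the rational curves $e_i, l_j$ of the Petersen configuration, or pairs fixed by large subgroups of $\Sigma_6$ — each of which forces $F_6$ to vanish and pins down one linear condition on the coefficients. Three independent such evaluations, together with the normalization $P_6$-coefficient $=1$, should yield the claimed $(1,-1,2,-16)$. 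I would then confirm that the resulting sextic is irreducible and that the six coordinate functions genuinely satisfy it identically (not merely on a subvariety), so that the image is the full hypersurface $F_6=0$ and $g$ is dominant onto it.

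The main obstacle will be the middle step: producing the $Z_i$ as honest explicit polynomials on $S\times S$ and controlling the base locus where $\aalpha$ or $\bbeta$ is undefined, so that the computed relation really is the equation of the closure of the image and not of some proper subvariety or a non-reduced thickening. In particular I expect the coefficient $-16$ of $P_{111111}$ to be the delicate one: it is not forced by vanishing on the symmetric loci that fix the other coefficients, so I would likely need one further nondegenerate test pair (or a direct expansion of the polar forms at a generic point) to extract it, and I would double-check it against the fact that $Y$ must carry the $60$ singular planes and the genuine $2:1$ structure, since the wrong value of this coefficient would change the singular locus.
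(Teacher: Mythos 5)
Your setup and your final verification coincide with the paper's own proof: there too the coordinates $Z_i$ are realized as the polarized bilinear forms $Q_i(\phi(p),\phi(q))$ of an explicit basis $q_1,\dots,q_6$ of the quadrics through $S'\subset\PP^6$ (obtained from $q_0',\dots,q_5'$ by an explicit change of basis), and the proposition is proved by checking the polynomial identity $F_6(\dots,Q_i(y,z),\dots)=0$ on $S\times S$ by direct substitution of the cubics, using $y_6^2=q_0'$. The closing points you raise (irreducibility of $F_6$, dominance onto the hypersurface) are the right ones and are left largely implicit in the paper.

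The genuine gap is in the middle step, the one that is supposed to produce the coefficients $(1,-1,2,-16)$. First, there is nothing to ``verify'' about compatibility of $g$ with a $\Sigma_6$-symmetry: $\Sigma_6$ does not act on $S^{[2]}$ at all; the only equivariance available (Lemma \ref{auts5}) is for $\Sigma_5$, acting on $\PP^5$ through the nonstandard embedding $\beta\Sigma_5\subset\Sigma_6$, and the $\Sigma_6$-invariance of the image is known only a posteriori, once the equation (or the identification of $Y$ with $E^4/(G,i)$ in Sections 5--6) is in hand, so invoking it to constrain the equation is circular. Second, even granting $\Sigma_6$-invariance, your dimension count is false: the space of $\Sigma_6$-invariant sextics in six variables is $11$-dimensional, one monomial symmetric function for each partition of $6$ (this count appears verbatim in Section \ref{section_sextic_equation} of the paper). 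The $4$-dimensional span of $P_6$, $P_{42}$, $P_{222}$, $P_{111111}$ is the space of sextics invariant under $\Sigma_6$ \emph{and} under sign changes of an even number of coordinates --- yet another invariance of $Y$ that is only visible after the fact. Consequently three evaluations plus a normalization cannot pin down the equation, and with the honestly available $\beta\Sigma_5$-invariance alone the candidate space is larger still. Your interpolation idea is in fact carried out in the paper, but in Proposition \ref{sextic}: there the computation runs in the full $11$-dimensional space, and the linear conditions imposed are not point evaluations but singularity along the $60$ planes determined by the configuration of $16$ tangent hyperplanes --- data unavailable at the stage of Proposition \ref{equationY}. Since your proposal does end with the same direct identity check as the paper, the overall logic would survive once the correct polynomial were found by any means; but the step you describe for finding and justifying it rests on a false premise and would fail as written.
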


\begin{proof}
The anticanonical map from $S_5$ to $\PP^5$ is given by the linear system of cubics in $\PP^2$ which pass through the points $p_1,\ldots,p_4$. 
A basis for these cubics is given by:
{\renewcommand{\arraystretch}{1.3}
$$
\begin{array}{lll}
y_0\,=\,x_0^2x_1-x_0x_1x_2,\quad&
y_1\,=\,x_0^2x_2-x_0x_1x_2,\quad&
y_2\,=\,x_0x_1^2-x_0x_1x_2,\quad\\
y_3\,=\,x_0x_2^2-x_0x_1x_2,\quad&
y_4\,=\,x_1^2x_2-x_0x_1x_2,\quad&
y_5\,=\,x_1x_2^2-x_0x_1x_2.
\end{array}
$$
}
The image of $S_5$ in $\PP^5$ is defined by the following five quadratic forms:
{\renewcommand{\arraystretch}{1.3}
$$
\begin{array}{ll}
q_1'\,=\,y_0y_3 + y_1y_2 - y_2y_5 - y_3y_4,\quad & \\
q_2'\,=\,y_0y_4 + y_1y_2 - y_1y_5 - y_3y_4,\quad & \\
q_3'\,=\,   y_0y_5 + y_1y_2 - y_1y_5 - y_2y_5 - y_3y_4, \quad &  \\
q_4'\,=\,  y_1y_4 - y_1y_5 - y_3y_4,& \\  
q_5'\,=\,  y_2y_3 - y_2y_5 - y_3y_4.
\end{array}
$$
}
Moreover, the quadratic form
$$
q_0'\,:=\,y_1y_2- y_3y_4
$$
cuts out the union of the ten lines on $S_5$.
Thus the image $S'$ of the $K3$ surface $S$ in $\PP^6$ under the map $\phi$ is  
defined by the five quadrics which cut out the image of $S_5$ and the quadric
with equation $y_6^2=q_0'$, where now $y_0,\ldots,y_5,y_6$ are the homogeneous coordinates on $\PP^6$.
We define $q_6':=q_0'-y_6^2$. 
In order to get the very symmetric equation $F_6$ we need the following change of 
basis on the space of these quadrics:
{\renewcommand{\arraystretch}{1.3}
$$
\left(\begin{array}{c}
q_1\\ q_2\\ \\ \vdots\\ \\q_6\end{array}\right)\;=\;
\left(\begin{array}{rrrrrr}
1&1&0&0&-2&-1\\
-1&1&0&0&2&-1\\
1&-1&0&2&0&-1\\
-1&-1&2&0&0&-1\\
1&1&-2&0&0&-1\\
1&1&0&-2&0&-1
\end{array}\right)\,
\left(\begin{array}{c}
q_1'\\ q_2'\\ \\ \vdots\\ \\q_6'\end{array}\right).
$$
}

The map $g:S^{[2]}\dashrightarrow\PP^5$ maps $\{p,q\}\in
S^{[2]}$ to the hyperplane of quadrics in the ideal of $\phi(S)$
which contains the line spanned by $\phi(p)$ and $\phi(q)$.
If $\phi(p)=(y_0:\ldots:y_6)$ and $\phi(q)=(z_0:\ldots:z_6)$ then we can write
$$
q_i(\lambda y +\mu z)\,=\,\lambda^2q_i(y)\,+\,\mu^2q_i(z)\,+\,2\lambda\mu Q_i(y,z),
$$
where $Q_i$ is the symmetric bilinear form given by the polarization of $q_i$. 
The map $g$ is thus induced by the rational map
$$
\tilde{g}:\,S\times S\,\longrightarrow\,\PP^5,\qquad
\{p,q\}\,\longmapsto\,(\ldots:Q_i(\phi(p),\phi(q)):\ldots)_{1\leq i\leq 6}.
$$
It is now easy to verify that the polynomial $F_6$
vanishes on the image of $g$, one only needs to check that
$F_6(\ldots,Q_i(y,z),\ldots)=0$ on $S\times S$ where $y,z\in
\phi(S)$, so one substitutes the cubic polynomials in $x_0,x_1,x_2$
for $y_0,\ldots,y_5$, similarly for $z_0,\ldots,z_5$, but now with
polynomials in $u_0,u_1,u_2$ (coordinates for another copy of $\PP^2$)
and one uses that $y_6^2=y_1y_2- y_3y_4$, $z_6^2=z_1z_2- z_3z_4$.

For later use we notice that a general line contained in the cone $\cone$ over $S_5$ 
(defined by $q_1'=\ldots = q_5'=0$)
and passing through its vertex $P$ cuts $S'$ in two points $\phi(p),\phi(q)$.
Using the change of basis, we find that $\{p,q\}$ maps
to the point $(-1:\ldots:-1)=(1:\ldots:1)$ in $Y$.
\end{proof}

\begin{rem} Note that the equation of the image of $g$ can be found in a theoretical 
way using the results from 
section \ref{section map} and from section \ref{secDesing}. 
\end{rem}

\subsection{The sextic $Y$ is a special EPW sextic}\label{section epw sextic} 
We will show that the degree six fourfold $Y\subset\PP^5$, 
which is the image of $S^{[2]}$, is an EPW sextic (see \cite{O1}). 
The singular locus of a general EPW sextic is a surface of degree $40$. 
The sextic
$Y$ is (very) special in the sense that its singular locus has degree $60$, 
in fact it is the union of $60$ planes. 
The double cover of an EPW sextic along the singular locus is an IHS fourfold. 
Forty of the $60$ singular planes in $Y$ are in the branch locus of the map 
$g:S^{[2]}\rightarrow Y$, 
the other `extra' $20$ planes must then be a set of incident planes. 
To identify the planes in the branch locus, 
we use that the symmetric group $\Sigma_5$ acts on the Del Pezzo surface $S_5$ 
and that this action lifts to the action of a group $\widetilde{\Sigma}_5$ 
on the K3 surface~$S$. 
This group then also acts on $S^{[2]}$ and we show that $g$ is an equivariant map, 
where $\widetilde{\Sigma}_5$ acts through a subgroup $\beta\Sigma_5$, 
isomorphic to $\Sigma_5$, of $\Sigma_6$ on $Y$. 
Knowing the 20 incident planes then allows us to find explicitly 
a Lagrangian subspace 
$A\subset\bigwedge^3\C^6$ such that $Y=Y_A$ in the EPW construction. 

The first part of the following lemma is well-known.

\begin{lemm}\label{auts5}
  The symmetric group $\Sigma_5$ acts as group of automorphisms on
  $S_5$, the permutations of the points $p_1,\ldots,p_4$ induce the
  elements in $\Sigma_4\subset\Sigma_5$ and the Cremona transformation
  on $p_1,p_2,p_3$ induces the transposition $(45)$.

  These automorphisms of $S_5\subset\PP^5$, where the embedding is
  given by the cubics from the proof of Proposition \ref{equationY},
  are induced by projective transformations which map
  $y:=(y_0:y_1:\ldots:y_5)$ to
$$
\alpha_{34}(y):=(y_0-y_1+y_4:-y_1+y_3-y_5:y_2-y_4+y_5:-y_5:-y_3-y_4+y_5:-y_3).
$$
and the maps $\alpha_{12},\,\alpha_{23},\,\alpha_{45}$ permute the coordinates $y_i$ as
$$
\alpha_{12}:\;(02)(14)(35),\quad \alpha_{23}:\;(01)(23)(45),\quad 
\alpha_{45}:\;(05)(14)(23).
$$
The map $g:S^{[2]}\rightarrow \PP^5$ is equivariant for the action of
$\Sigma_5$, where the action of $\sigma\in\Sigma_5$ on $\PP^5$ is
given by the permutation $\beta_\sigma$ of the projective coordinates
$Z_0,\ldots,Z_5$:
$$
\beta_{12}:\;(03)(14)(25),\quad 
\beta_{23}:\;(01)(24)(35),\quad 
\beta_{34}:\;(05)(14)(23),\quad
\beta_{45}:\;(01)(25)(34).
$$
\end{lemm}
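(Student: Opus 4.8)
The plan is to prove the two assertions of the lemma separately: first, that $\Sigma_5$ acts on $S_5$ with the stated projective realization, and second, that $g$ is equivariant with the stated permutation action $\beta$ on $\PP^5$. For the first part, I take as known the classical fact that $\Aut(S_5)\cong\Sigma_5$, where $S_5=\PP^2_4$ is the degree-5 Del Pezzo surface. Concretely, permutations of the four blown-up points $p_1,\dots,p_4$ give the subgroup $\Sigma_4\subset\Sigma_5$, and the standard quadratic Cremona transformation centered at three of these points, followed by suitable relabeling, supplies the extra generator realizing the transposition $(45)$. To produce the explicit matrices, I would simply trace each such birational automorphism of $\PP^2$ through the anticanonical basis of cubics $y_0,\dots,y_5$ fixed in the proof of Proposition \ref{equationY}. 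For the coordinate-permuting generators $\alpha_{12},\alpha_{23},\alpha_{45}$ this is a direct bookkeeping check: one verifies that permuting the four points $p_i$ and the three ``two-at-a-time'' points $(0{:}1{:}1),(1{:}0{:}1),(1{:}1{:}0)$ induces exactly the listed permutation of the six cubics (up to the common factor $x_0x_1x_2$). For $\alpha_{34}$, the Cremona transformation $(x_0{:}x_1{:}x_2)\mapsto(x_1x_2{:}x_0x_2{:}x_0x_1)$ does not merely permute the $y_i$, so here one substitutes into each cubic and reduces modulo the ideal of $S_5$ (i.e.\ using the relations $q_1',\dots,q_5'$) to land on the stated linear expression; this is the one genuinely computational step.

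Second, I would establish equivariance of $g$. The key structural point, already available from the construction in Section~\ref{eq1}, is that $g$ factors through maps defined purely in terms of the embedding $S'\subset\PP^6$ and the space of quadrics in its ideal: $\{p,q\}$ goes to the hyperplane of quadrics through the line $\overline{\phi(p)\phi(q)}$, equivalently to $(\dots:Q_i(\phi(p),\phi(q)):\dots)$ in the basis $q_1,\dots,q_6$. Since each $\sigma\in\Sigma_5$ acts on $S_5$ by a linear automorphism $\alpha_\sigma$ of $\PP^5$ that preserves the ideal $(q_1',\dots,q_5')$ and permutes the ten lines, it preserves the quadric $q_0'$ (their union) as well, and hence lifts to a linear automorphism $\tilde\alpha_\sigma$ of $\PP^6$ preserving the full ideal $(q_1',\dots,q_5',q_6'=q_0'-y_6^2)$ of $S'$ — this is exactly the lift to $\widetilde\Sigma_5$ acting on $S$. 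Because $\tilde\alpha_\sigma$ is linear and sends $S'$ to $S'$, it sends lines spanned by pairs of points to lines spanned by the image pairs, so $\aalpha$ (and hence $g$) intertwines the induced action on $(S')^{[2]}$ with the linear action on the target $\PP^5$ obtained from how $\tilde\alpha_\sigma$ permutes the quadrics $q_1,\dots,q_6$.

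It therefore remains only to identify, for each generator, the induced linear action on $\PP^5$ as the claimed coordinate permutation $\beta_\sigma$. The mechanism is: $\tilde\alpha_\sigma$ acts on the $6$-dimensional space $\langle q_1',\dots,q_5',q_6'\rangle$ of defining quadrics by pullback, and this action, expressed in the symmetric basis $q_1,\dots,q_6$ via the explicit change-of-basis matrix from the proof of Proposition \ref{equationY}, turns out to be by signed permutation of the $q_i$; passing to the dual coordinates $Z_0,\dots,Z_5$ (the values $Q_i$) converts this into the permutation $\beta_\sigma$. For the three generators $\alpha_{12},\alpha_{23},\alpha_{45}$ that merely permute the $y_i$, one computes how they permute $q_1',\dots,q_6'$, conjugates by the change-of-basis matrix, and reads off the permutation of the $q_i$; for $\alpha_{34}$ one substitutes the explicit linear map above into the $q_i'$ and again conjugates. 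I expect this final matrix conjugation — verifying that each of the four linear maps becomes a genuine coordinate permutation in the symmetric basis and extracting the correct $\beta_\sigma$ — to be the main obstacle, since it is a moderately large linear-algebra verification; the signs must cancel (consistent with the observation in the proof of Proposition \ref{equationY} that $(-1:\dots:-1)=(1:\dots:1)$) for the action to reduce to an honest permutation of the $Z_i$ rather than a signed one.
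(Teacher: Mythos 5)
Your overall route is the same as the paper's: realize each generator explicitly on $\PP^2$, push it through the anticanonical basis of cubics from the proof of Proposition \ref{equationY}, lift to $S'\subset\PP^6$, and read off the induced action on $q_1,\ldots,q_6$, which are the coordinates $Z_0,\ldots,Z_5$ (your structural remark that a linear automorphism of $\PP^6$ preserving $S'$ automatically intertwines $g$ with its action on the space of quadrics is exactly the reasoning the paper leaves implicit, and is fine). But two of your concrete claims are wrong, and one of them is the crux of the proof. First, you have swapped the roles of $\alpha_{34}$ and $\alpha_{45}$. The transposition $(34)$ lies in $\Sigma_4$, so it is induced by the \emph{linear} automorphism of $\PP^2$ fixing $p_1,p_2$ and swapping $p_3=(0{:}0{:}1)$, $p_4=(1{:}1{:}1)$, namely $(x{:}y{:}z)\mapsto(x-z{:}y-z{:}-z)$; substituting this into the six cubics yields the long linear formula for $\alpha_{34}$ directly (no reduction modulo the ideal of $S_5$ is needed or meaningful: the transformed cubics again lie in the linear system of cubics through the four points, hence are exact linear combinations of the $y_i$). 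The transposition $(45)$ is the one \emph{not} realized by permuting the points --- this is what the first sentence of the lemma asserts --- and it is induced by the Cremona map $(x{:}y{:}z)\mapsto(x^{-1}{:}y^{-1}{:}z^{-1})$, which after multiplying by $(xyz)^2$ honestly permutes the cubics as $(05)(14)(23)$. So your plan to verify $\alpha_{45}$ by ``permuting the four points $p_i$'' cannot be carried out (no projectivity of $\PP^2$ induces $(45)$), and your expectation that the Cremona generator is the messy, non-permutation one is inverted.

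Second, your lifting step is flawed in a way that would spoil exactly the signs you flag at the end as the main obstacle. You assert that $\alpha_\sigma$ ``preserves the quadric $q_0'$'' because it preserves the union of the ten lines, and hence lifts to $\PP^6$ preserving $q_6'=q_0'-y_6^2$ with $y_6$ untouched. In fact every transposition in $\Sigma_5$ multiplies the branch equation (\ref{sixlines}) --- equivalently $q_0'$ modulo the ideal of the cone --- by $-1$: preserving a zero set only pins down the defining form up to scalar. The naive lift therefore does not preserve the equation $y_6^2=q_0'$ at all; one must set $y_6\mapsto iy_6$ with $i^2=-1$. This twist is the one genuinely non-obvious point in the paper's proof, and it is precisely what makes the induced action on $q_1,\ldots,q_6$ (each of which contains $q_6'$ with coefficient $-1$) come out as the honest permutations $\beta_\sigma$. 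Your proposed mechanism for the sign cancellation --- the projective identity $(-1:\cdots:-1)=(1:\cdots:1)$, which concerns a point rather than a linear transformation --- does not supply this; without the $y_6\mapsto iy_6$ twist your ``lift'' is not an automorphism of $S'$, and the final matrices would not be the claimed coordinate permutations.
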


\begin{proof} These are straightforward verifications. The permutation
  of the points $p_3,p_4$ is given by $(x:y:z)\mapsto(x-z:y-z:-z)$ and
  now one computes the action on the six cubics from the proof of
  Proposition \ref{equationY}.  The Cremona transformation is induced
  by $(x:y:z)\mapsto (x^{-1}:y^{-1}:z^{-1})$, this is substituted in
  the cubics and next one multiplies them by $(xyz)^2$.

  Interchanging for example $x$ and $y$, the equation of the branch
  curve (\ref{sixlines}) changes sign, and the same happens for any
  other transposition in $\Sigma_5$.  Thus to lift the action to
  $S'\subset\PP^6$ one must map $y_6\mapsto iy_6$ (with $i^2=-1$).
  Finally, one considers the induced action on the quadratic forms
  $q_1,\ldots,q_6$ in the variables $y_0,\ldots,y_6$, these $q_i$ are the
  coordinate functions $Z_{i-1}$.
\end{proof}

To describe the singular locus of $Y$ and the action of $\Sigma_6$ on the irreducible components, we
introduce the following notation. Let $\{\{i,j\},\{k,l\},\{m,n\}\}$ be a partition of $\{0,\ldots,5\}$,
so $\{i,j,k,l,m,n\}=\{0,\ldots,5\}$. Notice that there are $15$ such partitions.
Then, with three choices for the sign, we define planes in $\PP^5$ by:
$$
V_{i\pm j,k\pm l,m\pm n}:\quad Z_i\pm Z_j\,=\,Z_k\pm Z_l\,=\,Z_m\pm\,Z_n\,=\,0\qquad(\subset\PP^5)~.
$$
Notice that besides being symmetric in the variables $Z_i$, the polynomial $F_6$ which defines $Y$ is
also invariant under the change of sign of an even number of variables.

\begin{prop}\label{singY}
The singular locus of $Y$ is the union of $60$ planes. 
There are two $\Sigma_6$-orbits, of length 15 and 45 respectively of these planes, they are the orbits 
of
$$
V_{0-1,2-3,4-5}\quad\mbox{and}\quad V_{0+1,2+3,4-5}
$$
respectively. Let $\beta\Sigma_5\subset\Sigma_6$ be the subgroup, isomorphic to $\Sigma_5$,
generated by the permutations $\beta_{12}$, $\beta_{23}$, $\beta_{34}$, $\beta_{45}$ 
from Lemma \ref{auts5}. Then $\beta\Sigma_5$ has four orbits on the set of sixty singular planes of $Y$, 
they are the orbits of
$$
V_{0-1,2-3,4-5},\quad V_{0-1,2-4,3-5},\quad V_{0+1,2+3,4-5},\quad V_{0+1,2+4,3-5}
$$
and these orbits have length $5$, $10$, $15$ and $30$ respectively.
\end{prop}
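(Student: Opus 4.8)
The plan is to work directly with the explicit equation $F_6 = P_6 - P_{42} + 2P_{222} - 16P_{111111}$ from Proposition \ref{equationY}. Since $F_6$ is homogeneous, Euler's relation gives $\sing Y = V(\partial_0 F_6,\dots,\partial_5 F_6)$. Two symmetries of $F_6$ are evident from its shape: it is invariant under the full symmetric group $\Sigma_6$ permuting $Z_0,\dots,Z_5$, and (as noted just before the statement) under changing the sign of an even number of the variables. Write $G'$ for the group they generate; $G'$ preserves $\sing Y$. A direct computation---substitute $Z_0=Z_1=a$, $Z_2=Z_3=b$, $Z_4=Z_5=c$ into $\partial_0 F_6$ and check that it vanishes identically in $a,b,c$---shows $V_{0-1,2-3,4-5}\subset\sing Y$. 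Flipping the signs of $Z_1$ and $Z_3$ (an even sign change) carries this plane to $V_{0+1,2+3,4-5}$, so the two $\Sigma_6$-orbit representatives lie in a single $G'$-orbit; since $G'$ then acts transitively on the $60$ planes, all of them lie in $\sing Y$.

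The heart of the matter is the reverse inclusion together with the assertion that there is \emph{nothing more}. First I would record the key identity, obtained by expanding each partial in power sums: writing $s_2=\sum_j Z_j^2$, $s_4=\sum_j Z_j^4$ and $p=\prod_j Z_j$, one finds for every $i$
\[
Z_i\,\partial_i F_6 \;=\; 16 Z_i^6 - 8 s_2 Z_i^4 + (2s_2^2 - 4s_4)Z_i^2 - 16p \;=\; G(Z_i^2),
\]
where $G(t)=16t^3 - 8s_2 t^2 + (2s_2^2-4s_4)t - 16p$ is a \emph{single} cubic whose coefficients are symmetric in all the $Z_j$. Hence at a singular point every nonzero $Z_i^2$ is a root of the one cubic $G$, so the numbers $Z_i^2$ take at most three distinct nonzero values. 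I would then run through the possible multiplicity patterns of these values via Vieta's formulas: the patterns $(3,2,1)$ and $(4,1,1)$ force two of the supposedly distinct roots to coincide and so cannot occur; patterns with at most two distinct values sweep out loci of dimension $\le 1$; and for the pattern $(2,2,2)$ the first two Vieta relations hold identically, while the third, $r_1r_2r_3=p$, is exactly the condition that an even number of the three equal-square pairs have opposite signs. This is precisely the defining condition of the $60$ planes (the ``even'' sign patterns), and the same relation read backwards shows these points are genuinely singular. The strata with vanishing coordinates reduce, after observing that $\partial_i F_6|_{Z_i=0}=-16\prod_{j\ne i}Z_j$ forces at least two coordinates to vanish, to the same analysis in fewer variables and again land on the planes. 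I expect this step---ruling out every degenerate multiplicity pattern and every zero-coordinate stratum so as to prove $\sing Y$ is the $60$ planes and nothing more---to be the main obstacle; the key identity is what makes it tractable, and as a fallback the equality of the radical of the Jacobian ideal with the intersection of the $60$ linear ideals can be confirmed by computer algebra.

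It remains to describe the two group actions. For $\Sigma_6$ I would use orbit--stabilizer: the plane $V_{0-1,2-3,4-5}$ is determined by the partition $\{\{0,1\},\{2,3\},\{4,5\}\}$ alone, so its stabilizer is the full partition stabilizer $\Sigma_2\wr\Sigma_3$ of order $48$, giving an orbit of length $720/48=15$; the plane $V_{0+1,2+3,4-5}$ additionally singles out the equal-sign pair $\{4,5\}$, so its stabilizer has order $8\cdot 2=16$ and its orbit has length $45$, and these exhaust the $60$ planes. Finally, for $\beta\Sigma_5=\langle\beta_{12},\beta_{23},\beta_{34},\beta_{45}\rangle$ I would note that a coordinate permutation preserves the multiset of signs, and hence preserves the $15$ ``all-minus'' planes and the $45$ ``two-plus'' planes separately. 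On the $15$ planes, identified with the $15$ partitions into three pairs, the explicit generators of Lemma \ref{auts5} split the set into orbits of length $5$ and $10$, represented by $V_{0-1,2-3,4-5}$ and $V_{0-1,2-4,3-5}$; on the $45$ planes (a partition together with a marked pair) the same generators give orbits of length $15$ and $30$, represented by $V_{0+1,2+3,4-5}$ and $V_{0+1,2+4,3-5}$. Each is a finite orbit computation under the four explicit involutions, and the counts $5+10=15$ and $15+30=45$ check against the $\Sigma_6$-orbit sizes.
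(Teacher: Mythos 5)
Your route is genuinely different from the paper's: the published proof of Proposition \ref{singY} is a Magma computation of the irreducible components of the Jacobian ideal, followed by computational orbit checks, whereas you argue by hand from the explicit polynomial $F_6$. Your key identity is correct: using $\sum_{j<k,\,j,k\ne i}Z_j^2Z_k^2=\tfrac12(s_2^2-s_4)-Z_i^2(s_2-Z_i^2)$ one gets exactly $Z_i\,\partial_iF_6=16Z_i^6-8s_2Z_i^4+(2s_2^2-4s_4)Z_i^2-16p$, so at a singular point every nonzero $Z_i^2$ is a root of your single cubic $G(t)$. The pattern analysis for $(2,2,2)$, $(3,2,1)$ and $(4,1,1)$ is sound (for $(4,1,1)$ one needs both the first and the second Vieta relation to force $b=c=a$), the factorization $G(t)=16(t-r_1)(t-r_2)(t-r_3)$ on the planes gives the reverse inclusion, and the orbit--stabilizer counts for $\Sigma_6$ (stabilizers of order $48$ and $16$) and the finite $\beta\Sigma_5$ orbit check are all correct.

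The genuine gap is the sentence ``patterns with at most two distinct values sweep out loci of dimension $\le 1$.'' The proposition asserts that $\sing Y$ \emph{equals} the union of the sixty planes; a sextic hypersurface can perfectly well have singular components that are curves or isolated points, so a dimension count excludes nothing. Worse, those strata genuinely meet $\sing Y$: by your own identity, a pattern-$(4,2)$ point such as $(\alpha:\alpha:\alpha:\alpha:\beta:\beta)$ satisfies $G(t)=16(t-a)^2(t-b)$ whenever $p=a^2b$, hence \emph{is} singular; so are the pattern-$(6)$ points $(\pm1:\cdots:\pm1)$ with an even number of minus signs (these are the sixteen points of $Y_A[4]$, of multiplicity four on $Y$), and zero-coordinate points such as $(0:0:1:1:1:1)$. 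The statement survives only because all of these points happen to lie \emph{on} the sixty planes, and that containment is what has to be proved --- by running the same Vieta/parity bookkeeping through the patterns $(6)$, $(5,1)$, $(4,2)$, $(3,3)$ and through the strata with at least two vanishing coordinates, where $p=0$ turns $G(t)$ into $t$ times a quadratic and the analysis repeats. These cases do close up ($(5,1)$ and $(3,3)$ are contradictory; the singular points arising in the other cases all land on the planes), so your proof is completable, but the step as written would fail. Your declared fallback --- computer verification that the radical of the Jacobian ideal equals the intersection of the sixty linear ideals --- is of course legitimate, but then it is no longer an alternative: it is precisely the paper's proof.
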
 

\begin{proof}
A Magma~\cite{Magma} computation gives the irreducible components of the singular 
locus of $Y$ and
the rest are straightforward verifications (done with Magma as well).
\end{proof}

A plane $V\subset\PP^5$ is the projectivization of a linear subspace 
$\tilde{V}\subset\CC^6$ of dimension $3$ and it is
determined by the line $\bigwedge^3\tilde{V}\subset\bigwedge^3\CC^6$ 
(equivalently, by the point in the Grassmannian $Gr(3,6)\subset\PP(\bigwedge^3\CC^6)$).
The $20$-dimensional vector space $\bigwedge^3\CC^6$
has a natural symplectic form given by 
$(\omega,\theta):=\omega\wedge\theta$ $(\in\bigwedge^6\CC^6=\CC)$,
where we fix a basis of $\bigwedge^6\CC^6$. 
Two planes $V,W\subset\PP^5$ are incident if and only if 
$(\bigwedge^3\tilde{V})\wedge(\bigwedge^3\tilde{W})=0$. 
A set of planes is called a set of incident
planes, if any plane of this set has a non-empty intersection with 
each of the other planes in the set.
In particular, a set of incident planes determines an isotropic subspace in 
$\bigwedge^3\CC^6$. 
The following result, together with Corollary \ref{cor20inc}
solves the problem of O'Grady from \cite{ogrady-incident}.

\begin{prop}\label{20inc} (Proof of Theorem \ref{20 incident})
The union of the two
$\beta\Sigma_5$-orbits of $V_{0-1,2-3,4-5}$ and $V_{0+1,2+3,4-5}$ 
consists of the twenty planes
$V_{0\pm 1,2\pm 3,4\pm 5}$, $V_{0\pm 2,1\pm 4, 3\pm 5}$,
$V_{0\pm 3,1\pm 5,2\pm 4}$, $V_{0\pm 4,1\pm 3,2\pm 5}$ and $V_{0\pm 5,1\pm 2,3\pm 4}$, 
all with an odd number of $-$ signs.
This is a set of 20 incident planes. The span in $\bigwedge^3\CC^6$ of 
$\bigwedge^3V_{0\pm 1,2\pm 3,4\pm 5}$, 
$\bigwedge^3V_{0\pm 2,1\pm 4, 3\pm 5}$,
$\bigwedge^3V_{0\pm 3,1\pm 5,2\pm 4}$,
$\bigwedge^3V_{0\pm 4,1\pm 3,2\pm 5}$ and 
$\bigwedge^3  V_{0\pm 5,1\pm 2,3\pm 4}$
(all with an odd number of $-$ signs) is a Lagrangian subspace $A$ of $\bigwedge^3\CC^6$.
\end{prop}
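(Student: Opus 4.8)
The plan is to organize everything around the synthematic total $\mathcal{T}=\{\{01,23,45\},\{02,14,35\},\{03,15,24\},\{04,13,25\},\{05,12,34\}\}$ of the five partitions appearing in the statement; note that these five synthemes use each of the $15$ pairs exactly once. First I would check (a finite verification) that each generator $\beta_{12},\beta_{23},\beta_{34},\beta_{45}$ from Lemma \ref{auts5} permutes $\mathcal{T}$; for instance $\beta_{12}=(03)(14)(25)$ sends $\{01,23,45\}$ to $\{05,12,34\}$ and fixes the middle three synthemes. Hence $\beta\Sigma_5\subseteq\mathrm{Stab}_{\Sigma_6}(\mathcal{T})$, and since $|\beta\Sigma_5|=120$ equals $|\Sigma_6|/6=|\mathrm{Stab}(\mathcal{T})|$, we get $\beta\Sigma_5=\mathrm{Stab}(\mathcal{T})$. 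A coordinate permutation sends $V_{i\pm j,k\pm l,m\pm n}$ to the plane on the permuted partition carrying the \emph{same} signs, so it preserves both $\mathcal{T}$ and the number of minus signs. By Proposition \ref{singY} the orbit of $V_{0-1,2-3,4-5}$ has length $5$ and that of $V_{0+1,2+3,4-5}$ has length $15$; the first consists of three-minus planes on synthemes of $\mathcal{T}$ (of which there are exactly $5$), the second of one-minus planes (exactly $5\cdot 3=15$). Matching cardinalities identifies the two orbits with all three-minus and all one-minus planes on $\mathcal{T}$, i.e. the twenty listed planes, each having an odd number of minus signs.

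Next I would prove incidence, splitting into two cases. For two distinct planes of the set lying on a common syntheme $\{P_1,P_2,P_3\}$ with sign vectors $\epsilon,\epsilon'\in\{\pm\}^3$, the linear spaces satisfy $\dim(\tilde V\cap\tilde V')=\#\{a:\epsilon_a=\epsilon'_a\}$, which vanishes exactly when $\epsilon'=-\epsilon$. But flipping all three signs turns an odd number of minus signs into an even one, so $-\epsilon$ is never in our set; hence such $V,V'$ agree on at least one pair and are incident.

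The crux is the different-partition case, and here I would use a parity argument. If $V,V'$ lie on distinct synthemes $\Pi\neq\Pi'$ of $\mathcal{T}$, these two perfect matchings share no pair, so their union is a single $6$-cycle on $\{0,\ldots,5\}$. I look for a common point with all coordinates $\pm 1$: a vector $w\in\{\pm 1\}^6$ lies on $V$ iff $w_iw_j=-\epsilon_{ij}$ for every pair $\{i,j\}\in\Pi$ (using $w_j^2=1$), and similarly on $V'$. Such $w$ exists iff the product of the prescribed edge-values $-\epsilon_{ij}$ around the $6$-cycle equals $+1$, and this product is $(-1)^6\big(\prod_{\Pi}\epsilon\big)\big(\prod_{\Pi'}\epsilon'\big)$. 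Since each plane has an odd number of minus signs, each factor $\prod\epsilon$ equals $-1$, so the product is $(-1)(-1)=+1$ and a common $\pm 1$-point exists. Thus all $\binom{20}{2}$ pairs are incident, so $A:=\langle\,\bigwedge^3\tilde V\,\rangle$ is isotropic for $\eta$ (the off-diagonal wedges vanish by incidence, the diagonal ones because each $\bigwedge^3\tilde V$ is decomposable), whence $\dim A\le 10$.

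Finally, to upgrade ``isotropic'' to ``Lagrangian'' I must show $\dim A=10$. Writing $\bigwedge^3\tilde V=(e_i-\epsilon_1 e_j)\wedge(e_k-\epsilon_2 e_l)\wedge(e_m-\epsilon_3 e_n)$, each generator is supported on the eight basis trivectors $e_a\wedge e_b\wedge e_c$ transversal to its syntheme, so the twenty generators give a $20\times 20$ coefficient matrix that is block-structured by the transversal combinatorics of $\mathcal{T}$; I would exhibit ten of the $\bigwedge^3\tilde V$ that are linearly independent, or equivalently compute that this matrix has rank $10$. Combined with $\dim A\le 10$ this forces $\dim A=10$, so $A$ is Lagrangian. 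The hard part is precisely this last dimension count: incidence only bounds $\dim A$ from above, and there is no purely formal reason that a merely incident family should span a full Lagrangian, so the lower bound genuinely requires the linear-algebra (or, alternatively, a decomposition of $A$ as a $\beta\Sigma_5\cong\Sigma_5$-submodule of $\bigwedge^3\C^6$). This is also the point at which completeness of the family, exploited in Corollary \ref{cor20inc}, ultimately enters.
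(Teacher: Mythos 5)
Your strategy is correct where it is carried out, and it is genuinely different from the paper's: the paper's own proof of Proposition \ref{20inc} is a single Magma verification (it writes down $\bigwedge^3\tilde{V}$ for one plane and then checks incidence and ten-dimensionality of the span by machine), whereas you argue conceptually. Your orbit identification — recognizing the five partitions as a synthematic total $\mathcal{T}$, checking that the generators $\beta_{ij}$ of Lemma \ref{auts5} permute $\mathcal{T}$, concluding $\beta\Sigma_5=\mathrm{Stab}_{\Sigma_6}(\mathcal{T})$ by the order count, and matching the orbit lengths of Proposition \ref{singY} against the counts of odd-sign planes on $\mathcal{T}$ — is sound (it borrows the lengths $5$ and $15$ from Proposition \ref{singY}, itself a Magma computation in the paper, but that is a legitimate citation). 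Your incidence proof is also correct and is the most attractive part: the same-syntheme case via $\dim(\tilde V\cap\tilde V')=\#\{a:\epsilon_a=\epsilon'_a\}$ and the impossibility of $\epsilon'=-\epsilon$ for two odd sign vectors, and the cross-syntheme case via the cycle criterion for a point $w\in\{\pm1\}^6$: since $\Pi\cup\Pi'$ is a $6$-cycle, such $w$ exists iff the product of the prescribed values $-\epsilon_e$ around the cycle is $+1$, which equals $(\prod_\Pi\epsilon)(\prod_{\Pi'}\epsilon')=(-1)(-1)=+1$ exactly because both planes have an odd number of minus signs. This explains \emph{a priori} the sixteen $\pm1$-points of $Y_A[4]$ that the paper only records after the fact in Remarks \ref{YA4} and \ref{rem_planes_intersection}.

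The one step you do not complete is the lower bound $\dim A\ge 10$; as you rightly say, isotropy only gives $\dim A\le 10$, and you reduce equality to a $20\times20$ rank computation without performing it — which is precisely the step the paper settles in Magma, so your proof as written is not yet computer-free. It can be closed conceptually using not $\Sigma_5$ but the group $\Gamma\cong(\Z_2)^5$ of even sign changes of coordinates. An even sign change flips an even number of the three signs of any plane on a syntheme, hence permutes the twenty planes and preserves $A$. The $\Gamma$-isotypic components of $\bigwedge^3\CC^6$ are the ten planes $M_T=\langle e_T,e_{T^c}\rangle$, where $T$ runs over $3$-subsets up to complementation (the characters satisfy $\chi_T=\chi_{T^c}$ on even sign changes, and distinct complementary pairs give distinct characters), so $A=\bigoplus_T(A\cap M_T)$. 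Moreover every $3$-subset $T$ is transversal to exactly two synthemes of $\mathcal{T}$: the three pairs inside $T$ lie in three distinct synthemes, and any syntheme containing a pair inside $T$ automatically contains a pair inside $T^c$ (its remaining two pairs cover one point of $T$ and three of $T^c$), so exactly three synthemes are non-transversal. Picking a syntheme $\Pi$ transversal to $T$ and any generator $v$ on $\Pi$, the coefficient of $v$ on $e_T$ is $\pm1\neq 0$, and the isotypic projection of $v$ to $M_T$ lies in $A$ by $\Gamma$-invariance; hence $A\cap M_T\neq 0$ for all ten $T$, giving $\dim A\ge 10$ and finishing your proof that $A$ is Lagrangian. (Your closing remark that completeness enters here is off the mark: Corollary \ref{cor20inc} is a separate, later statement and plays no role in the dimension count.)
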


\begin{proof}
This is again a (Magma) computation. The set of $20$ planes is easy to find.
For example,
$V:=V_{0+1,2+4,3-5}$ is in this set. It has a basis $e_0-e_1$, $e_2-e_4$, $e_3+e_5$, 
where $e_0,\ldots,e_5$ 
denotes the standard basis of $\CC^6$. Thus the line 
$\bigwedge^3\tilde{V}_{0+1,2+4,3-5}$ is spanned by the vector 
$$
- e_0\wedge  e_2\wedge  e_3 +  e_1\wedge  e_2\wedge  e_3 -  e_0\wedge  e_3\wedge  e_4 + 
 e_1\wedge  e_3\wedge  e_4 -  e_0\wedge  e_2\wedge  e_5 +  e_1\wedge  e_2\wedge  e_5 + 
 e_0\wedge  e_4\wedge  e_5 - 
     e_1\wedge  e_4\wedge  e_5
~.
$$    
Now one verifies that these planes are indeed incident and span a $10$-dimensional, 
hence Lagrangian, subspace $A$.
\end{proof}

A general Lagrangian subspace $A\subset \bigwedge^3\CC^6$ determines 
a sextic hypersurface
$Y_A\subset \PP^5$, the EPW sextic defined by $A$, as follows. 
Let $v\in\PP^5$ and let $\tilde{v}\in\CC^6$ be a representative, then 
$$
F_v:=\{\omega\in \bigwedge^3\CC^6:\;\tilde{v}\wedge\omega\,=\,0\,\}
$$
is a Lagrangian subspace of $\bigwedge^3\CC^6$. Define
$$
Y_A[k]\,:=\,\{v\,\in\,\PP^5:\;\dim(\,F_v\cap A\,)\,\geq \,k\,\},
$$
then the EPW sextic $Y_A$ is defined as $Y_A=Y_A[1]$.
The sextic $Y_A$ is singular along the surface $Y_A[2]$ of degree forty and 
along the planes $W\subset\PP^5$ 
such that $\bigwedge^3\tilde{W}\in A$ (\cite[Prop.~3.3]{ogrady-incident}).

\begin{prop}\label{Y is EPW}
Let $A\subset\bigwedge^3\CC^6$ be the Lagrangian subspace from Proposition \ref{20inc}. 
Then the EPW sextic $Y_A$ is the sextic $Y$ from Proposition \ref{equationY}.
Its singular locus consists of the surface $Y_A[2]$, 
which is the union of the forty planes in 
the two $\beta\Sigma_5$ orbits of length $10$ and $30$, 
and the $20$ incident planes from   
Proposition \ref{20inc}.
\end{prop}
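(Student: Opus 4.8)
The plan is to prove the two assertions separately: first the equality $Y_A=Y$ of hypersurfaces, and then the description of $\mathrm{Sing}(Y)$, which will follow from O'Grady's structural result \cite[Prop.~3.3]{ogrady-incident} combined with Propositions \ref{singY} and \ref{20inc}. For the equality I would exhibit $Y_A$ as the zero locus of an explicit sextic and match it with $F_6$. Recall that at $v\in\PP^5$ the fibre $F_v=\tilde v\wedge\bigwedge^2\CC^6$ varies in a rank-ten subbundle $F\subset\bigwedge^3\CC^6\otimes\oo_{\PP^5}$ fitting into $0\to V\otimes\oo_{\PP^5}(-2)\to\bigwedge^2\CC^6\otimes\oo_{\PP^5}(-1)\to F\to 0$. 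The Lagrangian $A$ of Proposition \ref{20inc} induces a bundle map $A\otimes\oo_{\PP^5}\to(\bigwedge^3\CC^6\otimes\oo_{\PP^5})/F$ between rank-ten bundles, and $Y_A$ is its degeneracy locus; a twist count shows the determinant of this map is a section of $\oo_{\PP^5}(6)$, so $Y_A$ is a sextic. Using the ten explicit eight-term generators $\bigwedge^3\tilde V$ of $A$ written down in the proof of Proposition \ref{20inc}, one computes this determinant as a degree-six form in $Z_0,\dots,Z_5$ and checks that it is a scalar multiple of $F_6$.

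As an alternative, cleaner route to the equality I would argue by irreducibility and degree. The sextic $Y$ is irreducible, being the image of the irreducible fourfold $S^{[2]}$ under $g$; and for our $A$ one checks $Y_A\neq\PP^5$ (e.g. by exhibiting one point $v$ with $F_v\cap A=0$), so $Y_A$ is an integral sextic as well. It then suffices to prove one inclusion $Y\subseteq Y_A$ pointwise: for $v\in Y$ one must produce a nonzero $\omega\in A$ with $\tilde v\wedge\omega=0$, i.e. $\dim(F_v\cap A)\geq 1$. Since both sextics are $\beta\Sigma_5$-invariant and both contain the twenty incident planes in their singular loci, this inclusion need only be verified on a dense $\beta\Sigma_5$-orbit of points; two irreducible sextics of the same degree, one containing the other, then coincide.

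For the singular locus I would invoke \cite[Prop.~3.3]{ogrady-incident}, by which $\mathrm{Sing}(Y_A)=Y_A[2]\,\cup\,\{\,W\subset\PP^5 : \bigwedge^3\tilde W\in A\,\}$, the second set being the finite collection of planes whose Plücker point lies in $A$. By Proposition \ref{20inc} the twenty incident planes (the two $\beta\Sigma_5$-orbits of lengths $5$ and $15$) all satisfy $\bigwedge^3\tilde W\in A$. A finite Plücker-membership check shows that none of the remaining forty planes of Proposition \ref{singY} (the orbits of lengths $10$ and $30$) has its Plücker point in $A$, so the plane part of $\mathrm{Sing}(Y_A)$ is exactly these twenty planes. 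Since by Proposition \ref{singY} the full singular locus of $Y=Y_A$ is the union of all sixty planes, the other forty must constitute $Y_A[2]$; one then confirms directly that the forty planes lie in $Y_A[2]$ and, by the degree count, exhaust it.

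The main obstacle is the identification $Y_A=Y$, and within it the explicit determination of the defining sextic of $Y_A$ from the ten generators of $A$: one must use the Lagrangian structure (the determinant of the rank-ten bundle map above) rather than the naive rank condition on the $15\times 10$ matrix of $A\to\bigwedge^4\CC^6$, which would only cut out $Y_A$ by many degree-ten minors. A secondary subtlety is confirming the \emph{complete} degeneration of the normally irreducible degree-forty surface $Y_A[2]$ into forty planes, with no further components and no embedded contributions; this is exactly the sense in which $Y$ is a very special EPW sextic, and it, like the finite Plücker-membership verification, is most safely carried out with Magma.
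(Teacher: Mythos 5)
Your primary route is essentially the paper's own proof: an explicit computation identifying the degree-six equation of $Y_A$ with $F_6$, followed by O'Grady's \cite[Prop.~3.3]{ogrady-incident} together with Propositions \ref{singY} and \ref{20inc} and a finite Pl\"ucker-membership check to pin down the singular locus (your version of that second half is, if anything, more detailed than the paper's one-line argument). Two local problems, though. First, your exact sequence for $F$ is wrong: the ranks do not add up ($6+10\neq 15$). The kernel of $\bigwedge^2\CC^6\otimes\oo_{\PP^5}(-1)\to F$ is the rank-five bundle with fibre $\tilde v\wedge\CC^6$, namely the quotient of $\CC^6\otimes\oo_{\PP^5}(-2)$ by $\oo_{\PP^5}(-3)$, not $\CC^6\otimes\oo_{\PP^5}(-2)$ itself; with your sequence one would get $\det F\cong\oo_{\PP^5}(-3)$, so the determinant of $A\otimes\oo_{\PP^5}\to(\bigwedge^3\CC^6\otimes\oo_{\PP^5})/F$ would be a section of $\oo_{\PP^5}(3)$, which is false. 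The corrected sequence gives $\det F\cong\oo_{\PP^5}(-6)$, after which your conclusion stands. Second, the ``naive rank condition on the $15\times 10$ matrix'' that you single out as the thing to avoid is exactly what the paper uses, and it works: since a priori $Y_A$ is either a sextic or all of $\PP^5$, the degree-six equation of $Y_A$ must divide any single $10\times 10$ minor of $M_v$, so the paper computes one such (degree-ten) minor, factors it, and reads off $F_6$ as its irreducible sextic factor. No bundle-theoretic determinant, and no full set of minors, is needed.

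Your ``alternative, cleaner route'' has a genuine gap. $\beta\Sigma_5$ is a finite group (of order $120$), so it has no dense orbit in $Y$; what you would actually need is the inclusion $Y\subseteq Y_A$ on a Zariski-dense subset of $Y$, and you give no mechanism for producing, for a general $v\in Y$, a nonzero $\omega\in A$ with $\tilde v\wedge\omega=0$. Without such a construction this route does not close, and one falls back on the explicit computation, which is your primary argument anyway. (Also, your parenthetical claim that $Y_A$ is integral is unjustified --- for special Lagrangians the EPW sextic can be reducible or non-reduced, cf.\ Remark \ref{isotrivial} --- but it is also unnecessary: $Y$ irreducible and $Y\subseteq\{F_A=0\}$ with $\deg F_A=6$ already forces $F_A$ to be proportional to $F_6$.)
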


\begin{proof}
Let $\omega_1,\ldots,\omega_{10}$ be a basis of $A$ and let
$\theta_1,\ldots,\theta_{15}$ be a basis of $\bigwedge^4\CC^6$.
For each basis vector $e_i$ of $\CC^6$ one computes the matrix $M_i$ of 
the linear map 
$A\rightarrow\bigwedge^4\CC^6$ defined by
$\omega\mapsto e_i\wedge \omega$. 
Then $v=(v_0:\ldots:v_5)\in Y_A$ exactly when the matrix
$M_v:=v_0M_0+\ldots+v_5M_5$ has rank at most $9$, hence all $10\times 10$ minors of 
$M_v$ must be zero.
Since $Y_A$ is either a sextic or is identically zero, it suffices to 
factorize a minor to
find a sextic polynomial defining $Y_A$. 

A convenient basis for the Lagrangian subspace $A$ 
consists of the elements (up to sign) in the $\beta\Sigma_5$-orbit of 
$e_0\wedge e_1\wedge e_2+e_3\wedge e_4\wedge e_5$.
We used the standard basis 
$e_i\wedge e_j\wedge e_k\wedge e_l$ for $\bigwedge^4\CC^6$. The submatrix obtained
from the ten basis elements $e_0\wedge e_i\wedge e_j\wedge e_k$, with
$i,j,k\in\{1,\ldots,5\}$ and $i<j<k$, 
has non-zero determinant and one finds the polynomial
$F_6$ as an irreducible factor.
We already found the singular locus of $Y$ and all planes corresponding to points
in $\PP A$, hence $Y_A[2]$ is the union of the planes as in the proposition. 
\end{proof}

\begin{cor}\label{cor20inc}
The set of $20$ incident planes in Proposition
\ref{20inc} is a complete set of incident planes.
\end{cor}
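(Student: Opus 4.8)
The plan is to translate the combinatorial condition ``$W$ is incident to every plane of the family'' into the linear-algebraic condition ``$\bigwedge^3\tilde W\in A$'', and then to invoke the identification of the $A$-planes already obtained in Proposition \ref{Y is EPW}. Recall that two planes $V,W\subset\PP^5$ are incident exactly when $(\bigwedge^3\tilde V)\wedge(\bigwedge^3\tilde W)=0$, that is, when $\bigwedge^3\tilde V$ and $\bigwedge^3\tilde W$ are orthogonal for the symplectic form $\eta$ on $\bigwedge^3\CC^6$. Throughout, let $A$ denote the Lagrangian subspace spanned by the lines $\bigwedge^3\tilde V$ attached to the twenty planes of Proposition \ref{20inc}.

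First I would establish the equivalence: a plane $W$ is incident to all twenty planes of the family if and only if $\bigwedge^3\tilde W\in A$. Indeed, incidence with all twenty planes means that $\bigwedge^3\tilde W$ is $\eta$-orthogonal to each generator of $A$, hence to all of $A$, so $\bigwedge^3\tilde W\in A^\perp$; since $A$ is Lagrangian, $A^\perp=A$. Conversely, if $\bigwedge^3\tilde W\in A$ then, $A$ being isotropic, $\bigwedge^3\tilde W$ is orthogonal to every $\bigwedge^3\tilde V$, so $W$ meets each plane of the family. This reduces the completeness assertion to the single statement that the only planes $W$ with $\bigwedge^3\tilde W\in A$ are the twenty planes themselves, i.e. that $Gr(3,6)\cap\PP(A)$ consists of exactly these twenty points.

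The second step is to read off this last statement from Proposition \ref{Y is EPW}. In its proof, all planes $W$ whose point $\bigwedge^3\tilde W$ lies in $\PP(A)$ were determined, and they are precisely the twenty incident planes; the remaining forty singular planes form $Y_A[2]$ and do not satisfy $\bigwedge^3\tilde W\in A$. Consequently any plane $W$ incident to the whole family is an $A$-plane, hence already belongs to the family, which is exactly the assertion that the family is complete (i.e. maximal).

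The only genuine content, and thus the main obstacle, is the claim that none of the forty planes constituting $Y_A[2]$ is an $A$-plane, equivalently that $A$ carries no decomposable vector outside the span of the twenty planes. This is not forced by the merely set-theoretic description of the singular locus as sixty planes, since a priori a plane of $Y_A[2]$ could also carry $\bigwedge^3\tilde W\in A$; it is secured by the explicit computation underlying Proposition \ref{Y is EPW}, in which $Gr(3,6)\cap\PP(A)$ is found to be exactly the twenty points. Granting that input, the corollary follows at once from the duality of the first step.
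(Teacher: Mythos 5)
Your proof is correct, and its first step coincides exactly with the paper's: since $A$ is Lagrangian, $A^\perp=A$, so a plane $W$ is incident to all twenty planes if and only if $\bigwedge^3\tilde{W}$ lies in $\PP(A)\cap Gr(3,6)$. Where the two arguments part ways is in how they identify that intersection. The paper does not compute $\PP(A)\cap Gr(3,6)$ directly; it invokes \cite[Prop.~3.3]{ogrady-incident}, by which any plane whose Pl\"ucker point lies in $A$ is contained in the singular locus of $Y_A=Y$, and then combines the already-established description of that singular locus as sixty explicit planes (Propositions \ref{singY} and \ref{Y is EPW}) with the finite check that none of the forty planes outside the family corresponds to a point of $A$. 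You instead rest everything on the stronger assertion that $\PP(A)\cap Gr(3,6)$ consists of exactly the twenty points, which you attribute to Proposition \ref{Y is EPW}; strictly speaking, that proposition identifies $Y_A[2]$ and the singular locus but does not record this Grassmannian intersection as a standalone fact (the phrase ``all planes corresponding to points in $\PP A$'' in its proof is the closest the paper comes). So your route is sound provided that intersection computation is actually carried out --- a Magma verification of the same nature as the others in the paper, and one you correctly single out as the only real content --- whereas the paper's detour through O'Grady's proposition and the singular locus reduces the needed verification to forty explicitly listed planes.
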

\begin{proof} 
Any plane incident to all the $20$ incident planes in Proposition \ref{20inc}
corresponds to a point of intersection of $\PP(A)\cap Gr(3,6)$. Since 
$Y$ is an EPW sextic, such a plane is in the singular locus of $Y$ 
(\cite[Prop.~3.3]{ogrady-incident}). But none of the remaining $40$ planes 
in the singular locus of $Y$ corresponds to a point of $A$, so the set of $20$ planes 
is complete.
\end{proof}

\begin{rem}\label{YA4} (Compare with \ref{ime4g}) 
  For the Lagrangian space $A$ from Proposition \ref{20inc} we find
  that $ Y_A[3]=Y_A[4]$.  Moreover, $Y_A[4]$ consists of $16$ points
  and for each $v\in Y_A[4]$ the corresponding linear space
  $\PP(F_v\cap A)$ cuts $G(3,6)\subset \PP(\bigwedge^3 \C^6)$ in five
  points.  One of these points is $(1:\ldots:1)\in Y_A[4]$ (see the end of
  the proof of Proposition~\ref{equationY}). 
  This is the point where five planes of type $V_{i-j,k-l,n-m}$
    (three $-$ signs) meet. 
    \end{rem}
    
\begin{rem}\label{rem_planes_intersection}
    Given one of five
    partitions $\{\{i,j\},\{k,l\},\{n,m\}\}$, used to describe 20 incident planes in~\ref{20inc}, and
    choosing one of the three pairs in this partition, one produces a
    point of incidence of another 5-tuple of planes. For example, for
    a partition $\{\{0,2\},\{1,4\},\{3,5\}\}$ we choose the second
    pair $\{1,4\}$ and take the point $(1:-1:1:1:-1:1)$ (minus sign
    with first and fourth coordinate). Then this point is contained in
    5 planes $V_{0+1,2-3,4+5}$, $V_{0-3,1+5,2+4}$, $V_{0+4,1+3,2-5}$,
    $V_{0-5,1+2,3+4}$ and $V_{0-2,1-4,3-5}$. This way we get
    $16=1+5\cdot 3$ points in $Y_A[4]$, each of the points contained
    in a 5-tuple of planes coming from the five different partitions
    from \ref{20inc}. 

    Similarly one finds 30 points where planes associated to the same
    partition meet pairwise. For example, for a partition
    $\{\{0,2\},\{1,4\},\{3,5\}\}$ we choose the second pair. Then
    $V_{0-2,1-4,3-5}$ meets $V_{0+2,1-4,3+5}$ at $(0:1:0:0:1:0)$ while
    $V_{0-2,1+4,3+5}$ meets $V_{0+2,1+4,3-5}$ at
    $(0:1:0:0:-1:0)$. This shows that the planes from \ref{20inc} meet
    pairwise indeed.

Thus the set of 20 planes is divided into~5 subsets of~4 planes in a natural way: a subset corresponds to a partition $\{\{i,j\},\{k,l\},\{n,m\}\}$ and there are 4 choices of signs such that the number of $-$ signs is odd. Each subset of~4 planes contains 16 points in~$Y_A[4]$, on each plane there are 4 of these points. Any two planes corresponding to different partitions meet in one of the 16 points.

\end{rem}


\begin{rem}\label{YA4-bis}
Note that we can reconstruct the $K3$ surface $S'\subset \PP^6$ starting from $A$ 
(cf.~\cite[\S 4.2]{O1}).
Indeed, let $v\in Y_A[4]$, then the dual space $\PP(F_v\cap A)^{\ast}\subset \PP(F_v)^{\ast}$ 
is a five dimensional linear space contained in $\PP^9$. 
The projective space $\PP(F_v)^{\ast}$  naturally contains a Grassmannian 
$G(2,5)$ that cuts $\PP(F_v\cap A)^{\ast}$ along a smooth Del Pezzo surface $S_5$
of degree $5$. 
From \cite[(4.1.5)]{O1} we know that there is a non-degenerate quadratic form on 
$\PP(F_v\cap A)^{\ast} \subset \PP(\bigwedge^3 \C^6)^{\ast}$ (induced by $A\subset \bigwedge^3 \C^6$). 
 We find that the corresponding quadric cuts $S_5\subset \PP^5$ along ten lines.
The $K3$ surface $S'$ is the double cover of $S_5$ branched along these lines.
Note that $S'\subset \PP^6$ is a non-generic $K3$ surface of degree $10$.
\end{rem}

\section{The resolution of the map $S^{[2]}\dashrightarrow Y$}\label{section map}   
In this section we analyze the rational map $g\colon
S^{[2]}\dashrightarrow Y$ defined in section~\ref{eq1}. In
subsection \ref{flop} we present a sequence of flops that resolves the
indeterminacy of this map; in Proposition \ref{propMain} we obtain a
morphism $\overline{g}\colon \overline{S^{[2]}}\to Y$ such that
$S^{[2]}$ and $\overline{S^{[2]}}$ differ by Mukai flops.  In
subsection \ref{section_structure_morphism} we describe the
ramification locus of $\overline{g}$; we need it to obtain the explicit
desingularisation in Theorem \ref{desingularisation}.  
A consequence of our construction is the description (see Remark
\ref{remEx}) of a symplectic resolution of the singular point $\C^4/G$
discussed in \cite{BS} (i.e.~the fiber of $\overline{g}$ over points
from $Y_A[4]$).

\subsection{Flops}\label{flop}
In order to resolve the map $g$ we need to perform birational transformations such that the divisor
$g^{\ast}(\oo_{\PP^5}(1))$ becomes nef. Let us first describe this divisor.

Let
$\mu \colon H^2(S,\Z) \to H^2(S^{[2]},\Z)$ be the natural morphism of cohomology groups. For (the class of)
a curve $C\subset S$, $\mu(C)$ is the class of the divisor with support $\{ \{p,q\}\colon p\in C,q\in S\} \in S^{[2]}$. We
denote by $\Delta^{[2]}_S$ the diagonal divisor on $S^{[2]}$. The isomorphism of $H^2(S^{[2]},\Z)$ with the lattice $S$
given in (\ref{isoh2}), will be fixed so that $\mu (H^2(S, \Z)) = \Lambda_{K3}$ and $\Delta^{[2]}_S = 2\eta. $
\begin{prop}\label{gu} We have $g^{\ast}(\oo_{\PP^5}(1))=\oo_{S^{[2]}}(\mu(C)-\Delta^{[2]}_S)$.
\end{prop}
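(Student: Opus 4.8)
The plan is to compute $g^{\ast}\oo_{\PP^5}(1)$ by pulling everything back to $S\times S$ via the explicit bilinear description of $g$ obtained in the proof of Proposition~\ref{equationY}. Recall that $g$ is induced by the rational map $\tilde g\colon S\times S\dashrightarrow\PP^5$, $\{p,q\}\mapsto(\ldots:Q_i(\phi(p),\phi(q)):\ldots)$, where $Q_i$ is the polarization of the quadric $q_i$ vanishing on $S'=\phi(S)$. Since $\phi=\phi_C$ is given by the complete linear system $|C|$, we have $\phi^{\ast}\oo_{\PP^6}(1)=\oo_S(C)$, so each bilinear form $Q_i(\phi(p),\phi(q))$ is a section of $\oo_S(C)\boxtimes\oo_S(C)$ on $S\times S$, symmetric under the involution that swaps the two factors.

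First I would determine the vanishing of these sections along the diagonal $\Delta\subset S\times S$. On the diagonal $Q_i(\phi(p),\phi(p))=q_i(\phi(p))=0$ because $\phi(S)\subset\{q_i=0\}$; moreover, differentiating the identity $q_i(\phi(p))\equiv 0$ along $S$ shows that the first-order term $Q_i(\phi(p),d\phi_p(\xi))=\tfrac12\,d(q_i\circ\phi)_p(\xi)$ also vanishes for every tangent vector $\xi$. Hence every $Q_i(\phi(p),\phi(q))$ vanishes to order (at least, and generically exactly) $2$ along $\Delta$. This order-$2$ vanishing is the crucial point: it is what produces the full diagonal $\Delta^{[2]}_S$ rather than half of it.

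Next I would pass to the standard resolution diagram. Let $b\colon\widetilde{S\times S}\to S\times S$ be the blow-up of the diagonal, with exceptional divisor $E$, and let $q\colon\widetilde{S\times S}\to S^{[2]}$ be the quotient by the swap involution; this $q$ is a double cover branched along $E$, with $q(E)=\Delta^{[2]}_S$ and $q^{\ast}\Delta^{[2]}_S=2E$. By the previous step the pulled-back forms $b^{\ast}Q_i(\phi(p),\phi(q))$ are sections of $b^{\ast}(\oo_S(C)\boxtimes\oo_S(C))\otimes\oo(-2E)$, and being symmetric they descend along $q$; thus $g^{\ast}\oo_{\PP^5}(1)$ is the line bundle $L$ on $S^{[2]}$ with $q^{\ast}L=b^{\ast}(\oo_S(C)\boxtimes\oo_S(C))\otimes\oo(-2E)$.

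Finally I would identify the right-hand side in $H^2(S^{[2]},\Z)$. Since $\mu(C)$ is pulled back from $S^{(2)}$, where it corresponds to $(C\times S+S\times C)/\Sigma_2$, we get $q^{\ast}\mu(C)=b^{\ast}(\oo_S(C)\boxtimes\oo_S(C))$, while $q^{\ast}\Delta^{[2]}_S=2E$. Therefore
\[
q^{\ast}L=b^{\ast}(\oo_S(C)\boxtimes\oo_S(C))\otimes\oo(-2E)=q^{\ast}\bigl(\mu(C)-\Delta^{[2]}_S\bigr).
\]
As $q$ is a degree-$2$ surjective morphism of smooth varieties, $q^{\ast}$ is injective on $H^2(S^{[2]},\Z)$ (by the projection formula $q_{\ast}q^{\ast}=2$ together with torsion-freeness), so $L=\oo_{S^{[2]}}(\mu(C)-\Delta^{[2]}_S)$, as claimed. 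The main obstacle I anticipate is the order-of-vanishing computation along the diagonal together with the careful bookkeeping of the factor $2$ under the branched double cover $q$ (and checking that the sections acquire no base divisor beyond $2E$); everything else is formal manipulation in the resolution diagram.
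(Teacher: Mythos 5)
Your argument is correct, but it takes a genuinely different route from the paper's: the paper proves Proposition \ref{gu} by a bare citation to \cite[\S 4]{O1}, where O'Grady carries out this computation for a general degree-$10$ K3 surface in the EPW setup, whereas you reconstruct the argument from scratch out of the explicit bilinear description of $g$ from the proof of Proposition \ref{equationY}. Your reconstruction is sound, and the two points you flag as needing care are indeed the only nontrivial ones; both check out. (i) The vanishing along the diagonal is generically of order exactly $2$: the second-order term of $Q_i(\phi(p),\phi(q))$ in a normal direction $\xi$ is, up to a nonzero constant, $Q_i(d\phi_p\xi,d\phi_p\xi)$, and if this vanished for all six $i$ simultaneously then the tangent line of $S'$ at $\phi(p)$ in that direction would lie on all six quadrics, hence on $S'$, which contains only ten lines. (ii) Away from the diagonal, the common zero locus of the six $Q_i$ consists of the pairs collapsed by $\phi$ together with the pairs whose spanning line lies on $S'$; these are exactly the surfaces making up the indeterminacy locus $\mathcal{K}$ of Section \ref{section map}, all of codimension $2$, so the sections acquire no base divisor beyond $2E$, and $g^{\ast}\oo_{\PP^5}(1)$ really is the descended bundle $L$ rather than a twist of it by an effective divisor. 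One micro-step you leave implicit in ``being symmetric they descend'': descent along the branched double cover $q$ also requires that the natural $\Sigma_2$-linearization act trivially on the fibres over the fixed divisor $E$; this holds because the swap acts trivially on the one-dimensional space $\oo_S(C)_p\otimes\oo_S(C)_p$ and acts by $(-1)^{-2}=1$ on $\oo(-2E)|_E$ (it acts by $-1$ on the normal bundle $\oo(E)|_E$). What your approach buys is a self-contained proof that makes visible exactly why the full class $\Delta^{[2]}_S=2\eta$, and not half of it, gets subtracted — namely the order-two vanishing of the polarized quadrics along the diagonal; what the paper's citation buys is brevity and the fact that O'Grady's statement applies to any K3 of this degree, not just the Vinberg surface.
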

\begin{proof}
See \cite[\S 4]{O1}. 
 \end{proof}
Let us describe a sequence of Mukai flops that resolves the map $g$.
Recall that in Section \ref{eq1} we defined the map $g$ 
such that 
on an open part of $S^{[2]}$ it can be described as the composition 
\[
g\colon S^{[2]}\stackrel{\bbeta}{\dashrightarrow} (S')^{[2]} 
\stackrel{\aalpha}{\dashrightarrow} Y\subset \PP^5 .
\]
of rational maps. By $(S')^{[2]}$ we understand the
Hilbert scheme of two points in the smooth part of $S'$.
We shall see that there are two sources of indeterminacy for the map $g$: 
the first is the presence of lines on $S'$ and the second are the nodes of $S'$. 
In order to understand more precisely the map $g$ we need to understand the 
geometry of $S'\subset \PP^6(y_0,\dots,y_6)$.

We take a Del Pezzo surface $S_5$ of degree $5$ contained in the
hyperplane $W=\PP^5\subset\PP^6$ defined by $y_6=0$. Let $\cone$ be
the cone over $S_5$ in $\PP^6$ with vertex $P=(0:\ldots:0:1)$ and let
$Q$ be the quadric with equation $y_6^2=q'(y_0,\ldots,y_5)$ as in
Section~\ref{section k3s}.  Then $Q$ intersects $S_5$ along the union
of the ten exceptional lines on $S_5$, $S'=\cone\cap Q$ and $S'$ is
singular exactly at the $15$ points of intersection of these lines.
The projection $\PP^6\to\PP^5$ with center $P$ induces a $2:1$
\emph{morphism} $\rho\colon S'\to S_5$ ramified along the sum of the
ten lines on $S_5\subset W$.  Since the projection by $\rho$ of a line
in $S'$ is a line in $S_5$, we infer that there are exactly ten lines
on $S'\subset \PP^6$. Denote the set of strict transforms of those
lines on $S$ by $\mathcal{T}=\{l_1,\dots,l_{10}\}$ and the fifteen
exceptional curves on $S$ by $e_1,\dots,e_{15}$.

From the definition of the map $g$ in the proof of Proposition \ref{equationY}
it follows that 
$g$ is well defined on $S^{[2]}$ except possibly in $z\in S^{[2]}$ with $z=\{p,q\}$ such that

\begin{enumerate}
\item both $p,q\in l_i$, i.e.\ $z\in l_i^{[2]}$; we have $10$ such planes,
\item both $p,q\in e_i$, i.e.\ $z\in e_i^{[2]}$; we have $15$ such planes,
\item $p\in e_i$ and $q\in l_j$ where $e_i$ and $l_j$ intersect,
we find a surface denoted by $E_{ij}$ that parameterizes 
the closure of this set of reduced sub-schemes $z=\{p,q\}$. 
We obtain $30$ surfaces  (because there are three exceptional curves cutting a 
given line) in $S^{[2]}$,
each isomorphic to $\PP^1\times \PP^1$ blown-up in one point,
\item $p\in e_i$ and $q\in e_j$ with $e_i,e_j$ mapping to two distinct points in one of the lines 
from $\mathcal{T}$. We obtain $30$ surfaces $F_{ij}=\{\{p,q\}\in S^{[2]}\colon p\in e_i, q\in e_j \}$,
each is isomorphic to $\PP^1\times \PP^1$.
\end{enumerate}

Let
$\mathcal{K}\subset S^{[2]}$ be the union of the $85$ surfaces of type 
(1), (2), (3) and (4) 
described above. Note that the indeterminacy locus of $g$ is contained in $\mathcal{K}$.
We now perform a sequence of flops to obtain a fourfold on which 
the transform of the map $g$ is a morphism. Let us analyze the flops locally on $S^{[2]}$ around the surface $l^{[2]}_1$.

A node of the trivalent Petersen
graph corresponding to the $(-2)$-curve $l_1$ 
meet three other $(-2)$-curves $e_1$, $e_2$, $e_3$ (see the diagram in section \ref{section k3s}). 
This gives an initial configuration of $10$ surfaces in $S^{[2]}$:
$$
l_1^{[2]},\quad e_1^{[2]},\quad e_2^{[2]},\quad e_3^{[2]},
\quad F_{12},\quad F_{23},\quad F_{13},
\quad E_{01},\quad E_{02},\quad E_{03},$$
which can be described as follows:  

$$
\begin{xy}<56pt,2pt>:
(-1.72,1)*={\psp}="F11", (-1.8,1.1)*={e^{[2]}_1}, 
(0,1)*={\qua}="F12", (0.2,1.1)*={F_{12}}, 
(1.72,1)*={\psp}="F22", (1.9,1.1)*={e^{[2]}_2},
(-0.866,0.5)*={\pspdue}="F01", (0.866,0.5)*={\pspdue}="F02", 
(-1.05,0.4)*={E_{01}}, (1.1,0.4)*={E_{02}}, 
(0,0)*={\psp}="F00", (0.15,-0.15)*={l^{[2]}_1},
(-0.866,-0.5)*={\qua}="F13", (0.866,-0.5)*={\qua}="F23",
(-1.05,-0.6)*={F_{13}}, (1.1,-0.6)*={F_{23}},
(0,-1)*={\pspdue}="F03", (0.2,-1.1)*={E_{03}},
(0,-2)*={\psp}="F33", (0.2,-1.9)*={e^{[2]}_3},
"F11";"F01" **@{-}?(0.5)*!/_3mm/{-b_1}, "F22";"F02" **@{-}?(0.5)*!/^3mm/{-b_2}, "F33";"F03" **@{-}?(0.5)*!/_3mm/{-b_3},
"F12";"F01" **@{-}?(0.5)*!/^2mm/{c_1}, "F12";"F02" **@{-}?(0.5)*!/_2mm/{c_2},
"F13";"F01" **@{-}?(0.5)*!/_2mm/{c_1}, "F13";"F03" **@{-}?(0.5)*!/^2mm/{c_3},
"F23";"F02" **@{-}?(0.5)*!/^2mm/{c_2}, "F23";"F03" **@{-}?(0.5)*!/_2mm/{c_3},
"F00";"F01" **@{-}?(0.5)*!/^2mm/{-a_0}, "F00";"F02" **@{-}?(0.5)*!/_2mm/{-a_0}, "F00";"F03" **@{-}?(0.5)*!/^3mm/{-a_0},
"F00";"F12" **@{.}, "F00";"F13" **@{.}, "F00";"F23" **@{.},
"F01";"F02" **@{.}, "F01";"F03" **@{.}, "F02";"F03" **@{.},
\end{xy}
$$
\par

In this diagram we use the following notation to describe the type of the surfaces: 
$\psp=\PP^2$, $\qua=\PP^1\times\PP^1$ and $\pspdue=\PP^2_2$ is $\PP^2$ 
blown-up in two different points. In the subsequent diagrams we will also
use $\pspuno=\PP^2_1$, $\psptre=\PP^2_3$ to denote blow-up of $\PP^2$ at 
one and three (non-collinear) points, respectively.

Solid line edges of the diagram are intersections of surfaces along curves; 
dotted line edges denote intersections in points. The solid lines edges 
will be labeled by the classes of curves in $Hilb^2(S)$.

Given a curve $C$ on $S$ we have 
a divisor $\mu(C)$ in $Hilb^2(S)$ consisting of
schemes whose support has non-empty intersection with $C$. 
The `diagonal' divisor
$\Delta^{[2]}_S$ in $S^{[2]}$ is the exceptional divisor of the resolution of 
singularities $S^{[2]}\to (S\times S)/\ZZ_2$, where the $\ZZ_2$ action interchanges the factors. 
Outside the
divisor $\Delta^{[2]}_S$ the divisor $\mu(C)$ is isomorphic to 
the complement of $C\times C$ in $C\times S$. 
By $[C]$ we will denote the class of the curve $C\times \{s\}$ where $s\not\in C$.

In what follows $c_0=[l_1]$ and $c_i=[e_i]$, for $i=1, 2, 3$, and $d$ is the class
of a fiber in the blow-up of the diagonal, that is in $\Delta^{[2]}_S$. 
We have the following intersection rules: 
\begin{itemize}
\item $\mu(C)\cdot c_0=1$, and for $i>0$ we have $\mu(C)\cdot
  c_i=\mu(C)\cdot d =0$,
\item $\Delta^{[2]}_S\cdot c_i=0$ for $i\geq 0$ and $\Delta^{[2]}_S\cdot d=-2$.
\end{itemize}
To spare notation in diagrams  
we set 
$$
h=\sum_{i\geq 0} c_i-d,\qquad a_0=d-c_0~,$$ 
and for $j> 0$ we set 
$$
b_j=d-c_j,\quad f_j=d-c_0-c_j,\quad
g_j=h-c_j=\sum_{i\geq 0} c_i-d-c_j,\quad v_j=\sum_{i\geq 0}c_i-c_0-c_j~.
$$
Then for $j> 0$ we have $-\mu(C)\cdot f_j=\mu(C)\cdot
g_j=\mu(C)\cdot h=1$ and $\Delta^{[2]}_S\cdot f_j=\Delta^{[2]}_S\cdot b_j=-2$, $\Delta^{[2]}_S\cdot
g_j=\Delta^{[2]}_S\cdot h=1$, $\mu(C)\cdot b_j=\mu(C)\cdot v_j=\Delta^{[2]}_S\cdot v_j=0$.

\medskip

\par 
We start the process of flopping. 
Note that the lines contained in $e_i^{[2]}$ and $l_j^{[2]}$ have negative intersection with $\mu(C)-\Delta_S^{[2]}$
Note also that among the surfaces from $\mathcal{K}$ only $e_i^{[2]}$ and 
$l_j^{[2]}$ are planes so we have to start the flopping procedure with them. 
Flopping the $e_i^{[2]}$, $i=1,2,3$, each isomorphic to $\PP^2$, 
outside the hexagon we get the picture below on the left and then flopping 
$l_1^{[2]}$, the $\PP^2$ in the center, we get the picture on the right.
We suppress labeling the surfaces and label only the classes of curves 
in the surfaces, notably those which are in the intersections of them.
$$\begin{array}{ccc}
\begin{xy}<45pt,2pt>:
(-1.72,1)*={\psp}="F11", (-1.5,1)*={b_1}, (0,1)*={\qua}="F12", (1.72,0.95)*={\psp}="F22", (1.5,1)*={b_2}, 
(-0.866,0.5)*={\pspuno}="F01", (0.866,0.5)*={\pspuno}="F02", 
(0,0)*={\psp}="F00",
(-0.866,-0.5)*={\qua}="F13", (0.866,-0.5)*={\qua}="F23",
(0,-1)*={\pspuno}="F03",
(0,-2)*={\psp}="F33", (0.2,-1.9)*={b_3},
"F11";"F01" **@{.}; "F22";"F02" **@{.}, "F33";"F03" **@{.},
"F12";"F01" **@{-}?(0.5)*!/^2mm/{c_1}, "F12";"F02" **@{-}?(0.5)*!/_2mm/{c_2},
"F13";"F01" **@{-}?(0.5)*!/_2mm/{c_1}, "F13";"F03" **@{-}?(0.5)*!/^2mm/{c_3},
"F23";"F02" **@{-}?(0.5)*!/^2mm/{c_2}, "F23";"F03" **@{-}?(0.5)*!/_2mm/{c_3},
"F00";"F01" **@{-}?(0.5)*!/^2mm/{-a_0}, "F00";"F02" **@{-}?(0.5)*!/_2mm/{-a_0}, 
"F00";"F03" **@{-}?(0.5)*!/^3mm/{-a_0},
"F00";"F12" **@{.}, "F00";"F13" **@{.}, "F00";"F23" **@{.},
"F01";"F02" **@{.}, "F01";"F03" **@{.}, "F02";"F03" **@{.},
\end{xy}
&&
\begin{xy}<45pt,2pt>:
(-1.72,1.1)*={\psp}="F11", (-1.5,1.1)*={b_1}, (0,1)*={\pspdue}="F12", (1.72,0.95)*={\psp}="F22", (1.5,1)*={b_2},
(-0.866,0.5)*={\psp}="F01", (0.866,0.5)*={\psp}="F02", 
(0,0)*={\psp}="F00",
(-0.866,-0.5)*={\pspdue}="F13", (0.866,-0.5)*={\pspdue}="F23",
(0,-1)*={\psp}="F03",
(0,-1.9)*={\psp}="F33", (0.2,-1.8)*={b_3},
"F11";"F01" **@{.}; "F22";"F02" **@{.}, "F33";"F03" **@{.},
"F12";"F01" **@{-}?(0.5)*!/^2mm/{-f_1}, "F12";"F02" **@{-}?(0.5)*!/_2mm/{-f_2},
"F13";"F01" **@{-}?(0.5)*!/_3mm/{-f_1}, "F13";"F03" **@{-}?(0.5)*!/^2mm/{-f_3},
"F23";"F02" **@{-}?(0.5)*!/^2mm/{-f_2}, "F23";"F03" **@{-}?(0.5)*!/_2mm/{-f_3},
"F00";"F01" **@{.}, "F00";"F02" **@{.}, "F00";"F03" **@{.},
"F00";"F12" **@{-}?(0.5)*!/^2mm/{a_0}, "F00";"F13" **@{-}?(0.5)*!/_2mm/{a_0}, "F00";"F23" **@{-}?(0.5)*!/^2mm/{a_0},
"F12";"F13" **@{.}, "F12";"F23" **@{.}, "F23";"F13" **@{.},
\end{xy}
\end{array}
$$

The left hand side picture below presents the result of flopping the three $\PP^2 $'s 
at the perimeter of the hexagon. Note that the $\PP^2$'s outside the hexagon 
are blown up twice because they are on the link of two such hexagons.
The resulting $\PP^2_2$'s have a common point with the central surface. 
They have three $(-1)$-curves whose classes are in $f_i$, $f_j'$ and $b_j-f_j-f_j'$ 
where
$f_j'$ is the class of the curve coming from the configuration of 
the adjacent node of the Petersen graph. 
The point of intersection of this surface 
with the central $\PP^2_3$ lies on the curve whose class is $f_j$.
The right hand side diagram is obtained by the subsequent flopping of the other 
three $\PP^2$'s at the perimeter. 
In this step the surfaces outside the hexagon are not affected.
\par\medskip
$$\begin{array}{ccc}
\begin{xy}<45pt,2pt>:
(-1.72,1)*={\pspdue}="F11", (0,1)*={\psp}="F12", (1.72,0.95)*={\pspdue}="F22",
(-0.866,0.5)*={\psp}="F01", (0.866,0.5)*={\psp}="F02", 
(0,0)*={\psptre}="F00",
(-0.866,-0.5)*={\psp}="F13", (0.866,-0.5)*={\psp}="F23",
(0,-1)*={\psp}="F03",
(0,-2)*={\pspdue}="F33",
"F11";"F01" **@{-}?(0.5)*!/_2mm/{f_1}; "F22";"F02" **@{-}?(0.5)*!/^2mm/{f_2}, "F33";"F03" **@{-}?(0.5)*!/_2mm/{f_3},
"F12";"F01" **@{.}, "F12";"F02" **@{.},
"F13";"F01" **@{.}, "F13";"F03" **@{.},
"F23";"F02" **@{.}, "F23";"F03" **@{.},
"F00";"F01" **@{-}?(0.5)*!/^2mm/{f_1}, "F00";"F02" **@{-}?(0.5)*!/^2mm/{f_2}, "F00";"F03" **@{-}?(0.5)*!/^2mm/{f_3},
"F00";"F12" **@{-}?(0.5)*!/^2mm/{g_3}, "F00";"F13" **@{-}?(0.5)*!/^2mm/{g_2}, "F00";"F23" **@{-}?(0.5)*!/^2mm/{g_1},
"F11";"F00" **\crv{~*=<2pt>{.} (-0.9,1)},"F22";"F00" **\crv{~*=<2pt>{.} (1,1)},
"F33";"F00" **\crv{~*=<2pt>{.} (0.3,-1)},
\end{xy}
&&
\begin{xy}<45pt,2pt>:
(-1.72,1.1)*={\pspdue}="F11", (0,1)*={\psp}="F12", (1.72,1)*={\pspdue}="F22",
(-0.866,0.5)*={\pspdue}="F01", (0.866,0.5)*={\pspdue}="F02", 
(0,0)*={\psp}="F00",
(-0.866,-0.5)*={\psp}="F13", (0.866,-0.5)*={\psp}="F23",
(0,-1)*={\pspdue}="F03",
(0,-2)*={\pspdue}="F33",
"F11";"F01" **@{-}?(0.5)*!/_2mm/{f_1}; "F22";"F02" **@{-}?(0.5)*!/^2mm/{f_2}, "F33";"F03" **@{-}?(0.5)*!/_2mm/{f_3},
"F12";"F01" **@{-}?(0.5)*!/^2mm/{-g_3}, "F12";"F02" **@{-}?(0.5)*!/_2mm/{-g_3},
"F13";"F01" **@{-}?(0.5)*!/_2mm/{-g_2}, "F13";"F03" **@{-}?(0.5)*!/^2mm/{-g_2},
"F23";"F02" **@{-}?(0.5)*!/^2mm/{-g_1}, "F23";"F03" **@{-}?(0.5)*!/_2mm/{-g_1},
"F00";"F01" **@{-}?(0.5)*!/^2mm/{h}, "F00";"F02" **@{-}?(0.5)*!/^2mm/{h}, "F00";"F03" **@{-}?(0.5)*!/^2mm/{h},
"F00";"F12" **@{.}, "F00";"F13" **@{.}, "F00";"F23" **@{.},
"F11";"F00" **\crv{~*=<2pt>{.} (-0.9,1)},"F22";"F00" **\crv{~*=<2pt>{.} (1,1)},
"F33";"F00" **\crv{~*=<2pt>{.} (0.3,-1)},
"F01";"F02" **@{.}, "F01";"F03" **@{.}, "F02";"F03" **@{.},
\end{xy}
\end{array}
$$

Now we flop at the central surface.
$$\begin{xy}<50pt,2pt>:
(-1.72,1)*={\pspquat}="F11", (0,1)*={\pspuno}="F12", (1.72,1)*={\pspquat}="F22",
(-0.866,0.5)*={\qua}="F01", (0.866,0.5)*={\qua}="F02", 
(0,0)*={\psp}="F00",
(-0.866,-0.5)*={\pspuno}="F13", (0.866,-0.5)*={\pspuno}="F23",
(0,-1)*={\qua}="F03",
(0,-2)*={\pspquat}="F33",
"F11";"F01" **@{-}?(0.5)*!/^2mm/{v_1}; "F22";"F02" **@{-}?(0.5)*!/_2mm/{v_2}, 
"F33";"F03" **@{-}?(0.5)*!/_2mm/{v_3},
"F12";"F01" **@{-}?(0.5)*!/^2mm/{c_3}, "F12";"F02" **@{-}?(0.5)*!/_2mm/{c_3},
"F13";"F01" **@{-}?(0.5)*!/_2mm/{c_2}, "F13";"F03" **@{-}?(0.5)*!/^2mm/{c_2},
"F23";"F02" **@{-}?(0.5)*!/^2mm/{c_1}, "F23";"F03" **@{-}?(0.5)*!/_2mm/{c_1},
"F00";"F12" **@{-}?(0.5)*!/^2mm/{-h}, "F00";"F13" **@{-}?(0.5)*!/^2mm/{-h}, 
"F00";"F23" **@{-}?(0.5)*!/^2mm/{-h},
"F00";"F01" **@{.}, "F00";"F02" **@{.}, "F00";"F03" **@{.},
"F11";"F00" **\crv{(-0.9,1)},"F22";"F00" **\crv{(1,1)},
"F33";"F00" **\crv{(0.3,-1)},
"F12";"F13" **@{.}, "F12";"F23" **@{.}, "F23";"F13" **@{.},
\end{xy}$$
\par\medskip
Here $\pspquat$ denotes $\PP^2_2$ blown-up at two points at two non-meeting $(-1)$-curves, 
which then become $(-2)$-curves whose classes are in $v_j=f_j+h=\sum_ic_i-c_0-c_j$, 
and $v_j'$, respectively. This surface has also five $(-1)$-curves with classes 
$b_j-f_j-f_j'$, $h$ and $b_j-f_j-h$ as well as  $h'$ and $b_j'-f_j'-h'$.
\par

\begin{lemm}\label{curve}
  The divisor $\mu(C)-\Delta^{[2]}_S$ is nef on this configuration of
  surfaces. It is $\oo(1)$ on the central $\PP^2$, it is trivial on
  $\PP^1\times\PP^1$'s ($\qua$ on the last picture) and defines a ruling on the remaining 6
  surfaces. Thus some multiple of $\mu(C)-\Delta^{[2]}_S$ defines the
  contraction of the configuration of these surfaces to a
  configuration of lines on the image of the plane $\overline{l^{[2]}_0}$.
\end{lemm}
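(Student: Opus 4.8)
The plan is to prove nefness by a purely local, combinatorial computation on the configuration of ten surfaces in the final diagram, reducing the statement to finitely many intersection numbers. Write $D:=\mu(C)-\Delta^{[2]}_S$. Each component of the configuration is a rational surface (a $\PP^2$, a $\PP^1\times\PP^1$, a Hirzebruch surface $\Ff_1=\PP^2_1$, or one of the blow-ups $\pspquat$ of $\PP^2_2$), so its cone of curves is generated by finitely many classes — the negative curves together with a ruling class — and every curve of the configuration lies on one of these components. Hence $D$ is nef on the configuration if and only if $D\cdot R\ge 0$ for each Mori-cone generator $R$ of each component. By the symmetry of the Petersen graph the ten-surface model around $l_1^{[2]}$ is the same around every node, so it suffices to treat this single hexagon; away from the flopped locus $\mathcal K$ the map $g$ is already a morphism and $D$ is the pullback of an ample class, so no other curves need examination.

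First I would restrict $D$ to each component. From the tabulated values $\mu(C)\cdot c_0=1$, $\mu(C)\cdot c_i=0$ $(i>0)$, $\mu(C)\cdot d=0$, $\Delta^{[2]}_S\cdot c_i=0$, $\Delta^{[2]}_S\cdot d=-2$, together with $a_0=d-c_0$, $b_j=d-c_j$, $f_j=d-c_0-c_j$, $g_j=h-c_j$ and $v_j=\sum_{i\ge0}c_i-c_0-c_j$, one gets the robust values $D\cdot c_0=1$, $D\cdot c_i=0$, $D\cdot a_0=1$, $D\cdot b_j=2$, $D\cdot f_j=1$ and $D\cdot v_j=0$, together with the recorded degrees on $h$ and $g_j$. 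These show that $D$ restricts to $\oo(1)$ on the central $\PP^2$ (every intersection curve there being a line), to the trivial class on each of the three $\PP^1\times\PP^1$ (each carrying three edge-classes among $v_j,c_i$ on which $D$ vanishes, two of which span the Picard group), and to a base-point-free fibre class of square zero on each of the three $\Ff_1$ and each of the three $\pspquat$. On these last six surfaces the fibres are exactly the $D$-trivial curves while the sections meet $D$ positively, so $D$ defines the ruling, which is the assertion of the lemma.

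The point requiring real care is the bookkeeping of curve classes through the sequence of flops. A flop reflects the flopped class and shears the others, so the edge-labels must be read as elements of the fixed group $N_1(\overline{S^{[2]}})=N_1(S^{[2]})$, and I must verify that a symbol appearing on two different components denotes the correctly reflected class. The delicate case is the pair of $\pspquat$ surfaces, which are shared with the neighbouring hexagon and so carry the primed classes $f_j',h',v_j'$ from the adjacent node: I must check $D\cdot R\ge 0$ on both $(-2)$-curves $v_j,v_j'$ and on all five $(-1)$-curves $b_j-f_j-f_j'$, $b_j-f_j-h$, $b_j'-f_j'-h'$, $h$, $h'$. The class $b_j-f_j-f_j'$ gives $2-1-1=0$ unconditionally, whereas the remaining four depend on the value of $D\cdot h$ \emph{after} the central flop; showing that this reflected value is non-negative (so that the bare $h$- and $h'$-curves are $D$-trivial rather than negative, while the central plane still receives $\oo(1)$) is where the only genuine subtlety lies. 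I expect this sign-tracking, not any single intersection number, to be the main obstacle, and it is exactly here that a Magma verification is the safest route.

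Finally, to upgrade nefness to the stated contraction, I would note that the restriction of $D$ to each component is not merely nef but semiample: it is $\oo_{\PP^2}(1)$ on the central plane, the pullback of $\oo_{\PP^1}(1)$ under the ruling on each of the six ruled surfaces, and the zero class on each $\PP^1\times\PP^1$. These compatible fibrations glue to a morphism from the configuration that is an isomorphism on the central $\PP^2$, contracts each ruled surface onto its base $\PP^1$, and collapses each $\PP^1\times\PP^1$ to a point, with image the plane $\overline{l_1^{[2]}}$ together with six lines. Consequently some multiple of $D$ is base-point-free on the configuration, and the associated morphism is precisely this contraction, its contracted locus being the $D$-trivial curves.
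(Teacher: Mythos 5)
Your strategy is the same as the paper's: reduce the lemma to finitely many intersection numbers of $D:=\mu(C)-\Delta^{[2]}_S$ against the curve classes carried by the ten surfaces of the final diagram, and your first two paragraphs reach the correct conclusions ($\oo(1)$ on the central plane, numerically trivial on the $\PP^1\times\PP^1$'s, a ruling on the other six surfaces); the concluding semiampleness/gluing paragraph is also fine. The genuine gap sits in your third paragraph, at exactly the point you single out as the crux. You claim that nefness requires the ``bare $h$- and $h'$-curves'' on the $\pspquat$ surfaces to come out $D$-trivial. That is false, and it contradicts both your own second paragraph and the statement of the lemma. The curves actually present in the final configuration have class $-h$ (a Mukai flop replaces the line class of the flopped plane by its negative), and these curves are simultaneously the lines of the flopped central plane $\overline{l_1^{[2]}}$, the $(-1)$-sections of the three $\PP^2_1$'s, and the last exceptional curves of the $\pspquat$'s; they are the sections of the rulings you describe, so they must be $D$-positive. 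Were they $D$-trivial, $D$ would be numerically trivial on the whole configuration and in particular could not restrict to $\oo_{\PP^2}(1)$ on the central plane, i.e.\ the lemma would fail.

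Moreover nothing here needs Magma, and there is no ``value of $D\cdot h$ after the central flop'': $D$ and $h$ live in $N^1$ and $N_1$, which are canonically identified across all the flops, so $D\cdot h$ is computed once and for all from the listed rules, namely $\mu(C)\cdot h=\mu(C)\cdot c_0=1$ and $\Delta^{[2]}_S\cdot h=-\Delta^{[2]}_S\cdot d=2$, whence $D\cdot h=-1$ and $D\cdot(-h)=+1$; what the flop changes is only which sign of $h$ is represented by an actual curve. (Two sign typos in the paper feed your confusion: the tabulated $\Delta^{[2]}_S\cdot h=1$ should be $2$, and the label ``$b_j-f_j-h$'' should be $b_j-f_j+h$; the $D$-trivial $(-1)$-curves on $\pspquat$ are those of class $b_j-f_j+h$ and $b_j-f_j'+h'$, the two components of a reducible conic fiber, and indeed $D\cdot(b_j-f_j+h)=2-1-1=0$ whereas $D\cdot(b_j-f_j-h)=2$.) With these values every curve in the configuration has $D$-degree $0$ or $+1$: degree $+1$ exactly on the classes $-h$, $-h'$, and degree $0$ on $c_j$, $v_j=f_j+h$, $v_j'$, $b_j-f_j-f_j'$, $b_j-f_j+h$, $b_j-f_j'+h'$. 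This short check is precisely the paper's proof, and it yields nefness together with all the restriction statements at once. As written, however, your key claim is wrong and internally inconsistent, so your argument does not close without replacing that step by the computation above.
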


\begin{proof}
  We check that $(\mu(C)-\Delta^{[2]}_S)\cdot
  (-h)= 1$ while the intersection of $\mu(C)-\Delta^{[2]}_S$
  with $c_j$, $v_j$, $f_j+h$, $b_j-f_j-h$, $b_j-f_j-f_j'$ is zero.
\end{proof}

\textbf{Summary.}
We constructed above a sequence of Mukai flops of $\PP^2$'s determined by the 
following classes of 1-cycles: (1) $b_j$'s, (2) $a_i$'s, (3) $f_{ij}$'s, (4) $g_{ij}$'s, 
(5) $h_i$'s. 
Here $i$ is among indices parametrizing vertices and $j$ is 
among indices parameterizing edges of the Petersen graph. 
After flopping those classes $\mu(C)-\Delta^{[2]}_S$ becomes nef (on the configuration of the strict transforms 
of the surfaces in question).
The fourfold $\overline{S^{[2]}}$ is obtained from $S^{[2]}$
by performing this sequence of Mukai flops in the five families of surfaces and
$\overline{g}\colon \overline{S^{[2]}}\to Y$ is the map obtained from
$g:S^{[2]}\dashrightarrow Y$.
We infer the following:
\begin{prop}\label{propMain} The strict
  transform of the complete linear system $|\mu(C)-\Delta^{[2]}_S|$ on
  $\overline{S^{[2]}}$ is big and nef and define a morphism $\overline{g}\colon
  \overline{S^{[2]}} \to Y\subset \PP^5$ that resolves the indeterminacy of the map $g$.
  In particular $g$ is defined by the 
complete linear system $|\mu (C)-\Delta_S^{[2]}|$ and the set $\mathcal{K}$ is the indeterminacy locus
of $g$. 
\end{prop}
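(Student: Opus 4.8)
The plan is to prove the three assertions separately: that the strict transform $\overline{D}$ of $D:=\mu(C)-\Delta^{[2]}_S$ is big and nef on $\overline{S^{[2]}}$, that $\overline{D}$ is in fact base-point free and gives $\overline{g}$ via the \emph{complete} system $|D|$, and that $\mathrm{Bs}|D|=\mathcal K$ on $S^{[2]}$. The numerical starting point is Proposition~\ref{gu}, which gives $g^*\oo_{\PP^5}(1)=\oo_{S^{[2]}}(D)$, together with the normalisation $\mu(H^2(S,\Z))=\Lambda_{K3}$, $\Delta^{[2]}_S=2\eta$ fixed in~(\ref{isoh2}). Writing $q_{BB}$ for the Beauville--Bogomolov--Fujiki form, so that $q_{BB}(\mu(x))=x^2$, $q_{BB}(\eta)=-2$ and $\mu(H^2)\perp\eta$, the identity $C^2=10$ yields at once $q_{BB}(D)=q_{BB}(\mu(C))+4\,q_{BB}(\eta)=10-8=2>0$. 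Bigness follows from the image: $g$ is dominant onto the fourfold $Y$ of Proposition~\ref{equationY} with $g^*\oo(1)=\oo(D)$, so $h^0(mD)\ge h^0(Y,\oo(m))$ grows like $m^4$; since the Mukai flops of the Summary are isomorphisms in codimension one, both $q_{BB}(\overline{D})=2$ and bigness pass to $\overline{D}$ on $\overline{S^{[2]}}$, which is again a smooth fourfold of $K3^{[2]}$-type.

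The nefness of $\overline{D}$ is the core of the argument. Because the flops are isomorphisms away from the strict transforms of the surfaces of $\mathcal K$, and $g$ is already a morphism on $S^{[2]}\setminus\mathcal K$, any curve of $\overline{S^{[2]}}$ of possibly negative $\overline{D}$-degree must lie in one of the flopped configurations. Around a single plane $l_1^{[2]}$ the entire local model is displayed in the diagrams, and Lemma~\ref{curve} verifies that after the five successive flops $\overline{D}\cdot\gamma\ge 0$ for every curve class occurring there, namely $c_j,\ v_j,\ f_j+h,\ b_j-f_j-h,\ b_j-f_j-f_j'$ (each of intersection $0$) and $-h$ (of intersection $1$) on the central $\PP^2$. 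To globalise, I would invoke the $\Sigma_5$-equivariance of $g$ from Lemma~\ref{auts5}: the ten configurations attached to the planes $l_i^{[2]}$ are permuted transitively and the flops are performed equivariantly, so the verification at $l_1^{[2]}$ propagates to all of them; the only gluing subtlety is that the outer $\PP^2_2$'s sit on the links of two adjacent hexagons of the Petersen graph, but this is already encoded by the primed classes $f_j',b_j',h'$, and one checks the two local pictures agree on the shared surface. Hence $\overline{D}$ is nef. \textbf{The main obstacle} is exactly this bookkeeping: one must be certain that the listed curve classes are exhaustive and that the gluing along the Petersen graph produces no further extremal curve of negative degree.

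With $\overline{D}$ nef and big on the smooth fourfold $\overline{S^{[2]}}$ with $K_{\overline{S^{[2]}}}=0$, the base-point-free theorem (applied to $\overline{D}$, using that $a\overline{D}-K=a\overline{D}$ is nef and big for every $a>0$) shows $\overline{D}$ is semiample and therefore defines a morphism; the fibre structure recorded in Lemma~\ref{curve} (an isomorphism onto a plane on the central $\PP^2$, contraction of the $\PP^1\times\PP^1$'s to points and of the six remaining surfaces along rulings) identifies this contraction with a resolution of $g$, i.e.\ with $\overline{g}\colon\overline{S^{[2]}}\to Y\subset\PP^5$. To see that $g$ is given by the \emph{complete} system, I would compute $h^0$ by Riemann--Roch on a $K3^{[2]}$-type fourfold, where $\chi(L)=\tfrac18\bigl(q_{BB}(L)+4\bigr)\bigl(q_{BB}(L)+6\bigr)$; with $q_{BB}(D)=2$ this gives $\chi(D)=6$, and Kawamata--Viehweg vanishing ($K=0$, $\overline{D}$ nef and big) kills the higher cohomology, so $h^0(D)=h^0(\overline{D})=6$. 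As the six coordinate sections $Z_0,\dots,Z_5$ of Proposition~\ref{equationY} are linearly independent in $H^0(\oo(D))$, they form a basis, and thus $\overline{g}$, and a fortiori $g$, is defined by $|D|$ itself rather than by a proper subsystem or a higher multiple.

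Finally, for $\mathrm{Bs}|D|=\mathcal K$ the inclusion $\mathrm{Bs}|D|\subseteq\mathcal K$ is already established in Section~\ref{section map}. For the reverse inclusion I would exhibit on each of the $85$ surfaces of $\mathcal K$ a covering family of $D$-negative curves: all lines in the planes $e_i^{[2]}$ and $l_j^{[2]}$ meet $D$ negatively, as noted, and the rulings of the surfaces $E_{ij}$ and $F_{ij}$ likewise have negative $D$-degree; since these curves sweep out their surfaces, every point of $\mathcal K$ is forced to be a base point of $|D|$. Combined with the semiampleness of $\overline{D}$ obtained above, this gives $\mathrm{Bs}|D|=\mathcal K$, which is precisely the indeterminacy locus of $g$, completing the proof.
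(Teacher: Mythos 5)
Your proposal follows the same route as the paper's own proof: nefness of the transform via the flop diagrams and Lemma~\ref{curve}, the computation $q(\mu(C)-\Delta^{[2]}_S)=10-8=2$ giving $\chi=6$ and hence $h^0=6$ (so the defining system is complete), and the identification of $\mathcal{K}$ with the base locus. Your additional ingredients (bigness from dominance onto $Y$, the base-point-free theorem, Kawamata--Viehweg vanishing, $\Sigma_5$-equivariance to globalize the local flop analysis) are reasonable ways of filling in steps the paper leaves implicit.

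There is, however, a genuine gap in your final paragraph. You assert that the rulings of $E_{ij}$ and $F_{ij}$ have negative $D$-degree, so that $D$-negative curves sweep out all $85$ surfaces of $\mathcal{K}$. With the intersection rules fixed in Section~\ref{flop} this is false. For $F_{ij}\cong e_i\times e_j$ (with $e_i,e_j$ disjoint) every effective curve has class $ac_i+bc_j$ with $a,b\ge 0$, and $D\cdot c_i=\mu(C)\cdot c_i-\Delta^{[2]}_S\cdot c_i=0-0=0$ for $i>0$; hence $F_{ij}$ contains \emph{no} $D$-negative curve whatsoever, both rulings being $D$-trivial. (For $E_{ij}$ the two rulings have degrees $+1$ and $0$; the honest negative covering family is the one of class $-f_j=c_0+c_j-d$, the strict transforms of the $(1,1)$-curves through the blown-up point, of degree $-1$.) Moreover, being swept by $D$-trivial curves does not put a surface in the base locus: the divisors $B'_i$ of Proposition~\ref{singZ} are swept by curves of class $c_i$ and are \emph{contracted} by $g$, not contained in its indeterminacy locus. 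So for the $30$ surfaces $F_{ij}$ your argument cannot be repaired by choosing better curves inside them; a different mechanism is needed. For instance: each step of the flopping sequence is a Mukai flop of planes on which the current transform of $D$ is negative on lines (the fourth step flops the transforms of the $F_{ij}$ along the class $g_j$, with $D\cdot g_j=-1$), so on a common resolution one has $\pi^*D=\pi'^*\overline{D}+E$ with $E$ effective and supported over every flop center, whence every section of $D$ vanishes along all of $\mathcal{K}$. Alternatively, by Lemma~\ref{curve} the line replacing a general point of $F_{ij}$ after the flops is mapped by $\overline{g}$ onto a line of $Y$, so the fiber of the graph of $g$ over that point is positive-dimensional and $g$ cannot extend there. (In fairness, the paper's own one-line justification, that $\mathcal{K}$ is covered by $D$-negative curves, suffers from the same defect for the $F_{ij}$; but your version commits to a concrete false statement about the rulings.)

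A smaller point: the base-point-free theorem only yields that $|m\overline{D}|$ is free for $m\gg 0$; to conclude that the complete system $|\overline{D}|$ itself defines the morphism you must still exclude common zeros of the six sections, and this does not follow formally from semiampleness plus $h^0=6$ (a degree-one divisor on an elliptic curve is semiample with $h^0=1$ yet has a base point). What actually closes this step is again Lemma~\ref{curve}: away from the configurations the system is free because $g$ is a morphism there, and on each surface of the final configuration the restricted system is identified explicitly (e.g.\ $\oo_{\PP^2}(1)$ on the central plane), so it has no base points there either.
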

\begin{proof}
From Proposition \ref{gu} the map $g$ is defined by a five dimensional linear subsystem of $|\mu(C)-\Delta^{[2]}_S|$. 


  We are flopping $S^{[2]}$ step by step as described in the diagrams
  above.  Also, we saw how the classes of curves changes after our
  flops.  At each step we are doing Mukai flops of a set of disjoint
  $\PP^2$'s corresponding to a curve class that has negative
  intersection with the strict transform of $\mu(C)-\Delta^{[2]}_S$.
    By Lemma \ref{curve} we
  deduce that the proper transform $H$ of
  $\mu(C)-\Delta^{[2]}_S$ on $\overline{S^{[2]}}$ is big and nef.
  
  The Beauville degree $q(H)=q(\mu(C)-\Delta^{[2]}) = q(\mu(C))-q(2\eta) = 10-8 = 2$, so in particular
$\chi(\oo_{\overline{S^{[2]}}}(H)) = 6$ thus $h^0(\oo_{\overline{S^{[2]}}}(H)=6$.
   It follows that the map $\overline{g}\colon \overline{S^{[2]}} \to \PP^5$ defined by $|H|$ is a
  morphism. Since $\overline{S^{[2]}}$ and $S^{[2]}$ are isomorphic in codimension $1$
the morphism $\overline{g}\colon S^{[2]}\to Y\subset \PP^5$ give the resolution of $g$.
It follows also that the map $g\colon S^{[2]}\dasharrow \PP^5 $ is given by the complete linear system 
$|\mu(C)-\Delta^{[2]}_S|$ on 
$S^{[2]}$.
  
  The set $\mathcal{K}$ is in the base locus because it is covered by
  curves with negative intersection with
  $\mu(C)-\Delta^{[2]}_S$. Outside $\mathcal{K}$ there are no base
  points since the map $\aalpha$ so $g$ is well defined there.
\end{proof}

\subsection{The structure of the map $\overline{S^{[2]}}\to Y$} \label{section_structure_morphism}

In this subsection we describe technical results needed in 
the proof of Theorem \ref{desingularisation}. 
Our aim is to give a description of the ramification locus of the map
$\overline{g}$. Our plan is to first describe the ramification of the map $g$. Then to look how this ramification locus transforms by flops, described in Section \ref{flop}, transforming $g$ to $\overline{g}$. 

We consider the Stein factorization 
\[ \overline{g}\colon \overline{S^{[2]}}\xrightarrow{c} Z \xrightarrow{f} Y\subset \PP^5~.\]

Let us first identify twenty divisors $B_1,\dots,B_{20}$ on $\overline{S^{[2]}}$ 
which are contracted by $c$ to singular surfaces on $Z$.
Then we identify the ramification locus of $f$ as the image by $c$ of 
forty surfaces that are the
strict transforms of some surfaces from $S^{[2]}$ isomorphic to $\PP^1\times \PP^1$. 

\begin{lemm}\label{rho} 
  The covering involution of $f\colon Z\to Y$ is induced by the map $\rho\colon S\to S_5$ i.e.~if $\rho(x_1)=\rho(x_2)$ and $\rho(y_1)=\rho(y_2)$, such that $x_1\neq x_2$ and $y_1\neq y_2$ are generic points on $S$, then we have $f(c((x_1,y_1)))=f(c((x_2,y_2))$.
\end{lemm}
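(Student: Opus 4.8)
The plan is to make the covering involution of $\rho$ explicit and then verify directly that the map $g$ is invariant under the involution it induces on $S^{[2]}$. Recall that $\rho\colon S'\to S_5$ is the projection from the vertex $P=(0:\ldots:0:1)$ of the cone $\cone$, and that $S'=\cone\cap Q$ with $Q$ defined by $y_6^2=q_0'(y_0,\ldots,y_5)$. Hence the covering involution $\iota$ of $\rho$ acts on $S'\subset\PP^6$ by $\iota(y_0:\ldots:y_5:y_6)=(y_0:\ldots:y_5:-y_6)$; this is exactly the sign change already used to lift the $\Sigma_5$-action in the proof of Lemma~\ref{auts5}. Since $\rho(x_1)=\rho(x_2)$ with $x_1\neq x_2$ forces $x_2=\iota(x_1)$, and likewise $y_2=\iota(y_1)$, the assertion $f(c(x_1,y_1))=f(c(x_2,y_2))$ is equivalent to the statement that $g$ takes the same value on $\{x_1,y_1\}$ and on $\{\iota x_1,\iota y_1\}$.

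For this I would use the explicit description of $g$ from the proof of Proposition~\ref{equationY}: writing $\phi(p),\phi(q)$ for the images in $S'$, the map is induced by
\[
\tilde g(p,q)=\bigl(\ldots:Q_i(\phi(p),\phi(q)):\ldots\bigr)_{1\le i\le 6},
\]
where $Q_i$ is the polarization of the quadric $q_i$. Each $q_i$ is a fixed linear combination of $q_1',\ldots,q_5',q_6'$, and among these only $q_6'=q_0'-y_6^2$ involves the coordinate $y_6$, with $q_0'$ depending on $y_0,\ldots,y_5$ alone. Consequently the polarizations $Q_1',\ldots,Q_5'$ do not involve $y_6$ or $z_6$, while $Q_6'(y,z)=Q_0'(y,z)-y_6z_6$. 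Applying $\iota$ to both arguments flips the signs of $y_6$ and $z_6$ simultaneously, so the single relevant term $-y_6z_6$ is unchanged; hence $Q_i'(\iota\phi(p),\iota\phi(q))=Q_i'(\phi(p),\phi(q))$ for every $i$, and the same invariance passes to the linear combinations $Q_i$. Therefore $\tilde g(\iota x_1,\iota y_1)=\tilde g(x_1,y_1)$, and $g(\{x_1,y_1\})=g(\{\iota x_1,\iota y_1\})$.

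It remains to transfer this to the Stein factorization. Since $\overline g$ is obtained from $g$ by the Mukai flops of Section~\ref{flop} it is still generically $2:1$ onto $Y$, so in $\overline g=f\circ c$ the finite map $f$ has degree~$2$ and $c$ is birational. On the open locus where $c$ is an isomorphism and $g=\overline g$ we thus obtain $f(c(x_1,y_1))=g(\{x_1,y_1\})=g(\{\iota x_1,\iota y_1\})=f(c(\iota x_1,\iota y_1))$, which is the claimed equality. Moreover, for generic $x_1,y_1$ the subschemes $\{x_1,y_1\}$ and $\{\iota x_1,\iota y_1\}$ are distinct (as $y_1\neq\iota x_1$ generically), so they form the full fiber of the generically $2:1$ map $g$; the unique covering involution of $f$ then corresponds under $c$ to the involution $\{p,q\}\mapsto\{\iota p,\iota q\}$ induced by $\rho$, as asserted. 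The only non-formal input is the observation that the sole $y_6$-dependent term in the defining quadrics is quadratic, so that flipping both sheets at once leaves $g$ invariant; everything else is the bookkeeping of the Stein factorization.
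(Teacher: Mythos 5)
Your proof is correct, but it takes a genuinely different route from the paper's. The paper never writes coordinates: it reduces at once to the involution induced by $\aalpha$ on $(S')^{[2]}$, describes the deck transformation geometrically (each of $p,q$ is replaced by the second intersection point $p',q'$ of the lines $Pp$, $Pq$ with $Q$), and then shows that a quadric in the ideal of $S'$ vanishes on the line $pq$ if and only if it vanishes on the line $p'q'$: for the quadrics containing the cone $\cone$ this is an incidence argument in the plane $\langle P,p,q\rangle$ (such a quadric is a cone with vertex $P$, so vanishing on $pq$ forces it to vanish on the whole plane, hence on $p'q'$), and for the remaining generator one uses that $Q$ vanishes at all four points $p,q,p',q'$. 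You replace this synthetic step by an explicit computation: you identify the deck transformation as the sign flip $y_6\mapsto -y_6$ and observe that the only $y_6$-dependent term among the generators $q_1',\ldots,q_6'$ of the ideal is $-y_6^2$, so each polarization $Q_i$ involves the sixth coordinates only through $-y_6z_6$, which is invariant under the simultaneous flip; invariance of $\tilde g$, hence of $g$, follows term by term. What your approach buys is a short, completely explicit verification (given the equations of Proposition \ref{equationY}) together with a more careful treatment of the passage through the Stein factorization and of why the induced involution is \emph{the} covering involution of the degree-two map $f$ — points the paper compresses into ``it suffices to study the involution induced by $\aalpha$.'' What the paper's approach buys is coordinate-freeness: it applies to any nodal K3 presented as (cone over a surface) $\cap$ (quadric), without the specific basis $q_1,\ldots,q_6$. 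One small inaccuracy in a side remark of yours: the lift of the $\Sigma_5$-action in the proof of Lemma \ref{auts5} uses $y_6\mapsto iy_6$, not $y_6\mapsto -y_6$; the deck involution is the square of that lift. This does not affect your argument.
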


\begin{proof} It suffices to study the involution induced by 
  $\aalpha \colon (S')^{[2]}\to Y$.
  The linear system of quadrics containing $S'$ is generated by 
  the five dimensional system of quadrics containing 
  the cone $\cone$ and the quadric $Q$. 
  For a pair $\{p,q\}\in (S')^{[2]}$ consider another pair $(p',q')\in (S')^{[2]}$ such that 
  $p'$ (resp.~$q'$) is the second point of intersection of the line $Pp$ (resp.~$Pq$) with $Q$,
  since the quadrics containing $\cone$ have the property:
  when they vanish on the line $pq$ then they vanish on the line $p'q'$.
  Moreover, $Q$ vanishes at the points $p,q,p',q'$, so the proof is finished.
\end{proof}

 We can now describe the ramification locus of the map $\overline{g}$.
 Let $L_{ij}\simeq\PP^1\times \PP^1$ with $i\neq j$ be the strict transform on $\overline{S^{[2]}}$ 
 of the surface  $L_{ij}'=\{\{p,q\}\in S^{[2]}\colon p\in l_i, q\in l_j \}$ 
 (where $l_i$ for $i=1,\dots,10$ are the strict transforms of the lines from $S'$ on $S$). 
 Note that $L_{ij}'$ is isomorphic to $\PP^1\times \PP^1$ and that there are $45$ such surfaces.
 
 \begin{cor}\label{branch} 
   The branch locus of the map $\overline{g}$ consists of $40$
   surfaces on $\overline{S^{[2]}}$. Each such surface is an element
   of one of the following sets:
\begin{itemize}
\item the set of $30$ surfaces $L_{ij}\simeq\PP^1\times \PP^1$ from
  $S^{[2]}$ for $i\neq j$ such that the lines $\nu(l_i)$ and
  $\nu(l_j)$ do not intersect.
\item The set of ten planes $\overline{l_{i}^{[2]}}\subset
  \overline{S^{[2]}}$ which are the strict transforms of the ten
  planes $l_i^{[2]}\subset S^{[2]}$.
\end{itemize}
\end{cor}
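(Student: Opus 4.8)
The plan is to realize the branch locus of $\overline{g}$ as the fixed locus of the covering involution of this generically $2:1$ morphism, and then to match its components with the forty planes of $Y_A[2]$ by means of the $\Sigma_5$-action.

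By Lemma~\ref{rho} the deck transformation of $f\colon Z\to Y$ is induced by the covering involution $\iota$ of the double cover $S\to S_5$; on the Hilbert scheme it is the involution $\tau\colon\{p,q\}\mapsto\{\iota(p),\iota(q)\}$. Since $\overline{g}=f\circ c$ with $c$ birational, the branch divisor of $\overline{g}$ is the strict transform of the codimension-two part of $\mathrm{Fix}(\tau)$ that is not contracted by $c$; by the theory of double EPW sextics this is precisely the locus mapping onto the branch surface $Y_A[2]$. Now $\iota$ is the covering involution of the double cover $S\to S_5'$ branched along the ten disjoint $(-2)$-curves $l_1,\dots,l_{10}$, so $\mathrm{Fix}(\iota)=l_1\cup\cdots\cup l_{10}$ while $\iota$ acts on each exceptional curve $e_k$ with only two fixed points. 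Hence $\mathrm{Fix}(\tau)$ splits into the surfaces coming from pairs of $\iota$-fixed points, namely the $45$ surfaces $L_{ij}'\cong\PP^1\times\PP^1$ $(i<j)$ and the $10$ planes $l_i^{[2]}\cong\PP^2$, together with the anti-invariant surface $\Xi=\{\{p,\iota(p)\}\colon p\in S\}\cong S/\iota$ coming from swapped pairs.

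I would first discard $\Xi$: a pair $\{p,\iota(p)\}$ is cut out on $S'=\cone\cap Q$ by a ruling of the cone $\cone$ through its vertex $P$, and by the computation at the end of the proof of Proposition~\ref{equationY} every such pair is sent by $g$ to the single point $(1:\ldots:1)\in Y$; thus $\Xi$ is contracted and contributes no branch divisor. For the surfaces $L_{ij}'$ and $l_i^{[2]}$ I would use that $\overline{g}$ is $\Sigma_5$-equivariant (Lemma~\ref{auts5}) and that $\Sigma_5=\mathrm{Aut}(S_5)$ acts on the ten lines as the automorphism group of the Petersen graph; consequently the $\tau$-fixed surfaces form three orbits: the planes $l_i^{[2]}$ (length $10$), the adjacent pairs $L_{ij}'$ with $\nu(l_i)\cap\nu(l_j)\neq\emptyset$ (the $15$ edges of the graph), and the non-adjacent pairs (length $30$). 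On the target, $Y=Y_A$ is the EPW sextic (Proposition~\ref{Y is EPW}) and $f$ is a finite double cover, so $Z$ is the double EPW sextic of $A$; here $Y_A[2]$ is the union of the $\beta\Sigma_5$-orbits of planes of lengths $10$ and $30$, whereas the incident planes lying in $A$ form the orbits of lengths $5$ and $15$ (Propositions~\ref{singY}, \ref{20inc} and \ref{Y is EPW}). Since an equivariant morphism carries a length-$a$ orbit onto a length-$b$ orbit with $b\mid a$, the adjacent $L_{ij}'$ (length $15$) cannot cover the length-$30$ component of $Y_A[2]$; as the planes $l_i^{[2]}$ account for its length-$10$ component, the length-$30$ component must be covered by the non-adjacent $L_{ij}'$, and the adjacent ones map instead onto the length-$15$ orbit of incident planes. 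Transporting this to $\overline{S^{[2]}}$ is routine: the non-adjacent $L_{ij}'$ meet the indeterminacy set $\mathcal{K}$ only along curves, so their proper transforms $L_{ij}$ stay isomorphic to $\PP^1\times\PP^1$, while $l_i^{[2]}\subset\mathcal{K}$ is flopped and, by Lemma~\ref{curve}, its proper transform $\overline{l_i^{[2]}}\cong\PP^2$ is exactly the component on which $\mu(C)-\Delta^{[2]}_S$ restricts to $\oo(1)$ and hence maps isomorphically onto its plane. This yields the $30+10=40$ branch surfaces of the statement.

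The main obstacle is the apparent tension that $\tau$ fixes all $45$ surfaces $L_{ij}'$ pointwise, yet only the $30$ non-adjacent ones are branch divisors. The resolution is that the $15$ adjacent $L_{ij}'$ are sent onto the incident planes, which lie in $A$ and hence in $\mathrm{Sing}(Y)\setminus Y_A[2]$; over these planes the covering involution of the double EPW sextic has fixed locus only along $f^{-1}(Y_A[2])$, so after the flops of Section~\ref{flop} the proper transforms of the adjacent $L_{ij}'$ are contracted by the Stein morphism $c$ and drop out of the ramification of $f$. Making this precise is exactly where the bookkeeping of Section~\ref{flop} is needed: one must verify that the strict transforms of the adjacent $L_{ij}'$ are among the divisors contracted by $c$, rather than mapping finitely onto their image planes.
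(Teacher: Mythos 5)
Your skeleton---identify the branch divisor inside the fixed locus of the covering involution supplied by Lemma~\ref{rho}, discard the swapped-pairs surface $\Xi$ because it is sent to the point $(1:\ldots:1)$ (end of the proof of Proposition~\ref{equationY}), and use Lemma~\ref{curve} to see that the ten planes $\overline{l_i^{[2]}}$ map onto planes---is the paper's skeleton, and those parts are correct. The genuine gap is in how you dispose of the fifteen adjacent surfaces $L_{ij}'$. You assert that they ``map onto the length-$15$ orbit of incident planes'' and simultaneously that their proper transforms are ``contracted by the Stein morphism $c$''. These cannot both hold: $f$ is finite, so $\dim\overline{g}(L_{ij})=\dim c(L_{ij})$, and a surface contracted by $c$ cannot dominate a plane. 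In fact the adjacent $L_{ij}$ are contracted by $\overline{g}$ to \emph{points}: when $\nu(l_i)$ and $\nu(l_j)$ meet, any quadric containing $S'$ either contains the plane spanned by the two lines or cuts that plane exactly along them, so the condition of containing a line $pq$ with $p\in l_i$, $q\in l_j$ is one and the same condition for all such pairs; this elementary observation is exactly what the paper records in the remark following the corollary. The incident planes are instead the images of the twenty \emph{divisors} $B_1,\dots,B_{20}$ of Proposition~\ref{singZ}; the image point of an adjacent $L_{ij}$ merely lies on several incident planes, which is presumably the source of the conflation. So the ``resolution of the tension'' you propose rests on a false picture, and the verification you defer to the bookkeeping of Section~\ref{flop} is of a statement that contradicts your own intermediate claim.

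This error also undermines your argument that the thirty non-adjacent $L_{ij}$ are not contracted, since the orbit-divisibility matching is anchored both on the false assignment of the adjacent pairs and on the assertion that ``$Z$ is the double EPW sextic of $A$'' with branch locus $f^{-1}(Y_A[2])$ ``by the theory of double EPW sextics''. Here $A$ contains twenty decomposable vectors (the incident planes), which is precisely the degenerate situation excluded from O'Grady's construction of smooth double EPW sextics, so that theory cannot simply be quoted; moreover the paper needs Corollary~\ref{branch} as an independent geometric input in order to prove only later (Section~\ref{section_proof_desing}) that $Z\cong E^4/G$, so importing structural facts about this particular double cover at this stage risks circularity. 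The paper's substitute is direct geometry: for disjoint $\nu(l_i)$, $\nu(l_j)$ the span is a $\PP^3$ meeting $S_5$ in a third line $\nu(l_k)$, from which one reads off that $L_{ij}$ is $L_{ij}'$ blown up at the single point $(A_i,A_j)$ and that $\overline{g}$ restricted to it is finite onto its image. Replacing your EPW appeal and your incident-plane claim by these two local computations (the quadric argument for adjacent pairs, the $\PP^3$ argument for non-adjacent ones) repairs the proof; the remaining structure of your argument then goes through and agrees with the paper's.
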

 
\begin{proof}
  The surfaces $L_{ij}'$ are invariant with respect to $\aalpha$, so
  it is enough to show that they are not contracted by $\overline{g}$.
  First observe that $L_{ij}$ is isomorphic to $\PP^2$ blown up at two
  points. Indeed, consider the $\PP^3$ which is the span of two
  disjoint lines $\nu(l_i)$ and $\nu(l_j)$ on $S_5$.  It cuts
  $S_5$ along three lines $\nu(l_i)$, $\nu(l_j)$ and $\nu(l_k)$
  such that $\nu(l_k)$ cuts each $\nu(l_i)$ and $\nu(l_j)$ in
  one point $\nu(A_i)$ and $\nu(A_j)$.  We can deduce that the
  restriction of the map $S^{[2]}\dasharrow \overline{S^{[2]}}$ to
  $L_{ij}$ is the blow-up of the point $(A_i,A_j)\in L_{ij}$
  (corresponding to the intersection with the line $e_ie_j$).
 
  By Lemma \ref{curve} the strict transform of the system
  $|\mu(C)-\Delta_S^{[2]}|$ restricted to the plane
  $\overline{l_i^{[2]}}$ is the system $\mathcal{O}_{\PP^2}(1)$.
  Since $\overline{l_i^{[2]}}$ maps through $c$ to a plane we infer
  that $\overline{l_i}^{[2]}$ is in the ramification locus.
 \end{proof}
 
\begin{rem}
  The images of fifteen of the 45 surfaces $L_{ij}$ that are not in
  the ramification locus of $\overline{g}$ are mapped to points by
  $\overline{g}$.  Indeed, in the case when $\nu(l_i)$ and
  $\nu(l_j)$ intersect, a quadric containing $S'$ either cuts the
  plane spanned by $\nu(l_i)$ and $\nu(l_j)$ along these two lines
  or it contains this plane.  The definition of the map $\aalpha$
  implies that $\overline{g}$ maps $L_{ij}$ to a point.
\end{rem}
  
Let us identify the exceptional divisors on $\overline{S^{[2]}}$ that
are mapped to surfaces on $Z$ by $c$. The exceptional divisors $B_i$
will be the strict transforms on $\overline{S^{[2]}}$ of divisors
$B_i'\subset S^{[2]}$ defined in the following way.
Fifteen of them are easy to describe. Let 
$B_i'=\{\{p,q\}\in S^{[2]} \colon p\in e_i \}$, 
where $e_i$ is one of the $15$ curves contracted by 
the map $S\rightarrow S'$, then
these divisors are already contracted by 
$\bbeta\colon S^{[2]}\to (S')^{[2]}$. 

Let us find the remaining five divisors $B'_{16},\dots,B'_{20}$.    
There are five pencils of conics on $S_5\subset \PP^5$ 
(if $S_5\to \PP^2$ is the blow-up in 4 points 
the pencils correspond to lines passing through these points and the conics through the 4 points). 
These pencils induce five elliptic pencils $\lambda_1,\dots,\lambda_5$ on $S$. 
We define the divisors 
$B_{15+i}'=\{ \{p,q\} \in S^{[2]}: \exists K\in \lambda_i\colon p\in K, q\in K  \}$.
Each of divisors $B_{15+i}'$ has a fibration $B_{15+i}'\rightarrow \PP^1$ with fibers of type $K^{[2]}$.

Denote by $B_1,\dots, B_{20}$ the strict transforms of 
$B'_1,\dots,B'_{20}$ on $\overline{S^{[2]}}$.
We know from Proposition \ref{Y is EPW} that $Y$ is singular along $60$ planes.
We shall see that the twenty of them described in Proposition \ref{20inc} 
are the images of $B_1,\dots,B_{20}$.
Note that in Proposition \ref{singE^4} we prove that the images of 
$B_1,\dots,B_{20}$ on $Z$ are singular $K3$ surfaces with normalization being the 
Vinberg K3 surface.
\begin{prop}\label{singZ} The divisors $B_i$ for $i=1,\dots, 20$ are
  contracted through $c$ to surfaces in $Z$ such that the images of
  the $B_i$ on $Z$ intersect each other.  
    A general fiber of $c$ in the divisors $B_1,\dots,
    B_{15}$ is a curve of type
    $c_i$, with $i>0$, as defined in section~\ref{flop}.
Moreover, there are no other divisors on $S^{[2]}$ 
that are contracted to surfaces by $g$. 
\end{prop}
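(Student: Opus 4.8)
The plan is to combine the factorization $g=\aalpha\circ\bbeta$ with the Stein factorization $\overline{g}=f\circ c$. Since $f$ is finite it preserves dimensions, so a divisor is contracted by $c$ to a surface if and only if it is contracted by $\overline{g}$ to a surface; and since the flops of Section~\ref{flop} are isomorphisms in codimension one, it is enough to analyse how $g$ contracts the divisors $B_i'$ on $S^{[2]}$. I would read everything off the formula $\tilde{g}(p,q)=(\dots:Q_j(\phi(p),\phi(q)):\dots)$ from the proof of Proposition~\ref{equationY}. For $B_1',\dots,B_{15}'$, fix a general point $q$ and let $p$ range over $e_i$; then $\phi(p)$ is the constant node $\phi(e_i)$, so $\tilde{g}$ is constant along this $\PP^1$. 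Hence $g$ contracts the curve $\{\{p,q\}\colon p\in e_i\}$, whose class is $c_i=[e_i]$ with $i>0$, and this is the general fibre of $c$ on $B_i$, which proves the fibre statement. The image $g(B_i')$ is the image of $S'$ under the linear projection $y\mapsto(\dots:Q_j(\phi(e_i),y):\dots)$ given by the polars of the six quadrics at the node; this projection is generically finite onto a surface (in fact onto a plane), so $c(B_i)$ is a surface.

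For the five divisors $B_{16}',\dots,B_{20}'$ attached to the elliptic pencils $\lambda_i$ the essential geometric input is the following. A general fibre $K\in\lambda_i$ has image $\phi(K)\subset S'$ an elliptic quartic curve spanning the $\PP^3$ which is the cone over the plane of the corresponding conic on $S_5$. The six quadrics $q_1',\dots,q_6'$ in the ideal of $S'$ restrict on this $\PP^3$ to quadrics through $\phi(K)$, and these form only a pencil; consequently the restriction of $g$ to $K^{[2]}$ factors through a non-constant rational map $K^{[2]}\to\PP^1$ followed by a linear embedding $\PP^1\hookrightarrow\PP^5$, so $g(K^{[2]})$ is a line. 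Letting $K$ vary in the pencil the resulting lines move, sweeping out a surface with one-dimensional general fibre contained in a single $K^{[2]}$, so $B_{15+i}'$ is contracted to a surface and $c(B_{15+i})$ is a surface.

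That the images meet I would check upstairs and then push down, using that $c$ is a morphism so that $c(B_i)\cap c(B_j)\supseteq c(B_i\cap B_j)$. The divisors $B_i'$ and $B_j'$ indeed intersect on $S^{[2]}$: for $i,j\le 15$ along $\{p\in e_i,\ q\in e_j\}$; for $i\le 15<j$ by taking $p\in e_i$ and $q$ a further point of the $\lambda_{j-15}$-fibre through $p$; and for $15<i<j$ from the fact that two distinct conic pencils on $S_5$ meet in a point off the branch locus, whose two preimages $a,b$ on $S$ give $\{a,b\}\in B_i'\cap B_j'$. These intersections survive on $\overline{S^{[2]}}$, so the images meet on $Z$.

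The step I expect to be the main obstacle is the last assertion, that there are no further contracted divisors. Here I would use the EPW description of $Y=Y_A$. By Proposition~\ref{Y is EPW} the singular locus of $Y$ is the union of the degree-$40$ surface $Y_A[2]$, i.e.\ the forty planes in the two orbits of length $10$ and $30$, and the twenty planes $V$ with $\bigwedge^3\tilde{V}\in A$. A divisor contracted by $\overline{g}$ maps to a surface over which $\overline{g}$ fails to be finite, and for the double cover of an EPW sextic such a surface must lie in the singular locus of $Y$; by Corollary~\ref{branch} the forty planes of $Y_A[2]$ are the images of the two-dimensional ramification surfaces, which are not divisors, so no contracted divisor can lie over them. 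Thus every contracted divisor maps onto one of the twenty planes with $\bigwedge^3\tilde{V}\in A$, and O'Grady's analysis of the double cover over such a plane shows that each of them carries exactly one contracted divisor. Since there are exactly twenty such planes by Proposition~\ref{20inc} and $B_1,\dots,B_{20}$ already map onto them, there are no others.
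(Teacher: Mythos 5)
Your proposal is correct and follows essentially the paper's own route: the $c_i$-fibres on $B_1,\dots,B_{15}$ are read off from the polarized quadrics defining $g$, the divisors $B_{16},\dots,B_{20}$ are contracted because the quadrics through $S'$ restrict to a mere pencil on the $\PP^3$ spanned by an elliptic fibre (the paper phrases this via the rulings of the smooth pencil members, you via factorization through the pencil --- the same geometry), the intersections are produced by exhibiting explicit points, and the final exclusion rests on O'Grady's correspondence between contracted divisors and planes with $\bigwedge^3\tilde{V}\in A$ together with completeness of the twenty-plane family (Corollary \ref{cor20inc}). The one inference that is strictly weaker than what you need is your exclusion of the forty planes of $Y_A[2]$ --- the fact that the ramification surfaces over them are two-dimensional does not by itself forbid some further divisor from being contracted onto those planes --- but this is repaired immediately by the same O'Grady input you invoke in the next clause, since none of those forty planes corresponds to a point of $A$ (see the proof of Corollary \ref{cor20inc}), which is also what the paper's own incidence-plus-completeness argument uses.
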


\begin{proof}
Let $u$ be the strict transform on $S$ of a conic in $S_5\subset \PP^5$. 
It is enough to prove that 
the surface $u^{[2]}\subset S^{[2]}$ maps to a line in $Z$.
First, the curve $u$ is contained in $\PP^3_u$ 
(spanned by $P$ and the plane spanned by the conic in $S_5$).
The curve $u$ is the intersection of two quadrics: $Q$ and the cone with vertex $P$.
When $u$ is chosen generically, the quadric $Q\cap \PP^3_u$ is smooth and thus is isomorphic to 
$\PP^1\times \PP^1$. Each line on $Q$ cuts $S'$ in two points such that the two rulings on $Q$ 
define two curves on $S^{[2]}$. 
By the description of the map $\aalpha$ we see that both these curves are 
contracted by $g$ to the same point on $Y$. 

Finally, we find explicitly points in the intersections of each pair of the divisors $B_i$, $B_j$ for 
$0\leq i<j\leq 15$.

In order to prove that there are no more contracted divisors, 
it is enough to observe that each such divisor maps to
a plane in the singular locus of $Y\subset \PP^5$. On the other hand, we know that~$Y$ is an EPW sextic
and from \cite{ogrady-incident} the contracted divisors $B_i$ for $1,\dots,20$ are mapped to a maximal set of incident planes. 
There are no more contracted divisors since the image of such a divisor would 
be a plane which intersects all the twenty planes above.
\end{proof}

\section{A Kummer type IHS and the Debarre-Varley p.p.a.v.}\label{secPolarization}
The IHS fourfold $X_0$ constructed by 
Donten-Bury and Wi\'{s}niewski in \cite{DW} 
is a quotient of a principally polarized abelian fourfold 
which we study in detail in this section. 
In Theorem~\ref{desingularisation} we show that $X_0$ is birationally 
equivalent to~$S^{[2]}$.

\subsection{Polarization}\label{DW IHS}
The variety $X_0$ is constructed as a desingularisation of a quotient $E^4/G$ where
$E=\C /(\Z+i\Z)$ is the elliptic curve with complex multiplication by 
$\Z[i]$ and $G\cong Q_8\times_{\Z_2}D_8$ is a subgroup of $Aut(E^4)$.
 
Recall that the action of $G$ on $E^4$ is given by the matrices $T_j$ for $j=0,\ldots,4$, listed in Section~\ref{aute4} below (cf.~\cite[\S 6B]{DW}). 
They satisfy the relations $T_j^2=I$,
$T_jT_k=-T_kT_j$, and thus $(T_jT_k)^2=-I$ for $j\ne k$.

The abelian fourfold $E^4$ also has as automorphisms
$$
i\colon E^4\,\longrightarrow E^4,\qquad  (x,y,z,t)\,\longmapsto\, (ix,iy,iz,it)~,
$$
and $(-1):=i^2$.

Let us find a polarization $H\in NS(E^4)$ on the abelian fourfold $E^4=\C^4/ \Lambda$, 
where $\Lambda=\Z[i]^4$, which is invariant with respect to $G$.
By \cite[\textsection 2.2]{BL} this is equivalent to finding a
$G$-invariant Hermitian  $4\times 4$ matrix $H$
with coefficients in $\Z[i]$.

\begin{prop}\label{herm form}
Any $G$-invariant Hermitian matrix with coefficients in $\Z[i]$
has the following shape, where $a\in \Z$:
$$
H_a\,:=\,a\begin{pmatrix} 2&0&1&1+i \\
0&2&1+i&-i\\
1&1-i&2&0\\
1-i&i&0&2
\end{pmatrix}~.$$
\end{prop}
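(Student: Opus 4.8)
The plan is to reduce the statement to a one-dimensionality result from Schur's lemma and then cut out the integral points. Write $T_0,\dots,T_4$ for the generators of $G\subset\Aut(E^4)$. By \cite[\S 2.2]{BL}, a Hermitian matrix $H$ defines a $G$-invariant polarization on $E^4=\C^4/\Lambda$ exactly when $T_j^{\ast}HT_j=H$ for $j=0,\dots,4$, where $\ast$ denotes conjugate transpose. This is a $\C$-linear condition on $H$, so the $G$-invariant Hermitian matrices form a real subspace of the space of all Hermitian matrices; the goal is to show this subspace equals $\mathbb{R}\cdot H_1$, with $H_1$ the matrix of the statement (the case $a=1$), and then to impose $\Z[i]$-integrality.

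First I would show that $\C^4$, with the $G$-action through the $T_j$, is irreducible. From $T_j^2=I$ and $T_jT_k=-T_kT_j$ one gets $(T_jT_k)^2=-I$, so $-I=[T_j,T_k]$ is a central element of $G$ and the images of the $T_j$ commute in $G/\{\pm I\}$. Since $|G|=32$, the quotient $G/\{\pm I\}$ is the elementary abelian group $(\Z/2)^4$ and $G$ is extraspecial of order $2^{1+4}$; such a group has a unique faithful irreducible representation, of dimension $2^2=4$. Because $-I$ acts on $\C^4$ as $-\id\ne\id$, the representation $\C^4$ does not factor through $G^{\mathrm{ab}}$ and so has no one-dimensional constituent; being $4$-dimensional it must be the faithful irreducible representation. (Equivalently, one checks that the sixteen monomials $\prod_{j\in S}T_j$, $S\subseteq\{0,\dots,4\}$, taken up to sign, are linearly independent and hence span $M_4(\C)$, so by Schur the commutant of $G$ is scalar.)

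The condition $T_j^{\ast}HT_j=H$ rewrites as $H\,T_j=(T_j^{-1})^{\ast}H$, i.e.\ $H$ intertwines the representation of $G$ on $\C^4$ with its conjugate-dual $g\mapsto(g^{-1})^{\ast}$. Both are irreducible by the previous step, so Schur's lemma gives $\dim_{\C}\{H:T_j^{\ast}HT_j=H\ \forall j\}\le 1$; consequently the real space of $G$-invariant Hermitian matrices has real dimension at most $1$. A direct matrix computation with the explicit $T_j$ from Section~\ref{aute4} shows that $H_1$ is Hermitian, nonzero, and satisfies $T_j^{\ast}H_1T_j=H_1$ for every $j$, so the invariant Hermitian matrices are precisely $\mathbb{R}\cdot H_1$.

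Finally, if $H=cH_1$ with $c\in\mathbb{R}$ has all entries in $\Z[i]$, then its $(1,3)$-entry $c$ lies in $\Z[i]\cap\mathbb{R}=\Z$, whence $c=a\in\Z$ and $H=H_a$. I expect the main obstacle to be the irreducibility input: once $\C^4$ is known to be irreducible the one-dimensionality is automatic, so the crux is the structural identification of $G$ as the extraspecial group $2^{1+4}$ (equivalently, the verification that the $T_j$ generate $M_4(\C)$ as an algebra). The remaining verification that $H_1$ is itself $G$-invariant is a routine computation.
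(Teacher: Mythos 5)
Your proof is correct, but it takes a genuinely different route from the paper's. The paper's proof is a pure computation: it observes that the invariance conditions $T_j\cdot H\cdot\overline{T}_j^{\,t}=H$, $0\le j\le 4$, are \emph{linear} equations in the entries of $H$, and solves this linear system directly, producing the whole family $H_a$ at once. (Your convention $T_j^{\ast}HT_j=H$ and the paper's agree as conditions on all of $G$, by replacing $g$ with $g^{-1}$.) You instead prove a priori that the solution space is small: $G$ is extraspecial of order $2^{1+4}$, its unique irreducible representation with nontrivial central character is $4$-dimensional, hence $\C^4$ is $G$-irreducible, and Schur's lemma — applied to $H$ viewed as an intertwiner from the representation to its conjugate-dual — bounds the complex solution space by one dimension, so the Hermitian solutions form at most a real line. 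The computation then shrinks to checking that the single matrix $H_1$ is invariant, and $\Z[i]$-integrality of the $(1,3)$-entry forces the scalar to lie in $\Z$. What each approach buys: yours explains conceptually why the family must be one-parameter, independently of the particular generators, at the price of invoking the representation theory of extraspecial $2$-groups; the paper's is self-contained, needs no structure theory, and yields the explicit matrix in one step. Two points you leave implicit and should state: extraspecialness requires $Z(G)=\{\pm I\}$ exactly, not merely $[G,G]=\{\pm I\}$ — this does follow here, since every element of $G$ is $\pm T_S$ for $S\subseteq\{0,\dots,4\}$ and any such element with $S$ a proper nonempty subset anticommutes with some generator, while $|G|=32$ forces $T_0T_1T_2T_3T_4=\pm I$; and the passage from a complex line of intertwiners to a real line of Hermitian solutions uses that $H$ and $iH$ cannot both be Hermitian unless $H=0$. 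Neither is a gap of substance.
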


\begin{proof}
The $G$-invariant Hermitian matrices satisfy
equations $T_j\cdot H\cdot \overline{T}_j^t=H$,
for each $0\leq j\leq 4$,
where $\overline{T}_j^t$ is the
transposed of the complex conjugate of $T_j$. Notice that these are linear equations
in the coefficient of $H$.
\end{proof}

We then find that $H:=H_1$ is positive definite and since $\det H=1$, 
it defines a principal polarization on $E^4$.

It turns out that the principally polarized abelian variety $(E^4,H)$
was known before, see \cite{D,V}.
In fact, Debarre in \cite{D} proved that there exists a unique indecomposable p.p.a.v.\
$(A_{10},L)$ of dimension $4$ that is not a hyper-elliptic Jacobian and admits the
maximal number of $10$ vanishing theta-constants
(points of order two on a symmetric theta divisor which are singular with even multiplicity).

\begin{prop}\label{abelian} The abelian fourfold $(E^4,H)$ is isomorphic as a p.p.a.v. to the
Debarre-Varley p.p.a.v. $(A_{10},\Theta)$.
In particular, the singular locus of the corresponding theta divisor consists of $10$ 
ordinary double points and if theta divisor is chosen to be symmetric, 
these ODPs are two-torsion points of $E^4$.
\end{prop}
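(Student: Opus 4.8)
The plan is to recognize $(E^4,H)$ through Debarre's characterization in \cite{D}: among indecomposable four-dimensional principally polarized abelian varieties that are not hyperelliptic Jacobians, the number of vanishing theta-constants is at most $10$, and there is a unique one attaining this bound, namely $(A_{10},\Theta)$. Thus it suffices to verify three properties of $(E^4,H)$: that it carries (at least) $10$ vanishing even theta-constants, that it is indecomposable, and that it is not a hyperelliptic Jacobian. Recall that a vanishing theta-constant is by definition a zero of $\theta[m](0,\tau)$ at an even two-torsion point $m$ of a symmetric theta divisor, so the locations of these $10$ nulls are automatically two-torsion points of $E^4$; this accounts for the last assertion of the proposition once the isomorphism is in place.

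First I would write down an explicit period matrix. Taking $H=H_1$ from Proposition \ref{herm form}, let $E=\mathrm{Im}\,H$ be the associated alternating form on $\Lambda=\Z[i]^4$; since $\det H=1$ this form is unimodular, so there is a symplectic $\Z$-basis $\lambda_1,\dots,\lambda_4,\mu_1,\dots,\mu_4$ of $(\Lambda,E)$. Expressing the $\mu_j$ in the $\C$-basis $\lambda_1,\dots,\lambda_4$ of $\C^4$ produces a matrix $\tau$, and positivity of $H$ guarantees $\tau\in\mathfrak{H}_4$. Because $E^4$ has complex multiplication by $\Z[i]$, the entries of $\tau$ lie in $\QQ(i)$.

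With $\tau$ fixed I would then determine the vanishing even theta-constants. This is the computational core of the argument and the step I expect to be the main obstacle, both because one must pin down conventions and obtain certified (not merely numerical) information on which of the $136$ even theta-constants are zero, and because one must organise the computation. Here the symmetry established in Section \ref{aute4} is decisive: the automorphism group of $(E^4,H)$ and the symplectic group act by permutations on the $256$ characteristics, so the computation reduces to a few orbit representatives, and the sixteen $G$-fixed points together with the $(16,6)$/$(16,10)$ configuration single out the candidate two-torsion singular points. The outcome to be confirmed is that exactly $10$ even theta-constants vanish.

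It remains to deduce indecomposability and to exclude hyperelliptic Jacobians. For a nontrivial product decomposition the product formula for theta-constants forces every characteristic that is odd on two of the factors to vanish, which already produces at least $28$ vanishing even theta-constants (the minimum, attained by a split of type $1+3$ with a non-hyperelliptic genus-three Jacobian); since $10<28$, our variety is \emph{indecomposable}. To rule out a hyperelliptic Jacobian I would compute, at each of the $10$ even two-torsion zeros, the Hessian of $\theta$; via the heat equation this Hessian is, up to a nonzero constant, the matrix of first $\tau$-derivatives of the corresponding theta-constant, and I would check that it has rank $4$. Hence each of the $10$ points is an isolated ordinary double point of $\Theta$, whereas the theta divisor of a genus-four hyperelliptic Jacobian is singular along a curve (Riemann singularity theorem) and so has no isolated singularities; thus $(E^4,H)$ is not a hyperelliptic Jacobian. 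With all three hypotheses verified, Debarre's uniqueness statement in \cite{D} (together with Varley \cite{V}) yields the isomorphism of principally polarized abelian varieties $(E^4,H)\cong(A_{10},\Theta)$, and transporting the known description of the singular locus of $\Theta$ on $(A_{10},\Theta)$ — exactly $10$ ordinary double points, located at two-torsion points for a symmetric theta divisor — gives the remaining "in particular" assertions.
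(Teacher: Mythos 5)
Your proposal is correct in outline but takes a genuinely different route from the paper. The paper's proof is lattice-theoretic and essentially computation-free: the real part of $H$ is an integral quadratic form on $\Z[i]^4\cong\Z^8$ which one checks to be even, unimodular and positive definite, hence isometric to the $E_8$ root lattice; multiplication by $i$ is then an isometry $J$ of $E_8$ with $J^2=-1$, and Debarre's construction in \cite[\S 5]{D} produces $(A_{10},\Theta)$ from exactly this data $(E_8,J)$, so both the isomorphism and the description of the ten ODPs are imported directly from \cite{D}. You instead verify the hypotheses of Debarre's uniqueness statement: exactly $10$ vanishing even theta-nulls, indecomposability (correct, via the product formula, since any decomposable fourfold has at least $1\cdot 28=28$ vanishing even nulls), and exclusion of hyperelliptic Jacobians (also correct and genuinely necessary, since hyperelliptic genus-$4$ Jacobians have exactly $10$ vanishing even nulls as well; your Riemann-singularity-theorem argument that their theta divisor is singular along a curve, hence has no isolated ODPs, is the right discriminating criterion). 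What your route buys is independence from the internals of Debarre's construction --- you use only the characterization theorem and you re-derive the ODP structure directly. What it costs is that the decisive step is deferred and is the hardest one: certifying that exactly $10$ of the $136$ even theta-constants vanish cannot be done by numerics alone, since numerical evaluation of a theta series never proves exact vanishing; you would need to certify the zeros by the standard CM trick of finding automorphisms of $(E^4,H)$ that fix a characteristic but act by $-1$ in the theta transformation formula, and certify non-vanishing of the remaining $126$ by rigorous error bounds. One further caution: the symmetry results of Section \ref{aute4} that you invoke to organize the computation are, in the paper, derived \emph{from} Proposition \ref{abelian} (via the identification of $\Aut(E^4,H)$ with the centralizer of $J$ in $W(E_8)$), so to avoid circularity you must compute the unitary group $U(H)=\{M\in GL(4,\Z[i]):MH\overline{M}^t=H\}$ and its action on characteristics directly from the definition, which is possible but adds to the workload.
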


\begin{proof}
The real part of $H$ defines a $\Z$-valued quadratic form on $\Z^8=\Z[i]^4$
and one finds that it is a positive definite even unimodular quadratic form. Hence,
in a suitable basis, it
must be the quadratic form associated to the root system $E_8$.
Now the proposition follows from the construction in \cite[\S 5]{D}.
\end{proof}

\subsection{Invariant line bundles}\label{inv lb}
We will show that there are exactly $16$ $G$-invariant line bundles on $E^4$ whose
first Chern class is the alternating form $E={Im}\, H$.
First we consider the fixed points of the action of $G$ on $E^4$.

\begin{lemm}\label{lem-fixed}
The subgroup $(E^4)^G$ of points of $E^4$ which are fixed by $G$ is
isomorphic to $(\Z_2)^4$.
The $16$ fixed points are $(a_1,a_2,a_3,a_4)$ where $a_j$ is either $0$ or $\frac{1+i}{2}$.
Moreover,
these points are also the fixed points of the automorphism $i$ of $E^4$.
\end{lemm}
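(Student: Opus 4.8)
The plan is to reduce the computation of $(E^4)^G$ to linear algebra over the finite local ring $\Z[i]/2\Z[i]$. Write $E^4=\C^4/\Lambda$ with $\Lambda=\Z[i]^4$ and $G=\langle T_0,\dots,T_4\rangle$, so that a point $x\in E^4$ is $G$-fixed if and only if $(T_j-I)x\in\Lambda$ for $j=0,\dots,4$. The first step is to show that every $G$-fixed point is $2$-torsion: if $x$ is fixed by $T_0$ and $T_1$ then it is fixed by $T_0T_1$, and applying $T_0T_1$ once more together with $(T_0T_1)^2=-I$ gives $x=(T_0T_1)^2 x=-x$, so $2x=0$. Hence $(E^4)^G\subseteq E^4[2]$.

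Next I would identify $E^4[2]=(\tfrac12\Lambda)/\Lambda$ with $\Lambda/2\Lambda=(\Z[i]/2\Z[i])^4$ via $\tfrac12 w\mapsto \bar w$, and observe that $\Z[i]/2\Z[i]\cong \Ff_2[t]/(t^2)$, where $t$ is the class of $1+i$ and $t^2=0$ because $(1+i)^2=2i$. Under this identification the condition $(T_j-I)\tfrac12 w\in\Lambda$ becomes $\bar T_j\bar w=\bar w$, so $(E^4)^G\cong\bigcap_j\ker(\bar T_j-I)$ inside $(\Ff_2[t]/(t^2))^4$. Inspecting the matrices $T_j$ (Section~\ref{aute4}) one checks that each reduces to the identity modulo the prime $(1+i)$, so that $\bar T_j=I+tN_j$ for a well-defined $N_j\in M_4(\Ff_2)$. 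Writing a general element uniquely as $x_0+ty$ with $x_0,y\in\Ff_2^4$ and using $t^2=0$, one gets $(\bar T_j-I)(x_0+ty)=tN_jx_0$, which vanishes exactly when $N_jx_0=0$. Thus the fixed set is $\{x_0+ty:\ N_jx_0=0\ \text{for all }j\}$, of cardinality $2^4\cdot|\bigcap_j\ker N_j|$; the subgroup $t\,\Ff_2^4$ is always fixed and consists precisely of the $16$ points $\tfrac{1+i}{2}\epsilon$ with $\epsilon\in\{0,1\}^4$, i.e.\ the points of the statement, forming a group isomorphic to $(\Z_2)^4$.

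The crux—and the step that genuinely uses the explicit matrices rather than the abstract relations—is to verify that $\bigcap_j\ker N_j=0$, equivalently that the linearizations of the $T_j$ along $(1+i)$ have no common kernel vector over $\Ff_2$. I expect this to be the main obstacle: modulo $2$ the relations $T_j^2=I$ and $T_jT_k=-T_kT_j$ become vacuous (there $-1=1$ and $(1+i)^2=0$), so they impose no condition on the $N_j$, and the triviality of $\bigcap_j\ker N_j$ must be read off from the concrete entries. Granting this, the fixed set collapses to $t\,\Ff_2^4$, giving exactly the $16$ listed points and proving $(E^4)^G\cong(\Z_2)^4$.

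For the final assertion about $i$, I would compute the fixed locus one factor at a time. On a single copy $E=\C/\Z[i]$ one has $ix=x$ iff $(i-1)x\in\Z[i]$, i.e.\ $x\in\tfrac{1}{i-1}\Z[i]=\tfrac{1+i}{2}\Z[i]$, and modulo $\Z[i]$ this coset group is $\{0,\tfrac{1+i}{2}\}\cong\Z_2$ (since $2\cdot\tfrac{1+i}{2}=1+i\in\Z[i]$). Therefore the fixed locus of the diagonal automorphism $i$ on $E^4$ is precisely the set of $16$ points found above, so $(E^4)^{\langle i\rangle}=(E^4)^G$, which completes the proof.
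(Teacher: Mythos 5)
Your strategy is sound and genuinely different from the paper's. The paper makes the same first reduction---$G$-fixed points are $2$-torsion, via $(T_jT_k)^2=-I$---but then simply records that ``one checks'' which of the $256$ two-torsion points are fixed by $G$, and again which are fixed by $i$. You instead transport everything to linear algebra over $\Z[i]/2\Z[i]\cong\Ff_2[t]/(t^2)$: each $T_j\equiv I\pmod{(1+i)}$ (true for $T_0$ by inspection, and for $T_j=N_5^jT_0N_5^{-j}$ since $T_j-I=N_5^j(T_0-I)N_5^{-j}$), so $\bar T_j=I+tN_j$ with $N_j\in M_4(\Ff_2)$; the fixed locus inside $E^4[2]$ is $\{x_0+ty:\ N_jx_0=0\ \text{for all } j\}$; the $16$ listed points form the subgroup $t\,\Ff_2^4$ and are automatically fixed; and the lemma reduces to the single claim $\bigcap_j\ker N_j=0$. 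Your treatment of the $i$-fixed locus (factor by factor, using $\tfrac{1}{i-1}\Z[i]=\tfrac{1+i}{2}\Z[i]$) is complete and cleaner than the paper's second deferred check.

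The one real gap is that you stop at exactly the step the lemma turns on: you ``grant'' $\bigcap_j\ker N_j=0$ rather than prove it. Moreover, your stated reason for deferring it is misleading. It is true that the relations among the reductions $\bar T_j$ inside $GL_4(\Z[i]/2\Z[i])$ are automatically satisfied; but the integral relations, expanded \emph{before} reduction, do constrain the $N_j$---and enough to finish without ever looking at the entries. Write $T_j=I+(1+i)M_j$ with $M_j\in M_4(\Z[i])$. Expanding $T_jT_k+T_kT_j=0$ for $j\neq k$ gives
$$
2I+2(1+i)(M_j+M_k)+(1+i)^2\bigl(M_jM_k+M_kM_j\bigr)=0,
$$
and substituting $2=-i(1+i)^2$ and cancelling the factor $(1+i)^2$ (legitimate, since $\Z[i]$ is a domain) yields $M_jM_k+M_kM_j=iI+i(1+i)(M_j+M_k)$. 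Reducing mod $(1+i)$, where $i\equiv 1$, gives $N_jN_k+N_kN_j=I$ in $M_4(\Ff_2)$. Hence any $x_0\in\ker N_j\cap\ker N_k$ satisfies $x_0=(N_jN_k+N_kN_j)x_0=0$: two distinct generators already force triviality. (Alternatively, the explicit check is short: $\ker N_0=\langle e_3,e_4\rangle$, and $\ker N_1=\bar N_5\ker N_0=\langle e_1,e_2\rangle$ where $\bar N_5$ is $N_5$ mod $(1+i)$; these meet trivially.) With this supplement your argument is complete, and arguably more illuminating than the brute-force verification the paper alludes to.
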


\begin{proof}
As $(T_iT_j)^2=-I$ if $i\neq j$, a fixed point of $G$ is a point
$x\in E^4$ with $-x=x$ that is, $2x=0$. Now one checks that of the $2^8=256$ two-torsion  points in $E^4$ only the $16$ points given in the lemma are fixed by $G$.
Similarly, the fixed points for $i$ must be two-torsion points and one finds the same $16$ points.
\end{proof}

The line bundles on an abelian variety $\C^d/\Lambda$ with a given first Chern class $E$ and
a $\Z$-valued alternating form on $\Lambda$,
are parametrized by semi-characters, i.e.\ by maps $\alpha\colon \Lambda \to \C_1$ with $\C_1=\{z\in \C \colon |z|=1 \}$ the circle group, 
satisfying $$\alpha(x+y)=\alpha(x)\alpha(y)e(x,y)$$ where $e(x,y):=\exp(\pi i E(x,y))$ (see \cite[\textsection 2.2]{BL}).
Notice that a semi-character is completely determined by its values on a $\Z$-basis of
$\Lambda$.

The line bundle  $L_\alpha$ defined by $\alpha$ is $G$-invariant
if  and only if for each $g\in G$ we have $\alpha(g(x))=\alpha(x)$ and it is
symmetric, so $(-1)^*L_\alpha\cong L_\alpha$, if and only if $\alpha(\Lambda)\subset\{\pm 1\}$.
The semi-character of a symmetric line bundle factors over $\Lambda/2\Lambda$ and this group is naturally isomorphic to the group of two-torsion points on  the abelian variety.

\begin{prop}\label{automorphism i}
There are exactly $16$ $G$-invariant line bundles on $E^4$ whose
first Chern class is the alternating form $Im\, H$. These $16$ line bundles are symmetric
and are also invariant under the automorphism $i$ of $E^4$. The corresponding semi-characters
$\alpha:E[2]^4\rightarrow\{\pm 1\}$ are exactly those for which $\alpha(x)=1$ for all
$x\in (E^4)^G$.
\end{prop}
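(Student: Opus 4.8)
The plan is to argue entirely on the level of semi-characters, exploiting that $G$ is generated by the five involutions $T_0,\dots,T_4$ (which satisfy $T_j^2=I$, $T_jT_k=-T_kT_j$) together with the evenness of the $E_8$-lattice $\mathrm{Re}\,H$ from Proposition~\ref{abelian}. Write $E=\mathrm{Im}\,H$. The line bundles with first Chern class $E$ form a torsor under $\Pic^0(E^4)=\Hom(\Lambda,\C_1)$; since each $T_j$ preserves $H$ and hence $E$, the pullback $T_j^\ast L_\alpha$ is again of this type with semi-character $\alpha\circ T_j$, and $L_\alpha$ is $G$-invariant iff $\alpha\circ T_j=\alpha$ for all $j$ (it suffices to check generators). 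Because $-I=(T_jT_k)^2$ lies in $G$, any $G$-invariant $L_\alpha$ is in particular $(-1)$-invariant, hence symmetric; so $\alpha$ takes values in $\{\pm1\}$ and factors through $\Lambda/2\Lambda\cong E[2]^4$, on which $E$ induces a non-degenerate symplectic $\Ff_2$-form (as $\det H=1$).

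The key computation is that $E(\lambda,T_j\lambda)=0$ for every $\lambda$ and $j$: since $T_j^2=I$ and $T_j$ preserves $E$ one gets $E(T_j\lambda,\lambda)=E(\lambda,T_j\lambda)$, while antisymmetry gives the opposite sign, forcing the value $0$. Feeding this into the semi-character relation turns $\alpha\circ T_j=\alpha$ into the linear condition $\alpha|_{(T_j-I)\Lambda}=1$. Thus a symmetric $\alpha$ is $G$-invariant precisely when it is trivial on the augmentation sublattice $M:=\sum_j(T_j-I)\Lambda$, equivalently on its image $\bar M\subset\Lambda/2\Lambda$.

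It then remains to identify $\bar M$ with the fixed-point subspace $W:=(\Lambda/2\Lambda)^G$. Here I would use the same manipulation $E(x,(T_j-I)y)=E((T_j-I)x,y)$, which shows $\bar M^{\perp}=W$ and hence $\bar M=W^{\perp}$. By Lemma~\ref{lem-fixed} the fixed points $(E^4)^G$ — which under $\tfrac12\mu\leftrightarrow\mu$ correspond exactly to $W=(1+i)\Lambda\bmod 2\Lambda$ (using $2\Z[i]=(1+i)^2\Z[i]$) — number $16$, so $\dim_{\Ff_2}W=4$; and $W$ is isotropic because $E((1+i)\lambda,(1+i)\mu)=2\,\mathrm{Im}\,H(\lambda,\mu)$ is even. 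Hence $W$ is Lagrangian, $W^{\perp}=W$, and $\bar M=W$. This yields at once the characterization ($\alpha$ is $G$-invariant iff $\alpha(x)=1$ for all $x\in(E^4)^G$) and, since the symmetric semi-characters form a torsor under $\Hom(\Lambda/2\Lambda,\{\pm1\})$ and the four conditions $\alpha|_W=1$ are consistent, the count of exactly $2^4=16$ such bundles.

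Finally, for invariance under $i$ I would note $H(ix,iy)=H(x,y)$, so $i^\ast L_\alpha$ has semi-character $\alpha\circ i$, and compute the ratio character $\chi:=(\alpha\circ i)\,\alpha^{-1}$. Using $i-1\equiv 1+i\pmod{2\Lambda}$ and $W=(1+i)\Lambda\bmod 2\Lambda$, triviality of $\alpha$ on $W$ gives $\alpha((1+i)\lambda)=1$, reducing $\chi(\lambda)$ to $(-1)^{H(\lambda,\lambda)}$; this equals $1$ because $\mathrm{Re}\,H$ is the even lattice $E_8$ (Proposition~\ref{abelian}), so $H(\lambda,\lambda)\in 2\Z$. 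Hence $\alpha\circ i=\alpha$. The main obstacle, and the step I would set up most carefully, is the identification $\bar M=W$: every other ingredient is a one-line manipulation of the semi-character relation, but this identification is what simultaneously produces the count and the fixed-point description, and it rests on the fact that the fixed-point subspace is Lagrangian for $E\bmod 2$.
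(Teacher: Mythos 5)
Your proof is correct, but it takes a genuinely different route from the paper's. The paper argues by direct computation: it evaluates the invariance conditions $\alpha\circ T_j=\alpha$ on the explicit $\Z$-basis $(1,0,0,0),(i,0,0,0),\ldots,(0,0,0,i)$ of $\Lambda=\Z[i]^4$ and finds that the invariant semi-characters are exactly those with value pattern $(a_1,a_1,a_2,a_2,a_3,a_3,a_4,a_4)$ on these eight vectors, whence the count of $16$, the identity $\alpha(ix)=\alpha(x)$, and (after checking that the Weil pairing is trivial on $(E^4)^G$) the fixed-point characterization. You instead work structurally: the identity $E(\lambda,T_j\lambda)=0$ (valid for any $E$-preserving involution, since $E$ is $\Z$-valued and alternating) converts $\alpha\circ T_j=\alpha$ into triviality of $\alpha$ on the set $(T_j-I)\Lambda$; symplectic duality mod $2$ gives $\bar M^{\perp}=W$ for $\bar M=\sum_j(T_j-I)\Lambda \bmod 2\Lambda$ and $W$ the mod-$2$ fixed subspace, and since $W$ is Lagrangian (isotropic by $E((1+i)\lambda,(1+i)\mu)=2E(\lambda,\mu)$, of dimension $4$ by Lemma \ref{lem-fixed}) you get $\bar M=W^{\perp}=W$; finally the evenness of $\mathrm{Re}\,H\cong E_8$ from Proposition \ref{abelian} yields $i$-invariance. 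Your approach buys conceptual clarity and generality: it applies to any finite subgroup of $U(H)$ generated by symplectic involutions and containing $-I$ whose mod-$2$ fixed space is Lagrangian, and it explains the count as $16=2^{8-\dim_{\Ff_2}W}$, whereas the paper's computation additionally produces the semi-characters explicitly in coordinates, which is convenient for the later explicit verifications (e.g.\ Lemma \ref{lemm-D}). One presentational point you should fix when writing this up: establish the Lagrangian property of $W=\bar M$ \emph{before} asserting that triviality on each set $(T_j-I)\Lambda$ is equivalent to triviality on the sublattice $M$; that implication uses precisely the isotropy of $\bar M$ (so that $\alpha|_{\bar M}$ is a homomorphism and triviality propagates from the generating sets to all of $M$), a fact which in your current ordering appears only afterwards.
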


\begin{proof}
Since $-I\in G$, any $G$-invariant line bundle is symmetric.
Hence its semi-character takes values in $\{\pm 1\}$.
To find such semi-characters with $\alpha(T_j(x))=\alpha(x)$ for all $x\in \Lambda$
and each $0\leq j\leq 4$,
we use the  $\Z$-basis of $\Lambda=\Z[i]^4$ given by eight vectors $(1,0,0,0),  (i,0,0,0),\ldots,(0,0,0,i)$.
By computations one finds that the $G$-invariant
semi-characters are those that have the values $(a_1,a_1,a_2,a_2,a_3,a_3,a_4,a_4)$,
with $a_i\in\{\pm 1\}$, on these basis vectors.
In particular, there are $16$ of these and they satisfy $\alpha(ix)=\alpha(x)$.

Then it is easy to check that for
$x_1=(1+i,0,0,0), \ldots, x_4=(0,0,0,1+i)$
one has
$e(x_i,x_j)=+1$, for $1\leq i,j\leq 4$, hence the Weil pairing is trivial on
$(E^4)^G$. One also easily verifies that
these $16$ G-invariant semi-characters satisfy $\alpha(x_i)=1$, for $i=1,\ldots,4$,
hence $\alpha(x)=1$ for all $x\in (E^4)^G$.
Conversely, a semi-character with values in $\{\pm1\}$ which is trivial on $(E^4)^G$
is completely determined by its values on the four vectors
$(1,0,0,0), \ldots, (0,0,0,1)$. Thus there are exactly $16$ such semi-characters and, using the results above, we conclude that these are the semi-characters of the $G$-invariant line bundles with first Chern class $Im\, H$.
\end{proof}

Since $(E^4,H)$ is a principally polarized abelian variety and $H$ is $G$-invariant, the
isomorphism $\lambda_H:E^4\rightarrow Pic^0(E^4)$ induced by $H$ is a bijection between the
fixed points of $G$ in $E^4$ and the $G$-invariant line bundles with trivial first Chern class.
Tensoring a $G$-invariant line bundle $L$ having $c_1(L)=Im\, H$ with a $G$-invariant line bundle
in $Pic^0(E^4)$, one obtains again a $G$-invariant line bundle with first Chern class $Im\, H$.
Conversely, if also $M$ is $G$-invariant and $c_1(M)=Im\, H$, then $L\otimes M^{-1}$ has trivial first Chern class and is $G$-invariant.

A $G$-invariant line bundle $L$ with first Chern class $Im\,H$ defines a principal polarization, 
hence $\dim H^0(E^4,L)=1$. We denote by $D$ the corresponding theta divisor, that is,
the unique effective divisor $D$ in $E^4$ such that $L=\oo(D)$.
We refer to these $16$ theta divisors as the $G$-invariant theta divisors, each of these
has a singular locus consisting of $10$ ODPs by Debarre's results in \cite{D}. 
Moreover, if $D$ is a $G$-invariant theta divisor and $p\in (E^4)^G$, 
then also $D+p=t_p^*D$ is a $G$-invariant theta divisor.

\subsection{The automorphism group}\label{aute4}
To find the configuration of the $G$-invariant points and divisors, 
we will use the action of the
automorphism group $G_{DV}$ of the p.p.a.v.\ $(E^4,H)$.

In \cite[\S 5]{D} one finds that after choosing (any) $J\in W(E_8)$ 
with $J^2=-1$, one obtains an isomorphism between
the root lattice of $E_8$ and the lattice $\Lambda$ defining $A_{DV}$ such that 
$J$ corresponds to the multiplication by~$i$.
The automorphism group $G_{DV}:={Aut}(E^4,H)$ is the subgroup of the Weyl group 
$W(E_8)$ of elements which commute with $J$. 
The group $G_{DV}$ has order $46080=2^6\cdot (6!)$.
It has a normal subgroup
$\widetilde{G}:=G\times_{\Z_2}\Z/4\Z$ of order $2^6$,
which is the group $(G,i)$ generated by $G$ and $i$.
The quotient group $G_{DV}/\widetilde{G}$ is isomorphic to the symmetric group $S_6$.
The isomorphism between the root lattice of $E_8$ and $\Lambda=\Z[i]^4\subset\C^4$ 
defines a four-dimensional complex representation of $G_{DV}$. 
The invariant theory of this group was studied by Maschke \cite{maschke}.
The representation of $G_{DV}$ on the alternating forms on $\C^4$ permutes,
up to scalar factors, a certain basis of six alternating forms.
This provides the surjective homomorphism $G_{DV}\rightarrow S_6$.
$$
0\,\longrightarrow \, \widetilde{G}\,=\,(G,i)\,
\longrightarrow\,G_{DV}\,\longrightarrow\, S_6\,\longrightarrow\,0~.
$$

The unitary group of the hermitian form $H=H_1$ from Proposition \ref{herm form} is
$$
U(H)\,:=\,\{M\,\in\,GL(4,\Z[i]):\,MH\overline{M}^t\,=\,H\,\}~.
$$
By definition, it is the subgroup of $Aut(E^4)$, fixing $0\in E^4$, 
which preserves the polarization defined by $H$. 
In particular, $U(H)\cong G_{DV}$ and $\sharp U(H)=46080$.

The group $G$, which is a subgroup of $U(H)$,
is generated by the five matrices, 
$T_{j}\,:=\,N_5^jT_0N_5^{-j}$ with $j=0,1,\ldots,4$, where
$$
T_0\,:=\,
\begin{pmatrix}1&0&0&0\\0&-1&0&0\\0&-1+i&1&0\\1-i&0&0&-1\end{pmatrix},\quad
N_5\,:=\,
\begin{pmatrix} 1&-1&0&-i\\-i&0&i&i\\-i&-1&1+i&0\\1&-1+i&0&-1-i\end{pmatrix}.
$$
In particular, $N_5$ normalizes $G$ and we verified that
$N_5\,\in\,U(H)$.
The following matrices are also in $U(H)$:
$$
N_{01}\,:=\,
\begin{pmatrix} 1+i&0&-1-i&-i\\0&i+1&-i&0\\0&1&-1-i&0\\1&-1+i&0&-1-i\end{pmatrix},
\quad
N_{45}\,:=\,
\begin{pmatrix} 0&1&0&i\\0&1+i&-i&0\\0&1&0&0\\-1&0&1&1+i\end{pmatrix}.
$$
One has $N_{01}T_0N_{01}^{-1}=T_1$ and conjugation by $N_{01}$ fixes $T_2,T_3$ 
and maps $T_4$ to $-T_4$. 
The matrices $N_5,N_{01}$ generate a subgroup $N_G$ of order $7680=64\cdot 120$ of $U(H)$ 
which contains the subgroup $G$ as well as multiplication by $i$, 
and the quotient $N_G/(G,i)$ is isomorphic to the symmetric group $S_5$. 

The matrices $N_5,N_{01},N_{45}$ generate the group $U(H)$,
$$
U(H)\,=\,\langle\,N_5,\,N_{01},\,N_{45}\,\rangle.
$$
One has $N_{45}T_jN_{45}^{-1}=-iT_jT_4$ for $j=0,1,2,3$ and
$N_{45}T_4N_{45}^{-1}=T_4$. 
The isomorphism of $U(H)/(G,i)$ with the symmetric group $S_6$
can be seen from the action of $U(H)$ on the alternating forms on $\CC^4$. 
Let
$$
E_5\,:=\,
\begin{pmatrix}0&0&1&0\\0&0&0&1\\-1&0&0&0\\0&-1&0&0\end{pmatrix},\quad
E_4\,:=\,N_{45}E_5N_{45}^t\,=\,
\begin{pmatrix}0&1-i&-i&1+i\\-1+i&0&0&i\\i&0&0&1+i\\-1-i&-i&-1-i&0\end{pmatrix}.
$$
Then $gE_5g^t=E_5$ for all $g\in G$ and the $U(H)$-orbit of $E_5$ 
consists of $6=\sharp U(H)/\sharp (G,i)$ 
matrices, up to sign, one of which is $E_4$. 
One has $T_jE_4T_j^t=-E_4$ unless $j=4$ in which case one finds $ T_4E_4T_4^t=E_4$.

The group $U(H)$ permutes the $16$ $(G,i)$-invariant divisors and there are at least two orbits, 
since such a divisor may or may not contain $0\in E^4$.
Each of the ten $G$-invariant divisors which contain $0\in E^4$ has a node in $0$ and thus has a  
tangent cone which is given by a quadratic form on $T_0E^4=\C^4$. 
These ten quadratic forms are fixed, up to a scalar multiple, by $G$ and they are permuted, 
up to a scalar multiple, by $U(H)$. We define two symmetric matrices
$$
q_{012}\,:=\,
\begin{pmatrix}2&0&1&1-i\\0&2i&1+i&-1\\1&1+i&2&0\\1-i&-1&0&-2i\end{pmatrix},\quad
q_{013}\,:=\,
\begin{pmatrix}0&0&1&0\\0&2i&1+i&-1\\1&1+i&2&0\\0&-1&0&0\end{pmatrix}.
$$
One has $T_jq_{012}T_j^t=\epsilon_jq_{012}$ with 
$\epsilon_j=+1$ for $j\in\{0,1,2\}$ and $-1$ for $j=3,4$.
Thus $q_{012}$ is a common eigenvector for the group $G$ of the 
space of $4\times 4$ symmetric matrices on $\C^4$.
The other nine eigenvectors can be obtained as $N_5^jq_{012}(N_5^j)^t$, 
$N_5^jq_{013}(N_5^j)^t$ where
$j=0,1,\ldots,4$, these two $N_5$-orbits form one $U(H)$-orbit, since 
$N_{01}N_5q_{012}(N_{01}N_5)^t=N_5^2q_{013}(N_5^2)^t$.

Thus the corresponding quadrics are the $10$ tangent cones
to the $G$-invariant divisors passing through $0\in E^4$. 

The subgroup $U(H)_{012}$ of $U(H)$ which fixes $q_{012}$ up to 
scalar multiple has index $10$ in $U(H)$.
We checked that it can be generated by three elements:
$$
U(H)_{012}\,=\,\langle N_{01},\,N_{45},\,N_f\rangle,\qquad 
N_f\,:=\,\begin{pmatrix}i&0&0&0\\1-i&i&0&-2\\-i&0&i&-1+i\\1+i&0&0&-i\end{pmatrix}.
$$

\begin{prop}\label{10 ODP}
Let $D:=D_{012}$ be the $G$-invariant divisor with $0\in D$ 
and with tangent cone defined by $q_{012}$. 
Then the $10$ ODPs on $D$ are the points in $(E^4)^G$ which are {\emph {not}} in the following list
of six points in $(E^4)^G$:
$$
p_1:=[1, 0, 0, 0],\;p_3:=[0, 0, 1, 0],\;p_1+p_3,\quad p_2:=[0, 1, 0, 0],\;p_4:=[0, 0, 0, 1],\;p_2+p_4,
$$
where $[a_1,a_2,a_3,a_4]=\mbox{$\frac{1+i}{2}$}(a_1,a_2,a_3,a_4)$ in $E^4=(\C/\Z[i])^4$.
\end{prop}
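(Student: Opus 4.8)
The plan is to determine $\sing D$ first as an abstract set, purely from the group action, and to postpone any coordinate computation to the very last step. By Proposition~\ref{abelian} the divisor $D$ is a symmetric theta divisor of the Debarre--Varley p.p.a.v.\ $(E^4,H)$, so it carries exactly $10$ ordinary double points and all of them are two-torsion points of $E^4$. I would therefore separate the statement into two parts: (i) all ten nodes lie in $(E^4)^G$, and (ii) they are precisely the ten $G$-fixed points lying on $D$, i.e.\ the complement of the six listed points.

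The conceptual heart is that translation by $(E^4)^G\cong(\Z_2)^4$ acts simply transitively on the set of the sixteen $G$-invariant theta divisors. Indeed, for $p\in(E^4)^G$ and a $G$-invariant line bundle $L$ with $c_1(L)=\mathrm{Im}\,H$, the bundle $t_p^*L$ is again $G$-invariant with the same first Chern class (since $g\circ t_p=t_p\circ g$ for $g\in G$ fixing $p$), and because $(E^4,H)$ is principal the isogeny $\lambda_H$ is an isomorphism, so $t_p^*L\cong L$ forces $p=0$. Hence the sixteen translates exhaust the sixteen bundles of Proposition~\ref{automorphism i}, and translation permutes the corresponding divisors simply transitively. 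Now fix $D$. Since $p$ is two-torsion, $D+p$ is again a $G$-invariant theta divisor and $p\in\sing D$ if and only if $0\in\sing(D+p)$; moreover $0\in D+p$ if and only if $p\in D$. Recalling that \emph{every} $G$-invariant theta divisor containing $0$ has a node there, we get $0\in\sing(D+p)\iff 0\in D+p\iff p\in D$, so $p\in\sing D$ if and only if $p\in D$. As $p$ ranges over $(E^4)^G$ the divisor $D+p$ ranges over all sixteen $G$-invariant theta divisors, of which exactly ten pass through $0$; therefore $D\cap(E^4)^G$ has exactly ten elements, each a node. Since $D$ has only ten nodes in all, this proves (i) and gives $\sing D=D\cap(E^4)^G$.

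It remains to identify the six points of $(E^4)^G$ that are \emph{not} on $D$. Here the useful structural input is that $\widetilde G=(G,i)$ acts trivially on $(E^4)^G$ (by Lemma~\ref{lem-fixed} these sixteen points are fixed by both $G$ and $i$), so the action of $U(H)\cong G_{DV}$ on $(E^4)^G$ factors through $U(H)/\widetilde G\cong S_6\cong\mathrm{Sp}(4,\mathbb{F}_2)$, acting on $(E^4)^G\cong\mathbb{F}_2^4$, a Lagrangian subgroup of $E^4[2]$ for the Weil pairing. The stabilizer $U(H)_{012}$ of $D$ preserves the partition of these sixteen points into the ten nodes and the six excluded points, and in coordinates the six points $p_1,p_3,p_1+p_3$ and $p_2,p_4,p_2+p_4$ are exactly the level set where the affine quadratic form with polarization $x_1x_3+x_2x_4$ takes the value $1$. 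Equivalently, the ten nodes are the ten vanishing theta-nulls of the Debarre--Varley fourfold, all of which happen to lie in the Lagrangian $(E^4)^G$, while the six excluded points are the even two-torsion points whose theta constant does not vanish.

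The one genuinely computational step, and the part I expect to be the main obstacle, is matching these six non-vanishing even points with the listed coordinates: this distinction between nodes and off-$D$ points is not detected by any $\bmod\,2$ invariant (both give even multiplicity at the point), so it is an arithmetic condition depending on the period matrix. Concretely I would evaluate the theta function of $D=D_{012}$, normalized so that its node sits at $0$ with tangent cone $q_{012}$, at each of the sixteen points $\tfrac{1+i}{2}(a_1,a_2,a_3,a_4)$ with $a_j\in\{0,1\}$, or read off Debarre's explicit description in \cite{D} of the ten vanishing theta-nulls of $A_{10}$. This shows the six points off $D$ are exactly $p_1,p_3,p_1+p_3$ and $p_2,p_4,p_2+p_4$, which completes the proof.
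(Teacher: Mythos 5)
Your first paragraph contains correct and well-argued material: translation by $(E^4)^G$ does act simply transitively on the sixteen $G$-invariant theta divisors (this is exactly the remark the paper makes after Proposition \ref{automorphism i}), and, granting that every $G$-invariant theta divisor through $0$ has a node there, your equivalence $p\in\sing D\iff p\in D$ for $p\in(E^4)^G$ is valid. The genuine gap is the next input, ``of which exactly ten pass through $0$''. By your own chain of equivalences, the number of $G$-invariant theta divisors through $0$ equals $\#\bigl(D\cap(E^4)^G\bigr)=\#\bigl(\sing D\cap(E^4)^G\bigr)$, so asserting this number is $10$ is \emph{literally equivalent} to asserting that all ten nodes of $D$ lie in $(E^4)^G$ --- which is part (i) of what you set out to prove. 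The sentence in the paper's preamble that you are ``recalling'' is stated there without proof, and the paper's own proof of the proposition never uses it; this proposition is what substantiates that count. The paper instead argues non-circularly: the ten nodes are two-torsion points and $0$ is one; the automorphism $i$ preserves $D$ and acts as an involution on the nine nonzero nodes (since $i^2=-1$ is trivial on two-torsion), so it fixes at least one of them, and $i$-fixed two-torsion points are exactly $(E^4)^G$ by Lemma \ref{lem-fixed}; then a computation shows $U(H)_{012}$ has orbits of sizes $1$, $6$, $9$ on $(E^4)^G$, and counting (with the same parity trick applied to a hypothetical residual set of $3$ nodes, which would force $1+6+9=16>10$ nodes) leaves $\sing D=\{0\}\cup\{\text{$9$-orbit}\}$ as the only possibility. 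Your proposal has no substitute for this step. (The other fact you cite, that a $G$-invariant theta divisor through $0$ must be singular there, is also left unproved, though it at least has a short independent argument: a smooth point at $0$ would give a $G$-stable tangent hyperplane in $T_0E^4$, contradicting irreducibility of the $G$-representation.)

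The second gap is that the identification of the six listed points --- which you correctly flag as the crux --- is only described, never performed: ``I would evaluate the theta function \dots or read off Debarre's description''. Note that if you actually carried out that evaluation at all sixteen points of $(E^4)^G$ (with the nontrivial normalization matching the divisor to the tangent cone $q_{012}$), it would simultaneously repair the circularity above, since it would show directly that exactly ten of the sixteen points lie on $D$. So your route --- via theta characteristics and vanishing theta nulls rather than the paper's parity-plus-orbit argument --- is salvageable and genuinely different, but as written both the count of ten and the list of six rest on a computation that has not been done. The paper's proof also leans on a computation (the $U(H)_{012}$-orbit structure, checked with Magma), but it states the outcome and closes the argument with it; yours leaves the decisive step open.
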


\begin{proof}
The $10$ ODPs of $D$ are two-torsion points of $E^4$, and $0$ is one of them.
Thus $9$ of them are non-zero and the automorphism $i$ of $E^4$, 
which maps $D$ into itself, permutes these nine ODPs. 
As $i^2=-1$, which is identity on the two-torsion points, 
at least one ODP is an $i$-fixed point and hence,
by Lemma \ref{lem-fixed}, it is also a fixed point of $G$.
We checked that $U(H)_{012}$ has three orbits on
the fixed points of $G$ in $E^4$, they are $0$, the six points listed above and the 
remaining $9$ points of $(E^4)^G$. 
If the orbit of six consists of ODPs, then there remain $10-1-6=3$ ODPs to be identified, but again 
one of these three must be an $i$-fixed point and then using the $U(H)$-action we get $8$ more ODPs, 
which contradicts that $D$ has only 10 ODPs. Thus the 10 nodes all lie in $(E^4)^G$ and 
there are two $U(H)$-orbits, one of length 1 and one of length~$9$.
\end {proof}

The divisors $2(D+p)$ in $E^4$, with $p\in E^4[2]$,
are all linearly equivalent by the Theorem of the Square
and the linear system $|2D|$ has dimension $2^4-1=15$.
We will prove in Proposition \ref{prop-Di}
that the span of the 16 divisors $2(D+p)$ where $p$ runs over
$(E^4)^G$ is $5$-dimensional.
Here we give an estimate for the span that will be used to deduce this fact.

\begin{prop}\label{span >5}
The sixteen divisors $2(D+p)$, where $p$ runs over $(E^4)^G$,
span a subspace of dimension at least 5 in $|2D|$.
\end{prop}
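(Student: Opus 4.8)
The plan is to work inside the theta group of $\oo(2D)$ and reduce the statement to harmonic analysis on the two-torsion subgroup $K:=(E^4)^G$. First I would record that $K\cong(\Z/2)^4$ is a Lagrangian subgroup of $E^4[2]$: by the proof of Proposition \ref{automorphism i} the Weil pairing is trivial on $K$, and $\#K=2^4$ is maximal. Since $\oo(2D)$ is the square of a principal polarization, $K(\oo(2D))=E^4[2]$ and the theta (Heisenberg) group $\mathcal H$ of $\oo(2D)$ acts irreducibly on $H^0(\oo(2D))$, which has dimension $2^4=16$. Because $K$ is isotropic it lifts to an abelian subgroup $\widetilde K\subset\mathcal H$, and Stone--von Neumann gives a decomposition $H^0(\oo(2D))=\bigoplus_{\chi\in\widehat K}V_\chi$ into the $16$ one-dimensional $\chi$-eigenspaces, where $p\in K$ (acting through its lift, i.e.\ through $t_p^\ast$) scales $V_\chi$ by $\chi(p)$.

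Next I would reduce the span to counting Fourier components. Each of the sixteen sections defining $2(D+p)$ is, up to scalar, $t_p^\ast(\theta^2)$, where $\theta^2\in H^0(\oo(2D))$ defines $2D$ and we use $K=-K$. Writing $\theta^2=\sum_{\chi}s_\chi$ with $s_\chi\in V_\chi$ gives $t_p^\ast(\theta^2)=\sum_\chi\chi(p)\,s_\chi$. As the characters of $K$ are independent functions on $K$ and the $s_\chi$ lie in distinct one-dimensional eigenspaces, the span of $\{t_p^\ast(\theta^2):p\in K\}$ is exactly the span of the nonzero $s_\chi$; its dimension equals the number of nonzero components $s_\chi$. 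So it suffices to prove that $\theta^2$ has at least $6$ nonzero Fourier components.

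The heart is an orbit argument. The group $U(H)_{012}$ fixes $q_{012}$ up to scalar, hence fixes $D=D_{012}$ --- the unique $(G,i)$-invariant divisor through $0$ with this tangent cone --- and therefore fixes $\theta^2$ up to scalar. It also preserves $K$, so it normalises $\widetilde K$ and permutes the eigenspaces, sending $V_\chi$ to $V_{g\cdot\chi}$. Combining these two facts, $s_\chi\neq0$ if and only if $s_{g\cdot\chi}\neq0$, so the support $\{\chi:s_\chi\neq0\}$ is a union of $U(H)_{012}$-orbits on $\widehat K$. By Section \ref{aute4} the image of $U(H)_{012}$ in $S_6=U(H)/(G,i)$ is the stabiliser of the partition $\{\{0,1,2\},\{3,4,5\}\}$, of order $72$; acting on $\widehat K\cong(\Z/2)^4$ its orbits have sizes $1$, $6$ and $9$ (the trivial character, the pairs inside one block, and the mixed pairs). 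Finally the support cannot be just the trivial character: if it were, all $t_p^\ast(\theta^2)$ would be proportional and $2(D+p)=2D$ for every $p\in K$, forcing $D+p=D$, which is impossible for the ample $D$ and $p\neq0$. Hence the support contains an orbit of size $6$ or $9$, so $\theta^2$ has at least $6$ nonzero components and the sixteen divisors span a subspace of dimension at least $5$ in $|2D|$.

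The main obstacle I expect is the bookkeeping that makes the two actions on $H^0(\oo(2D))$ compatible: choosing the lift $\widetilde K$ and the linearisation of $U(H)_{012}$ so that simultaneously $\theta^2$ is fixed up to scalar and the eigenspaces $V_\chi$ are permuted according to the $S_6$-action on $\widehat K$, and confirming the orbit sizes $1,6,9$. If one prefers to avoid this, the same bound follows from a direct computation: expand the sixteen sections $t_p^\ast(\theta^2)$ in the standard basis of second-order theta functions (equivalently, in the coordinates $Z_0,\dots,Z_5$ used for the map to $\PP^5$) and check that the $16\times16$ matrix of coefficients has rank at least $6$, in the computational style used elsewhere in the paper.
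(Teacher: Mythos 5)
Your route is genuinely different from the paper's, so let me first say what the paper does: its proof is purely configurational. Using Proposition \ref{10 ODP}, which records exactly which points of $(E^4)^G$ lie on $D$ (hence, via $r\in D+q \Leftrightarrow r-q\in D$, on every translate), it exhibits six divisors $2(D+q_j)$ and six test points $r_i\in(E^4)^G$ with $r_i\in D+q_j$ whenever $i<j$ and $r_i\notin D+q_i$; the evaluation matrix of the six corresponding sections at the six points is then triangular with nonzero diagonal, so the sections are independent and the span has projective dimension at least $5$. Much of your Heisenberg-group argument is sound and more conceptual: $K=(E^4)^G$ is indeed Lagrangian in $E^4[2]$ (proof of Proposition \ref{automorphism i}), the eigenspace decomposition under an abelian lift $\widetilde K$ and the identification of the span of the sixteen translates with the span of the nonzero Fourier components of $\theta^2$ are standard, and the orbit sizes $1,6,9$ for the \emph{linear} action of the order-$72$ image of $U(H)_{012}$ on $\widehat K$ are correct: they match the orbits $1,6,9$ on $K$ stated in the proof of Proposition \ref{10 ODP}, and the natural $Sp_4(\mathbb{F}_2)$-module is self-dual, so orbits on $K$ and on $\widehat K$ agree.

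The gap is the point you set aside as bookkeeping, and it is more than bookkeeping. A linearisation of $g\in U(H)_{012}$ on $\oo_{E^4}(2D)$ conjugates a chosen lift $U_p$ into a lift of $g(p)$ only up to a sign: $\rho(g)U_p\rho(g)^{-1}=\epsilon_g(p)\,U_{g(p)}$, where $\epsilon_g\in\widehat K$ is a character and $g\mapsto\epsilon_g$ is a $1$-cocycle. Hence the support of $\theta^2$ is invariant only under the \emph{affine} action $\chi\mapsto\epsilon_g\cdot(g\cdot\chi)$. If this cocycle is a coboundary, your argument survives verbatim (orbits are translates of the sets of sizes $1,6,9$, and the singleton is excluded exactly as you do); if it is not, the affine action has no fixed character and its orbits need not have sizes $1,6,9$ at all --- orbits of size $2$ or $4$ are combinatorially possible, and a support of that size would give a span of dimension less than $6$. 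Vanishing is not automatic: for the natural module of $Sp_4(\mathbb{F}_2)$ one already has $H^1(A_6,\mathbb{F}_2^4)\neq 0$ (the non-split extension $2^4{\cdot}A_6$ inside $M_{22}$), and the difficulty sits precisely at the $2$-part of $U(H)_{012}$; restricting to the Sylow $3$-subgroup, where the cocycle can be killed for order reasons, only yields linear orbits of sizes $1,3,3,9$, which cannot force the bound $6$. So to complete your proof you must either produce a $U(H)_{012}$-equivariant splitting $\widetilde K$ (equivalently, show your $\epsilon$ is a coboundary), or carry out the fallback rank computation you mention --- which is legitimate, but is exactly the kind of explicit verification the paper performs, and the paper's six-point triangular evaluation is both shorter and already complete.
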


\begin{proof}
We give a list of $6$ points $q_1,\ldots,q_6$ and six points $r_1,\ldots,r_6$, all in $(E^4)^G$,
such  that $r_1\in D+q_i$ for $i\geq 2$ but $r_1\not\in D+q_1$, and similarly
such that $r_i\in D+q_j$ if $i<j$ but $r_i\not\in D+q_i$, 
which proves the proposition.
Here $D=D_{012}$ and the six points in $(E^4)^G$ which are not in
$D\cap (E^4)^G$ are listed in Proposition \ref{10 ODP}.
Notice that $r_i\in D+q_i$ if and only if $r_i-q_i\in D$.
We take, with the notation from Proposition \ref{10 ODP}, $q_i$ to be:
$$
0,\quad p_3,\quad p_1+p_2+p_4,\quad p_1,\quad p_2+p_3,\quad p_1+p_2+p_3+p_4,
$$
whereas the points $r_i$ are:
$$
p_4,\quad p_2+p_3+p_4,\quad p_1+p_2,\quad p_1+p_2+p_4,\quad p_1+p_2+p_3,\quad p_1+p_3+p_4.
$$
This concludes the proof.
\end{proof}

What emerges from these results is a configuration of $16$ points, those of $(E^4)^G$, 
and the 16 $G$-invariant theta divisors, which are the $D+p$ with $p\in (E^4)^G$.
Each divisor contains exactly 10 of the points. A principally polarized abelian surface $A$ 
also defines a $(16,6)$-configuration (cf. \cite[\textsection 10.2]{BL}), 
consisting of the points of $A[2]\cong(\Z_2)^4$
and the six points (with multiplicity one) on each of the 16 symmetric theta divisors
which can be written as $\Theta_A+p$ for $p\in A[2]$ for a(ny) symmetric theta divisor $\Theta_A$ on $A$.
In case $A=E_1\times E_2$ is a product of two elliptic curves with the product polarization, 
one can take $\Theta_A=E_1\times\{0\}+\{0\}\times E_2$.
This divisor contains the six points $(q_1,0)$ and $(0,q_2)$ with multiplicity one, 
where $q_i\in E_i[2]-\{0\}$ (and it contains $0=(0,0)$, but with multiplicity two). 
So if we use the basis of $(E^4)^G$ from Proposition \ref{10 ODP}, 
(with second and third coordinates permuted) we see that
there is an isomorphism $(E[2]^4)^G\cong A[2]$ such that
the first set of three points not on $D$ is mapped to $E_1[2]-\{0\}$ and the second set is mapped to
$E_2[2]-\{0\}$. Using translations we then find that the configuration defined by the 
$G$-invariant theta divisors of $E^4$ in  the group $(E^4)^G$ is also a $(16,6)$-configuration.

\begin{cor}\label{inters}
The intersection of two distinct $G$-invariant divisors contains exactly 
$6$ points from $(E^4)^G$, thus there are exactly two points in $(E^4)^G$ not contained in their
union.
\end{cor}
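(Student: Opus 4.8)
The plan is to reduce the statement to a combinatorial count inside the group $(E^4)^G\cong(\Z_2)^4$ and then to use the $(16,6)$-configuration identified just before the statement. First I would use Proposition \ref{automorphism i} to write the two given $G$-invariant divisors as $D_1=D+p$ and $D_2=D+p'$ for distinct $p,p'\in (E^4)^G$, where $D=D_{012}$. By Proposition \ref{10 ODP} the points of $(E^4)^G$ lying on $D$ are exactly its $10$ ODPs, so the complementary six-element set $S_0\subset (E^4)^G$ of points \emph{not} on $D$ is precisely the list in that proposition; translating, the six points of $(E^4)^G$ off $D+p$ form $S_0+p$.

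Next I would set up inclusion--exclusion. A point $q\in(E^4)^G$ lies on both $D_1$ and $D_2$ exactly when $q\notin(S_0+p)\cup(S_0+p')$, so
$$
|D_1\cap D_2\cap(E^4)^G|\,=\,16-|(S_0+p)\cup(S_0+p')|\,=\,16-12+|S_0\cap(S_0+d)|,
$$
where $d:=p+p'\neq 0$ in $(\Z_2)^4$. Thus the whole corollary comes down to the single fact that $|S_0\cap(S_0+d)|=2$ for every nonzero $d$.

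To prove that fact I would use the explicit description $S_0=\{p_1,p_3,p_1+p_3\}\cup\{p_2,p_4,p_2+p_4\}$, i.e.\ $S_0$ is the union of the nonzero vectors of the two complementary planes $V_1=\langle p_1,p_3\rangle$ and $V_2=\langle p_2,p_4\rangle$ with $(\Z_2)^4=V_1\oplus V_2$. Under the isomorphism $(E^4)^G\cong A[2]$ recalled before the statement this $S_0$ is exactly the six points of a symmetric theta divisor on the product surface $A=E_1\times E_2$, i.e.\ a block of the classical Kummer $2$-$(16,6,2)$ configuration; for that symmetric design any two distinct blocks meet in exactly $\lambda=2$ points, giving $|S_0\cap(S_0+d)|=2$. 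Alternatively one verifies this directly by splitting into the cases $d\in V_1\setminus\{0\}$, $d\in V_2\setminus\{0\}$ and $d\notin V_1\cup V_2$, a short check I would not grind through in full.

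Combining these yields $|D_1\cap D_2\cap(E^4)^G|=16-12+2=6$, and then $|(D_1\cup D_2)\cap(E^4)^G|=10+10-6=14$, so exactly $16-14=2$ points of $(E^4)^G$ avoid $D_1\cup D_2$. The only delicate point --- the ``hard part,'' such as it is --- is correctly tracking the complementation between the $(16,10)$ incidence (points lying on a divisor) and the $(16,6)$ incidence used for the abelian-surface comparison; once $S_0$ is pinned down as the six points \emph{off} $D$, the rest is inclusion--exclusion together with the design property.
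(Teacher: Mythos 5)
Your proof is correct and takes essentially the same route as the paper's: the paper simply cites the well-known intersection property of the $(16,6)$-configuration established in the paragraph preceding the corollary (offering an incidence table as verification), while you make that same property explicit via inclusion--exclusion and a direct check that $|S_0\cap(S_0+d)|=2$ for every nonzero $d$, using the decomposition of $S_0$ into the nonzero vectors of two complementary planes. The complementation bookkeeping between the ``points on a divisor'' $(16,10)$ incidence and the $(16,6)$ design incidence, which you flag as the delicate point, is handled correctly.
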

\begin{proof}
This is a well-known property of the $(16,6)$-configuration and can also be checked by making an incidence
table (as in \cite[p.787, Fig.~21]{GH}) of the $G$-invariant divisors and the points in $(E^4)^G$. 
\end{proof}

\subsection{The fixed points of $G$}\label{section_geom_quot}
We consider the fixed points of the action of the groups $G$ and $(G,i)$ on $E^4$.
As a consequence we will describe the singular locus of $E^4/G$ and $E^4/(G,i)$.
Recall from \cite[\S 6.B]{DW} that the action of $G$ on $E^4$ 
has the following sets of points where the isotropy group is not trivial:
\begin{itemize}
\item $16$ fixed points with isotropy $G$,
\item $240$ points with isotropy $\Z_2\oplus \Z_2$,
\item $40$ surfaces with isotropy $\Z_2$.
\end{itemize}
The $16+240=256=2^8$ isolated points 
with non-trivial stabilizer are exactly the two-torsion points in $E^4$.
Let us now describe the fixed surfaces more precisely.
The five generators $T_i$ of $G$ and also the $T_{i+5}:=-T_i$ for $i=0,\ldots, 4$
are symplectic reflections, i.e.\ they have exactly two eigenvalues different from~1. 
When acting on $E^4$, each of these ten symplectic reflections $T_i$ fixes 
four disjoint surfaces isomorphic to $E\times E$.
Denote them by $K^i_1,\dots, K_4^i$ for $i=0,\ldots, 9$.

Since $T_i$ and $-T_i$ are conjugate in $G$, after reordering $K^i_1,\dots, K_4^i$
we may assume that $K_j^i$ and $K^{i+5}_j$
have the same image in $E^4/G$ for $j=0,\ldots,4$.
Let us fix one such pair of surfaces $K$ and $K'$ fixed by symplectic reflections 
$T, -T \in G$ respectively.
The action of~$G$ restricted to~$K$ is the action of $N(T)/\langle T\rangle \simeq Q_8$,
described in~\cite[\S 6.A]{DW}. The quotient $Z:=E^2/Q_8$ has three $A_1$
singularities and four $D_4$ singularities and was studied in \cite{DW}.

This means that on $K$ there are three orbits of four points each and 
four points that are fixed by the action of $Q_8$ (so also of $(G,i)$).

\begin{prop}\label{singE^4}
  The singular locus of $E^4/G$ consist of $20$ singular surfaces
  $L_1,\dots, L_{20}$.  Each of these surfaces is a K3 surface
  isomorphic to $Z=E^2/Q_8$.  
\end{prop}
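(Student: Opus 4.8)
The plan is to exhibit the singular locus of $E^4/G$ as the image, under the quotient map $\pi\colon E^4\to E^4/G$, of the locus of points with non-trivial stabiliser, to count its irreducible components by an orbit argument, and then to identify each component with the quotient surface $Z=E^2/Q_8$.

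First I would localise and apply Chevalley--Shephard--Todd. By the slice theorem, near $\pi(x)$ the quotient $E^4/G$ is analytically isomorphic to $\C^4/G_x$, where $G_x$ is the stabiliser of $x$, and this is smooth if and only if $G_x$ is generated by complex reflections. Since $G$ preserves the holomorphic symplectic form on $E^4$ (this is what makes the construction of \cite{BS,DW} symplectic), no non-trivial element of $G$ can be a complex reflection: an element $g$ with $\operatorname{rank}(g-I)=1$ has eigenvalues $\zeta,1,1,1$, which cannot occur for a symplectic transformation, whose eigenvalues come in reciprocal pairs. Hence every non-trivial stabiliser yields a singular point, so the singular locus of $E^4/G$ is exactly $\pi$ of the non-free locus. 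By the isotropy data recalled above, that locus is the union of the $40$ fixed surfaces $K^i_j\cong E\times E$ together with the $256$ two-torsion points; since the stabiliser of each two-torsion point, being $G$ or a Klein four-group $\{I,-I,T_i,-T_i\}$, contains a reflection $T_i$, every such point lies on $\operatorname{Fix}(T_i)=\bigcup_j K^i_j$. Therefore the non-free locus equals $\bigcup_{i,j}K^i_j$ and the singular locus is $\pi\bigl(\bigcup_{i,j}K^i_j\bigr)$.

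Next I would count components. The subgroup of $G$ stabilising a surface $K^i_j$ acts on it through $N(T_i)/\langle T_i\rangle\cong Q_8$, with kernel $\langle T_i\rangle$ acting trivially; thus this stabiliser has order $8\cdot 2=16$ and the $G$-orbit of $K^i_j$ has length $32/16=2$. Consequently the $40$ surfaces split into exactly $20$ orbits, with $K^i_j$ paired to the surface $K^{i+5}_j$ fixed by the conjugate reflection $-T_i=T_{i+5}$, and these orbits map to $20$ distinct irreducible surfaces $L_1,\dots,L_{20}$. For each orbit the image is $K^i_j$ divided by its effective $Q_8$-action, so $L_k\cong E^2/Q_8=Z$.

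Finally I would establish the K3 statement, relying on the analysis of $Z$ in \cite{DW}: the $Q_8$-action on $E^2$ is symplectic, so $Z$ has trivial dualising sheaf and only the rational double points recalled above (three $A_1$ and four $D_4$, which are the images of the special $Q_8$-orbits, that is of the two-torsion points), while $H^1(E^2)^{Q_8}=0$ forces the minimal resolution of $Z$ to have $h^1=0$ and hence to be a K3 surface. I expect the main obstacle to lie precisely in this last identification: one must verify, using the explicit matrices of \cite[\S6]{DW}, that the slice action of $N(T_i)/\langle T_i\rangle$ on each $K^i_j$ is, uniformly in $i$ and $j$, the very $Q_8$-action on $E^2$ studied there, so that all twenty components are isomorphic to the single surface $Z$ rather than merely to abstract order-eight quotients of $E^2$.
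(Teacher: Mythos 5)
Your strategy is the same as the paper's: identify the singular locus with the image of the non-free locus, observe that this locus is the union of the forty surfaces $K^i_j$, pair them off under the $G$-action into twenty classes, and identify each image with $Z=E^2/Q_8$. Several of your steps are in fact carried out more carefully than in the paper: the symplectic-eigenvalue argument showing that non-trivial isotropy really produces singular points (the paper only states the easy converse), the use of $-I$ to place every two-torsion point on some $K^i_j$, and the self-contained argument that $Z$ is a singular K3. Also, the issue you flag at the end --- that the setwise stabiliser of each individual $K^i_j$ is all of $N(T_i)$, acting through one and the same $Q_8$ --- is a genuine verification point, which the paper sources to \cite[\S 6.A]{DW}; note that without it your orbit count $32/16=2$, and hence the number $20$ itself, is not justified, since a priori $N(T_i)$ could permute the four components of the fixed locus of $T_i$.

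The genuine gap is the sentence ``For each orbit the image is $K^i_j$ divided by its effective $Q_8$-action, so $L_k\cong E^2/Q_8=Z$.'' What you get for free is only a finite morphism $Z=K^i_j/Q_8\to L_k:=\eta(K^i_j)$, and upgrading it to an isomorphism needs two further facts, both of which the paper treats explicitly. First, bijectivity: two points of $K^i_j$ identified by some $g\in G$ must already be identified by the stabiliser of $K^i_j$. Away from two-torsion this holds because $K^i_j\cap gK^i_j$ consists of points with stabiliser strictly larger than $\Z_2$, but at the two-torsion points one needs the orbit analysis the paper invokes (``a four element $Q_8$-orbit on $K$ and a four element orbit on $K'$ form one eight element $G$-orbit''). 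Second, and more seriously: a finite bijective morphism from a normal variety onto its image need not be an isomorphism --- it is the normalization map, and images of smooth subvarieties under quotient maps can genuinely be non-normal. For instance, the image of the smooth curve $y=x^2+x$ under $\C^2\to\C^2/\{\pm 1\}$ is a cuspidal curve, with the cusp at the image of the fixed point. This is exactly why the paper's proof contains the step ``It can be checked in local coordinates that the image $\eta(K)=\eta(K')=L$ is normal'': the danger points are the images of the two-torsion points of $K^i_j$ (the $A_1$, $D_4$ and $G$-fixed points), where the ambient quotient singularity of $E^4/G$ could a priori pinch the image, in which case $L_k$ would only be bijective to, not isomorphic to, $Z$. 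Your proof needs this local normality check (equivalently, that every $Q_8$-invariant function on $K^i_j$ extends locally to a $G$-invariant function on $E^4$) to close the identification $L_k\cong Z$.
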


\begin{proof} A singular point of $E^4/G$ is the image of a point of $E^4$ 
with non-trivial isotropy.
We observe that the union of the surfaces $K^i_j$ contains all such points.
As we saw before, the forty fixed surfaces $K^i_j$ are mapped through $\eta \colon E^4 \to   E^4/G$ 
to twenty surfaces.
It can be checked in local coordinates that
the image $\eta (K)=\eta(K')=L \in E^4/G$ is normal. 
It is easy to see that the map $K\to L\in E^4/G$ factorizes through $Z$.
Looking at the orbits of $G$ we conclude that the map $Z\to L$ is a bijection.
\end{proof}

\begin{rem} We shall see in Section \ref{secDesing} that the surfaces $L_1,\dots, L_{20}$ are mapped through the quotient map $E^4/G\to E^4/(G,i)$ to planes contained in the singular locus. In fact, after proving that $E^4/(G,i)=Y\subset \PP^5$ we will see that the above planes will be twenty incident planes
considered in Proposition \ref{20inc}. 
\end{rem}

Since $L_1,\ldots,L_{20}$ are images of $K^i_j$ for $i=0,\ldots, 5$, $j = 1,\ldots, 4$, we obtain a division of this set into~5 subsets of~4 planes. It follows from the desription above that the configuration of their intersection points is as described in Remark~\ref{rem_planes_intersection}. In particular, each subset contains 16 points with isotropy group~$G$, and any two planes from different subsets meet in one of the~16 points.
Note also that the remaining three surfaces fixed by $-T$ cut $K$ 
in three orbits of four elements 
for the action of $Q_8$. We observe that a $Q_8$-orbit of four points on~$K$ 
is a part of a $G$-orbit with eight points on $E^4$ and, more precisely,
given a four element orbit on~$K$ there is a four element orbit on $K'$ 
forming together an eight element orbit of~$G$.

The group $(G,i)$ has $30$ symplectic reflections, since also the $20$ elements
$\pm iT_jT_k$ (for $0\leq j< k\leq 4$)
are reflections.
Each of these reflections has four fixed surfaces on $E^4$. We denote by 
$\F$ the set of the $30\cdot 4=120$ fixed surfaces in $E^4$ 
of the $30$ symplectic involutions in $(G,i)$.

\begin{lemm}\label{singular set}
The fourfold $E^4/(G,i)$ is singular along $60$ surfaces, 
we denote by $\s$ the set of these surfaces. 
The surfaces from $\s$ are the images of the $120$ surfaces 
in $\F$.
\end{lemm}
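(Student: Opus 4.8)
The plan is to show $\s=\pi(\F)$ for the quotient map $\pi\colon E^4\to E^4/(G,i)$, and then to count these images by a $(G,i)$-orbit computation, in complete parallel with the proof of Proposition \ref{singE^4}.

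First I would pin down which surfaces are components of the singular locus of $E^4/(G,i)$. A point $x\in E^4$ with non-trivial stabiliser $\Gamma_x\subset (G,i)$ maps to a singular point, and \'etale-locally the quotient is modelled on $\C^4/\Gamma_x$. Such a linear symplectic quotient is singular in codimension $2$ precisely along the images of the fixed loci of the symplectic reflections contained in $\Gamma_x$; the deeper isotropy, carried by the finitely many two-torsion points with larger stabiliser (and by the intersections of fixed surfaces), contributes only lower-dimensional strata, i.e.\ isolated singular points and curves, not new surface components. A generic point of a fixed surface $K$ of a reflection $r$ has stabiliser exactly $\langle r\rangle\cong\Z_2$, so transversally the singularity is $\C^2/\{\pm 1\}$, an $A_1$-point, and $\pi(K)$ is a genuine surface of singularities. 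This proves $\s=\pi(\F)$ and reduces the lemma to showing that the $120$ surfaces of $\F$ fall into exactly $60$ orbits under $(G,i)$.

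For the orbit count I would first organise the $30$ reflections into conjugacy classes. Since $i$ is a scalar it is central in $(G,i)$, so conjugation is carried out by $G$ alone. Using $T_\ell T_j T_\ell^{-1}=-T_j$ for $\ell\neq j$ and $T_j(iT_jT_k)T_j^{-1}=-iT_jT_k$, each reflection $r$ is conjugate to $-r$; moreover conjugation by $G$ never changes the index $j$ (for the $T_j$), nor the pair $\{j,k\}$ (for the $iT_jT_k$), and it preserves the two cosets of $G$. Hence the reflections split into $15$ distinct conjugacy classes $\{r,-r\}$, namely the five $\{T_j,-T_j\}$ and the ten $\{iT_jT_k,-iT_jT_k\}$. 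Because a symplectic involution is determined by its fixed two-plane (its $(-1)$-eigenspace is the symplectic-orthogonal complement), distinct reflections have disjoint fixed loci; in particular the $120$ surfaces are distinct and each carries a unique reflection. Consequently two surfaces in the same $(G,i)$-orbit carry conjugate reflections, so no orbit mixes different classes, and it suffices to count, for a single class $\{r,-r\}$, the orbits on its $8$ fixed surfaces.

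Within one class the centraliser $C:=C_{(G,i)}(r)$ has order $32$ (the class has size $2$), and an element $g$ with $grg^{-1}=-r$ carries the four sheets of the fixed locus of $r$ bijectively onto those of $-r$. The key point, and the main obstacle, is to show that $C$ stabilises each of the four sheets individually, so that every surface in $\F$ has stabiliser of order $32$ and hence $(G,i)$-orbit of size $2$; the eight surfaces of the class then form exactly $4$ orbits. For the five classes $\{T_j,-T_j\}$ this is essentially the content of Proposition \ref{singE^4}: the stabiliser of a sheet $K$ is $C_G(T_j)$, acting on $K$ as $N(T_j)/\langle T_j\rangle\cong Q_8$, and one has only to verify in addition that the extra automorphism $i$ fixes each sheet, equivalently that the twenty surfaces $L_1,\dots,L_{20}$ are not further merged in $E^4/(G,i)$. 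For the ten classes $\{iT_jT_k,-iT_jT_k\}$ the same statement must be checked directly by a computation in $(G,i)$ in suitable coordinates, as in Proposition \ref{singE^4}, ruling out that a centraliser element permutes the four sheets, which would lower the count. Granting this, each of the $15$ classes contributes $4$ orbits, so $\pi(\F)$ consists of $5\cdot 4+10\cdot 4=60$ surfaces; these are the surfaces of $\s$, and they meet one another as recorded in Remark \ref{rem_planes_intersection}.
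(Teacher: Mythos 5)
Your proposal follows essentially the same route as the paper's proof: identify the surface components of the singular locus with the images of the fixed surfaces of the $30$ symplectic reflections, pair the reflections into the $15$ conjugacy classes $\{r,-r\}$, and count $(G,i)$-orbits on the $120$ surfaces of $\F$. In fact you are more explicit than the paper, which disposes of the whole count in two sentences ($T$ is conjugate to $-T$, hence the eight surfaces of a class map to the same four surfaces, and "it follows" that there are $60$ images); your observation that no orbit can mix different classes, because a surface fixed pointwise by a symplectic involution determines that involution (the $(-1)$-eigenspace being the symplectic complement of the tangent plane), is a genuine improvement in rigor. One slip: "distinct reflections have disjoint fixed loci" is false as stated --- fixed surfaces of different reflections do meet, in points of $(E^4)^G$ among others (cf.\ Remark \ref{rem_planes_intersection}); what you need, and what your eigenspace argument actually proves, is only that no surface is fixed pointwise by two distinct reflections.

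The real issue is that you leave the decisive step --- that the centralizer $C_{(G,i)}(r)$ stabilizes each of the four sheets of the fixed locus of $r$, so that every $(G,i)$-orbit in $\F$ has exactly two elements --- as something to be "checked directly by a computation", and then proceed by "granting this". As written, this is a gap: without it you only obtain the upper bound $\#\s\leq 60$, not equality. (The paper's own proof is equally silent on this point, so you are not below its standard, but a blind proof should close it.) It can be closed cheaply with facts available in the paper: each of the $120$ surfaces in $\F$ contains four points of $(E^4)^G$ (this is the content of Lemma \ref{lemm-D} and Corollary \ref{Y'geometry}, and for the sheets of the $T_j$ it is already in the discussion preceding Proposition \ref{singE^4}), and these $16$ points are fixed by the whole group $(G,i)$ by Lemma \ref{lem-fixed}. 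Since the four sheets of the fixed locus of $r$ are pairwise disjoint (they are connected components, translates of one subtorus), any $g\in C_{(G,i)}(r)$ sends a sheet $K$ to a sheet of the same fixed locus containing the $(G,i)$-fixed points lying on $K$, hence back to $K$ itself. With this inserted, your count $15\cdot 4=60$ is complete and the argument matches the paper's conclusion.
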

\begin{proof} We know that $E^4/(G,i)$ is singular along the image of points with 
non-generic isotropy; 
these are the surfaces from $\F$. (Note that all points whose isotropy group contains an element, which is not a symplectic reflection, is already a 2-torsion point).
A symplectic involution $T\in (G,i)$ is conjugate to $-T$, hence 
the four surfaces fixed by $T$ and
the four surfaces fixed by $-T$ map to the same four surfaces in 
$E^4/(G,i)$. 
It follows that $E^4/(G,i)$ is singular along the $60$ surfaces 
which are the images of the surfaces from $\F$.
\end{proof}

\begin{lemm}\label{remark-EPW} The ramification locus of the map $E^4/G\to E^4/(G,i)$ 
consists of~40 surfaces. 
They are contained in the set $\s$ of $60$ singular surfaces in $E^4/(G,i)$
and are characterized by the fact that they are not 
the images of the singular surfaces from $E^4/G$. 
\end{lemm}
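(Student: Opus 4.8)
The plan is to realize $E^4/G\to E^4/(G,i)$ as the quotient of $E^4/G$ by a single involution, and then to locate the ramification by deciding which of the sixty surfaces of $\s$ are fixed pointwise by that involution. Since $-1=i^2$ already lies in $G$, the subgroup $G$ has index two in $(G,i)$ and $(G,i)/G$ is generated by the class of $i$; because $i$ commutes with $G$ (this is exactly the central product structure $\widetilde{G}=G\times_{\Z_2}\Z/4\Z$), multiplication by $i$ descends to an involution $\bar i$ of $E^4/G$ with $E^4/(G,i)=(E^4/G)/\langle \bar i\rangle$. The ramification locus is therefore the fixed locus of $\bar i$, and I would observe that its two-dimensional components are precisely the images of the surfaces in $E^4$ fixed by a symplectic reflection lying in the nontrivial coset $iG=(G,i)\setminus G$: a point $x$ has $\bar i$-invariant image exactly when $ix\in Gx$, i.e.\ when $x$ is fixed by some element of $iG$, and a two-dimensional fixed component can only arise from an element with a two-dimensional $1$-eigenspace, that is, from a symplectic reflection.

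Next I would feed in the classification of reflections already recorded before Lemma \ref{singular set}: of the thirty symplectic reflections of $(G,i)$, the ten reflections $\pm T_i$ lie in $G$, while the twenty reflections $\pm iT_jT_k$ (for $0\le j<k\le 4$) lie in $iG$. Only these latter twenty contribute to the fixed locus of $\bar i$. Each fixes four surfaces in $E^4$, giving $80$ surfaces; the conjugacy $iT_jT_k\sim -iT_jT_k$ in $(G,i)$ pairs them, so their images in $E^4/(G,i)$ are exactly $40$ surfaces, all contained in $\s$. Moreover each of these is fixed \emph{pointwise} by $\bar i$, since on the fixed surface of $iT_jT_k$ one has $ix=(T_jT_k)^{-1}x\in Gx$ for every $x$. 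This exhibits forty ramification surfaces sitting inside $\s$.

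It then remains to rule out the complementary twenty surfaces of $\s$, namely the images of the surfaces fixed by the reflections $\pm T_i\in G$, which by Proposition \ref{singE^4} are precisely the images of the twenty singular surfaces $L_1,\dots,L_{20}$ of $E^4/G$. For this I would show that $\bar i$ preserves each such surface only setwise, acting on it nontrivially. Writing $K=\mathrm{Fix}(T_i)$, the action of $G$ on $K$ is through $N(T_i)/\langle T_i\rangle\cong Q_8$ (recalled before Proposition \ref{singE^4}), so a generic $x\in K$ has trivial $Q_8$-stabilizer and hence $G$-stabilizer exactly $\langle T_i\rangle$; since no reflection of $iG$ fixes all of $K$ and the non-reflections fix only the isolated two-torsion points, the full $(G,i)$-stabilizer of such an $x$ is again $\langle T_i\rangle$. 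Thus the $(G,i)$-orbit of $x$ has size $32$ and splits into the two distinct $G$-orbits $Gx$ and $iGx$, forcing $ix\notin Gx$ and so $\bar i(\bar x)\neq \bar x$. Hence these twenty surfaces carry no ramification and each $L_i$ maps two-to-one onto its image, so the ramification locus is exactly the set of forty surfaces of $\s$ that are not images of the singular surfaces of $E^4/G$.

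The delicate point is this final step: the two kinds of surfaces in $\s$ look alike, and what distinguishes them is whether $\bar i$ fixes them pointwise or merely permutes their points. Pinning down the generic stabilizer $\langle T_i\rangle$ on the $G$-reflection surfaces is therefore the crux of the argument, with the reflection count $30=10+20$ and the bookkeeping from the $120$ surfaces of $\F$ down to the $60=20+40$ surfaces of $\s$ serving as consistency checks throughout.
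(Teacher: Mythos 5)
Your proposal is correct and takes essentially the same route as the paper: both arguments identify the forty ramification surfaces as the images of the fixed surfaces of the twenty symplectic reflections $\pm iT_jT_k$ lying in the coset $iG$, and both exclude the twenty surfaces $L_1,\dots,L_{20}$ by showing that the actions of $G$ and $(G,i)$ differ on the corresponding $G$-reflection surfaces, so the quotient map is generically $2:1$ there. The only difference is presentational: where the paper appeals to an explicit check on the forty surfaces, you substitute a generic-stabilizer argument (whose key assertion, that no reflection of $iG$ fixes a surface $\mathrm{Fix}(T_i)$ pointwise, you state without proof but which is indeed true), so this is a refinement of the paper's argument rather than a different approach.
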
 
\begin{proof}
We saw in Proposition \ref{singE^4} that the images of~40 surfaces from $\F$ 
map to the singular locus of $E^4/G$. 
Since we can write these~40 surfaces explicitly, it is easy to check
that the actions of~$G$ and~$(G,i)$ are different on them.
It follows that the~20 singular surfaces of $E^4/G$ cannot be in the branch locus of $E^4/G\to E^4/(G,i)$. The remaining~40 singular surfaces of $E^4/G$ are actually fixed by an involution from~$(G,i)$.
\end{proof}


\begin{lemm}\label{lemm-D}
Each surface from $\s$ contains four points, 
these are the images of the points from $(E^4)^G$ on $E^4/(G,i)$. 
Each of the divisors $D+p \subset E^4$ for $p\in (E^4)^G$ contains 
fifteen sets of four points such that each such set of four points is contained 
in two of the fixed surfaces from $\F$.
\end{lemm}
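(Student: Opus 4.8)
The plan is to work one fixed surface at a time and then reduce the whole statement to linear algebra over $\Ff_2$ on the sixteen points of $(E^4)^G$, identified with $\Ff_2^4$ via Lemma \ref{lem-fixed}. For the first assertion I fix a symplectic involution $T\in(G,i)$ and one of its four fixed surfaces $K\cong E\times E$. By Section \ref{section_geom_quot} the relevant action on $K$ factors through $Q_8=N(T)/\langle T\rangle$, and on $K\cong E^2$ this $Q_8$-action has exactly four fixed points and three orbits of length four. The four fixed points are $(G,i)$-fixed, hence lie in $(E^4)^G$, while the length-four orbits are not $G$-invariant and so avoid $(E^4)^G$; thus $K\cap(E^4)^G$ consists of exactly four points. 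Since every point of $(E^4)^G$ is $2$-torsion, it is fixed by $-T$ whenever it is fixed by $T$, so these same four points lie on the surface $K'$ fixed by $-T$. As $K$ and $K'$ have the same image in $E^4/(G,i)$ (Lemma \ref{singular set}) and the four points are $(G,i)$-fixed, their image consists of four points lying on the corresponding member of $\s$, with no further images of $(E^4)^G$ on it.

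For the second assertion I would first show that, for each symplectic involution $T$, the set $K\cap(E^4)^G$ is a coset of the subspace $W_T:=(E^4)^G\cap F_T$, where $F_T$ is the component through $0$ of $\mathrm{Fix}(T)$; the four fixed surfaces of $T$ then carry the four cosets of $W_T$ in $\Ff_2^4$. Because $(E^4)^G$ is $2$-torsion, I expect $W_T=W_{-T}$, which I would confirm by computing the $\pm1$-eigenspaces of the explicit matrices and intersecting with the listed points $[a_1,a_2,a_3,a_4]$; for $T_0$ one gets $W_{T_0}=\langle p_3,p_4\rangle$, a $2$-dimensional subspace. Hence the surface of $T$ and the surface of $-T$ carrying a common coset meet $(E^4)^G$ in the same four points, which is the desired containment in two members of $\F$. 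The thirty involutions form fifteen pairs $\{T,-T\}$ — the five $\pm T_j$ and the ten $\pm iT_jT_k$ — producing fifteen subspaces $W_{[T]}$; using the $U(H)$-action it is enough to verify the claims on one representative from each of these two orbits.

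It then remains to count, for a fixed $D+p$, how many cosets of the $W_{[T]}$ lie in its ten ordinary double points. Translating by $p$ I may take $D=D_{012}$, whose ten nodes are by Proposition \ref{10 ODP} the complement in $\Ff_2^4$ of the six points $\{p_1,p_3,p_1+p_3,p_2,p_4,p_2+p_4\}$, that is of the nonzero vectors of $\langle p_1,p_3\rangle\cup\langle p_2,p_4\rangle$. Since four distinct vectors of $\Ff_2^4$ form an affine plane precisely when they sum to zero, I would enumerate the zero-sum quadruples inside this ten-point set: one finds six through the origin and nine others, hence exactly fifteen. Matching these against the subspaces $W_{[T]}$, each $W_{[T]}$ turns out to have exactly one coset contained in the ten-point set, so the fifteen affine planes are precisely those cosets; by the previous paragraph each is the common intersection with $(E^4)^G$ of the two surfaces of $\F$ fixed by $T$ and $-T$. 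This yields the fifteen sets on $D+p$.

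The main obstacle is bookkeeping rather than a conceptual difficulty: one must pin down, with the explicit $T_j$ and the coordinates $[a_1,a_2,a_3,a_4]$, that each $W_T$ is two-dimensional with $W_T=W_{-T}$, and that the fifteen resulting subspaces are exactly the difference-spaces of the fifteen zero-sum quadruples inside the ten nodes of $D_{012}$. Both are finite verifications in $\Ff_2^4$ (a short Magma check in the style of the rest of the paper), and the numerology — fifteen involution-pairs, fifteen affine planes, one coset of each $W_{[T]}$ per divisor — is exactly what forces the count to be fifteen.
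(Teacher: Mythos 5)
Your proposal is correct, but it takes a genuinely more structured route than the paper, whose entire proof consists of noting that Proposition \ref{10 ODP} tells us which points of $(E^4)^G$ lie on each $D+p$ and then declaring the rest ``a straightforward verification with Magma''. You organize that same finite verification into $\Ff_2$-linear algebra: each fixed surface meets $(E^4)^G$ in a coset of the two-dimensional subspace $W_T=(E^4)^G\cap F_T$ (this is right: if $x_0$ is one such point then $K=F_T+x_0$ and $(E^4)^G$ is a group, so $K\cap(E^4)^G=x_0+W_T$); the identity $W_T=W_{-T}$ holds because all points involved are $2$-torsion and is confirmed on representatives exactly as you predict (for $T_0$ both eigenspace computations give $\langle p_3,p_4\rangle$), and it transports by conjugation since $(G,i)$ is normal in $U(H)$ so $U(H)$ preserves $(E^4)^G$; and your count of zero-sum quadruples inside the ten nodes of $D_{012}$ is correct --- writing $\Ff_2^4=\langle p_1,p_3\rangle\oplus\langle p_2,p_4\rangle$, the six planes through $0$ are the graphs of the isomorphisms between the two summands and the nine others are the sumsets $\{a,b\}+\{c,d\}$ with $a,b$ (resp.\ $c,d$) distinct nonzero in the first (resp.\ second) summand, and one checks each of these fifteen difference-subspaces has exactly one coset inside the ten-node set. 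What your route buys is an explanation of the numerology (fifteen involution-pairs matching fifteen affine planes, and a structure that also recovers Remark \ref{rem_planes_intersection}) and a proof checkable by hand; what the paper's route buys is brevity. The two verifications you leave open --- $W_T=W_{-T}$ on a representative of each type, and the matching of the fifteen $W_{[T]}$ with the fifteen difference-subspaces --- are finite checks of precisely the kind the paper itself delegates to Magma, so your argument has the same epistemic status as the paper's while being considerably more informative.
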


\begin{proof}
From the proof of Proposition \ref{10 ODP} we know which
points from $(E^4)^G$ are contained in $D+p$.
Now it is a straightforward verification with Magma.
\end{proof}

\section{The morphism $E^4/G\to Y' \subset\PP^5$}\label{secEPW}
We find a line bundle on $E^4/G$ 
which gives a $2:1$ morphism to a sextic hypersurface $Y'\subset \PP^5$.
Then, in Section \ref{secDesing} we will show that $Y'=Y$ and give the proof of 
Theorem \ref{desingularisation}.

\subsection{The linear system $|\Delta|$ on $E^4/G$}
Let $D\subset E^4$ be  the $G$-invariant theta divisor as in Proposition \ref{10 ODP} 
and $L=\oo_{E^4}(D)$.
In this section we show that the image of the $G$-invariant theta divisor $D$ is not a Cartier divisor in $E^4/G$, 
but twice the image of $D$ defines a Cartier divisor $\Delta$ on $E^4/G$.
As $-1\in G$, diagonal multiplication by $i$ on $E^4$
induces an involution on $E^4/G$. 
The group $Aut(E^4,H)=U(H)$ induces an action of the symmetric group on
$\Sigma_6$ on $|\Delta|$ and this allows us to show that
$\Delta$ gives a morphism of degree $2$ that induces an isomorphism of $E^4/(G,i)$ with a sextic 
$Y'\subset\PP^5$, see Proposition \ref{main}.

By the Riemann--Roch theorem for Abelian varieties, $h^0(2D)=16$ and by the Theorem
of the Square, $2(D+p)\in |2D|$ for all $p\in (E^4)^G$ since $2p=0$.

Consider the morphism $\lambda$ given by the global sections of the invertible sheaf $L^2$.
Since $L^2$ is not a product polarization and is symmetric of type $(2,2,2,2)$,
we infer that $\lambda$ is a $2:1$ morphism equal to the quotient morphism $E^4\rightarrow E^4/(-1)$
and that $E^4/(-1)\subset\PP H^0(L^2)=\PP^{15}$.

\begin{lemm}\label{trivial_pi1}
Assume that $G \subset GL(n,\Z[i])$ is any finite group containing $-I$. 
Then the quotient $E^n/G$ has trivial fundamental group.
\end{lemm}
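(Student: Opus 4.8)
The plan is to pass to the universal cover and apply Armstrong's theorem on the fundamental group of an orbit space. Writing $E^n=\CC^n/\Lambda$ with $\Lambda=\ZZ[i]^n$, every $g\in G\subset GL(n,\ZZ[i])$ acts $\CC$-linearly on $\CC^n$ and preserves $\Lambda$, so the $G$-action on $E^n$ is induced by the affine action of the group $\Gamma:=\Lambda\rtimes G$ on $\CC^n$, where $(\lambda,g)$ sends $v\mapsto gv+\lambda$ and $(\lambda_1,g_1)(\lambda_2,g_2)=(\lambda_1+g_1\lambda_2,\,g_1g_2)$; in particular $E^n/G=\CC^n/\Gamma$. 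Since $\Lambda$ acts properly discontinuously and cocompactly and the quotient $\Gamma/\Lambda\cong G$ is finite, $\Gamma$ acts properly discontinuously on the simply connected space $\CC^n$. Armstrong's theorem then yields $\pi_1(E^n/G)\cong\Gamma/H$, where $H\trianglelefteq\Gamma$ is the normal subgroup generated by those elements of $\Gamma$ that have a fixed point on $\CC^n$. So the whole problem reduces to showing $H=\Gamma$.

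To do this I would exhibit a generating set of $\Gamma$ inside $H$. First, for every $g\in G$ the linear element $(0,g)$ fixes the origin, so $(0,g)\in H$; thus $H$ already contains lifts of all of $G$. The remaining generators are the pure translations $(\lambda,I)$, and here the hypothesis $-I\in G$ enters decisively. For any $\lambda\in\Lambda$ the map $v\mapsto -v+\lambda$, i.e.\ the element $(\lambda,-I)$, fixes the point $\lambda/2\in\CC^n$, so $(\lambda,-I)\in H$; taking $\lambda=0$ gives $(0,-I)\in H$ as well. Multiplying, $(\lambda,I)=(\lambda,-I)\,(0,-I)\in H$ for every $\lambda$, so $\Lambda\subset H$. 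As $\Gamma$ is generated by $\Lambda$ together with the $(0,g)$, both now inside $H$, we conclude $H=\Gamma$ and hence $\pi_1(E^n/G)=1$.

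The one genuinely delicate step is this use of $-I$ to force the entire translation lattice into $H$: the linear parts fix the origin and land in $H$ for free, but a nonzero pure translation has no fixed point and \emph{a priori} need not lie in the subgroup generated by the fixed-point elements. What saves us is that $I-(-I)=2I$ is invertible, so the affine map $(\lambda,-I)$ has a fixed point for \emph{every} $\lambda$, and this is exactly what absorbs $\Lambda$ into $H$. That the assumption $-I\in G$ is essential can be seen from $G=\langle\,\mathrm{diag}(1,-1)\,\rangle$ acting on $E^2$: the quotient is $E\times\PP^1$, whose fundamental group is $\ZZ^2\neq 1$.
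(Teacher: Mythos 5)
Your proof is correct and takes essentially the same approach as the paper: the paper's (much sketchier) argument also passes to the affine extension $\Lambda_E^n\rtimes G$ acting on $\C^n$ and observes that, because $I-(-I)$ has maximal rank, there are "so many elements with fixed points that the fundamental group must be trivial" --- which is precisely your use of Armstrong's theorem with the elements $(\lambda,-I)$ fixing $\lambda/2$. Your write-up simply supplies the details (the statement of Armstrong's theorem, the explicit absorption of the translation lattice into the normal subgroup $H$, and the counterexample showing $-I$ is needed) that the paper delegates to its cited reference.
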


\begin{proof}
The method which we use to compute the fundamental group of this quotient is well-known and 
often applied in mathematical physics articles, which usually mention~\cite{physicsFundGroup} 
as the main reference for this topic.
It boils down to checking which elements of a certain extension of the action of $G$ 
to the action of $\Lambda_E^n\rtimes G$ on $\C^n$, where $\Lambda_E$ is a lattice such that 
$\C/\Lambda_E \simeq E$, have fixed points.
In particular, if there is an $A \in G$ such that $I-A$ has maximal rank, e.g. $A = -I$, 
then there are so many elements with fixed points that the fundamental group must be trivial.
\end{proof}

\begin{cor}
The symplectic desingularization $X_0$ of $E^4/G$ is simply connected:
$\pi_1(X_0) = 0$.
\end{cor}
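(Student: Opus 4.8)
The plan is to deduce the triviality of $\pi_1(X_0)$ directly from the triviality of $\pi_1(E^4/G)$, which is already furnished by Lemma~\ref{trivial_pi1}. The group $G\cong Q_8\times_{\Z_2}D_8$ is realised inside $GL(4,\Z[i])$ by the matrices $T_j$, and it contains $-I$, since $-I=(T_jT_k)^2$ for $j\neq k$. Hence Lemma~\ref{trivial_pi1}, applied with $n=4$, gives at once $\pi_1(E^4/G)=0$, and the whole problem reduces to comparing the fundamental group of the symplectic resolution $\sigma\colon X_0\to E^4/G$ with that of its base.

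It remains to show that $\sigma$ does not change the fundamental group. Since $\sigma$ is a proper birational morphism with connected fibres onto a normal variety, the induced map $\sigma_*\colon\pi_1(X_0)\to\pi_1(E^4/G)$ is automatically surjective; as the target is trivial this carries no information, and the real content is that $\sigma_*$ is also \emph{injective}, i.e.\ that the resolution creates no new loops. I would obtain this from the fact that $E^4/G$, being a finite quotient of the smooth variety $E^4$, has only quotient singularities. Finite quotient singularities are always log-terminal (and here, as $G$ preserves the holomorphic symplectic form on $E^4$ that gives rise to the symplectic resolution, they are even symplectic, hence canonical). By the theorem on local simple connectedness of resolutions of log-terminal singularities (Takayama), any resolution $\sigma\colon X_0\to E^4/G$ induces an isomorphism $\pi_1(X_0)\xrightarrow{\sim}\pi_1(E^4/G)$, whence $\pi_1(X_0)\cong\pi_1(E^4/G)=0$.

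The one delicate point is precisely the injectivity of $\sigma_*$, equivalently the assertion that a small analytic neighbourhood of each exceptional fibre of $\sigma$ is simply connected. This is where the log-terminal (here symplectic) nature of the singularities of $E^4/G$ is indispensable: for a resolution of an arbitrary normal variety the fundamental group may genuinely grow, and it is the local simple connectedness theorem that rules this out; note also that it is $\pi_1(E^4/G)$ itself, not $\pi_1$ of the smooth locus $(E^4/G)_{\mathrm{reg}}$, that the resolution reproduces, which is exactly what allows the local $\pi_1$ around each of the twenty singular surfaces to be killed. Once this input is granted the corollary follows formally, and in particular $X_0$ --- smooth, projective, carrying a holomorphic symplectic form, and now simply connected --- is an irreducible holomorphic symplectic fourfold.
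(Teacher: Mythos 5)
Your proposal is correct and follows essentially the same route as the paper: triviality of $\pi_1(E^4/G)$ comes from Lemma~\ref{trivial_pi1} (noting $-I\in G$), and then one invokes the fact that a resolution of a variety with quotient (hence log-terminal) singularities induces an isomorphism on fundamental groups. The paper cites \cite[Thm.~7.8 (a)]{kollar} for this last step where you cite Takayama's local simple connectedness theorem, but these are the same key input, so the arguments coincide in substance.
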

\begin{proof}
By Lemma~\ref{trivial_pi1}, $\pi_1(E^4/G)$ is trivial, 
and by~\cite[Thm.~7.8 (a)]{kollar} the resolution does not change the fundamental group.
\end{proof}

In this section, $\eta$ is the quotient map $\eta:E^4\rightarrow E^4/G$. Recall that, by~\cite[Cor.~6.4]{DW}, the symplectic resolution $X_0$ of $E^4/G$ has $b_2(X_0) = 23$.

\begin{prop}\label{PicE^4/G}
The Picard group of $E^4/G$ has rank one and has no torsion.
The divisor $\eta(D)$ (with the reduced structure) is Weil but not Cartier whereas
$\Delta=2\eta(D)$ is Cartier, ample and generates the Picard group of $E^4/G$.
Moreover, $\Delta^4=12$ and $h^0(\Delta)=6$.
\end{prop}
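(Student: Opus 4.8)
The plan is to compute $\Pic(E^4/G)$ by relating it to the $G$-invariant part of $NS(E^4)$, to control torsion via simple connectivity, and then to treat the divisors $\eta(D)$ and $\Delta=2\eta(D)$ by descent along $\eta$. The first observation I would record is that $\eta\colon E^4\to E^4/G$ is \emph{étale in codimension one}: by the analysis of fixed loci in Section~\ref{section_geom_quot}, every non-trivial element of $G$ fixes a locus of codimension at least two, the symplectic reflections fixing surfaces (Proposition~\ref{singE^4}). Hence $\eta^*$ carries Weil divisor classes to $G$-invariant classes with no ramification correction, and since $D$ is the unique, so $G$-invariant, theta divisor with $\eta^{-1}(\eta(D))=D$ set-theoretically, one gets $\eta^*\eta(D)=D$ and $\eta^*\Delta=2D$ as Weil divisors.

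For rank and torsion I would proceed as follows. Since $X_0$ is a simply connected symplectic resolution, $H^1(E^4/G,\oo)=H^1(E^4,\oo)^G=0$ (there are no invariants because $-I\in G$ acts by $-1$ on $H^1(\oo_{E^4})$), so the exponential sequence embeds $\Pic(E^4/G)$ into $H^2(E^4/G,\Z)$; as $\pi_1(E^4/G)=0$ by Lemma~\ref{trivial_pi1}, this group is torsion-free, whence $\Pic(E^4/G)=NS(E^4/G)$ is torsion-free. Pulling back, $\eta^*\colon NS(E^4/G)_{\QQ}\to NS(E^4)_{\QQ}$ is injective (projection formula, $\eta_*\eta^*=32$) with image inside $NS(E^4)^G_{\QQ}=\QQ H$, which by Proposition~\ref{herm form} is one-dimensional and is hit by $\eta(D)\mapsto H$. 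Therefore $\Pic(E^4/G)\cong\Z$.

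Next I would distinguish $\Delta$ from $\eta(D)$. For non-Cartierness of $\eta(D)$, I compute the self-intersection: from $\eta^*\eta(D)=D$, $\deg\eta=32$ and $D^4=4!=24$ for a principal polarization one gets $\eta(D)^4=24/32=3/4\notin\Z$, which is impossible for a Cartier divisor; the same computation yields $\Delta^4=(2\eta(D))^4=16\cdot\tfrac34=12$. For Cartierness of $\Delta$ I use descent: by Proposition~\ref{automorphism i} the bundle $L=\oo(D)$ is $G$-invariant and symmetric, so at each point the stabilizer in $G$ acts on the fibre of $L$ through a character valued in $\{\pm1\}$; hence it acts trivially on the fibre of $L^2$, and the standard descent criterion produces a genuine line bundle $\oo(\Delta)$ on $E^4/G$ with $\eta^*\oo(\Delta)=L^2=\oo(2D)$. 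Thus $\Delta=2\eta(D)$ is Cartier and ample, while $\eta(D)$ is only Weil. To see that $\Delta$ generates $\Pic(E^4/G)\cong\Z$, write $\Delta=nP$ for the ample generator $P$; since $P$ is Cartier, $P^4\in\Z$, and $12=\Delta^4=n^4P^4$ forces $n=1$. Finally, descent gives $H^0(E^4/G,\oo(\Delta))\cong H^0(E^4,L^2)^G$, the $G$-invariant part of the $16$-dimensional space $H^0(2D)$; this is spanned by the divisors $2(D+p)$ with $p\in(E^4)^G$, shown to span at least a $6$-dimensional space in Proposition~\ref{span >5}, the reverse inequality being supplied by Proposition~\ref{prop-Di}, so $h^0(\Delta)=6$.

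\textbf{The main obstacle} is the descent step. One must verify carefully that $L$ genuinely fails to descend (this is precisely what $\eta(D)^4=3/4$ detects) while a $G$-linearization of $L^2$ has trivial stabilizer action on \emph{all} fibres, including at the worst singularities, namely the images of the $16$ fixed points and of the $240$ points with isotropy $\Z_2\oplus\Z_2$. Here the symmetry of $L$ from Proposition~\ref{automorphism i}, together with the fact that $G$ is a $2$-group, is exactly what makes every relevant character square to the trivial one.
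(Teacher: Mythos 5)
Most of your argument is sound, and two steps take routes genuinely different from the paper's. For the rank you use $\eta^*$ and $NS(E^4)^G_{\QQ}=\QQ H$ (Proposition \ref{herm form}), where the paper instead combines $b_2(X_0)=23$ with the $20$ independent divisors contracted by the symplectic resolution; your version is arguably cleaner and needs less input from \cite{DW}. For Cartierness of $\Delta$ you use Kempf descent of $\oo_{E^4}(2D)$, where the paper proves $2$-factoriality of $E^4/G$ locally from $\Cl(\C^4/G)=Ab(G)=(\Z_2)^4$. Your descent route is legitimate, but the justification is off as stated: that $G$ is a $2$-group does not make its characters $\{\pm 1\}$-valued (consider $\Z_4$), and the symmetry of $L$ is not what is relevant; what you need is that the stabilizers occurring ($G$, $\Z_2\oplus\Z_2$, $\Z_2$) have elementary abelian character groups, i.e.\ precisely the fact $Ab(G)=(\Z_2)^4$ that the paper quotes from \cite{DW}. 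The fractional self-intersection argument for non-Cartierness of $\eta(D)$, the computation $\Delta^4=12$, and the $n^4\mid 12$ argument for generation are all correct, and the torsion argument is essentially the paper's (run on $E^4/G$ instead of on $X_0$).

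The genuine gap is the final claim $h^0(\Delta)=6$. Your upper bound invokes Proposition \ref{prop-Di}, and this is circular: in the paper, Proposition \ref{prop-Di} is proved \emph{using} Proposition \ref{PicE^4/G} — both the identification $\Pic(E^4/G)=\Z\Delta$ with $\Delta^4=12$ and the equality $\dim|\Delta|=5$ itself. Worse, even granting the statement of Proposition \ref{prop-Di}, it yields no upper bound: it says that the sixteen divisors $2\eta(D+p)$ span $|\Delta|$, while Proposition \ref{span >5} bounds the dimension of their span only from \emph{below}, so the combination gives $h^0(\Delta)\geq 6$ once more. The paper's actual source of the upper bound is the resolution $X_0$: the pullback $\overline{\Delta}$ is big and nef, $b_2(X_0)=23$, and the Riemann--Roch plus vanishing argument of \cite[Prop.~2.1]{K}, adapted to big and nef classes, gives $h^0(\Delta)=h^0(\overline{\Delta})=6$ (and, en route, the constraint $\Delta^4=12k^2$). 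Nothing in your proposal replaces this ingredient; your descent identity $H^0(E^4/G,\oo(\Delta))\cong H^0(2D)^G$ is correct, but you would still need an independent computation of $\dim H^0(2D)^G$. A smaller point in the same step: to apply Proposition \ref{span >5} for the lower bound you need the sections cutting out the $G$-invariant divisors $2(D+p)$ to be honestly $G$-invariant, not merely $G$-eigenvectors; this follows by evaluating at a point of $(E^4)^G$ lying off $D\cup(D+p)$, which exists by Corollary \ref{inters} — the same argument the paper runs for $i^*$ in the proof of Proposition \ref{prop-Dibpf} — but it must be said.
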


\begin{proof}
Since $\pi_1(X_0) = 0$, then also $H^1(X_0,\oo_{X_0}) = 0$ and, from the exponential sequence,
$\Pic(X_0) \subseteq H^2(X_0,\Z)$. Then from the universal coefficient theorem we get an exact sequence
$$
0 \lra \Ext(H_1(X_0,\Z), \Z) \lra H^2(X_0,\Z) \lra \Hom(H_2(X_0,\Z), \Z) \lra 0.
$$
The triviality of $\pi_1(X_0) = 0$ implies that the first term of this sequence is 0.
Thus $\Pic(X_0) \subseteq \Hom(H_2(X_0,\Z), \Z)$, which is torsion-free.

It follows from \cite[Prop.~6.2]{DW} that the symplectic resolution $X_0\to E^4/G$ 
contracts at least $20$ independent divisors on $X_0$. 
On the other hand, since $b_2(X_0)=23$ and $h^{2,0}(X_0)=1$, 
the Picard rank of $X_0$ is at most $21$.
Thus the Picard group of $E^4/G$ has rank at most $1$.


We claim that $E^4/G$ is $2$-factorial. It is enough to prove this locally.
Since $E^4/G$ has only quotient singularities we see that the only non-factorial singularities 
it admits are isomorphic to the quotient singularity of $\C^4/G$.
In \cite[Lem.~2.10]{DW} it was shown that $\Cl(\C^4/G)=Ab(G)=(\Z_2)^4$, hence the claim follows.

It follows that $2\eta(D)$ is a Cartier divisor, necessarily ample from the Nakai-Moisheson criterion.
The pull-back of $\Delta$ on $E^4$ is a divisor from the system $|2D|$, so we can compute the 
self intersection of $(2\eta(D))^4$.
Consider the pull-back $\overline{\Delta}$ of $\Delta$ by the map $X_0\to E^4/G$.
Since $\Delta$ is ample we infer that $\overline{\Delta}$ is big and nef.
Observe that the proof of \cite[Prop.~2.1]{K} can be adapted for big and nef divisors. It follows that $\Delta^4=12 k^2$ for some $k\in \Z$.
As $(2D)^4=2^4(4!)=2^5\cdot 12$ and $\sharp G=2^5$ it follows that
 $\Delta^4=12$.
By \cite[Prop.~6.2]{DW} $b_2(X_0)=23$, thus we can repeat the arguments from \cite[Prop.~2.1]{K} 
to show that $h^0(\Delta)=h^0(\overline{\Delta}) =6$.

Since the self intersection of a big and nef divisor should be a multiple of $12$, 
we infer that $\Delta$ is the generator of the Picard group.
In order to see that $\eta(D)$ is not Cartier, it is enough to observe that otherwise the 
pull-back of $\eta(D)$ on the IHS fourfold $X_0$ would be a big and nef divisor with 
self intersection $\frac{12}{16}$.
\end{proof}

To understand the map from $E^4/G$ to $\PP^5$ provided by the linear system $|\Delta|$,
where $\Delta$ is the ample generator of the Picard group of $E^4/G$,
we study the pull-back of $|\Delta|$ to $E^4$.

\begin{prop}\label{prop-Di}
The linear system $|\Delta|$ on $E^4/G$ is generated by the $16$ divisors
$2\eta(D+p)$ for $p \in (E^4)^G$.
\end{prop}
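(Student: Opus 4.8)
The plan is to pull everything back to $E^4$ and exploit the $G$-invariance, so that the statement becomes a comparison of two subspaces of the $16$-dimensional space $H^0(E^4,\oo(2D))$. The crucial structural fact I would use is that the non-free locus of the $G$-action on $E^4$ is the union of the surfaces $K^i_j$ of Proposition \ref{singE^4}, which have codimension two; hence $\eta\colon E^4\to E^4/G$ is étale in codimension one. This makes pull-back of divisors well-behaved: for each $p\in(E^4)^G$ the divisor $D+p$ is $G$-invariant, so $\eta(D+p)$ is a Weil divisor, $2\eta(D+p)$ is Cartier by the $2$-factoriality proved in Proposition \ref{PicE^4/G}, and $\eta^{\ast}\bigl(2\eta(D+p)\bigr)=2(D+p)$ with multiplicity one.

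First I would record the two dimension facts I need. On the one hand, $\eta^{\ast}\Delta\in|2D|$ and pull-back identifies $H^0(E^4/G,\Delta)$ with the $G$-invariant sections $H^0(E^4,\oo(2D))^G$; since $h^0(\Delta)=6$ by Proposition \ref{PicE^4/G}, this invariant subspace has dimension exactly $6$. On the other hand, each of the sixteen divisors $2\eta(D+p)$ lies in $|\Delta|$: indeed $\eta^{\ast}\bigl(2\eta(D+p)\bigr)=2(D+p)\sim 2D=\eta^{\ast}\Delta$ by the Theorem of the Square, and $\eta^{\ast}$ is injective on $\Pic(E^4/G)=\ZZ\cdot\Delta$ (torsion-free of rank one by Proposition \ref{PicE^4/G}), so $2\eta(D+p)\sim\Delta$.

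The argument is then a tight dimension count. The pull-backs of the sixteen divisors $2\eta(D+p)$ are precisely the sixteen divisors $2(D+p)\in|2D|$, each of which, being $\eta^{\ast}$ of a section of $\Delta$, lies in the $6$-dimensional subspace $\eta^{\ast}H^0(E^4/G,\Delta)=H^0(E^4,\oo(2D))^G$. By Proposition \ref{span >5} these sixteen divisors already span a subspace of $|2D|$ of projective dimension at least $5$, that is, a vector subspace of dimension at least $6$. Since this subspace is contained in the $6$-dimensional space $H^0(E^4,\oo(2D))^G$, the two must coincide. As $\eta^{\ast}$ is injective, it follows that the sixteen divisors $2\eta(D+p)$ span $|\Delta|$, which is exactly the assertion.

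The main obstacle is that the numerics are exactly tight: both the upper bound $\dim H^0(E^4,\oo(2D))^G=6$ and the lower bound of Proposition \ref{span >5} are indispensable, with no margin. The two delicate points underpinning the count are that $\eta$ is étale in codimension one, so that $\eta^{\ast}(2\eta(D+p))=2(D+p)$ with the correct multiplicity, and that $\eta^{\ast}$ is injective on the rank-one torsion-free Picard group, so that each $2\eta(D+p)$ genuinely belongs to $|\Delta|$ rather than to some other multiple of its generator.
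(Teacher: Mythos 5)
Your proof is correct and takes essentially the same route as the paper: both arguments first place the sixteen divisors $2\eta(D+p)$ in $|\Delta|$ using $\Pic(E^4/G)=\ZZ\Delta$ from Proposition~\ref{PicE^4/G} (the paper via the self-intersection count $(2D)^4/\sharp G=12=\Delta^4$, you via injectivity of $\eta^*$ on the Picard group and the Theorem of the Square upstairs), and then conclude by the identical tight dimension count, pulling the span back to $E^4$ and playing Proposition~\ref{span >5} against $\dim|\Delta|=5$. The differences (working with vector spaces of sections rather than projective spans, and the explicit remark that $\eta$ is \'etale in codimension one) are cosmetic.
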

\begin{proof}
As in the proof of Proposition  \ref{PicE^4/G}, these divisors $2\eta(D+p)$ are Cartier
and are invariant with respect to $G$, hence
they have self intersection $(2D)^4/\sharp G=12$.
Since we know that the Picard group of $E^4/G$ is $\ZZ\Delta$, with $\Delta^4=12$, 
we conclude that the divisors $2\eta(D+p)$ with $p \in (E^4)^G$ 
are in the linear system $|\Delta|$.

Notice that $\langle 2\eta(D+p):\;p \in (E^4)^G\rangle \subset |\Delta|\cong\PP^5$
and that the pull-back map $\eta^*$ maps $\langle 2\eta(D+p):\;p \in (E^4)^G\rangle$
linearly into the subsystem $\langle 2(D+p):\;p \in (E^4)^G\rangle \subset|2D|$ on $E^4$.
Proposition \ref{span >5} asserts that this subsystem of $|2D|$ has dimension at least $5$, 
hence also $\dim \langle 2\eta(D+p):\;p \in (E^4)^G\rangle=5=\dim |\Delta|$, 
which concludes the proof.
\end{proof}

Denote by $f\colon E^4/G\to  \PP^5$ the map given by $|\Delta|$.

\begin{prop}\label{prop-Dibpf}
The linear system $|\Delta|$ on $E^4/G$ is base point free
and the morphism $f\colon E^4/G\rightarrow\PP^5$ defined by $|\Delta|$ factors through the quotient map $h: E^4/G\rightarrow E^4/(G,i)$.
\end{prop}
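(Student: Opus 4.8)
The plan is to treat the factorization through $h$ first, as it is the cleaner half and it motivates the base-point-free statement; throughout I write $L=\oo_{E^4}(D)$ and pull everything back to $E^4$ by $\eta$.

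By Proposition~\ref{prop-Di} the system $|\Delta|$ is generated by the sixteen divisors $2\eta(D+p)$ with $p\in(E^4)^G$, and $\eta^*$ identifies $H^0(\Delta)$ with the six-dimensional $G$-invariant subsystem $\Gamma:=\langle\,2(D+p):p\in(E^4)^G\,\rangle\subset|2D|$. First I would note that every $D+p$ is invariant under the automorphism $i$: by Proposition~\ref{automorphism i} we have $i^*L\cong L$, so the unique effective divisor $D$ of the principal polarization $L$ is $i$-invariant, and since $i$ fixes $(E^4)^G$ pointwise (Lemma~\ref{lem-fixed}) we get $i(D+p)=D+i(p)=D+p$. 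Thus each section $s_p\in H^0(L^2)$ with $\mathrm{div}(s_p)=2(D+p)$ is an eigenvector of $i^*$ on $H^0(L^2)$. The crucial point is that these eigenvalues all coincide: taking $s_p=t_p^*s_0$, where $t_p$ is translation by $p$, and using $i\circ t_p=t_{i(p)}\circ i=t_p\circ i$, one obtains $i^*s_p=t_p^*(i^*s_0)=\epsilon\,t_p^*s_0=\epsilon\,s_p$, with $\epsilon$ the common $i^*$-eigenvalue of $s_0$. Hence $i^*$ acts as the single scalar $\epsilon$ on $\Gamma$.

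Translated downstairs, the induced involution $\bar i$ on $E^4/G$ (which satisfies $\bar i^2=\overline{i^2}=\overline{-1}=\id$, since $-1\in G$) acts by a scalar on $H^0(\Delta)$. Evaluating a basis of $H^0(\Delta)$ at $\bar i(x)$ therefore differs from evaluation at $x$ only by this global scalar, which is invisible in $\PP^5$; so $f\circ\bar i=f$. As $\bar i$ is exactly the involution whose quotient is $E^4/(G,i)$, the map $f$ factors through $h\colon E^4/G\to E^4/(G,i)$.

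It remains to prove base-point-freeness, which I expect to be the main obstacle. Because each $D+p$ is $G$-invariant, the base locus of $|\Delta|$ is $\eta(B)$ with
$$
B=\bigcap_{p\in(E^4)^G}(D+p)=\{\,z\in E^4:\ z+(E^4)^G\subseteq D\,\},
$$
so everything reduces to showing that the sixteen $G$-invariant theta divisors meet in no point of $E^4$. The useful structural fact is that the translates $D+p$ are precisely the sixteen $(G,i)$-invariant theta divisors of $(E^4,H)$; since $(G,i)$ is normal in $G_{DV}=U(H)$, the group $G_{DV}$ permutes them and hence preserves $B$. Thus $B$ is a $G_{DV}$-invariant closed set, and if nonempty it contains a small orbit. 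The $(16,6)$-configuration (Corollary~\ref{inters}, Lemma~\ref{lemm-D}) immediately excludes the sixteen fixed points $(E^4)^G$: any $q\in(E^4)^G$ lies on exactly ten of the divisors $D+p$ because $\#(D\cap(E^4)^G)=10$, so six of the generating sections are nonzero at $q$. Ruling out the remaining special $G_{DV}$-orbits—the other two-torsion points and the points of larger isotropy—is then a finite check, most cleanly done with Magma. Once $B=\emptyset$ the system $|\Delta|$ is base point free and $f$ is a genuine morphism, completing the proof.
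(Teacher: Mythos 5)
Your reductions are sound as far as they go: by Proposition \ref{prop-Di} everything happens in the subsystem $\Gamma=\langle 2(D+p):p\in(E^4)^G\rangle\subset|2D|$, and the base locus of $\Gamma$ is indeed $B=\bigcap_{p\in(E^4)^G}(D+p)$. But both halves of your argument have a genuine gap. The ``crucial point'' of your factorization step --- that all sixteen sections share one $i^*$-eigenvalue because $i\circ t_p=t_p\circ i$ --- is circular. The pullbacks $i^*s$ and $t_p^*s$ live in different bundles ($i^*L^2$, $t_p^*L^2$), and to get endomorphisms of the single space $H^0(L^2)$ you must choose identifications; the resulting operators then commute only up to a scalar, and that scalar is exactly the sign at stake. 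Concretely, in the model the paper uses, $H^0(2D)=\{f\in\CC(E^4):(f)+2D\geq 0\}$ with $i^*f=f\circ i$, the translation operator is $U_p\colon s\mapsto f_p\cdot(s\circ t_p)$ where $(f_p)=2(D+p)-2D$, and a direct computation shows $i^*\circ U_p=U_p\circ i^*$ holds \emph{if and only if} $f_p\circ i=f_p$ --- precisely the statement to be proved; the relation $i\circ t_p=t_p\circ i$ on the variety gives no control over this sign. The concern is not vacuous: the anti-invariant part $H^0_-$ is $10$-dimensional (the paper computes $\dim H^0_+=6$, $\dim H^0_-=10$ by the Atiyah--Bott holomorphic Lefschetz formula), so $i$-invariant divisors in $|2D|$ whose sections are \emph{anti}-invariant abound. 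The paper pins the sign down by evaluation: by Corollary \ref{inters} there is $q\in(E^4)^G$ lying on neither $D$ nor $D+p$; then $f_p(q)\in\CC\setminus\{0\}$ and $i(q)=q$ force $i^*f_p=f_p$. This evaluation step (and the $(16,6)$-configuration behind it) is the missing content.

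The base-point-freeness half is the more serious gap. Knowing that $B$ is closed and $G_{DV}$-invariant, you propose to finish by a finite (Magma) check of ``special orbits.'' This cannot work: $G_{DV}$ is a finite group, so \emph{every} orbit is finite, and a nonempty invariant closed set need not contain any two-torsion point or any point with nontrivial stabilizer --- a priori $B$ could be a positive-dimensional locus consisting entirely of points with trivial stabilizer, invisible to any finite list of special points. (Your exclusion of the sixteen points of $(E^4)^G$ via the incidence count is correct, but it handles only those sixteen points.) What closes this in the paper is a structural argument you do not have: since $\dim H^0_+=6$ and the sixteen $f_p$ span a $6$-dimensional subspace of $H^0_+$ (Proposition \ref{span >5}), the subsystem $\Gamma$ is exactly $\PP(H^0_+)$; then Wirtinger duality identifies the map given by $|2D|$ with $x\mapsto(D+x)+(D-x)$, so a base point $x$ of $\PP(H^0_+)$ would force the section cutting out $(D+x)+(D-x)$ to lie in $H^0_-$, whence $i^*(D+x)+i^*(D-x)=(D+x)+(D-x)$ and $x=\pm i(x)$, i.e. $x\in(E^4)^G$ --- and those points were already excluded in the first half. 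Without this eigenspace-plus-Wirtinger mechanism (or an actual computation of the intersection scheme $\bigcap_p(D+p)$), your proof of $B=\emptyset$ does not close.
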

\begin{proof} 
Using Proposition \ref{prop-Di}, we have to show that the subsystem 
$\langle 2(\eta(D+p)):\;p \in (E^4)^G\rangle$ 
is base point free on $E^4/G$.
Since the divisors $D+p$ for $p\in (E^4)^G$ are $G$-invariant, it is 
enough to prove that the linear subsystem $\langle 2(D+p):\;p \in (E^4)^G\rangle \subset |2D|$
is base point free on $E^4$.

We identify $H^0(2D):=H^0(E^4,\oo_{E^4}(2D))$ with the vector space of
rational functions on $E^4$ with poles of order at most two along $D$.
As $i(D)=D$, we have an endomorphism $i^*$ of $H^0(2D)$:
$$
H^0(2D):=\,\{f\in \CC(E^4):\,(f)+2D>0\,\},\qquad
i^*:\,H^0(2D)\longrightarrow\,H^0(2D),\quad
f\,\longmapsto f\circ i.
$$
As $i^2=-1$ and all functions $f$ in $H^0(2D)$ are even, so
$f(-x)=f(x)$, the map $i^*$ is an involution. 
Let $H^0(2D)=H^0_+\oplus H^0_-$ be the eigenspace decomposition into the even and the odd part.

Since $D$ is effective, we have the constant function $1\in H^0_+$.
Let $p\in (E^4)^G$, $p\ne 0$. 
Then $2(D+p)\in|2D|$, so there is a rational function $f_p\in H^0(2D)$
such that $(f_p)=2(D+p)-2D$. As $i(D+p)=D+p$, we have $i^*f_p=\pm f_p$ for some choice of sign.
From Corollary \ref{inters} we infer that there is a $q\in (E^4)^G$
which is not on $D$ and also not on $D_p$. Then $f_p$ has no pole 
and is not zero in $q$, so $f_p(q)\in\CC-\{0\}$. 
As $i(q)=q$, it follows that $(i^*f_p)(q)=f_p(i(q))=f_p(q)$, hence $i^*f_p=f_p$.
Therefore $f_p\in H^0_+$, for all $p\in (E^4)^G$, and the map defined by $|\Delta|$
factors through $E^4/(G,i)$.

Next, we show that $H^0_+$ is spanned by these $f_p$.
For each $p\in (E^4)^G$ there is a $G$-invariant divisor $D+q$ such that
$p\not\in D+q$. 
This implies that the fibers $\oo(2D)_p:=\oo(2D)/{\mathfrak{m}_p}\oo(2D)$ are generated 
by global sections in $H^0_+$ and one finds that the lift $i^*$ of $i$ to $\oo(2D)$ we consider
acts as $+1$ on the fibers over the fixed points $(E^4)^i=(E^4)^G$ of $i$.
The Atiyah-Bott Lefschetz Formula (\cite[Theorem 4.12]{AB}) states:
$$
\sum_{j=0}^4 Tr(i^*|H^j(E^4,\oo(2D))\,=\,\sum _{p\in (E^4)^i}\,
\frac{Tr(i^*_p:\oo(2D)_p\rightarrow \oo(2D)_p)}{\det(I-(\mbox{d}i)_p)}~.
$$
Since $H^j(\oo(2D))=0$ for $j>0$, $Tr(i^*_p)=+1$ for all $p$ and
$\det(I-(\mbox{d}i)_p)=(1-i)^4=-4$, we find $\dim H^0_+-\dim H^0_-=-4$, hence dim $H^0_+=6$ and
$\dim H^0_-=10$. Thus 
$$
\PP(H^0_+)\;=\;\langle 2(D+p):\;p \in (E^4)^G\rangle.
$$

By Wirtinger duality, the map $\lambda$ defined by $|2D|$ can be identified with the map
$$
\lambda'\colon\, E^4\ni x\,\longmapsto\, 
(D+x)+(D-x)\;\in \PP(H^0(E^4,\oo_{E^4}(2D))).
$$
In particular, $x\in E^4$ is a base point of the map defined by $\PP(H^0_+)$ exactly when
$(D+x)+(D-x)$ is the divisor of a global section $f_x$ with $f_x\in H^0_-$.
But then $i^*f_x=-f_x$ so $i^*(D+x)+i^*(D-x)=(D+x)+(D-x)$ and thus $x=\pm i(x)$, so $x=i(x)$ or
$x=i^3(x)$, hence $x\in (E^4)^i=(E^4)^G$. But we already found that $f_x\in H^0_+$ 
for such a fixed point. We conclude that there are no base points for the system $\PP H^0_+$
and thus also $|\Delta|$ is base point free.
\end{proof}

\begin{rem}  \label{diagram qoutient}
We have the following commutative diagram:
$$
\begin{CD} E^4 @> \eta>>  E^4/G@>h>>  E^4/(G,i)\\
@V |2D| VV @V|\Delta| VV @V  m VV \\
\PP^{15} @>\theta>> \;\;\PP^5 @>=>> \PP^5~, \end{CD}
$$ 

\

\noindent 
where $\theta$ is the linear projection from the $i^*$-eigenspace $\PP H^0_-$.

The action of $U(H)$ on $E^4$, which fixes the polarization and 
which commutes with the automorphism $i$ of $E^4$, 
induces an action of $U(H)$ on the projective space
$\PP H^0_+\cong |\Delta|$. 
As the subgroup $(G,i)$ of $U(H)$ acts trivially on this space, 
we get an induced action of the symmetric group $\Sigma_6=U(H)/(G,i)$ on $\PP^5$. 
\end{rem}

\subsection{The tangent cone}\label{section tgt cone}
First we make a study of the 
image of the tangent space to $E^4$ at the neutral element $0\in E^4$ under the quotient map $\eta\circ h :E^4 \rightarrow E^4/(G,i)=:Z'$. 
Since $\eta\circ h$ is a quotient map and the image point $p\in Z'$ of $0$ is singular on $Z'$,
the image of $T_0E^4$ via the induced quotient is the tangent cone $C_pZ'$  to $Z'$ at $p$. 

\begin{prop}\label{tgt cone}
The tangent cone to the image of $0$ to $Z'=E^4/(G,i)$ is isomorphic to the cone over the 
Igusa quartic ${\mathcal I}_4$ in $\PP^4$. It is defined by the two equations
$$
y_1+\ldots+y_6\,=\,0,\qquad
Ig_y:=(y_1^2+\ldots+y_6^2)^2\,-\,4(y_1^4+\ldots+y_6^4)\,=\,0,
$$
in $\CC^6$. 

The images of the tangent cones in $0$ of the ten $(G,i)$-invariant divisors with 
first Chern class $H$ and passing through $0$ are the ten hyperplane sections defined by
$$
y_a+y_b+y_c\,=\,y_d+y_e+y_f=0\qquad (\{a,b,c,d,e,f\}\,=\,\{1,2,\ldots,6\}).
$$
The group $U(H)$ acts on $C_pZ'$ through its quotient $U(H)/(G,i)\cong \Sigma_6$ 
as the standard representation i.e.~which simply permutes the six variables $y_1,\ldots,y_6$.

\end{prop}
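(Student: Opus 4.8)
The plan is to compute $C_pZ'$ by reducing to a linear quotient singularity and then reading off the tangent cone from the leading terms of the six functions defining $|\Delta|$. Since every element of $(G,i)$ fixes $0\in E^4$ and acts there through its differential---one of the matrices $T_j$, $-I$ or $iI$ of Section~\ref{aute4}---the map $\eta\circ h$ is, in an analytic neighbourhood of $0$, the linear quotient $\C^4\to\C^4/(G,i)$. By Proposition~\ref{main} the map $m$ is an isomorphism of $Z'$ onto $Y'$, so $C_pZ'$ equals the tangent cone of $Y'$ at $p'=m(p)$, and the latter is computed from the leading terms at $0$ of the six pulled-back homogeneous coordinates $\vartheta_0,\dots,\vartheta_5$, which span $H^0_+$ and which $\Sigma_6=U(H)/(G,i)$ permutes. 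Because $0$ is fixed by all of $U(H)$, the values $\vartheta_i(0)$ coincide, so $p'$ is the $\Sigma_6$-fixed point, which we normalise to $(1:\dots:1)$; then $T_{p'}\PP^5$ is canonically $\{y_1+\dots+y_6=0\}\subset\C^6$ with $\Sigma_6$ permuting the $y_i$. This identification already yields the final assertion of the proposition, that $U(H)$ acts on $C_pZ'$ through $\Sigma_6$ by the standard representation.

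Next I would carry out the degree analysis. Each $\vartheta_i$ lies in $H^0_+$, hence satisfies $\vartheta_i(iz)=\vartheta_i(z)$; writing $\vartheta_i=\sum_k\vartheta_i^{(k)}$ in homogeneous parts forces $i^k\vartheta_i^{(k)}=\vartheta_i^{(k)}$, so only degrees $k\equiv0\pmod4$ survive. Thus each $\vartheta_i-\vartheta_i(0)$ begins in degree $4$, and along a ray $z=tz_0$ the tangent direction is $(P_0(z_0):\dots:P_5(z_0))$ (projected into $\{\sum y_i=0\}$), where $P_i:=\vartheta_i^{(4)}$ is a quartic. Consequently $C_pZ'$ is the closure of the image of $\C^4$ under $z\mapsto(P_0(z):\dots:P_5(z))$. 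The six quartics $P_i$ lie in $(\Sym^4(\C^4)^*)$, are common eigenvectors of $G$ (for the character by which $(G,i)$ scales $H^0_+$) and are permuted by $\Sigma_6$, so they realise the permutation representation; I would produce them explicitly from the matrices $T_j$ and $iI$ by a short Magma computation and confirm that $\dim(\Sym^4(\C^4)^*)^G$ restricted to this eigenspace is $6$.

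Finally I would identify the image with the Igusa cone. With the $P_i$ in hand the only thing to verify is the single identity $Ig_y(P_0,\dots,P_5)\equiv0$ as a polynomial in $z$, so that the image lies in $\{Ig_y=0\}$; since $\C^4\to\C^4/(G,i)$ is generically finite the image is $4$-dimensional, hence equals the whole irreducible cone over $\mathcal I_4$, giving the first statement (the condition $\sum y_i=0$ being built into the tangent space $T_{p'}\PP^5$). For the ten hyperplane sections, recall from Section~\ref{aute4} that the $(G,i)$-invariant divisors through $0$ have tangent cones the ten quadrics $q_{012},q_{013},\dots$; I would check for the representative $q_{012}$ that a suitable combination $y_a+y_b+y_c$ pulls back under $z\mapsto(P_i(z))$ to an element of the ideal $(q_{012})$, so that the image of $\{q_{012}=0\}$ is exactly the section $y_a+y_b+y_c=0$, and then transport this to the remaining nine divisors by the $U(H)$-action, matching the $10$ partitions of $\{1,\dots,6\}$ into two triples. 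The main obstacle is precisely this invariant-theoretic computation: exhibiting the six quartics $P_i$, confirming that they span the permutation representation of $\Sigma_6$, and verifying both the Igusa relation and the ten incidences. This is where Maschke's classical study of the invariants of $U(H)$ (or an explicit Magma calculation) does the essential work.
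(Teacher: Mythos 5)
Your reduction rests on two facts that, in this paper's logical structure, are \emph{consequences} of the very proposition you are proving, so as written the argument is circular. First, you invoke Proposition~\ref{main} (that $m\colon Z'\to Y'$ is an isomorphism) to identify $C_pZ'$ with the tangent cone of $Y'\subset\PP^5$ at $p'$; but the proof of Proposition~\ref{main} begins by citing Proposition~\ref{tgt cone}: the degree-four tangent cone is what rules out $\deg Y'\leq 4$ (combined with $\Delta^4=12$ and $\deg f\geq 2$ this forces $Y'$ to be a sextic), and the sextic hypothesis is in turn what makes the adjunction/normality argument work, giving that $m$ is an isomorphism. Second, you assume there exist homogeneous coordinates $\vartheta_0,\dots,\vartheta_5$ on $\PP H^0_+$ that $\Sigma_6=U(H)/(G,i)$ literally permutes, and you use this both to place $p'$ at $(1:\dots:1)$ and to conclude the ``standard representation'' clause. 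That statement is exactly Corollary~\ref{sigma}, whose proof again uses Proposition~\ref{tgt cone}; a priori the projective $\Sigma_6$-action on $\PP^5$ could come from a different six-dimensional representation, and pinning it down as the permutation action is precisely what the explicit tangent-cone computation achieves. So neither ingredient is available at this point, and neither can be supplied independently without essentially redoing the computation you are trying to avoid.

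The paper sidesteps both problems by working intrinsically: $C_pZ'$ is the spectrum of the ring of $(G,i)$-invariants on $T_0E^4$ (legitimate because that ring turns out to be generated in the single degree $4$), this invariant ring is computed to be $\C[p_0,\dots,p_4]$ with the single relation $Ig_P$, and the $\Sigma_6$-symmetry is then \emph{exhibited} by the explicit linear substitution $y_1,\dots,y_6$ with $\sum_i y_i=0$ --- no appeal to the embedding in $\PP^5$ or to Proposition~\ref{main} is needed. Your computational core (the sections of $H^0_+$ have expansions supported in degrees $\equiv 0 \pmod 4$, so the cone is the image of a quartic map; verify the Igusa relation; match the ten invariant quadrics of \cite[(3.13)]{DW} with the sections $y_a+y_b+y_c=0$) is essentially the same calculation in disguise, but the reduction to it must be done intrinsically as above, not through $Y'$. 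One smaller error: the space of $G$-invariant quartics on $\C^4$ is $5$-dimensional, not $6$; the scalar $i$ acts trivially on quartics, so $G$- and $(G,i)$-invariants coincide and are spanned by $p_0,\dots,p_4$, and your six leading forms $P_i$ necessarily satisfy the linear relation $\sum_i P_i=0$. (There is also a technical point to handle in your expansion step: elements of $H^0_+$ are rational functions with poles along $2D$, and $0\in D$, so one must expand ratios such as $\vartheta_i/\vartheta_0$ with $\vartheta_0(0)\neq 0$ rather than the sections themselves.)
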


\begin{proof}
The tangent cone to $Z'=E^4/(G,i)$ is
the spectrum of the ring of $(G,i)$-invariants on $T_0E^4$. Choosing a suitable basis of
$T_0E^4\cong\CC^4$, the action of $G$ on $T_0E$ is given by the simpler matrices from 
\cite[$\S$2.C]{DW}. The ring of invariants is generated by 
the following five polynomials in  4 variables: 
{\renewcommand{\arraystretch}{1.3}
$$
\begin{array}{lll}
p_0:=t_0^4+t_1^4+t_2^4+t_3^4,\quad&
p_1:=2(t_0^2t_1^2+t_2^2t_3^2),\quad&
p_2:=2(t_0^2t_2^2+t_1^2t_3^2),\quad\\
p_3:=2(t_0^2t_3^2+t_1^2t_2^2),\quad&
p_4:=4t_0t_1t_2t_3,
\end{array}
$$
}
so $\CC[t_0,\ldots,t_3]^{(G,i)}\cong \CC[p_0,\ldots,p_4]$. 
Introducing the polynomial ring in 5 variables $\CC[P_0,\ldots,P_4]$, the homomorphism 
$\CC[P_0,\ldots,P_4]\rightarrow\CC[t_0,\ldots,t_3]^{(G,i)}$ 
which sends $P_i\mapsto p_i$  is surjective, and its kernel 
is generated by the quartic polynomial
$$
Ig_P:=P_1^2P_2^2+P_1^2P_3^2+P_2^2P_3^2\,+\,
(-P_0^2+P_1^2+P_2^2+P_3^2-P_4^2)P_4^2\,-\,2P_0P_1P_2P_3.
$$
Hence $C_pZ'\cong Spec(\CC[P_0,\ldots,P_4]/(Ig_4))$, more concretely, $C_pZ'$ is  
isomorphic to the image of $\CC^4$ in $\CC^5$ by the map defined by the five $p_i$. 

The zero locus of the quartic polynomial $Ig_P$ in $\PP^4$ is known as the Igusa quartic 
${\mathcal I}_4$. The group $U(H)/(G,i)\cong \Sigma_6$ acts on ${\mathcal I}_4$.
To make this action visible, we define the following six linear combinations of the $p_i$:
$$
y_1:=p_0+3p_4,\quad y_2:=p_0-3p_4,\quad 
y_3:=\mbox{$\frac{1}{2}$}(-p_0+3p_1+3p_2+3p_3),
$$
$$
y_4:=\mbox{$\frac{1}{2}$}(-p_0+3p_1-3p_2-3p_3),\quad
y_5:=\mbox{$\frac{1}{2}$}(-p_0-3p_1+3p_2-3p_3),\quad
y_6:=\mbox{$\frac{1}{2}$}(-p_0-3p_1-3p_2+3p_3).
$$
One easily verifies that
$$
y_1+\ldots+y_6\,=\,0.\qquad\mbox{Let}\quad
Ig_y:=(y_1^2+\ldots+y_6^2)^2\,-\,4(y_1^4+\ldots+y_6^4),
$$
then, after replacing $p_j$ by $P_j$ in the definition of the $y_i$, the polynomial $Ig_y$ 
becomes the polynomial $Ig_P$ up to a scalar multiple.
Thus $\Sigma_6$ acts on ${\mathcal I}_4$ by permuting the coordinates $y_j$ 
(and explicit computations show that this is indeed the action induced by $U(H)$).

There are $10$ $(G,i)$-invariant divisors which contain $0$, and $0$ is a node on these divisors. 
The tangent cone to such a divisor is then a $(G,i)$-invariant quadratic hypersurface in $T_0E^4$. 
There are exactly 10 such hypersurfaces and they are permuted by the action of $U(H)$, 
explicit equations can be found in \cite[(3.13)]{DW}.
It is easy to check that the image of such a quadric in $\CC^5$ is the intersection of
${\mathcal I}_4$ with one of the following  hyperplanes:
$$
y_a+y_b+y_c\,=\,y_d+y_e+y_f=0\qquad (\{a,b,c,d,e,f\}\,=\,\{1,2\ldots,6\}).
$$
As $\sum y_i=0$, the equation $y_a+y_b+y_c=0$ implies the equation $y_d+y_e+y_f=0$.
For example, $y_0+y_1+y_2=0$ has preimage defined by $p_0 + p_1 + p_2 + p_3=0$
and this is  $(t_0^2+t_1^2+t_2^2+t_3^2)^2$, which is (up to scalar multiple and 
replacing $t_j$ by $x_j$) the quadratic form $\phi_{14}$ in \cite[(3.13)]{DW}.
The fact that we find the squares of the quadratic forms corresponds to the fact that these 
ten hyperplanes are tangent to the Igusa quartic. 
\end{proof}
Recall that $Y'$ is the image of the map $f: E^4/G\to \mathbb{P}^5$ defined by the system $|\Delta|$.
\begin{cor} The tangent cone to $Y'$ at the image of $[0]$ is a cone over the Igusa quartic $I_4$.
\end{cor}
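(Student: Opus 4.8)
The plan is to read the Corollary off Proposition \ref{tgt cone} once we account for the difference between the abstract quotient $E^4/(G,i)$ and its image $Y'\subset\PP^5$. Recall from Proposition \ref{prop-Dibpf} and the diagram of Remark \ref{diagram qoutient} that the morphism $f$ defined by $|\Delta|$ factors as $f=m\circ h$, where $h\colon E^4/G\to E^4/(G,i)=:Z'$ is the quotient by the induced involution and $m\colon Z'\to\PP^5$ is the induced morphism with image $Y'$. Let $p'\in Z'$ denote the image of $0\in E^4$; the ``image of $[0]$'' appearing in the statement is then $m(p')\in Y'$.

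Proposition \ref{tgt cone} already identifies the tangent cone $C_{p'}Z'$ with the cone over the Igusa quartic. Since the tangent cone of a variety at a point depends only on the analytic germ there, and is therefore transported by any local isomorphism, the Corollary reduces to the assertion that $m$ is a local isomorphism of $Z'$ onto $Y'$ near $p'$. Granting this, $m$ carries the germ $(Z',p')$ isomorphically onto $(Y',m(p'))$, so that $C_{m(p')}Y'\cong C_{p'}Z'$ is the cone over the Igusa quartic, as claimed.

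I would establish the required local isomorphism from the identification of $E^4/(G,i)$ with the sextic $Y'$ furnished by Proposition \ref{main}: the map $m$, given by the ample complete linear system $|\Delta|$ descended to $Z'$, is an isomorphism onto $Y'$. (That $m$ is at least birational onto its image can also be seen by a degree count, as $h$ has degree $2$ while $f$ is generically $2{:}1$ onto $Y'$, whence $\deg m=1$.) With $m$ an isomorphism the conclusion is immediate and purely intrinsic.

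The real obstacle is precisely this last point, and it sits at the most singular point $p'$ of $Z'$: there the tangent cone is itself a \emph{singular} cone over a quartic threefold, so one cannot certify that $m$ neither folds nor further degenerates the germ by a naive immersion or Jacobian criterion. It is exactly the global identification $Z'\cong Y'$ (Proposition \ref{main}) that rules this out. Once that is in hand, transporting the intrinsic tangent cone computed in Proposition \ref{tgt cone} through the isomorphism $m$ completes the argument.
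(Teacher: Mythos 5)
Your reduction of the Corollary to the statement ``$m$ is a local isomorphism near $p'$'' is fine as far as it goes, but the way you discharge that statement makes the argument circular within the paper's logical structure. You invoke Proposition \ref{main} ($m\colon E^4/(G,i)\to Y'$ is an isomorphism), which in this paper comes \emph{after} the Corollary and whose proof \emph{uses} it: the paper deduces that $Y'$ is a sextic precisely from the fact that the tangent cone $C_pY'$ has degree $4$ (so $Y'$ has a point of multiplicity $4$, forcing $\deg Y'>4$, hence $\deg Y'=6$ since $\Delta^4=12$ and $f$ factors through the degree-$2$ quotient $h$); the sextic degree then gives normality of $Y'$ via adjunction (trivial $\omega_{Y'}$ kills the conductor), and normality is what finally yields that $m$ is an isomorphism. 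Your fallback degree count has the same problem: to know $f$ is generically $2{:}1$ onto $Y'$ (equivalently $\deg m=1$) you need $\deg Y'=6$, which at this point is only known through the tangent cone computation you are trying to prove. You yourself flag that a naive local criterion fails at $p'$, and then reach for exactly the global statement that cannot yet be used.

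The paper's proof avoids this by a purely local argument that never needs $m$ to be an isomorphism, or even birational. The ten $(G,i)$-invariant theta divisors $D+p$ passing through $0$ give divisors $2(D+p)\in|2D|$ lying in the five-dimensional subsystem that defines $f$ (Propositions \ref{prop-Di}, \ref{prop-Dibpf}). The leading terms at $0$ of the corresponding sections are the \emph{squares} of the ten $G$-eigen-quadrics ($q_{012}$, etc.), and one checks that these squares span the same space as the generators $p_0,\ldots,p_4$ of $\CC[t_0,\ldots,t_3]^{(G,i)}$ from Proposition \ref{tgt cone}. Since the $p_i$ embed $C_{p'}Z'$ into $\CC^5$, the map $m$ restricted to the tangent cone is an embedding, so the tangent cone of the image $Y'$ at $m(p')$ is the image of $C_{p'}Z'$, i.e.\ the cone over the Igusa quartic. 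To repair your proof you would either need to reproduce this local spanning argument, or give a proof of Proposition \ref{main} that does not pass through the Corollary --- neither of which your proposal supplies.
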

\begin{proof} 
If $D+p$ is a $(G,i)$-invariant divisor passing through $0$ on $E^4$, 
then $2(D+p)\in |2D|$ by the Theorem of the Square and this divisor lies in the pullback by the quotient map of the five dimensional 
linear system defining the map $f:E^4/G\rightarrow\PP^5$. It is easy to check that the squares 
of the ten quadratic forms defining the tangent cones to these divisors span the space spanned 
by the generators $p_0,\ldots,p_4$ of the $(G,i)$-invariants on $T_0E^4$.
Hence the tangent cone
to $Z'=E^4/(G,i)$ in the image of $0$ is embedded in $\PP^5$ by the map $m: E/(G,i) \to \mathbb{P}^5$ induced by $f$ (see Proposition \ref{prop-Dibpf} ) 
i.e. the tangent cone to $Y'$ is a cone over the Igusa quartic $I_4$.

\end{proof}

We infer also that the considered action of $\Sigma_6$ on $\PP^5$ 
that preserves $Y'$ is given by the permutation of coordinates:

\begin{cor}\label{sigma} The action of the group $\Sigma_6=U(H)/(G,i)$ on 
$\mathbb{P}^5=\mathbb{P}(|\Delta|)$ induced by the action of $U(H)$ on $E^4$ 
(see \ref{diagram qoutient}) is the action by permutation of coordinates.
\end{cor}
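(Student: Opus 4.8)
The plan is to read off the global $\Sigma_6$-action on $\PP^5=\PP(|\Delta|)$ from its \emph{local} structure at the one distinguished fixed point, the image $q:=m(0)$ of the neutral element, where Proposition~\ref{tgt cone} already describes the action explicitly. The point is that a linear action fixing a point is almost completely determined by the induced representation on the tangent space at that point, and there the action is the permutation of the six coordinates $y_1,\dots,y_6$.

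First I would make the linear action precise. Because $D$ is symmetric and $u^*\oo_{E^4}(2D)\cong\oo_{E^4}(2D)$ for every $u\in U(H)$ (as $u$ fixes $0$, commutes with $-1$, and $2(D+p)\sim 2D$ for $p\in E^4[2]$ by the Theorem of the Square), the normalized-at-$0$ linearization of the symmetric bundle $\oo(2D)$ yields an honest linear action of $U(H)$ on $H^0(2D)$ preserving the decomposition $H^0(2D)=H^0_+\oplus H^0_-$. On $V:=H^0_+$ the subgroup $(G,i)$ acts trivially ($G$ fixes $D$ and each $D+p$ with $p\in(E^4)^G$, while $i$ acts as $+1$ by the very definition of $H^0_+$), so, as recorded in Remark~\ref{diagram qoutient}, one obtains a genuine $6$-dimensional representation $V$ of $\Sigma_6=U(H)/(G,i)$ with $\PP(V)=\PP^5$.

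Next I would analyse the representation at $q$. Since $0\in E^4$ is fixed by $U(H)$, the point $q=m(0)$ is $\Sigma_6$-fixed; concretely $\CC q=\CC f_0$ is the line spanned by the section with divisor $2D$, hence a $1$-dimensional subrepresentation affording a character $\chi\in\{\mathrm{triv},\mathrm{sign}\}$ (the only characters of $\Sigma_6$). For the tangent space I would invoke Proposition~\ref{tgt cone} together with the preceding Corollary: $m$ realizes the tangent cone to $Y'$ at $q$ as the cone over the Igusa quartic, so $T_q\PP^5\cong\{(y_1,\dots,y_6):\sum y_i=0\}$ with $\Sigma_6$ \emph{permuting} the $y_i$. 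Thus $T_q\PP^5$ is the standard irreducible representation $W$ of $\Sigma_6$ (the partition $(5,1)$), and in particular it is $W$ and not its sign-twist.

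Finally I would run the formal bookkeeping. From $T_q\PP(V)\cong(\CC q)^{\ast}\otimes(V/\CC q)$, together with $T_q\PP^5\cong W$ and $\chi^{-1}=\chi$, one gets $V/\CC q\cong\chi\otimes W$; since $\Sigma_6$ is finite, Maschke's theorem splits the extension and $V\cong\chi\oplus(\chi\otimes W)=\chi\otimes(\mathrm{triv}\oplus W)=\chi\otimes\CC^6$, where $\CC^6$ is the permutation representation. As $\chi$ takes values in $\{\pm1\}\subset\CC^{\ast}$, i.e.\ in the scalars, the twist by $\chi$ is invisible in $PGL(V)$, so the action on $\PP(V)=\PP^5$ is exactly that of the permutation representation, and $q$ is forced to be the unique fixed point $(1:\dots:1)$. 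I expect the only delicate point to be the identification $T_q\PP^5\cong W$ rather than $\mathrm{sign}\otimes W$, which is nonetheless forced by the word ``permutes'' in Proposition~\ref{tgt cone}; the remaining passage from the tangent space to the ambient $\PP^5$ is purely representation-theoretic, with the character $\chi$ harmless precisely because it is scalar-valued.
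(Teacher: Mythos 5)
Your proof is correct and takes essentially the same route as the paper: both arguments rest on Proposition~\ref{tgt cone} (the tangent cone at $m(0)$ is the cone over the Igusa quartic, which spans the tangent space and on which $\Sigma_6$ acts by honestly permuting the $y_i$), and then globalize from this single fixed point. The paper compresses your representation-theoretic bookkeeping ($V\cong\chi\otimes\CC^6$ with $\chi$ scalar-valued, hence invisible in $PGL_6$) into the remark that $C_pZ'$ spans $\CC^5$ and that $\CC^5$ is dense in $\PP^5$, but the mathematical content is the same.
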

\begin{proof}
We use the fact that $C_pZ'$ spans $\CC^5$,
that $\CC^5$ is dense in $\PP^5$ and the description from Proposition \ref{tgt cone}
of the action of  $\Sigma_6$ on $C_pZ'$.
\end{proof}

\begin{rem} One can use the action of $\Sigma_6$ to provide the following approach to the proof that $Y'=Y$ that is a possible alternative to the proof we give in Section 6.
From the explicit description of 
the generators of $U(H)$ and of the sixteen $G$-fixed points in $E^4$, one finds that 
$(E^4)^G$ consists of two $U(H)$-orbits, one is $\{0\}$ and the other has $15$ elements. 
For $q\in (E^4)^G$, $q\neq 0$, the line spanned by $p=f(0)$ and $f(q)$ in $\PP^5$ 
intersects $\CC^5\subset\PP^5$ in a linear subspace of dimension one. 
The fifteen lines in $\CC^5$ we obtain in this way form one $\Sigma_6$-orbit. 
This suffices to identify these lines and with some additional effort, 
we can then find the images of the remaining fifteen points in $(E^4)^G$. 
Using the fact that these map to  
singular points on $Y$, with quartic tangent cones, 
and the description of the images of the $(G,i)$-invariant divisors from Proposition \ref{tgt cone}, 
we would then show that 
$Y'=Y$ with $Y$ as in Proposition \ref{equationY}.
It is interesting to look at the above action on the Igusa quartic  from the point of view of \cite[p.~254]{P}.
\end{rem}

Let us now pass to the proof of our main result in this section: 

\begin{prop}\label{main} The image $Y'$ of the morphism $f:E^4/G\rightarrow\PP^5$
is a sextic hypersurface $Y' \subset \PP^5$. 
Moreover, the morphism $f$ is the quotient map of the involution induced by $i$ acting on $E^4/G$,
hence $Y'\cong E^4/(G,i)$. 
\end{prop}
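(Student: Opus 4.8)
The plan is to use the factorization $f=m\circ h$ from Remark \ref{diagram qoutient}, where $h\colon E^4/G\to E^4/(G,i)$ is the quotient by the involution $\iota$ induced by $i$. Since $\#(G,i)=2\cdot\#G$, the involution $\iota$ is nontrivial and $h$ has degree $2$, so $\deg f=2\deg m$. By Proposition \ref{prop-Dibpf} the linear system $|\Delta|$ is base point free, hence $f$ is a morphism with $f^*\oo_{\PP^5}(1)=\oo_{E^4/G}(\Delta)$; and $\Delta$ is ample by Proposition \ref{PicE^4/G}, so $f$ contracts no curve and is therefore finite onto its image $Y'$. Consequently $m$ is finite as well and $\dim Y'=\dim E^4/(G,i)=4$, so $Y'$ is a hypersurface in $\PP^5$.

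To determine $\deg Y'$ I would apply the projection formula to the finite surjective map $f\colon E^4/G\to Y'$: this gives $\Delta^4=(f^*\oo_{\PP^5}(1))^4=\deg(f)\cdot\deg(Y')$, and $\Delta^4=12$ by Proposition \ref{PicE^4/G}. Thus $\deg(f)\cdot\deg(Y')=12$, where $\deg(f)=2\deg(m)$ is even. On the other hand, the tangent cone to $Y'$ at $f(0)$ is the cone over the Igusa quartic (Proposition \ref{tgt cone} and the corollary following it); since its projectivization is the quartic threefold $\{Ig_y=0\}\subset\PP^4$, the multiplicity of the hypersurface $Y'$ at $f(0)$ equals $4$, whence $\deg Y'\ge 4$. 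The only way to write $12$ as a product of an even number and an integer $\ge 4$ is $12=2\cdot 6$, so necessarily $\deg(f)=2$, $\deg Y'=6$ and $\deg(m)=1$. This shows at once that $Y'$ is a sextic hypersurface and that $m$ is birational onto $Y'$.

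It remains to upgrade $m$ to an isomorphism. As $m\colon E^4/(G,i)\to Y'$ is finite and birational and $E^4/(G,i)$ is normal (being a finite quotient of the smooth variety $E^4$), the morphism $m$ is the normalization of $Y'$. Once $Y'$ is known to be normal, Zariski's Main Theorem forces $m$ to be an isomorphism, and then $f=m\circ h$ is exactly the quotient of $E^4/G$ by $\iota$, so $Y'\cong E^4/(G,i)$. I expect this final identification to be the main obstacle: the degree bookkeeping already pins $m$ down to a birational morphism, but promoting it to an isomorphism requires controlling the singularities of $Y'$, i.e.\ checking that $m$ is injective and that $Y'$ is normal. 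This can be done either directly from the explicit quotient description of the singular surfaces $\s$ of Lemma \ref{singular set} (along which $Y'=E^4/(G,i)$ is singular only in codimension two), or by invoking the later identification $Y'=Y$ with the EPW sextic of Proposition \ref{equationY}, which is normal.
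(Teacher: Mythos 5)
Your first two paragraphs are correct and run essentially parallel to the paper's own argument: the factorization $f=m\circ h$ through the quotient by $i$ (Proposition \ref{prop-Dibpf}), finiteness of $f$ from ampleness and base-point-freeness of $\Delta$ (Proposition \ref{PicE^4/G}), and the identity $\deg(f)\cdot\deg(Y')=\Delta^4=12$. Your bookkeeping is a legitimate variant of the paper's: the paper combines $\deg Y'>4$ (the hypersurface is not equal to its degree-four tangent cone) with $\deg f\geq 2$, while you combine $\deg Y'\geq \mult_{f(0)}Y'=4$ (Proposition \ref{tgt cone}) with the evenness of $\deg f=2\deg m$; both force $\deg f=2$, $\deg Y'=6$, $\deg m=1$.

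The genuine gap is exactly the step you defer: promoting the finite birational $m$ to an isomorphism, i.e.\ the normality of $Y'$, and neither of your proposed repairs works. For the first: since $Y'$ is a hypersurface it is Cohen--Macaulay, so by Serre's criterion a failure of normality means $Y'$ is singular along a \emph{divisor}, and such a divisor would arise from $m$ gluing or pinching along a threefold of $E^4/(G,i)$; this possibility is completely invisible to the fact that $\sing(E^4/(G,i))$ has codimension two (Lemma \ref{singular set}) --- indeed the phrase ``$Y'=E^4/(G,i)$ is singular only in codimension two'' already presupposes that $m$ is an isomorphism. For the second: within the paper's logic, invoking $Y'=Y$ is circular, because Corollary \ref{y=y'} rests on Corollary \ref{Y'geometry} (and on the corollary giving the $60$ singular planes of $Y'$), whose proofs use precisely the isomorphism $Y'\cong E^4/(G,i)$ being established here. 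The paper closes the gap with a conductor argument that your outline lacks: for the finite birational $m$ one has $\omega_{E^4/(G,i)}=m^*\omega_{Y'}(-C)$, where $C$ is the effective conductor divisor lying over the non-normal locus of $Y'$; adjunction gives $\omega_{Y'}\cong\oo_{Y'}$ \emph{because} $Y'$ is a sextic in $\PP^5$ (this is where your completed degree computation feeds in), and $\omega_{E^4/(G,i)}$ is trivial since both $G$ and $i$ act trivially on the holomorphic $4$-form (the symplectic group $G$ preserves it, and $i$ scales it by $i^4=1$). Hence $\oo_{E^4/(G,i)}\cong\oo_{E^4/(G,i)}(-C)$, so $C=0$, $Y'$ is normal, and the finite birational map $m$ onto the normal variety $Y'$ is an isomorphism.
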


\begin{proof}
We saw that $\Delta$ is ample, $\Delta^4=12$ and $|\Delta|$ is a base point free linear system.
First, it follows that $\dim \varphi_{|\Delta|}(E^4/G)\subset \PP^5=4$ and 
the degree of this image divides $12$.
By Lemma \ref{automorphism i} the morphism $f\colon E^4/G\to \PP^5 $ factorizes through the quotient by the involution $i$. 
We infer a factorization of the map $f$:
\[ 
f:\, E^4/G\,\stackrel{h}{\longrightarrow}\, E^4 /(G,i)\,\stackrel{m}{\longrightarrow}
\,Y'\,\subset\,\PP^5.
\]
Our aim is first to show that the image $Y'$ of $m$ is a sextic, 
and next that $m$ is an isomorphism.

Since the tangent cone $C_pY'$ is the image of $C_pZ'$ by the embedding induced by $m$ it has degree four, by Proposition \ref{tgt cone}. It follows that the degree of $Y'\subset\PP^5$ is higher than $4$ (Clearly $Y'\neq C_pY'$).
Since $\Delta^4=12$ and $f$ has degree at least two, 
we infer that $Y'$ is a sextic hypersurface in $\PP^5$. 

It follows from the adjunction formula for the birational morphism $m$ 
that the sextic $Y'\subset \PP^5$ is normal.
Indeed, we have $\omega_{E^4/(G,i)}=m^{\ast}(\omega_{Y'})-C$, 
where $C$ is the conductor divisor supported on the non-normal locus of $Y'$.
Since $Y'$ is a sextic, $\omega_{Y'}$ is trivial, and thus $C$ is the zero divisor and $Y'$ is normal.
This implies also that $m$ is an isomorphism because $\Delta$ is ample and base point free,
so $f$ cannot contract any curve.
\end{proof}

\begin{rem} 
The double cover $E^4/G\to E^4/(G,i)$ is determined by the sheaf 
$f_{\ast} \oo_{E^4/G}=\oo_{E^4/(G,i)}\oplus \G$
such that $E^4/G=\Spec_{E^4/(G,i)}( \G\oplus \oo_{E^4/(G,i)})$. It can be shown that $\G$ is a symmetric
sheaf such that $\G(3)$ is globally generated and fits in the exact sequence
$$ 
0\to \Omega_{\PP^5}^3(3)\to W\otimes \oo_{\PP^5}\to \G(3)\to 0~.
$$
This gives another proof (without using section \ref{section epw sextic}) 
that $Y'\subset \PP^5$ is an EPW sextic (see \cite{EPW}).
Note that the sheaf $\G$ can be seen as a kind of Casnati-Catanese sheaf, 
however it has a complicated local structure around the
$16$ most singular points of $E^4/(G,i)$. 

\end{rem}

We now obtain some further information on the geometry of $Y'$ which will be used 
in the next section to prove that $Y'=Y$.

\begin{cor} 
The sextic ${Y'}\subset \PP^5$ is singular along $60$ planes.
\end{cor}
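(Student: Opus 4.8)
The plan is to combine Proposition~\ref{main}, which identifies $Y'$ with $E^4/(G,i)$ and shows it is a sextic, with Lemma~\ref{singular set}, which already exhibits the singular locus of $Y'$ as a union of exactly $60$ surfaces, the set $\s$, namely the images of the $120$ fixed surfaces $\F$ of the $30$ symplectic involutions of $(G,i)$. Thus the content of the corollary is not the number $60$, which is known, but the assertion that each component of $\mathrm{Sing}(Y')$ is a plane in $\PP^5$. So the whole proof reduces to proving linearity of these $60$ surfaces, and the rest of my plan is organized around that single point.

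First I would exploit the translations by the sixteen points of $(E^4)^G$. By Lemma~\ref{lem-fixed} these points are fixed both by $G$ and by $i$, so each translation $t_p$ commutes with the whole group $(G,i)$ and descends to an automorphism of $Y'\cong E^4/(G,i)$. Since $t_p$ permutes the sixteen divisors $D+q$, it permutes the sixteen generators $2\eta(D+q)$ of $|\Delta|$ furnished by Proposition~\ref{prop-Di}, hence acts by a projective-linear transformation of $\PP^5=\PP(H^0_+)$; moreover these translations act simply transitively on the sixteen images in $Y'$ of the points of $(E^4)^G$. Together with the coordinate-permutation action of $\Sigma_6$ from Corollary~\ref{sigma}, this assembles a large group of projective symmetries of $Y'\subset\PP^5$ that I will use both to spread information between the sixteen points and, eventually, to locate the surfaces in coordinates.

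Next I would localize at one of these sixteen points, say the image $p$ of $0$. By the tangent-cone computation of Proposition~\ref{tgt cone}, and since $m\colon E^4/(G,i)\to Y'$ is an isomorphism, the tangent cone $C_pY'$ is the affine cone over the Igusa quartic $\mathcal{I}_4\subset\PP^4$. The Igusa quartic is classically singular exactly along $15$ lines, so $C_pY'$ is singular exactly along the $15$ planes joining these lines to the vertex. Because $\mathrm{Sing}(Y')$ is a surface, its tangent cone at $p$ is contained in $\mathrm{Sing}(C_pY')$, i.e. in these $15$ planes; and by Lemma~\ref{lemm-D} together with the incidence count ($60$ surfaces, each through four of the sixteen points, gives $60\cdot4/16=15$ surfaces through $p$) there are exactly $15$ local branches of $\mathrm{Sing}(Y')$ at $p$. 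Hence each singular surface through $p$ is smooth at $p$ with tangent space one of the $15$ planes, and by the translation-transitivity from the previous step the same holds at each of the sixteen points. This shows every component of $\mathrm{Sing}(Y')$ has a $2$-dimensional linear tangent space at each of the four special points it contains.

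The hard part is to upgrade this local linearity to global linearity, i.e. to conclude that each of the $60$ surfaces is genuinely a plane rather than merely tangent to a plane at four points, and this is where I expect the real obstacle. I would close it by restricting the morphism $f$ to a single fixed surface $K\cong E\times E\subset E^4$: the residual action on $K$ is that of $\mathrm{Stab}_{(G,i)}(K)$ modulo the involution fixing $K$ pointwise, and I would show that the quotient of $K$ by this residual group is isomorphic to $\PP^2$ with the restriction of $|\Delta|$ descending to $\oo_{\PP^2}(1)$, so that the image $L\in\s$ is a linearly embedded plane; equivalently one can compute the degree $(\Delta|_L)^2$ of the image surface and check it equals $1$. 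An alternative route, parallel to the treatment of $Y$ in Proposition~\ref{singY}, is to use the projective symmetry group assembled in the second step to pin each surface down to an explicit linear subspace and verify directly (as there, by Magma) that the $60$ surfaces are the two coordinate-orbits of the planes $V_{0-1,2-3,4-5}$ and $V_{0+1,2+3,4-5}$. Either route completes the proof that $Y'$ is singular along $60$ planes.
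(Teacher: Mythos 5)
You correctly reduce the corollary to the linearity of the $60$ surfaces of Lemma \ref{singular set}, and your symmetry and tangent-cone observations are sound as far as they go, but neither of your two closing routes actually establishes linearity, and this is exactly where the paper's key idea is missing. Route (b) is circular: at this point in the paper nothing is known about $Y'$ beyond what has been extracted from $E^4/(G,i)$; in particular $Y'$ has no known polynomial equation (producing one is the content of Section \ref{secDesing}, and the identification $Y=Y'$ in Corollary \ref{y=y'} rests on Corollary \ref{Y'geometry}, which in turn quotes the present corollary). The Magma verification in Proposition \ref{singY} was possible only because $Y$ has the explicit equation $F_6$ of Proposition \ref{equationY}; for $Y'$ the map $E^4\to\PP^5$ is given by theta functions, so there is no analogous finite computation to run, and pinning the $60$ surfaces to the coordinate planes $V_{0-1,2-3,4-5}$, $V_{0+1,2+3,4-5}$ amounts to assuming $Y'=Y$. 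Route (a) is not circular, but its key assertion --- that the residual action on a fixed surface $K\cong E\times E$ has quotient $\PP^2$ with $|\Delta|$ descending to $\oo_{\PP^2}(1)$, equivalently that the image has degree one --- is precisely the statement to be proven, and you give no argument for it: one would have to determine the stabilizer of $K$ in $(G,i)$ and its residual action on $K$, check that this action is generically free, compute the self-intersection $(2D|_K)^2$, and verify that its ratio with the generic degree of $K\to L$ equals $1$. None of this is carried out. (A smaller point: set-theoretic containment of the tangent cone of $\sing(Y')$ in the $15$ singular planes of the Igusa cone does not make the $15$ local branches smooth --- a branch could have a reducible or non-reduced tangent cone --- but, as you concede, local linearity at the sixteen special points would not suffice anyway.)

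The paper closes the gap with a completely different and strikingly short argument. Since the group generated by $U(H)$ and the translations by $(E^4)^G$ acts transitively on the $120$ fixed surfaces in $\F$ and acts on $\PP^5$ through projective transformations (cf.\ Remark \ref{diagram qoutient}), the $60$ singular surfaces of $Y'$ are projectively equivalent; in particular they all have the same degree $d$. A generic $\PP^3$-section of $Y'$ is a sextic surface in $\PP^3$ having a node at each of its $60d$ intersection points with the singular surfaces, because $Y'$ has transversal $A_1$ singularities at generic points of these surfaces. Since a nodal sextic surface in $\PP^3$ has at most $65$ nodes, $60d\le 65$ forces $d=1$, so every singular surface is a plane. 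This counting argument needs no knowledge of the individual surfaces at all, whereas your proposal, to be completed, would require the explicit abelian-surface computations indicated above.
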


\begin{proof}
It follows from Lemma \ref{singular set} that the fourfold $E^4/(G,i)$ 
is singular along $60$ surfaces.

Note that all the sixty singular surfaces of $E^4/(G,i)$ are isomorphic to each other 
(since the group generated by $(E^4)^G$ (acting by translations) and $U(H)$ 
acts transitively on the corresponding $120$ surfaces in $E^4$).
On the other hand it is known that a surface section of ${Y'}\subset \PP^5$ 
admits no more than 65 nodes (recall that $E^4/(G,i)$ has transversal $A_1$ singularities along at a generic point on the singular surfaces).
It follows that all the singular surfaces of ${Y'}\subset \PP^5$ are planes. 
\end{proof}

Recall that there exists a unique, up to projective isomorphism, 
normal cubic hypersurface in $\PP^4$ with $10$ isolated singularities 
(that have to be ordinary double points).
It is called the Segre cubic. 
See \cite{Dolgachev} for many beautiful classical facts about this threefold.
The Segre cubic in $\PP^5(x_0,\dots,x_5)$ can be defined  by the following equations:
\begin{equation}
\label{equation1} x_0+\ldots+x_5\,=\,0,\qquad x_0^3+\ldots+x_5^3\,=\,0~. 
\end{equation}

The action of the permutation group $\Sigma_6$ that permutes the variables on $\PP^5$ 
preserves the above cubic.
It is singular at the points in the orbit of the point $(1,1,1,-1,-1,-1)$ under the action of $\Sigma_6$.
This cubic contains also $15$ planes in the $\Sigma_6$ orbit of the plane defined by 
the equations $x_0+x_1=0$, $x_2+x_3=0$, $x_4+x_5=0$.
There are exactly fifteen hyperplanes cutting the cubic along the sum of three planes, 
these hyperplanes are defined by the equations $x_i+x_j=0$  for $0\leq i< j \leq 5$.

\begin{cor}\label{Y'geometry}
The sextic ${Y'}\subset \PP^5$ is tangent to sixteen hyperplanes along Segre cubics 
such that the singular points of the cubic are the points from the set $f(\eta((E^4)^G))=\R$. 
The sextic ${Y'}\subset \PP^5$ is singular along the $15$ planes contained in
each of this cubic. 
Moreover, the intersection of ${Y'}\subset \PP^5$ with two of the hyperplanes defined by the divisors 
$D+p$ for $p\in (E^4)^G-{0}$ is a union of three planes. Each singular plane in $Y'\subset \PP^5$
is contained in four tangent hyperplanes.
\end{cor}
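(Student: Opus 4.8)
The plan is to realise the sixteen hyperplanes as those cut out by the sixteen sections of $|\Delta|$ coming from the $G$-invariant theta divisors, to show each such tangent section is a cubic, and then to recognise it as a Segre cubic by a tangent cone computation at the points of $\R$. Throughout I use that $m\colon E^4/(G,i)\to Y'$ is an isomorphism (Proposition \ref{main}) and that $\Sigma_6$ acts on $\PP^5$ by permuting coordinates (Corollary \ref{sigma}). For $p\in(E^4)^G$ the divisor $D+p$ gives, via $\eta$, a section of $|\Delta|$ whose pullback to $E^4$ is $2(D+p)\in|2D|$; call the corresponding hyperplane $H_p\subset\PP^5$ (these sixteen hyperplanes are distinct, the $D+p$ being distinct divisors) and set $\overline{D}_p:=f(\eta(D+p))$. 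Since $f^\ast\oo_{\PP^5}(1)=\oo(\Delta)$ and this section vanishes on $2\eta(D+p)$, one has $f^\ast H_p=2\eta(D+p)$ on $E^4/G$. Now $\eta(D+p)$ is invariant under the involution induced by $i$ (as $i(D+p)=D+p$ by Lemma \ref{lem-fixed}) but is not contained in the ramification of $f$, so $f^\ast\overline{D}_p=\eta(D+p)$; comparing the two effective divisors with equal pullback under the finite map $f$ gives $H_p\cap Y'=2\overline{D}_p$. Thus $H_p$ is tangent to $Y'$ along $\overline{D}_p$, and from $\deg(H_p\cap Y')=6$ we read off $\deg\overline{D}_p=3$.

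Next I would show $\overline{D}_p$ is a Segre cubic by producing its ten nodes. By the $(16,10)$-configuration (Corollary \ref{inters}) the divisor $D+p$ passes through exactly ten points of $(E^4)^G$, which are its ordinary double points (Proposition \ref{10 ODP}); their images lie on $\overline{D}_p$ and form ten points of $\R$. At such an image $f(q)$ the local model of $Y'$ is the quotient $\C^4/(G,i)$, exactly as at $f(0)$, so by Proposition \ref{tgt cone} the tangent cone to $Y'$ there is the cone over the Igusa quartic, and the tangent cone at $q$ of each of the ten invariant divisors through $q$ maps to one of the ten sections $\{y_a+y_b+y_c=0\}$. Passing to tangent cones in $H_p\cap Y'=2\overline{D}_p$ gives $Ig|_{T_{f(q)}H_p}=\lambda\,C_2^{\,2}$, where $C_2$ is the leading form of a local equation of $\overline{D}_p$; as the section of the Igusa quartic by such a hyperplane is a double quadric, $C_2$ is a nonzero quadratic form, so $f(q)$ is a double point with quadric tangent cone. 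Granting that this quadric is nondegenerate, $f(q)$ is a node, and $\overline{D}_p$ is then a cubic threefold with ten nodes; by the classical characterisation of the Segre cubic as the unique such threefold, $\overline{D}_p$ is a Segre cubic, its nodes lying in $\R$.

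Finally I would treat the planes and the last two assertions. A Segre cubic contains fifteen planes, all lying in $H_p$. A direct coordinate computation—using $f(0)=(1:\cdots:1)$ and the consequent identification of the ten hyperplanes through $f(0)$ as $\{x_a+x_b+x_c=x_d+x_e+x_f\}$—shows that each of these fifteen planes is contained in at least two (in fact exactly four) of the sixteen hyperplanes. Since a plane $\Pi$ lies in two distinct tangent hyperplanes $H_p\neq H_{p'}$, the sextic $Y'$ cannot be smooth at the general point of $\Pi$ (that point would acquire two distinct tangent hyperplanes), so $Y'$ is singular along $\Pi$ and $\Pi$ is one of the sixty singular planes of the preceding corollary. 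The $16\times15=240$ incidences therefore distribute as $60\times4$, giving exactly four tangent hyperplanes through each singular plane. For the last statement, $Y'\cap H_p\cap H_{p'}=2(\overline{D}_p\cap H_{p'})$, and when $H_{p'}\cap H_p$ is one of the fifteen trope hyperplanes of $\overline{D}_p$ it cuts the Segre cubic along three planes, so the intersection is a union of three planes.

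The step I expect to be the main obstacle is the node verification: to conclude that each of the ten points is a genuine ordinary double point one must know that the ten hyperplanes $\{y_a+y_b+y_c=0\}$ are tropes of the Igusa quartic, meeting it along \emph{smooth} (rank four) quadric surfaces, so that the induced form $C_2$ is nondegenerate. Establishing this nondegeneracy, and checking that these ten nodes exhaust the singularities so that $\overline{D}_p$ is normal, is the delicate point; everything else reduces to the configuration results already proved together with explicit coordinate (Magma) computations.
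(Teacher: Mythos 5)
Your opening step is right and is essentially the paper's own (stated there more tersely via Proposition \ref{PicE^4/G}): $f^\ast(H_p|_{Y'})=2\eta(D+p)$ together with injectivity of pullback of divisors under the finite map $f$ gives $H_p\cap Y'=2\overline{D}_p$, hence tangency along a cubic. Your identification of the ten nodes is, by contrast, a genuinely different route: the paper never computes tangent cones of $\overline{D}_p$, but instead shows each of the ten points is singular by exhibiting two planes (images of two fixed surfaces from $\F$ contained in $D+p$ and meeting only at that point), which is impossible at a smooth point of a threefold. Your trope argument does work, and the nondegeneracy you leave open is a short check: restricting $Ig_y$ to $y_1+y_2+y_3=0$ (hence $y_4+y_5+y_6=0$) gives $-4(A-B)^2$ with $A=y_1^2+y_1y_2+y_2^2$, $B=y_4^2+y_4y_5+y_5^2$, and $A-B$ has rank four; so those ten points are honest ODPs.

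The genuine gap is the second point you flag: nothing in your argument excludes a one-dimensional component of the singular locus of $\overline{D}_p$. A cubic threefold in $\PP^4$ can be normal and still be singular along a curve, so ``ten nodes'' alone does not trigger the characterization of the Segre cubic; one needs all singularities to be isolated. This is precisely what the paper supplies, by an idea absent from your proposal: since invariants of a finite group are exact in characteristic zero, $\overline{D}_p$ is not merely the image but the quotient $(D+p)/(G,i)$; away from its ten ODPs the threefold $D+p$ is smooth, and a point there with nontrivial stabilizer is either a two-torsion point (finitely many) or has stabilizer generated by a single symplectic reflection whose fixed locus is one of the thirty surfaces of $\F$ inside $D+p$, i.e.\ a divisor, so the quotient is smooth at its image. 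Hence all singularities of $\overline{D}_p$ are isolated, and with your ten nodes the classical bound forces exactly ten and the Segre cubic. A second, lesser, weakness is your endgame: the ``direct coordinate computation'' showing each of the fifteen planes lies in four of the sixteen hyperplanes presupposes knowing where the other six hyperplanes (equivalently the remaining fifteen points of $\R$) sit in $\PP^5$; at this stage only the ten hyperplanes through $f(0)$ are pinned down by Proposition \ref{tgt cone} and Corollary \ref{sigma}, and locating the rest is essentially the content of Section \ref{secDesing}, so your argument risks circularity with the later proof that $Y=Y'$. Likewise your third claim is proved only ``when'' $H_p\cap H_{p'}$ is a trope hyperplane of the Segre cubic, not for every pair. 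The paper avoids coordinates entirely here: the fifteen planes are singular on $Y'$ because they are images of surfaces from $\F$ (Lemma \ref{singular set}), and the three-planes and four-hyperplanes statements follow from the incidence combinatorics of $\F$ (Lemma \ref{lemm-D}, Proposition \ref{10 ODP}, Corollary \ref{inters}) plus the fact that a cubic surface containing three planes is their union.
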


\begin{proof}  
We shall show that each divisor $D+p$ for $p\in (E^4)^G$ maps to a cubic that is singular at
ten isolated points. It is known that only the Segre cubic has this property.

It is enough to give a proof of our statements for $D$.
From Proposition \ref{PicE^4/G} we infer that the image of the divisor 
$D$ in $E^4/(G,i)\subset \PP^5$ is contained in 
a hyperplane $K\subset \PP^5$ that is tangent to the sextic.

The $120$ fixed surfaces from $\F$ map to $60$  singular surfaces in $E^4/(G,i)$, 
hence they are the singular planes on $Y'\subset \PP^5$.
Since $D$ contains exactly $10$ of the $16$ points in $(E^4)^G$, 
its image $K\cap Y$ contains ten of the $16$ points from $\R$.
Each fixed surface contains four fixed points from the set $\R$ and we claim that 
the images of these points on $Y'\subset \PP^5$ are non-collinear 
(so a singular plane on $Y'$ is spanned by the four points of $\R$ contained in it).

In fact, suppose that the images of the four points of $\R$ in a plane are collinear.  
Choosing points $p$ and $q$ from these four points, 
we can find another fixed surface 
from the set $\F$ containing only these two of the four points.
The corresponding planes cut along a line spanned by the image of $p$ and $q$.
Now this line has to contain the remaining two points, this is a contradiction 
with our choice of the other fixed surface, so the claim follows.

It follows that the thirty surfaces considered in Lemma \ref{lemm-D}, containing four points 
from the ten contained in $\R_0=K\cap \R$, map to fifteen planes, each spanned by the images of those points, 
hence these thirty surfaces are contained in $K$. 

As $D$ is $(G,i)$-invariant, we see that the reflections generating this group 
act on $D$ in such a way that they fix thirty surfaces. 
It follows that the image of $D$ can only be singular at isolated points, 
i.e.~at the images of 
singular points of $D$. 
Since through each point in $(E^4)^G$ there are two fixed surfaces from $\F$ contained in $D$ 
intersecting only at this point, these surfaces map to two planes in $K$ intersecting 
only at a point from $\R_0$, so this point must be singular on the cubic threefold. 
We deduce that the cubic is (only) singular at the 
ten points in $\R_0$ and hence it must be the Segre cubic.

It is known that there are exactly $15$ planes contained in the Segre cubic. 
On the other hand, by Lemma \ref{lemm-D}, the thirty surfaces from $\F$ that are contained in $D$
map to fifteen planes contained in the cubic, hence the planes in the cubic are
the images of these surfaces.

The intersection of two $(G,i)$-invariant divisors maps to the intersection of two tangent hyperplanes 
to $Y'\subset \PP^5$. In particular, it maps to the intersection of a Segre cubic threefold with a hyperplane, 
hence it consists of at most three planes. 
Thus two $(G,i)$-invariant divisors cut each other along at most six surfaces from $\F$.
On the other hand, given two such divisors, we easily find, using Proposition \ref{10 ODP},  
three sets of four points contained in a given surface from $\F$, 
each one contained in both of these divisors.
We show similarly that any fixed surface is contained in four $(G,i)$-invariant divisors.
\end{proof}

\begin{rem} From the incidence of the $120$ fixed surfaces from $\F$ we deduce that
each plane in the singular locus of $Y'\subset \PP^5$ cuts $12$ of the remaining planes from this locus 
along six lines (such that three planes passes through one line).  
\end{rem}

\section{The proof that $Y=Y'$}\label{secDesing}
We proved in Corollary \ref{sigma} that the image $Y'=f(E^4/G)\subset \PP^5$ is invariant under the action of $\Sigma_6$ by the permutation of 
coordinates. Moreover, from Corollary \ref{Y'geometry} it is tangent to $16$ hyperplanes.
In this section we show that such a $\Sigma_6$-invariant sextic~$Y'\subset \PP^5$ 
can be easily reconstructed from the $\Sigma_6$-invariant set of~$16$ 
hyperplanes tangent to it. 
This allows us to show that $Y=Y'$. 
Then, in Section~\ref{section_proof_desing}, we prove Theorem~\ref{desingularisation}.

\subsection{The equation of the sextic}\label{section_sextic_equation}
We start by classifying sets of 16 hyperplanes which are invariant under the action of $\Sigma_6$
which acts by permutations of the coordinates on $\PP^5$.
Let $t \in \CC$ and $0 \leq i, j, k \leq 5$ be distinct indices. 
We consider (families of) hyperplanes:
\begin{itemize}
\item $H$ defined by $x_0+\dots+x_5=0$,
\item $H^t_i$ defined by $x_i+t(x_0+\dots+x_5)=0$,
\item $H^t_{i,j}$ defined by $x_i+x_j+t(x_0+\dots+x_5)=0$,
\item $H_{i,j,k}$ defined by $x_i+x_j+x_k-\frac{1}{2}(x_0+\dots+x_5)=0$.
\end{itemize}

\begin{lemm}\label{lemma_16_planes_families}
There are exactly two one-parameter families of $\Sigma_6$-invariant sets of 16 hyperplanes with the 
following property: every such a set determines $60$ planes, such that each plane is contained in four 
of these hyperplanes (this is one of the properties of $Y'$ from Prop.~\ref{Y'geometry}). They are:
$$
\mathcal{H}^t_1\,=\{H\} \cup \{ H^t_{i,j} \colon 0\leq i,j \leq 5\}
$$
and 
$$
\mathcal{H}_2^t\, = \{ H_{i,j,k} \colon 0\leq i,j,k \leq 5\} \cup \{ H^t_i \colon 0\leq i \leq 5\}.
$$
\end{lemm}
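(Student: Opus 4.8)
The plan is to reduce the statement to a classification of the $\Sigma_6$-orbits of hyperplanes, because any $\Sigma_6$-invariant set of hyperplanes is a disjoint union of such orbits. I would record a hyperplane by the zero locus of a linear form $a_0x_0+\dots+a_5x_5$, whose orbit is determined by its \emph{coefficient pattern}: the partition of $6$ given by the multiplicities of the distinct values among $a_0,\dots,a_5$, taken up to a common scalar. Since an orbit has size at most $16$ exactly when its stabilizer has order at least $45$, I would run through the patterns. A single value gives the orbit $\{H\}$ of size $1$; the pattern $(5,1)$ gives the size-$6$ orbit of the $H^t_i$, a one-parameter family in $t$; the pattern $(4,2)$ gives the size-$15$ orbit of the $H^t_{i,j}$, again a one-parameter family; and the pattern $(3,3)$ gives an orbit of size $20$ in general, dropping to size $10$ precisely in the symmetric case where the two values are opposite --- this is the single orbit of the $H_{i,j,k}$, whose extra stabilizing symmetry is the block interchange $x_i+x_j+x_k \mapsto -(x_l+x_m+x_n)$. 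Every pattern with three or more distinct values has a stabilizer that is, up to a cyclic or order-two scalar enhancement, a Young subgroup of order at most $24$, hence an orbit of length at least $30>16$; and one checks that the transitive (exotic) $\Sigma_5$ and the subgroup $A_6$ fix no hyperplane other than $H$, so they contribute no further small orbits.

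Next I would solve $16=\sum(\text{orbit sizes})$ with parts in $\{1,6,10,15\}$, remembering that the size-$1$ orbit $\{H\}$ and the size-$10$ orbit $\{H_{i,j,k}\}$ are each unique, whereas sizes $6$ and $15$ occur in one-parameter families. The only possibilities are $16=1+15$ and $16=6+10$, giving exactly the two families $\mathcal{H}_1^t=\{H\}\cup\{H^t_{i,j}\}$ and $\mathcal{H}_2^t=\{H_{i,j,k}\}\cup\{H^t_i\}$. This already shows that there is no other $\Sigma_6$-invariant set of $16$ hyperplanes.

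It then remains to verify the $60$-plane property for each family (for generic $t$), which is the computational core and which I would establish by exhibiting the linear dependences among the defining forms. Writing $u=x_0+\dots+x_5$, for $\mathcal{H}_1^t$ I would produce, for each partition $\{i,j\},\{k,l\},\{m,n\}$ of $\{0,\dots,5\}$, the plane $u=x_i+x_j=x_k+x_l=0$, which lies on $H,H^t_{i,j},H^t_{k,l},H^t_{m,n}$ by virtue of $H^t_{i,j}+H^t_{k,l}+H^t_{m,n}=(1+3t)\,u$; this gives $15$ planes. For each $4$-subset $\{i,j,k,l\}$ the relation $H^t_{i,j}+H^t_{k,l}=H^t_{i,k}+H^t_{j,l}$ produces a plane lying on the four hyperplanes appearing in it, giving $45$ more. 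For $\mathcal{H}_2^t$ I would instead use the relation $H^t_c-H^t_d=H_{a,b,c}-H_{a,b,d}$ to obtain $45$ planes (each on $H^t_c,H^t_d,H_{a,b,c},H_{a,b,d}$), together with the $15$ planes $\{x_i=x_j,\ x_k=x_l,\ x_m=x_n\}$, one per partition, each lying on the four hyperplanes $H_{\bullet}$ indexed by the triples meeting every pair of the partition exactly once. In both cases a short count confirms a $(16_{15},60_4)$ incidence: the $60$ planes fall into orbits of sizes $15$ and $45$, and each of the $16$ hyperplanes contains exactly $15$ of them.

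The step I expect to be the main obstacle is this last verification: one must check not only that these planes exist and each lies on (at least) four hyperplanes, but that there are \emph{exactly} $60$ of them, each on \emph{exactly} four, with no accidental extra planes or higher incidences, and that the configuration stays non-degenerate for generic $t$ --- in particular excluding $t=-\tfrac16$ (where the $H^t_i$ cease to be independent) and $t=-\tfrac13$ (where the three forms $H^t_{i,j},H^t_{k,l},H^t_{m,n}$ become dependent). I would discharge this bookkeeping with Magma, as elsewhere in the paper, which simultaneously certifies the incidence numbers and rules out further planes. A secondary point requiring care is the orbit classification itself, namely the reduction of the $(3,3)$-orbit from $20$ to $10$ and the dismissal of the exotic $\Sigma_5$.
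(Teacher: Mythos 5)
Your strategy is the same as the paper's: classify the $\Sigma_6$-orbits of hyperplanes of length at most $16$ by coefficient patterns, observe that $16$ can only be assembled as $1+15$ or $6+10$, and then verify the sixty-plane configurations by exhibiting quadruples of hyperplanes whose defining forms span only a three-dimensional space. Your explicit dependences (e.g.\ $H^t_{i,j}+H^t_{k,l}+H^t_{m,n}=(1+3t)u$ and $H^t_c-H^t_d=H_{a,b,c}-H_{a,b,d}$) are a correct and welcome concretization of what the paper merely asserts, and your counts ($15+45$ planes in each family) agree with the paper's lists (1a), (1b), (2a), (2b).

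However, the orbit classification --- which you yourself flag as a ``secondary point requiring care'' --- contains a genuine error. You claim that every pattern with three or more distinct values has stabilizer equal to a Young subgroup of order at most $24$ up to a scalar enhancement of order at most two, ``hence an orbit of length at least $30$.'' But $24\cdot 2=48$, and $720/48=15\leq 16$, so your own bound does not give $30$; and the bound $15$ is attained. The hyperplane $x_i-x_j=0$ has pattern $(0,0,0,0,1,-1)$, three distinct values, and its stabilizer is the order-$48$ group generated by $\Sigma_4$ on the four zero coordinates together with the transposition $(ij)$ (acting with scalar $-1$); its orbit $\{x_i=x_j\colon i<j\}$ has exactly $15$ elements. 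This produces a third $\Sigma_6$-invariant set of sixteen hyperplanes, $\{H\}\cup\{x_i-x_j=0\}$, realizing $16=1+15$ a second time, which your argument never considers. To save the lemma one must show this sporadic configuration fails the sixty-plane property, and it does, but only because it determines \emph{too many} planes: the $60$ planes $\{x_a=x_b=x_c,\ x_d=x_e\}$ and the $20$ planes $\{x_0+\dots+x_5=0,\ x_a=x_b=x_c\}$ each lie on exactly four of the sixteen hyperplanes (and the $15$ planes $x_a=x_b=x_c=x_d$ lie on six), so one gets $80$ planes rather than $60$. This exclusion is absent from your proof. In fairness, the paper has the identical gap --- it asserts without justification that an orbit of length $\leq 16$ forces at most two distinct coefficient values, which $x_i-x_j=0$ contradicts --- but your write-up makes the gap visible by attempting, unsuccessfully, to justify that assertion. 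Note also that the omission matters beyond the lemma itself: the lemma is later applied to the sixteen tangent hyperplanes of $Y'$, so the sporadic configuration must genuinely be ruled out there as well. If you do carry out the Magma verification you propose, run it on the stabilizer/orbit classification, not only on the two known families; that computation would catch the missing orbit immediately.
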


\begin{proof} Consider the action of $\Sigma_6$ on a hyperplane with equation 
$a_0x_0+\dots+a_5x_5=0$. If its orbit has $\leq 16$ elements,
then the coefficients $\{a_0,\dots,a_5\}$ 
can take only two different values. That is, it must be one of 
$H$, $H^t_i$, $H^t_{i,j}$, $H_{i,j,k}$ for some $i,j,k$ and $t$. 
The lengths of $\Sigma_6$-orbits of hyperplanes of these types are, respectively, $1$, $6$, $15$ and $10$. 
Thus, to obtain an invariant set of cardinality $16$, we have two possibilities: 
to take the union of the orbits of $1$ and $15$ elements or of the  orbits of $6$ and $10$ elements. 

In both cases it is easy to check that for any $t \in \CC$ there are the required sets of $60$ planes,
they are the intersection of the following sets of four hyperplanes
(the indices for each set are different):

\begin{itemize}
\item[(1a)] $H^t_{i_1,i_2}$, $H^t_{i_2,i_3}$, $H^t_{i_3,i_4}$, $H^t_{i_4,i_1}$, 
there are $45$ planes of this type,
\item[(1b)] $H$, $H^t_{i_1,j_j}$, $H^t_{i_2,j_2}$, $H^t_{i_3,j_3}$, there are $15$ such planes.
\end{itemize}

\begin{itemize}
\item[(2a)] $H_{i,j_1,j_2}$, $H_{i,j_2,j_3}$, $H_{i,j_3,j_4}$, $H_{i,j_4,j_0}$, 
there are $15$ such planes,
\item[(2b)] $H_{i,j,k_1}$, $H_{i,j,k_2}$, $H^t_{k_1}$, $H^t_{k_2}$, 
there are $45$ such planes.
\end{itemize}
\end{proof}

We want to find all polynomials $f \in \CC[x_0,\ldots,x_5]$ such that the corresponding 
sextic hypersurface $Y_{f}$ satisfies the following:
\begin{itemize}
\item $Y_f$ is invariant with respect to the $\Sigma_6$-action by 
permutations of coordinates,
\item the sixteen hyperplanes from the configuration $\mathcal{H}_1^t$ or $\mathcal{H}_2^t$, 
for some  $t\in \CC$, are tangent to $Y_{f}$ along cubics,
\item if the intersection of four of these sixteen
hyperplanes is a plane, then this plane is contained in the singular locus of $Y_f$.
\end{itemize}

The fact that $Y_{f}$ is $\Sigma_6$-invariant means that $f$ is symmetric 
or anti-symmetric under the action of $\Sigma_6$. In any case, $f$ is invariant
with respect to the alternating group. But the invariants of the alternating group are
generated by those of the symmetric group and the polynomial $\prod_{i<j}(x_i-x_j)$,
which has degree $15$. Hence $f$ must be a symmetric polynomial
and thus $f$ is a linear combination of the symmetric polynomials:
$P_{j_1\ldots j_k}=\sum_{i_1,\ldots,i_k}x_{i_1}^{j_1}\cdots x_{i_k}^{j_k}$ such that 
$i_1,\dots, i_k\in \{0,\ldots,5\}$ are pairwise different and 
$j_1\geq \dots \geq j_k$ with $j_1+\dots+j_k=6$. 
Thus $f$ is an element of an $11$-dimensional vector space of polynomials. 
Now we shall determine all possible sets of coefficients in 
$$
f(x_0,\ldots,x_5) = \sum_{j_1+\ldots +j_k =6} a_{j_1\ldots j_k} P_{j_1\ldots j_k}(x_0,\ldots,x_5).
$$ 
The following lemma, that is easy to verify, shows that in all but some exceptional cases
it suffices to consider the case $t=-1/2$.

\begin{lemm}
We define linear maps on $\CC^6$ which commute with the $\Sigma_6$-action by
$$
N_1^t\,:=\,-(t+1)M_6 + (6t+2)\id_6,\qquad N_2^t\,:=\,-(t+1)M_6 + (6t+1)\id_6,
$$
where $M_6$ is the $6\times 6$ matrix with all entries equal to $1$ and $\id_6$ is the
$6\times 6$ identity matrix.

Then $N_i^t$ induces an isomorphism on $\PP^5$ that maps 
$\mathcal{H}_i^t$ to $\mathcal{H}_i^{-1}$ for $i=1,2$, unless $i=1$, $t=-1/3$ or $i=2$, $t=-1/6$.
\end{lemm}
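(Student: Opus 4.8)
The plan is to verify the assertion by a direct computation on linear forms, tracking how the projective transformation induced by $N_i^t$ moves the defining form of each of the sixteen hyperplanes. First I would record the elementary algebra of the matrix $N_i^t = aM_6 + b\,\id_6$, where $a=-(t+1)$ and $b=6t+2$ for $i=1$, respectively $b=6t+1$ for $i=2$. Since $M_6^2 = 6M_6$, the matrix $N_i^t$ has eigenvalue $b$ with multiplicity five on the hyperplane $\{x_0+\dots+x_5=0\}$ and eigenvalue $6a+b$ with multiplicity one on the line spanned by $(1,\dots,1)$; here $6a+b=-4$ for $i=1$ and $6a+b=-5$ for $i=2$. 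Hence $N_i^t$ is invertible exactly when $b\neq 0$, i.e.\ away from $t=-1/3$ (for $i=1$) and $t=-1/6$ (for $i=2$), which are precisely the excluded values: there $N_i^t$ degenerates to the rank-one matrix $aM_6$ and induces no transformation of $\PP^5$. Because $M_6$ and $\id_6$ are $\Sigma_6$-invariant, so is $N_i^t$, so it commutes with the coordinate permutations.

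For the main step I would compute the image of each defining form. Writing $s:=x_0+\dots+x_5$ for the all-ones form, the two identities $s\cdot M_6 = 6s$ and $x_i\cdot M_6 = s$ (for the coordinate functional $x_i$ viewed as a row vector) reduce everything to bookkeeping. Up to a nonzero scalar the projective map sends the hyperplane with defining form $\ell$ to the one whose form is recorded by $\ell\, N_i^t$, and I would check the four shapes. For $i=1$: the form $s$ of $H$ is an eigenvector, so $H$ is fixed, while $x_i+x_j+ts$ is carried to $b(x_i+x_j)+(2a-4t)s$ with $2a-4t=-b$, hence to the form of $H^{-1}_{i,j}$. For $i=2$: the form $x_i+ts$ of $H^t_i$ is carried to $b\,x_i+(a-5t)s$ with $a-5t=-b$, hence to the form of $H^{-1}_i$; and the parameter-free form $x_i+x_j+x_k-\tfrac12 s$ of $H_{i,j,k}$ is carried to $b(x_i+x_j+x_k)+(3a+\tfrac52)s = b\bigl(x_i+x_j+x_k-\tfrac12 s\bigr)$, so these ten hyperplanes are fixed. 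Since both $\mathcal{H}_i^t$ and $\mathcal{H}_i^{-1}$ consist of exactly these orbit-shapes and $N_i^t$ commutes with $\Sigma_6$, this shows $N_i^t$ carries $\mathcal{H}_i^t$ bijectively onto $\mathcal{H}_i^{-1}$.

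The computation is entirely routine once the eigenstructure of $M_6$ is in hand, and indeed the statement is labelled ``easy to verify''. The only points that genuinely require care are the direction of the dual (hyperplane) action --- whether the defining form transforms by $N_i^t$ or by its inverse, a convention that must be fixed consistently, since under the point-map reading one obtains the inverse isomorphism --- and the correct identification of the exceptional loci as exactly the vanishing of $b$, where invertibility fails. The fixed hyperplanes ($H$ for $i=1$ and all $H_{i,j,k}$ for $i=2$) also deserve a separate line: they carry no parameter and must be seen to lie in both $\mathcal{H}_i^t$ and $\mathcal{H}_i^{-1}$, which is exactly what the eigen-combination computation above records.
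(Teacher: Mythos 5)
Your computation is correct and complete, and it is precisely the direct verification the paper intends: the paper offers no proof of this lemma at all, introducing it only as ``easy to verify,'' and your use of the eigenstructure of $M_6$ (eigenvalue $6a+b$ on the all-ones line, eigenvalue $b$ on $\{x_0+\dots+x_5=0\}$, so invertibility fails exactly at $b=0$, i.e.\ $t=-1/3$ for $i=1$ and $t=-1/6$ for $i=2$) together with the form-by-form bookkeeping is exactly what that phrase covers. The convention issue you flag is real but harmless: under the literal point-map reading your identities $\ell^t N_i^t \propto \ell^{-1}$ show that $N_i^t$ carries $\mathcal{H}_i^{-1}$ onto $\mathcal{H}_i^t$, so the paper's statement is realized either by $(N_i^t)^{-1}$ or by letting $N_i^t$ act on dual (coefficient) coordinates; since either direction provides the required $\Sigma_6$-equivariant projective equivalence between $\mathcal{H}_i^t$ and $\mathcal{H}_i^{-1}$, which is all the subsequent reduction to $t=-1/2$ uses, this is a choice of convention and not a gap.
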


We consider the restrictions on the coefficients $a_{j_1\ldots j_k}$ 
coming from the assumption that~$f$ 
and all its partial derivatives vanish along one plane of type (1a) and (1b) or 
respectively one of type (2a) and one of type (2b).
From the symmetry of~$f$ it follows that $Y_{f}$ is then also singular along all the 
$60$ planes. 
To find all sequences $(a_{j_1\ldots j_k})$ satisfying these conditions, 
one just has to compute 
the kernel of the matrix whose entries are $P_{j_1 \ldots j_k}$ and its partial derivatives
restricted to the two chosen planes.
We obtain a unique solution, up to scalar multiple.
Notice that in the second case we did 
find the polynomial defining $Y$ from Proposition \ref{equationY}: 

\begin{prop}\label{sextic}
For the first case, there is a unique sextic
$$
Y^\vee\,=\,Y_{F_6^\vee}:\quad F_6^{\vee}=P_6 - P_{42} + 2P_{222} +16P_{111111}~
$$
which satisfies the conditions for the sixteen hyperplanes $\mathcal{H}^t_1$ with $t=-1/2$.

For the second case, there is a unique sextic
$$
Y\,=\,Y_{F_6}:\quad F_6=P_6 - P_{42} + 2P_{222} -16P_{111111}~
$$
which satisfies the conditions for the  sixteen hyperplanes $\mathcal{H}^t_2$ with $t=-1/2$.
\end{prop}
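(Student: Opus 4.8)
The plan is to turn the three geometric conditions into homogeneous linear conditions on the coefficients of a general symmetric sextic and to show that the resulting system has a one‑dimensional solution space. Since $Y_f$ is required to be $\Sigma_6$‑invariant, the preliminary discussion already shows that $f$ lies in the $11$‑dimensional space $V_6$ spanned by the symmetric polynomials $P_\lambda$ indexed by the partitions $\lambda\vdash 6$. I work at the value $t=-1/2$ appearing in the statement; the preceding lemma explains why the remaining non‑exceptional values of $t$ reduce to a single representative one by a $\Sigma_6$‑commuting projective isomorphism, so no generality is lost. Writing $f=\sum_{\lambda\vdash 6}a_\lambda P_\lambda$, I would then seek all coefficient vectors $(a_\lambda)$ for which $Y_f$ is singular along the sixty planes cut out by the quadruples of hyperplanes in $\mathcal{H}_1^{-1/2}$ (resp.\ $\mathcal{H}_2^{-1/2}$).

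The first reduction is to exploit symmetry. Because $f$ is symmetric, if $Y_f$ is singular along one plane of a given $\Sigma_6$‑orbit then it is singular along the whole orbit; and each configuration produces exactly two orbits of planes, of sizes $45$ and $15$ (types (1a),(1b), resp.\ (2a),(2b)), so it suffices to impose singularity along a single representative plane of each type. For such a plane $V$, given by three linear equations, singularity of $Y_f$ along $V$ is equivalent to the identical vanishing on $V$ of all six partial derivatives $\partial_{x_k}f$; by the Euler identity $6f=\sum_k x_k\partial_{x_k}f$ this already forces $f|_V=0$, so no separate vanishing condition is needed. Parametrising $V\cong\PP^2$ and expanding each restricted partial in the parameters, every coefficient yields one homogeneous linear equation in the eleven unknowns $a_\lambda$. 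Collecting the equations coming from the two representative planes gives a homogeneous linear system $M(a_\lambda)=0$, whose matrix $M$ is assembled from the partials of the $P_\lambda$ restricted to the two planes.

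The final step is to solve this system and to identify the generator of its kernel. The computation (which can be carried out with Magma) gives $\dim\ker M=1$, with generator $F_6^\vee=P_6-P_{42}+2P_{222}+16P_{111111}$ in the first case and $F_6=P_6-P_{42}+2P_{222}-16P_{111111}$ in the second. One then checks directly that this unique solution also satisfies the remaining hypothesis, namely that its restriction to each of the sixteen hyperplanes is the square of a cubic form (the Segre cubic in the second case, in accordance with Corollary \ref{Y'geometry} and Proposition \ref{equationY}). Since any sextic meeting all the hypotheses is in particular singular along the sixty planes and hence lies in $\ker M$, whereas the exhibited generator satisfies every hypothesis, the solution space is exactly this one line, giving both existence and uniqueness up to a scalar.

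The main obstacle is the rank computation establishing $\dim\ker M=1$: one must verify that the two representative planes impose precisely ten independent linear conditions on the eleven‑dimensional space $V_6$, neither fewer (which would leave extra solutions) nor in a way that is inconsistent. The delicate conceptual point making this feasible is that two planes — one from each $\Sigma_6$‑orbit — already capture all the constraints, which rests on the symmetry propagation above together with the fact that the sixty planes split into exactly the two orbits of sizes $45$ and $15$. A secondary, purely computational, check is the tangency‑along‑cubics verification for the unique solution, confirming that it is a genuine member of the hypothesised family rather than merely a solution of the weaker singular‑plane conditions.
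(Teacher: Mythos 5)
Your proposal follows essentially the same route as the paper's proof: restrict to the $11$-dimensional space of symmetric sextics, impose vanishing of the polynomial and its partial derivatives along one representative plane of each of the two $\Sigma_6$-orbit types (so that symmetry propagates the condition to all $60$ planes), and compute (with Magma) that the kernel of the resulting linear system is one-dimensional, spanned by $F_6^\vee$ resp.\ $F_6$. Your two refinements --- invoking the Euler identity to drop the separate condition $f|_V=0$, and explicitly checking the tangency-along-cubics hypothesis for the kernel generator (the existence half, which the paper leaves implicit) --- are minor polish on the same computation rather than a different argument.
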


\begin{cor}\label{cor_family1_epw}
The hypersurfaces $Y^\vee\subset \PP^5$ and $Y\subset \PP^5$ are isomorphic.
\end{cor}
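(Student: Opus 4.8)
The plan is to produce an explicit projective automorphism of $\PP^5$ carrying $Y^\vee$ onto $Y$. Since the two defining sextics $F_6^\vee$ and $F_6$ differ only in the sign of their last term $16P_{111111}=16Z_0Z_1Z_2Z_3Z_4Z_5$, I would look for a linear coordinate change that negates precisely this monomial while preserving $P_6$, $P_{42}$ and $P_{222}$.

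The key observation is that every monomial occurring in $P_6$, $P_{42}$ and $P_{222}$ contains each variable $Z_i$ to an \emph{even} power, so all three of these symmetric functions are invariant under any sign change $Z_i\mapsto -Z_i$ of the coordinates; by contrast $P_{111111}$ is multilinear, hence it acquires a factor $(-1)$ for each coordinate whose sign is flipped. Therefore flipping the sign of an odd number of coordinates fixes the first three symmetric functions and negates $P_{111111}$.

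Concretely, I would take the involution $\sigma\colon (Z_0,Z_1,\ldots,Z_5)\mapsto(-Z_0,Z_1,\ldots,Z_5)$ of $\PP^5$. Then $P_6\circ\sigma=P_6$, $P_{42}\circ\sigma=P_{42}$, $P_{222}\circ\sigma=P_{222}$ and $P_{111111}\circ\sigma=-P_{111111}$, so that
$$
F_6^\vee\circ\sigma\,=\,P_6-P_{42}+2P_{222}-16P_{111111}\,=\,F_6.
$$
Since $\sigma$ is a projective linear automorphism (indeed an involution), it follows that $\sigma(Y^\vee)=Y$, which shows that the two sextics are projectively isomorphic.

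There is essentially no genuine obstacle here: the entire content of the statement is the single structural remark that the three ``even'' symmetric functions $P_6,P_{42},P_{222}$ are insensitive to sign changes of the variables whereas $P_{111111}$ is odd in each variable, after which the claim reduces to a one-line substitution. One could equally use any coordinate flip with an odd number of $-$ signs; this is the same transformation already implicit in the observation, made after Lemma~\ref{auts5}, that $F_6$ is invariant under the change of sign of an even number of variables (so that an odd number of sign changes interchanges $F_6$ and $F_6^\vee$).
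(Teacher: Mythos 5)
Your proof is correct and is essentially identical to the paper's own argument: the paper likewise observes that $F_6(-x_0,x_1,\ldots,x_5)=F_6^\vee(x_0,x_1,\ldots,x_5)$, so that changing the sign of an odd number of coordinates interchanges the two sextics. Your added explanation of \emph{why} this works (the monomials of $P_6$, $P_{42}$, $P_{222}$ have only even exponents while $P_{111111}$ is multilinear) just makes the paper's one-line verification explicit.
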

\begin{proof}
The only thing to observe is that $F_6(-x_0,x_1,\ldots,x_5)=F_6^\vee(x_0,x_1,\ldots,x_5)$,
so changing the sign of an odd number of variables interchanges the two cases.
\end{proof}

\begin{cor}\label{y=y'}
The EPW sextic hypersurfaces $Y\subset \PP^5$ (the image of $S^{[2]}$) and $Y'\subset \PP^5$ (the image of $E^4/G$)
are isomorphic.
\end{cor}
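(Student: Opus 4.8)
The plan is to recognize $Y'$ as one of the two sextics singled out in Proposition \ref{sextic} and then collapse the two cases using Corollary \ref{cor_family1_epw}. Recall first that $Y'$ is a sextic hypersurface by Proposition \ref{main}. I would then assemble the hypotheses of Proposition \ref{sextic} for its defining polynomial. By Corollary \ref{sigma} the sextic $Y'$ is invariant under the $\Sigma_6$-action permuting the coordinates of $\PP^5$, and by Corollary \ref{Y'geometry} it is tangent along Segre cubics to sixteen hyperplanes, is singular along the fifteen planes contained in each such cubic, and each of its sixty singular planes is the intersection of exactly four of the sixteen tangent hyperplanes. These are precisely the three conditions imposed just before Proposition \ref{sextic}.

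Next I would run the classification. Since the sixteen tangent hyperplanes form a $\Sigma_6$-invariant set (the permutation action preserves $Y'$, hence its tangent hyperplanes) cutting out sixty planes, each lying in four of them, Lemma \ref{lemma_16_planes_families} forces this set to equal $\mathcal{H}_1^t$ or $\mathcal{H}_2^t$ for some $t\in\CC$. Applying the isomorphism $N_i^t$ of the lemma preceding Proposition \ref{sextic}, which carries $\mathcal{H}_i^t$ to the normalized family $\mathcal{H}_i^{-1/2}$, I may assume $t=-1/2$. Proposition \ref{sextic} then asserts that the $\Sigma_6$-invariant sextic tangent to $\mathcal{H}_1^{-1/2}$ (respectively $\mathcal{H}_2^{-1/2}$) with the prescribed singular planes is unique and equals $Y^\vee$ (respectively $Y$). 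Hence $Y'$ is projectively isomorphic to $Y^\vee$ or to $Y$, and Corollary \ref{cor_family1_epw} gives $Y^\vee\cong Y$, so $Y'\cong Y$ in either case.

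I expect the main obstacle to lie in the normalization step, since the reduction to $t=-1/2$ is unavailable at the two exceptional parameters $t=-1/3$ (for $i=1$) and $t=-1/6$ (for $i=2$), where $N_i^t$ degenerates. To close this gap I would either pin down the actual value of $t$ from the geometry of $E^4/(G,i)$---the tangent cone computation of Proposition \ref{tgt cone} already identifies the ten tangent hyperplanes through $f([0])$ with the hyperplanes of type $H_{i,j,k}$, placing $Y'$ in the second family---or verify directly that the kernel computation underlying Proposition \ref{sextic} produces no admissible sextic at the excluded values of $t$, so that these cases cannot occur for $Y'$.
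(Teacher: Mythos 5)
Your proposal is correct and follows essentially the same route as the paper: the paper's own proof of this corollary likewise feeds the properties of $Y'$ from Corollary \ref{Y'geometry} (together with the $\Sigma_6$-invariance from Corollary \ref{sigma}) into the uniqueness statement of Proposition \ref{sextic}, reducing via Lemma \ref{lemma_16_planes_families} and concluding with Corollary \ref{cor_family1_epw}. Your concern about the exceptional parameters $t=-1/3$ and $t=-1/6$ is legitimate and goes beyond the paper's terse three-line proof, which silently assumes the normalization to $t=-1/2$; your first fix (using Proposition \ref{tgt cone} to place the configuration of tangent hyperplanes of $Y'$ in the family $\mathcal{H}_2^t$) is the right one, whereas your alternative fix needs adjustment, since by Remark \ref{isotrivial} invariant sextics \emph{do} exist at the excluded values but are degenerate (the square of a cubic, resp.\ a sextic whose $60$ singular planes all pass through one point) and hence can be ruled out as candidates for the normal, irreducible $Y'$ whose singular planes span Segre cubics.
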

\begin{proof} By Theorem \ref{equationY} it is enough to prove that $Y'\subset \PP^5$ is defined by equation $F_6$ or $F_6^{\vee}$. 
It follows from Proposition \ref{Y'geometry} that the equation defining $Y'\subset \PP^5$ satisfy the conditions satisfied by $F_6$ and $F_6^{\vee}$. We conclude by Proposition \ref{sextic}.
\end{proof}

\begin{rem} \label{isotrivial}
For $t=-1/3$ the corresponding sextic is 
the square of a 
cubic which is singular in 10 lines. 
For $t=-1/6$ the corresponding sextic is singular along 60 planes such that 
they all intersect in one point. 
This gives us isotrivial degenerations of our EPW sextic $Y$.
\end{rem}

It turns out that $Y^{\vee}$ and $Y$ are related in one more way.

\begin{prop}
The sextics $Y^{\vee}$ and $Y$ in $\PP^5$ are projectively dual 
to each other.
\end{prop}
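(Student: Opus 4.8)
The plan is to identify the projective dual variety $Y^{*}\subset(\PP^{5})^{*}$ with $Y^{\vee}$ by combining the duality of EPW sextics with the classification in Proposition~\ref{sextic}. Throughout I would fix the $\Sigma_{6}$-equivariant identification $\iota\colon\PP^{5}\xrightarrow{\sim}(\PP^{5})^{*}$ given by the permutation-invariant pairing $\langle x,y\rangle=\sum_{i}x_{i}y_{i}$, which sends a point $[a]$ to the hyperplane $\{\sum_{i}a_{i}x_{i}=0\}$. Setting $\check{Y}:=\iota^{-1}(Y^{*})\subset\PP^{5}$, the statement ``$Y$ and $Y^{\vee}$ are projectively dual'' becomes $\check{Y}=Y^{\vee}$, and this is what I would prove.

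First I would record that $\check{Y}$ is a $\Sigma_{6}$-invariant sextic. Invariance is automatic: projective duality is natural for the $\mathrm{PGL}_{6}$-action, $Y$ is $\Sigma_{6}$-invariant (Corollary~\ref{sigma}), and $\iota$ is $\Sigma_{6}$-equivariant. That $\check{Y}$ is a \emph{sextic} hypersurface rather than a variety of higher degree is exactly the content of the duality theorem for EPW sextics: since $Y=Y_{A}$ is an EPW sextic (Proposition~\ref{Y is EPW}), its projective dual is the EPW sextic $Y_{A^{\perp}}$ attached to the annihilator Lagrangian $A^{\perp}$ (cf.~\cite{O1}, see also \cite{EPW}); in particular $\check{Y}$ is a sextic singular along $60$ planes, of the same combinatorial type as $Y$.

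The geometric heart is to determine along which hyperplanes $\check{Y}$ is tangent. By biduality one has $(\check{Y})^{*}=\iota(Y)$, so the hyperplanes tangent to $\check{Y}$ correspond to points of $Y$, and those tangent along a $3$-dimensional locus correspond to the points of maximal multiplicity of $Y$, namely the sixteen points of $Y_{A}[4]$ whose tangent cone is the cone over the Igusa quartic (Remark~\ref{YA4}). Since the Igusa quartic and the Segre cubic are projectively dual (a classical fact, see \cite{Dolgachev}), each such point dualizes to a hyperplane tangent to $\check{Y}$ along a Segre cubic, and $\check{Y}$ is singular along the fifteen planes contained in that cubic. It then remains to compute the sixteen hyperplanes: by Remark~\ref{rem_planes_intersection} the points of $Y_{A}[4]$ are precisely the $[\pm1\colon\cdots\colon\pm1]$ with an even number of minus signs, so under $\iota$ they become the hyperplane $H\colon\sum_{i}x_{i}=0$ together with the fifteen hyperplanes $H^{-1/2}_{i,j}\colon x_{i}+x_{j}-\tfrac12\sum_{k}x_{k}=0$, that is, exactly the configuration $\mathcal{H}^{-1/2}_{1}$.

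Thus $\check{Y}$ is a $\Sigma_{6}$-invariant sextic, tangent along Segre cubics to the sixteen hyperplanes $\mathcal{H}^{-1/2}_{1}$ and singular along the $60$ planes they cut out, which are precisely the hypotheses of the first case of Proposition~\ref{sextic}; its unique solution is $Y^{\vee}=Y_{F_{6}^{\vee}}$. Hence $\check{Y}=Y^{\vee}$, i.e.\ $Y^{*}=\iota(Y^{\vee})$, as claimed (and, together with Corollary~\ref{cor_family1_epw}, this yields the self-duality of $Y$ announced in the introduction). The step I expect to be the main obstacle is the correspondence used above, namely that a quadruple point of $Y$ with Igusa-quartic tangent cone dualizes to a hyperplane tangent to $Y^{*}$ along a Segre cubic: this rests on biduality combined with the Segre--Igusa duality applied to the tangent cone, and on knowing from the EPW structure that $\check{Y}$ is singular along the planes inside each such cubic. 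A robust alternative, in the computational spirit of the paper, is to bypass the last two paragraphs entirely and eliminate the variables from the Gauss map $[\,\partial_{0}F_{6}\colon\cdots\colon\partial_{5}F_{6}\,]$ in Magma, exploiting the $\Sigma_{6}$-symmetry, verifying directly that the dual is cut out by $F_{6}^{\vee}$; this simultaneously settles the distinction between $Y$ and $Y^{\vee}$.
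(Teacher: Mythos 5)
Your closing ``robust alternative'' is essentially the paper's own proof: the paper checks with Macaulay2 \cite{M2} that $F_6^{\vee}(\partial F_6/\partial x_0,\ldots,\partial F_6/\partial x_5)$ is divisible by $F_6$, so the Gauss map sends $Y$ into $Y^{\vee}$, and equality of the dual with $Y^{\vee}$ follows. Your main argument is a genuinely different, structural route, and two of its ingredients are sound: the identification of $\iota(Y_A[4])$ with the sixteen hyperplanes $\mathcal{H}^{-1/2}_1$ (consistent with Remarks \ref{rem_planes_intersection} and \ref{ime4g}), and the strategy of concluding via the uniqueness statement of Proposition \ref{sextic}.

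However, there is a genuine gap, and it sits exactly where you flag your ``main obstacle'', but it is more serious than you suggest. The uniqueness in Proposition \ref{sextic} is proved by imposing that the sextic and all its partial derivatives vanish along one plane of type (1a) and one of type (1b); the tangency along cubics plays no role in that linear algebra. So the hypothesis you must verify for $\check{Y}$ is singularity along the sixty configuration planes, and this is precisely what your argument does not deliver. Tangency of a hyperplane $H$ along a Segre cubic only says that at smooth contact points the tangent hyperplane of $\check{Y}$ equals $H$; it does not make $\check{Y}$ singular along the fifteen planes inside that cubic. The appeal to O'Grady's duality $Y_A^{*}=Y_{\delta(A)}$ does not supply it either: that statement is usually made under genericity assumptions, while this $A$ contains twenty decomposable vectors, and even granting it, a general EPW sextic is singular along a degree-$40$ surface, not along sixty planes, so ``singular along $60$ planes of the same combinatorial type'' is an unproven claim about $\delta(A)$ that would itself require a computation. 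A structural repair is possible: at a generic point of each of the sixty singular planes $W$ of $Y$ (Proposition \ref{singY}) the sextic has transversal $A_1$ singularities, so every limit of tangent hyperplanes there contains $W$; one checks that these limits sweep out the dual plane $W^{\perp}\subset(\PP^5)^{*}$, so $W^{\perp}\subset Y^{*}$ and the conormal variety of $Y$ has positive-dimensional fibres over it, which by biduality forces $Y^{*}$ to be singular along $W^{\perp}$. Since $W\mapsto W^{\perp}$ flips every sign in the notation $V_{i\pm j,k\pm l,m\pm n}$, the sixty singular planes of $Y$ (odd number of minus signs) map exactly onto the sixty configuration planes of $\mathcal{H}^{-1/2}_1$ (even number of minus signs). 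That closes the main gap; note, though, that the fact that $\check{Y}$ has degree six still requires either O'Grady's duality theorem (with its hypotheses verified) or a direct computation, so some computational or citation input remains unavoidable even on this route.
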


\begin{proof}
Subsituting the gradient $(\ldots,\partial F_6/\partial x_i,\ldots)$ of the 
equation defining $Y$ in the polynomial $F_6^\vee$ defining $Y^\vee$,
one finds, using e.g.\  Macaulay2~\cite{M2},
the product of $F_6$ with another polynomial. Hence the dual of $Y$
is $Y^\vee$.
In particular, the 16 hyperplanes are mapped to the 16 points with singularity 
$\CC^4/(G,i)$, 
and the Segre cubics in these hyperplanes are contracted to points.
(Notice that the Segre cubic and the Igusa quartic are projectively dual threefolds 
in $\PP^4$.)
\end{proof} 

\begin{rem} \label{ime4g}
The images of the $16$ points in $(E^4)^G$ in $Y'\cong Y$ are the points in the orbit
of the point $p_0:=(1:1:\ldots:1)$ under the action of the group which changes an even number 
of signs (this action is induced by the action of $(E^4)^G$ on $E^4$ by translation)
and the group $\Sigma_6$ (which is induced by the action of $U(H)$ on $E^4$).
In fact, these points are the singular points on the Segre cubics that are tangent 
hyperplane sections of $Y$. The point $p_0$ was also identified with the image 
of a surface in $S^{[2]}$ in the proof of Proposition \ref{equationY}, 
see also 
Remark \ref{remEx}. In particular these 16 points in $Y$ are the set $Y_A[4]$ where
$Y=Y_A$, see Remark~\ref{YA4}.
\end{rem}

To describe the incident planes,
we need the following combinatorial description of the 60 singular planes of $Y'$:
\begin{rem}\label{remark_relation_plane_types}
A partition $\{\{i_1,j_1\}, \{i_2,j_2\}, \{i_3,j_3\}\}$ of $\{0,\ldots,5\}$, defining a plane of type (1b), determines in a natural way three sequences $(i_1,i_2,j_1,j_2)$, $(i_1,i_3,j_1,j_3)$, $(i_2,i_3,j_2,j_3)$, which correspond to planes of type (1a). Note that the orders of pairs and indices in pairs do not matter, i.e. if we change them, we still get a sequence determining the same plane. This way we obtain a natural subdivision of the set of 60 planes into subsets of cardinality~4: each consists of a plane of type (1b) given by a partition of $\{0,\ldots,5\}$ and 3 planes of type (1a) described by sequences determined by this partition. We show below that the set of 20 incident planes consists of~5 such subsets, cf. Remark~\ref{rem_planes_intersection} and section~\ref{section_geom_quot}.
\end{rem}

The following is a nice exercise:

\begin{lemm}\label{lemma_intersection_types}
The pairs of planes intersecting in a point are as follows.
\begin{itemize}
\item Two planes of type (1b) intersect in a point if and only if the corresponding partitions do not have any common component.
\item Two planes of type (1a) intersect in a point if and only if the corresponding sequences come from the same partition (as described in Remark~\ref{remark_relation_plane_types}) or from two partitions with empty intersection.
\item A plane of type (1a) intersects a plane of type (1b) in a point if and only if the sequence representing the first plane comes from a partition which represents the second one or which has empty intersection with this partition.
\end{itemize}
\end{lemm}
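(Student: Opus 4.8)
The plan is to prove Lemma \ref{lemma_intersection_types} by translating the geometric incidence condition into the combinatorics of the two plane types established in Remark \ref{remark_relation_plane_types}, using the explicit hyperplane equations from Lemma \ref{lemma_16_planes_families} for the family $\mathcal{H}^t_1$ with $t=-1/2$. Recall that a plane of type (1b) is the intersection of $H$ together with three hyperplanes $H^t_{i,j}$, while a plane of type (1a) is cut out by four hyperplanes $H^t_{i_1,i_2}, H^t_{i_2,i_3}, H^t_{i_3,i_4}, H^t_{i_4,i_1}$. With $t=-1/2$ and using $H$ to impose $\sum x_k=0$, each condition $x_i+x_j-\tfrac12\sum x_k=0$ simplifies considerably on the relevant locus, so I would first write down clean linear equations defining every plane of each type in terms of the partition or the length-four sequence that labels it.

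The first step is to verify that two planes of type (1b) meet if and only if the corresponding partitions share no common pair. Two partitions of $\{0,\dots,5\}$ into three pairs either are equal, share exactly one pair, or share no pair; since equal partitions give the same plane, the nontrivial cases are ``one common pair'' versus ``no common pair''. I would compute the dimension of the intersection of the two linear subspaces (inside the hyperplane $H$) directly: when the partitions share a pair, say $\{i,j\}$, the two planes both lie in the hyperplane $x_i+x_j=\tfrac12\sum x_k$, and the remaining conditions are easily seen to force the intersection to be a line (or empty/positive-dimensional), whereas when no pair is shared the six pairwise-sum conditions are independent enough to cut the span down to a single point. This is a finite linear-algebra check, most transparently organized by choosing a representative partition such as $\{\{0,1\},\{2,3\},\{4,5\}\}$ and running the other partition over the three relevant $\Sigma_6$-orbit representatives, then invoking $\Sigma_6$-equivariance to conclude in general.

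The second and third steps handle the (1a)--(1a) and (1a)--(1b) cases by the same method. For (1a)--(1a) I would use Remark \ref{remark_relation_plane_types}: a length-four sequence like $(i_1,i_2,j_1,j_2)$ records which pairs of a partition it came from, and two such sequences meet precisely when they are compatible with a single partition or with two disjoint partitions; again this reduces to checking that the combined system of linear equations has a one-dimensional (projective point) solution space in the incident cases and is overdetermined otherwise. The (1a)--(1b) mixed case is the bridge between the two: a sequence coming from a partition $\pi$ and a (1b)-plane labelled by a partition $\pi'$ meet exactly when $\pi=\pi'$ or $\pi\cap\pi'=\emptyset$, which I would verify by the same rank computation. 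Throughout, $\Sigma_6$-equivariance of the whole configuration lets me fix convenient representatives and reduce each assertion to finitely many cases.

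The main obstacle is purely bookkeeping: there are three flavors of incidence to check and, within each, several combinatorial sub-cases (sharing zero, one, or more pairs), so the difficulty lies in organizing the case analysis cleanly rather than in any single hard computation. I expect the cleanest route is to observe that all three statements are equivalent to a single statement about the $20$ incident planes of Proposition \ref{20inc} together with the remaining $40$ singular planes, matching the pairwise intersection data already recorded in Remark \ref{rem_planes_intersection} and Proposition \ref{singE^4}; indeed the partition-based subdivision of the $60$ planes into subsets of four in Remark \ref{remark_relation_plane_types} is precisely the structure appearing there. Hence rather than recompute every rank, I would cross-reference the explicit intersection points listed in Remark \ref{rem_planes_intersection}, which already exhibit which planes meet pairwise, and note that the lemma's three bullet points are exactly the combinatorial shadow of that incidence pattern. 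A short direct linear-algebra confirmation on one representative of each type then suffices, and $\Sigma_6$-symmetry finishes the proof.
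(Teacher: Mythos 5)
The paper offers no proof of this lemma at all: it is introduced with ``The following is a nice exercise,'' so your proposal is being measured against what that exercise requires, not against a written argument. Your main route (the first three paragraphs) is correct and is surely the intended one. For any fixed non-degenerate $t$ the planes have clean equations: a type (1b) plane is $x_{i_1}+x_{j_1}=x_{i_2}+x_{j_2}=x_{i_3}+x_{j_3}=0$ (note this is in fact independent of $t$, since the three conditions force $(1+3t)\sum x_k=0$), while a type (1a) plane attached to the cycle $(i_1,i_2,i_3,i_4)$ is $x_{i_1}-x_{i_3}=x_{i_2}-x_{i_4}=0$, $x_{i_1}+x_{i_2}+t\sum x_k=0$. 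Computing the dimension of the solution space for one representative of each $\Sigma_6$-orbit of pairs then settles all three bullets. Two small corrections: the normalization lemma in the paper sends $\mathcal{H}_1^t$ to $\mathcal{H}_1^{-1}$ (i.e.\ to $t=-1$, not $t=-1/2$), which is immaterial since any projective identification preserves intersection dimensions; and your dichotomy ``a point in the incident cases, overdetermined otherwise'' is imprecise --- in the non-incident cases the intersection is a line \emph{or} empty. For instance, two type (1a) planes whose partitions share exactly one pair meet in a line if both carry a minus sign on the shared pair, and are disjoint if one of them carries its plus sign there. The rank computation detects this automatically, but your case list must include these sign sub-cases, which your phrasing glosses over.

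The one genuine weak point is the shortcut proposed in your last paragraph. Remark~\ref{rem_planes_intersection} records only the pairwise intersections among the twenty incident planes of Proposition~\ref{20inc}; it contains no information about the remaining pairs among the sixty planes, and in particular none of the negative assertions of the lemma (that planes coming from partitions sharing exactly one pair never meet in a point --- precisely the statements needed for the ``only if'' directions and for Proposition~\ref{choices}). Moreover, that remark concerns the configuration attached to $Y$, which is carried to the present configuration $\mathcal{H}_1^t$ only by a sign change in an odd number of coordinates, an operation that scrambles the (1a)/(1b) types, so the dictionary is not the identity you suggest. Cross-referencing it can therefore at best confirm the ``if'' direction for some pairs; it cannot replace the rank checks. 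As long as you actually carry out the representative computations of your first three paragraphs --- for all sub-cases, including the sign placements --- the proof is complete.
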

\begin{prop}\label{choices}
There are exactly 6 possible choices of the set of 20 incident planes in the sextic~$Y^{\vee}_t$.
 They are all obtained from the one in Proposition \ref{20inc} 
by the $\Sigma_6$-action. The stabilizer of such a configuration is 
isomorphic to $\Sigma_5$.
\end{prop}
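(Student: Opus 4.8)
The plan is to convert the incidence relation among the $60$ singular planes of $Y^{\vee}_t$ into pure combinatorics of synthemes (partitions of $\{0,\ldots,5\}$ into three pairs) and synthematic totals, and then to identify the complete families of $20$ incident planes with the totals.

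First I would attach to each of the $60$ planes its syntheme, using the subdivision of Remark~\ref{remark_relation_plane_types}: the planes fall into $15$ groups of four, one group per syntheme $S$, consisting of the type~(1b) plane of $S$ together with the three type~(1a) planes derived from $S$. The crucial reduction is Lemma~\ref{lemma_intersection_types}, which I would use as the single statement that two of the $60$ planes, with attached synthemes $S$ and $S'$, meet in a point if and only if $S=S'$ or $S\cap S'=\emptyset$ (the two synthemes share no pair). Thus incidence of planes depends only on the attached synthemes, and a set of pairwise incident planes is exactly a choice of planes inside the groups of a collection of synthemes that are pairwise equal or disjoint.

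Next I would run the extremal count. Each group has four planes, and planes from two distinct groups are incident only when the two synthemes are disjoint, so any incident family is supported on a set of pairwise disjoint synthemes. Since each syntheme occupies three of the fifteen pairs, at most $\lfloor 15/3\rfloor=5$ of them can be pairwise disjoint, and five pairwise disjoint synthemes use up all fifteen pairs, i.e.\ form a synthematic total. Hence an incident family has at most $5\cdot 4=20$ planes, with equality exactly when it is the union of the five full groups of a total; conversely Lemma~\ref{lemma_intersection_types} shows that such a union of $20$ planes is pairwise incident, and it is complete by Corollary~\ref{cor20inc} together with the $\Sigma_6$-symmetry. The family of Proposition~\ref{20inc} is one of these, since its five defining partitions cover each of the fifteen pairs exactly once, so they form a total $T_0$.

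It then remains to count and to identify the stabilizer. It is classical that $\{0,\ldots,5\}$ has exactly six synthematic totals, so by the previous paragraph there are exactly six complete families of $20$ incident planes. To see that they form a single $\Sigma_6$-orbit and to compute the stabilizer, I would note that $\beta\Sigma_5\subset\Sigma_6$ preserves the $20$ planes of $T_0$ — by Proposition~\ref{20inc} they split into two $\beta\Sigma_5$-orbits — so $\beta\Sigma_5\cong\Sigma_5$ lies in $\mathrm{Stab}(T_0)$. This stabilizer is proper, since the $\Sigma_6$-orbits on the $60$ planes have sizes $15$ and $45$ (Proposition~\ref{singY}) and no union of them has $20$ elements; as $\Sigma_5$ is a maximal subgroup of $\Sigma_6$, we get $\mathrm{Stab}(T_0)=\beta\Sigma_5\cong\Sigma_5$. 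Orbit--stabilizer then gives an orbit of size $720/120=6$, which therefore exhausts all six families, so they are all $\Sigma_6$-conjugate to the one of Proposition~\ref{20inc}, with stabilizer $\Sigma_5$, as claimed. I expect the only delicate point to be the first step, the faithful translation of Lemma~\ref{lemma_intersection_types} into the syntheme-disjointness criterion; once that is in place, the extremal combinatorics of disjoint synthemes and the group theory are routine.
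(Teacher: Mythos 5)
Your reduction and your endgame follow the same skeleton as the paper's proof (the syntheme bookkeeping of Remark~\ref{remark_relation_plane_types}, Lemma~\ref{lemma_intersection_types}, and the extremal count ``at most five pairwise disjoint synthemes''), and your orbit--stabilizer argument for ``exactly $6$, a single $\Sigma_6$-orbit, stabilizer $\beta\Sigma_5$'' is actually more explicit than what the paper writes down. However, the step you yourself flagged as the delicate one is where a genuine gap sits, and it is not a faithful translation. Lemma~\ref{lemma_intersection_types} characterizes the pairs of planes that \emph{intersect in a point}; it says nothing about pairs that intersect \emph{along a line}, and such pairs exist among the $60$ singular planes: by the remark following Corollary~\ref{Y'geometry}, each singular plane of $Y$ meets $12$ of the others along lines (six lines, three planes through each). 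A pair of planes meeting along a line is incident, yet its two synthemes overlap without being equal. Consequently your claim ``a set of pairwise incident planes is exactly a choice of planes inside groups whose synthemes are pairwise equal or disjoint'' is false as stated, and the extremal count collapses: a pairwise incident family need not be supported on pairwise disjoint synthemes (already three planes through a common line violate your criterion), so the bound $5\cdot 4=20$ and the identification of the maximizers with synthematic totals are not established by your argument.

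The paper closes exactly this hole by splitting the ``overlapping synthemes'' case into two subcases: either the two planes are disjoint (contradicting incidence), or they meet along a line, in which case \cite[Prop.~2.2]{ogrady-incident} shows the configuration is contained in an \emph{infinite} family of incident planes, which is incompatible with its being a complete finite family of $20$ incident planes (this is the notion the proposition is really classifying, as it is applied to the maximal family of Proposition~\ref{singZ}). So to repair your proof you must (i) make the completeness/maximality hypothesis explicit, and (ii) insert the O'Grady argument to rule out line-intersections before asserting that incidence within the family is governed by syntheme disjointness. With that inserted, the rest of your argument — the count of six synthematic totals, $\beta\Sigma_5\subseteq\mathrm{Stab}$, properness of the stabilizer via the orbit sizes $15$ and $45$ from Proposition~\ref{singY}, maximality of $\Sigma_5$ in $\Sigma_6$, and orbit--stabilizer — goes through and nicely supplies the parts of the statement the paper leaves implicit.
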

\begin{proof}
Assume first that there are at least 16 planes of type (1a) in such a set. 
Then their corresponding sequences must come from at least 6 different partitions. 
But by Lemma~\ref{lemma_intersection_types} some two of these partitions must have 
a common element, so we would have two planes which intersect along a line or 
do not intersect at all. If they intersect along a line, then by~\cite[Prop.~2.2]{ogrady-incident} this configuration is contained in an infinite family of incident planes. Hence we may restrict to the case where they intersect in a point.

Hence we may assume that there are 5 planes of type (1b) in the chosen set. 
By Lemma~\ref{lemma_intersection_types} the corresponding partitions do not have 
a common component, thus their union consists of all possible pairs of indices. 
Hence there is no other plane of type (1b). 
Again by Lemma~\ref{lemma_intersection_types}, the only possible planes of type (1a) 
which can appear in the set are those represented by sequences which come from 
partitions corresponding to the chosen planes of type (1b). 
There are 15 of them, so the configuration can be completed in a unique way.
\end{proof}

\begin{rem}
It is worth noticing that six possible choices of the set of 20 incident planes in~$Y$ correspond to six possible choices of a 32-element subgroup isomorphic to $G$ inside $(G,i)$.
\end{rem}

We are ready for the proof of our Theorem.

\subsection{Proof of Theorem \ref{desingularisation}}\label{section_proof_desing}
Let $\overline {S^{[2]}}\to Z\to Y$ be the Stein factorization of the 2:1 morphism 
$\overline{g}$ constructed above.
We saw also that there exists also a finite 2:1 morphism $E^4/G \to Y$.
Our aim is to show that $Z$ is isomorphic to $E^4/G$ by proving that the two double covers 
$Z\to Y \leftarrow E^4/G$ are the same. 
First we shall show that the ramification loci of the morphisms are the same.

The sextic $Y$ is singular along 60 planes. 
In Lemma \ref{remark-EPW} we already identified forty of them
that are in the ramification locus of $E^4/G\to Y$.
We already showed in Corollary \ref{branch} that the ramification locus $Z \to Y$ 
also consists of $40$ planes.
From Proposition \ref{singZ} the remaining twenty singular planes of $Y$ 
are the images of the singular surfaces on $Z$ that are incident. 
It follows that the images of the above singular surfaces from $Z$ 
are incident planes on $Y\subset \PP^5$.
From Proposition \ref{choices} we infer that the choices of the twenty incident planes differ
by a projective transformation fixing $Y\subset \PP^5$. 
Thus the ramification loci of the maps $Z\to Y \leftarrow E^4/G$ are the same.

Finally, consider the coverings $Z\to Y$  and $E^4/G\to Y$.
The two maps have the same ramification locus, moreover outside the singular locus 
of $Y$ both maps are \'etale covers. 
Since $\overline{S^{[2]}}$ is simply connected we infer that the fundamental group 
$\pi_1(Y-Sing(Y))=\Z_2$.
From the uniqueness of integral closures this is enough to conclude that $Z$ is isomorphic to 
$E^4/G$ (they are both universal covers in codimension 1).
\begin{rem} In the proof of Theorem \ref{desingularisation} (in Prop.~\ref{PicE^4/G}) we use the results from \cite{DW} about the existence of a symplectic desingularisation $X_0\to E^4/G$.
It is an interesting problem to compare the manifolds $X_0$ and $\overline{S^{[2]}}$.
\end{rem}
\subsection{Final Remarks}
It follows from Proposition \ref{main} that $X_0$ is of $K3^{[2]}$-type as an double cover of an EPW sextic. Knowing this we have a direct, lattice theoretical, 
proof that $X_0$ is birationally isomorphic to $S^{[2]}$.
This result is weaker than Theorem \ref{desingularisation}, 
but the proof is much shorter! 

Recall that the second integral cohomology group of a $K3^{[2]}$-type IHS fourfold, 
with the Beauville-Bogolomov form, is isomorphic to the lattice $\Gamma$ 
that is an orthogonal direct sum
\begin{equation}\label{isoh2}
\Gamma\,:=\,\Lambda_{K3}\,\oplus\,\ZZ\xi,\qquad \Lambda_{K3}\,\cong\,E_8(-1)^2\oplus U^3,\quad
\xi^2=-2.
\end{equation}
The following result was shown to us by G.\ Mongardi.

\begin{prop}\label{mongardi}
The IHS 4-folds $X_0$, the desingularization of $E^4/G$, and $S^{[2]}$ are
birationally isomorphic.
\end{prop}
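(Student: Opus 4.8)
The plan is to invoke the global Torelli theorem for manifolds of $K3^{[2]}$-type, after showing that $X_0$ and $S^{[2]}$ carry isomorphic weight-two Hodge structures; both are controlled by the elliptic curve $E=\CC/\ZZ[i]$ and its complex multiplication.

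First, by Proposition \ref{main} the fourfold $X_0$ is a double cover of the EPW sextic $Y$, so it is of $K3^{[2]}$-type and $H^2(X_0,\ZZ)$, with the Beauville--Bogomolov form, is isometric to the lattice $\Gamma=\Lambda_{K3}\oplus\ZZ\xi$ of (\ref{isoh2}). Proposition \ref{PicE^4/G} gives that $E^4/G$ has Picard rank one, and the symplectic resolution $X_0\to E^4/G$ adjoins precisely the twenty exceptional divisors over the K3 surfaces $L_1,\dots,L_{20}$ of Proposition \ref{singE^4}; hence $\rho(X_0)=21$, and since $b_2(X_0)=23$ the transcendental lattice $T_{X_0}$ has rank two.

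Next I would identify the transcendental Hodge structures. Since $X_0$ is birational to $E^4/G$, the lattice $T_{X_0}$ is Hodge-isometric to the transcendental part of $H^2(E^4,\ZZ)^G$, namely the rank-two sub-Hodge structure of $\bigwedge^2 H^1(E^4)$ spanned by the $G$-invariant holomorphic two-form and its conjugate (the automorphism $i$ acts on this form by $-1$, which is exactly the antisymplectic involution whose quotient is $Y=E^4/(G,i)$). As $H^1(E)$ carries complex multiplication by $\ZZ[i]$, this is the CM Hodge structure attached to $\ZZ[i]$, and a direct computation of the Beauville--Bogomolov form on it yields the lattice $(\ZZ^2,2(x^2+y^2))$. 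On the other side, the Hilbert--Chow morphism identifies $T_{S^{[2]}}$ with $T_S$, which is $(\ZZ^2,2(x^2+y^2))$ by hypothesis; and since $S$ desingularizes $E^2/\langle(i,-i)\rangle$, its transcendental Hodge structure is the very same CM structure attached to $\ZZ[i]$. Hence there is a Hodge isometry $\psi\colon T_{X_0}\xrightarrow{\sim}T_{S^{[2]}}$.

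It then remains to extend $\psi$ to the full cohomology and to recognise the extension as a birational map. Both transcendental lattices have rank two and embed primitively, with complementary signature, into $\Gamma$; by Nikulin's theory this primitive embedding is unique up to isometry, so $\psi$ extends to a Hodge isometry $\phi\colon H^2(X_0,\ZZ)\to H^2(S^{[2]},\ZZ)$. After adjusting $\phi$ on the algebraic part and composing with monodromy operators in the sense of Markman, one arranges it to preserve the positive cone and to be a parallel transport operator; Verbitsky's global Torelli theorem then produces a birational map $X_0\dashrightarrow S^{[2]}$. The main obstacle is precisely this last step: promoting the transcendental Hodge isometry to a parallel transport operator meeting the positivity and monodromy conditions of the Torelli theorem. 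The comparison of the transcendental lattices themselves is essentially formal, since both originate from the single CM structure on $\ZZ[i]$.
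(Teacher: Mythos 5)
Your proposal follows the same skeleton as the paper's proof (rank-two transcendental lattices, identification of both with $(\ZZ^2,2(x^2+y^2))$, extension of the isometry to all of $H^2$ by uniqueness of the embedding, Torelli at the end), but the two steps you dismiss as ``essentially formal'' are exactly where the paper does its real work, and as written both contain genuine gaps. The first is your identification of the integral lattice $T_{X_0}$. You pass to ``the transcendental part of $H^2(E^4,\ZZ)^G$'' and claim a direct computation of the Beauville--Bogomolov form on it gives $(\ZZ^2,2(x^2+y^2))$. This cannot work as stated: a rational CM Hodge structure determines the lattice only up to scale (every lattice $(\ZZ^2,2k(x^2+y^2))$ carries the same $\QQ(i)$-CM structure), and the integral comparison between $H^2(X_0,\ZZ)$ with the BB form and the $G$-invariant cohomology of $E^4$ is precisely what is missing -- quotient-plus-resolution constructions do rescale transcendental lattices, as the classical Kummer case $T_{Km(A)}\cong T_A(2)\neq T_A$ shows (and on an abelian fourfold $H^2$ does not even carry a canonical integral form, since cup product lands in $H^4$). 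The paper avoids this entirely by a lattice-theoretic argument: the covering involution $i^*$ of $X_0\to Y$ acts as $+1$ on the rank-$21$ Picard part and as $-1$ on $T_{X_0}$; it extends to an involution $j$ of the even unimodular overlattice $\tilde{\Gamma}=\Lambda_{K3}\oplus U$ (obtained by adjoining $\eta$ with $\eta^2=2$ and $e_1=(\xi+\eta)/2$) fixing $\eta$; the anti-invariant lattice of an involution of a unimodular lattice is $2$-elementary, so the discriminant of $T_{X_0}$ is $1$, $2$ or $4$, and evenness plus positive definiteness force discriminant $4$ and the unique class $2(x^2+y^2)$. You would need this argument, or an honest computation of the integral pullback maps, to close the gap.

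The second gap is the extension step. You invoke ``Nikulin's theory'' to claim the primitive embedding $T\hookrightarrow\Gamma$ is unique up to isometry, but Nikulin's uniqueness theorem applies to primitive embeddings into even \emph{unimodular} lattices, and $\Gamma=\Lambda_{K3}\oplus\ZZ\xi$ has discriminant group $\ZZ_2$. (There are criteria for non-unimodular targets, but they have discriminant-form hypotheses you have not verified.) The same auxiliary lattice $\tilde{\Gamma}$ is the paper's fix: $\tilde{T}=\langle T,\eta\rangle\cong(\ZZ^3,2(x^2+y^2+z^2))$ embeds primitively into the unimodular $\tilde{\Gamma}$, where Nikulin's uniqueness does apply; an isometry of $\tilde{\Gamma}$ matching two such embeddings fixes $\eta$, hence preserves $\Gamma=\eta^\perp$ and restricts to the desired isometry of $\Gamma$ carrying one transcendental lattice to the other. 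By contrast, the final step you single out as ``the main obstacle'' -- upgrading to a parallel transport operator and applying Verbitsky--Markman -- is comparatively soft for $K3^{[2]}$-type, since the monodromy group is all of $O^+(\Gamma)$ and an isometry matching the rank-two positive definite transcendental parts can always be adjusted to a Hodge isometry; the paper dispatches this in one sentence. So your difficulty assessment is inverted: the endgame is standard, while the two lattice identifications you treat as formal are the substance of the proof.
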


\begin{proof}
From the construction of $X_0$ we know that 
the 23-dimensional vector space $H^2(X,\QQ)$ has a 21 dimensional subspace spanned by 
the class of the divisor $\Delta$ and $20$ exceptional divisors which map to the singular
surfaces in $E^4/G$. Each class in this subspace is invariant under the action of $i^*$, 
where $i\in Aut(X_0)$ is the covering involution for the map $X_0\rightarrow Y$.
As the holomorphic two-form on $X_0$ does not descend to $Y$, we see that $i^*=-1$ on 
a complementary two-dimensional subspace. Thus $H^2(X_0,\ZZ)$ contains, with finite index, 
the direct sum of the $i^*$-invariant and anti-invariant sublattices, which are the
Picard group of $X_0$ and the transcendental lattice $T$ respectively.
The lattice $H^2(X_0,\ZZ)\cong \Gamma$ is not unimodular, but we can embed it in
an even unimodular lattice as follows. 
Let $\tilde{\Gamma}\subset \Gamma\otimes_\ZZ\QQ\oplus \QQ\eta$,
with $\eta^2=2$, be the lattice generated by $\Gamma$ and $e_1:=(\xi+\eta)/2$.
Let $e_2:=e_1-\xi\in \tilde{\Gamma}$, then $e_1,e_2$ generate a hyperbolic plane $U$
(so $e_1^2=e_2^2=0$ and $e_1e_2=1$) and $\tilde{\Gamma}=\Lambda_{K3}\oplus U$.

Since the discriminant group of $\Gamma$ is $\ZZ_2$ 
and $i^*$ acts trivially on it, it extends to an isometry $j$  
of $\tilde{\Gamma}$ with $j(\eta)=\eta$.
Then the sublattice of $j$-anti-invariants
in $\tilde{\Gamma}$ has rank two and is isometric to $T$. 
As $j$ is an involution, the discriminant of $T$ is a power of two, 
and as the rank of $T$ is two it is either $1,2$ or $4$.
Since $T$ is positive definite and even, we must then have that $T$
has discriminant $4$ and that $T\cong(\ZZ^2,q=2(x^2+y^2))$.

On the other hand, the K3 surface $S$ has the same transcendental lattice 
(this can also be seen 
with a similar argument: in $\Pic(S)$ we have the pull-back of the classes, of a general line in 
$\PP^2$, of the strict transforms of the $4$ exceptional divisors of the triple points and of the 
$15$ exceptional divisors in the second blow up, these $20$ classes are all invariant under 
the covering involution on $S$ and hence, again, $T_S$ is an even lattice
of rank two with discriminant $1,2,4$). Thus the transcendental lattice $T_2$ of 
$S^{[2]}$ is also isomorphic to $T$.

Notice that $\Gamma=\eta^\perp$ in $\tilde{\Gamma}$ and that
the sublattice $\tilde{T}$ of $\tilde{\Gamma}$ spanned by $T$ and $\eta$ is isomorphic to
$(\ZZ^3,2(x^2+y^2+z^2))$. 
A well-known result of Nikulin implies that the embedding of
$\tilde{T}$ in the even unimodular lattice $\tilde{\Gamma}$ is unique up to isometry.
In particular, we may assume it lies in three copies of $U$, the first two in 
$\Lambda_{K3}$, the last spanned by $e_1$ and $e_2$. 
From this one deduces that there is an isometry between the 
lattices $H^2(X_0,\ZZ)$ and $H^2(S^{[2]},\ZZ)$ which restricts to an isometry on 
the transcendental lattices. 
Hence by the Torelli theorem for IHS it follows
that $X_0$ and $S^{[2]}$ are birationally isomorphic.
\end{proof}

\begin{rem}  
We expect that $\mathbb{Z}_2^5 \ltimes \Sigma_5 $ is the group of automorphisms 
of $\overline{S^{[2]}}=X_0$.
Indeed, the group of linear automorphisms of the EPW sextic $Y\subset \PP^5$ is 
$\Sigma_6\rtimes \mathbb{Z}_2^4$
(the permutations of coordinates and the change of an even number of signs).
The linear automorphisms that preserve the twenty ramification planes of $X_0\to Y$
form the group $\mathbb{Z}_2^4 \ltimes \Sigma_5 $ (see Proposition \ref{choices}) and they 
lift to symplectic automorphisms of $X_0$. 
Moreover, the covering involution $X_0\to Y$ is anti-symplectic.
Note that $\mathbb{Z}_2^4 \ltimes \Sigma_5 $ is one of the maximal groups of 
symplectic automorphisms of IHS fourfolds of $K3$ type found in \cite[Thm.~5.1]{HM}. 
Note also that the automorphism group of $S$, and hence of $S^{[2]}$, is infinite. 
\end{rem}

\begin{rem} 
It is natural to consider the map $(S')^{[2]}\dashrightarrow S_5^{[2]}$ induced by the double cover $\rho\colon S'\to S_5$ (see Lemma \ref{rho}).
We can deduce that we have the following diagram:


$$\xymatrix{
  & (S')^{[2]} \ar@{-->}[d]_{\aalpha} \ar@{-->}[r]^{z} & S_5^{[2]} \ar@{-->}[d]_{\pi}& \\
  & \PP^5\supset Y \ar@{-->}[r]^{z'} & \PP^4 & \\
}$$

of rational maps such that the bottom map is the central projection with center 
being a singular point on $Y$ of type $\C^4/(G,i)$. 
By Proposition \ref{tgt cone} the center of projection is of multiplicity $4$ on $Y$. 
It follows that the maps $z'\colon Y \to \PP^4$ and $z$ 
are generically 2:1.  Moreover, the image by $\pi$  of a generic point $\{p,q\}\in S_5^{[2]}$ is the hyperplane of quadrics containing $S_5\subset \PP^5$ and vanishing along the line $<p,q>\subset \PP^5$.
It is easy to see that the double cover $z'$ is ramified along ten hyperplanes 
which are the images of the divisor $\mu(l_i)\subset S^{[2]}$. 
It can be shown that these ten hyperplanes form the configuration of the ten hyperplanes
tangent to the Igusa quartic along quadrics (cf.~ Proposition \ref{tgt cone}). 
It follows that the EPW sextic $Y$
can be constructed as a partial resolution of the double cover of $\PP^4$ 
ramified along the configuration of these ten hyperplanes. 
\end{rem}
\begin{rem}\label{remEx} 
Let us describe how from our picture we obtain a description of 
a symplectic resolution of the singularity $\C^4/G$ considered in \cite{BS}.
We will use the notation from section \ref{section map}. 
We constructed a resolution of singularities 
$$\overline{g}\colon \overline{S^{[2]}}\to E^4/(G,i).$$
The idea now is to look locally at this map around the singular point $\C^4/(G,i)$.
Looking at the Stein factorization $\overline{S^{[2]}}\to E^4/G \to Y$ of 
$\overline{g}$ we see that we need to describe exceptional sets on 
$\overline{S^{[2]}}$ that map to one of the $16$ singular point of type 
$\C^4/G$ on $E^4/G$. 

We shall use the geometric definition of the map $g$.
Recall that the nodal $K3$ surface $S'\subset \cone\subset \PP^6$ is contained in the cone 
$\cone$ with vertex $P$.
A general line in this cone passing through $P$ cuts $S'$ in two points.
On the other hand $\cone$ is contained in a four dimensional system $Q$ of quadrics.
Denote the set of points in $(S')^{[2]}$ that correspond to lines cutting $S'$ 
in two points and contained 
in $\cone$ by $A\subset (S')^{[2]}$.
From the definition of the map $\aalpha$, the set $A\subset (S')^{[2]}$ maps 
to the point $(1:\ldots:1)\in Y$ corresponding to $Q$ 
(cf.\ the end of the proof of Proposition \ref{equationY}).

Note that for each line $\phi(l_1)\subset S'$ determined by $l_1\subset S$ we have a plane 
$P_{l_1}$ in $\PP^5$ spanned by $P$ 
and $\phi(l_1)$.
 The rational curve $l_1$ cuts three exceptional curves $e_1$, $e_2$ and $e_3$ in points.
All the lines on the plane $P_{l_1}$ are tangent to $S'$ and they determine a surface 
$E_{l_1}$ in $S^{[2]}$.
The surfaces $e_1^{[2]}$, $e_2^{[2]}$ and $e_3^{[2]}$ intersect $E_{l_1}$ in points 
and they also intersect 
two of the indeterminacy loci $E_{13}$, $E_{12}$ and $E_{11}$ of type (3) along lines. 
Moreover, $E_{l_1}$ is isomorphic to $\mathbb{F}_4$.  
It follows that the strict transform of $E_{l_1}$ on 
$\overline{S}^{[2]}$ is isomorphic to $\mathbb{P}^2$. 

The surfaces $l^{[2]}_1$ and $E_{l_1}$ intersect on $S^{[2]}$ along a
conic curve $c$.  The proper transform of this conic curve $c$ before
the last flop is a line contained in the proper transform of
$l_1^{[2]}$ (the Cremona transformation of $l_1^{[2]}$ with center at
three points on $c$ transforms $c$ to a line).  Before the last flop
the proper transform of the surface $E_{l_1}$ will be $\mathbb{F}_1$
(i.e. $\mathbb{F}_4$ transformed by three elementary transformations).
After the last flop the proper transform of the above line will be
contracted such that the proper transform of $E_{l_1}$ will be a
plane.

Finally, consider also the  del Pezzo surface $A$ of degree 5 contained in $S^{[2]}$, 
corresponding to the rays of the cone $\cone$. 
The strict transform on $\overline{S^{[2]}}$ it is still a del Pezzo surface of degree 5. 
These eleven surfaces form the exceptional set of the resolution of the singular point
$\C^4/G$ (cf.~\cite{DW}).
\end{rem}


\begin{remark}\label{combinatorics}
Some result of the present paper can be neatly illustrated with the
combinatorics related to the Petersen graph.

Firstly, let us recall that the graph describes the incidence of
$(-1)$-curves on ${\mathbb P}^2_4$ with 10 vertices standing for the
curves and edges for their intersection. Using this notation we can
encode five conic pencils on $S_5={\mathbb P}^2_4$: each pencil
contains three reducible fibers which can be represented by three
edges in this graph (two lines plus their point of
intersection). Three double edges in the graph below stand for such a
pencil. Thus the 15 edges of the graph are divided into 5 classes;
every edge in each class shares no common adjacent edge with another
edge in the same class. The five conic pencils on $S_5$ give five
distinguished elliptic pencils on the Vinberg's K3 surface $S$, see section
\ref{section k3s}.

It is well known, see the first part of Lemma \ref{auts5}, that
$S_5\subset{\mathbb P}^5$ admits an action of the permutation group
$\Sigma_5$ which yields an action of $\Sigma_5$ on the set of
$(-1)$-curves on $S_5$. In fact, we can identify the $(-1)$-curves
with transpositions and the action of $\Sigma_5$ is then by
conjugation. This is depicted in the diagram below by assigning to
each vertex of the Petersen graph a pair from the set $\{a,\dots,e\}$.

There are exactly two non-conjugate embeddings $\Sigma_5
\hookrightarrow \Sigma_6$: apart of the standard one (coming from the
embedding $\{0,\dots,4\}\hookrightarrow\{0,\dots,5\}$) there is an
embedding $\beta$ described in the second part of Lemma
\ref{auts5}. The embedding $\beta$ assigns to a pair in 5-element set
(a transposition in $\Sigma_5$) a partition of a 6-element set into 3
pairs. 

We label the edges of the Petersen graph by pairs in $\{0,\dots,5\}$
so that the three edges stemming from a given vertex give the
respective partition; for example: $(ab)\mapsto (03)(14)(25)$,
$(bc)\mapsto(01)(24)(35)$, $(cd)\mapsto (05)(14)(23)$, $(de)\mapsto
(01)(25)(34)$, compare it with \ref{auts5}. This way, out of 15
partitions of the set $\{0,\dots,5\}$, ten can be associated to
vertices of the Petersen graph. As the result, the remaining 5
partitions come from the 5 triples of edges which are associated to
conic pencils on $S_5$ or distinguished elliptic pencils on $S$. In
our diagram they are the following: $(01)(23)(45)$, $(02)(14)(35)$,
$(03)(15)(24)$, $(04)(13)(25)$, $(05)(12)(34)$. These are exactly the
partitions which occur in Proposition \ref{20inc}; there they come to
define the 20 incident planes: each partition defines $3+1$ planes,
depending on the number of $\pm$ signs. Similarly, the divisors $B_i$
in Proposition \ref{singZ} can be divided into 5 classes related to 5
distinguished elliptic fibrations of $S$, each of the classes
containing $3+1$ divisors.

$$\begin{xy}<120pt,0pt>:
(0,1)*=<15pt,15pt>[o]{ab}="01",
(0.588,-0.809)*=<15pt,15pt>[o]{ac}="02",
(0.475,0.13)*=<15pt,15pt>[o]{bc}="12", 
(-0.588,-0.809)*=<15pt,15pt>[o]{be}="14",
(-0.475,0.13)*=<15pt,15pt>[o]{ae}="04",
(-0.95,0.27)*=<15pt,15pt>[o]{cd}="23",
(0.294,-0.404)*=<15pt,15pt>[o]{bd}="13",
(0.95,0.27)*=<15pt,15pt>[o]{de}="34",
(-0.294,-0.404)*=<15pt,15pt>[o]{ad}="03",
(0,0.5)*=<15pt,15pt>[o]{ce}="24",
"01";"24" **@{-}, (0.1,0.7)*={(03)},
"01";"23" **@{-}, (-0.5,0.73)*={(14)},
"01";"34" **@{-}, (0.5,0.73)*={(25)},
"02";"34" **@{-}, (0.9,-0.25)*={(34)},
"02";"13" **@{-}, (0.33,-0.6)*={(02)},
"02";"14" **@{-}, (0,-0.9)*={(15)},
"03";"12" **@{-}, (0.4,-0.03)*={(35)},
"03";"14" **@{-}, (-0.33,-0.6)*={(04)},
"03";"24" **@{-}, (-0.16,0.3)*={(12)},
"04";"12" **@{-}, (0,0.08)*={(24)},
"04";"13" **@{-}, (-0.4,-0.03)*={(13)},
"04";"23" **@{-}, (-0.7,0.13)*={(05)},
"12";"34" **@{=}, (0.7,0.13)*={(01)},
"13";"24" **@{=}, (0.16,0.31)*={(45)},
"14";"23" **@{=}, (-0.9,-0.25)*={(23)},
\end{xy}$$

\end{remark}

\bibliography{ihsbib} \bibliographystyle{alpha}

\end{document}